\documentclass[10pt,oneside,reqno]{amsart}
\usepackage{textcomp}
\usepackage{amsmath} 
\usepackage{mathrsfs}
\usepackage{amsthm}
\usepackage[linktocpage]{hyperref}
\usepackage{amsfonts,graphics,amsthm,amsfonts,amscd,latexsym}
\usepackage{epsfig}
\usepackage{flafter}
\usepackage{mathtools}
\usepackage{comment}
\usepackage{stmaryrd}
\usepackage{enumitem}
\usepackage{textcomp}
\usepackage{epigraph} 
\usepackage{longtable}
\usepackage{tikz-cd}
\hypersetup{
colorlinks=true,
linkcolor=red,
citecolor=green,
filecolor=blue,
urlcolor=blue
}
\usepackage{tikz}
\usetikzlibrary{graphs,positioning,arrows,shapes.misc,decorations.pathmorphing}

\tikzset{
>=stealth,
every picture/.style={thick},
graphs/every graph/.style={empty nodes},
}

\tikzstyle{vertex}=[
draw,
circle,
fill=black,
inner sep=1pt,
minimum width=5pt,
]
\usepackage[position=top]{subfig}
\usepackage[alphabetic]{amsrefs}
\usepackage{amssymb}
\usepackage{color}

\setlength{\textwidth}{\paperwidth}
\addtolength{\textwidth}{-2in}
\calclayout

\usetikzlibrary{decorations.pathmorphing}
\tikzstyle{printersafe}=[decoration={snake,amplitude=0pt}]

\newcommand{\mult}{\operatorname{mult}}

\newcommand{\ord}{\operatorname{ord}}

\newcommand{\oo}{\mathcal{O}}

\newcommand{\pp}{\mathbb{P}}

\newcommand{\coreg}{\mathrm{coreg}\,}

\def\O#1.{\mathcal {O}_{#1}}
\def\pr #1.{\mathbb P^{#1}}
\def\af #1.{\mathbb A^{#1}}
\def\ses#1.#2.#3.{0\to #1\to #2\to #3 \to 0}
\def\xrar#1.{\xrightarrow{#1}}
\def\K#1.{K_{#1}}
\def\bA#1.{\mathbf{A}_{#1}}
\def\bM#1.{\mathbf{M}_{#1}}
\def\bL#1.{\mathbf{L}_{#1}}
\def\bB#1.{\mathbf{B}_{#1}}
\def\bK#1.{\mathbf{K}_{#1}}
\def\subs#1.{_{#1}}
\def\sups#1.{^{#1}}

\DeclareMathOperator{\reg}{reg}

\newcommand{\OOO}{\mathscr{O}}

\usepackage{tikz}
\usetikzlibrary{matrix,arrows,decorations.pathmorphing}

\newtheorem{theorem}{Theorem}[section]
\newtheorem{lemma}[theorem]{Lemma}
\newtheorem{proposition}[theorem]{Proposition}
\newtheorem{corollary}[theorem]{Corollary}

\theoremstyle{definition}
\newtheorem{definition}[theorem]{Definition}
\newtheorem{example}[theorem]{Example}

\newtheorem{problem}[theorem]{Problem}
\newtheorem{question}[theorem]{Question}

\newtheorem{remark}[theorem]{Remark}

\theoremstyle{remark}

\numberwithin{equation}{section}
\numberwithin{subsection}{theorem}
\numberwithin{subsubsection}{subsection}

\usepackage[all]{xy}
\newcounter{rownumber}[figure]
\setcounter{rownumber}{0}

\newcounter{rownumber-irr}[figure]
\setcounter{rownumber-irr}{0}

\newcounter{rownumber-p1}[figure]
\setcounter{rownumber-p1}{0}

\begin{document}

\title{Coregularity of smooth Fano threefolds}
\begin{abstract}
We study the coregularity of smooth Fano threefolds. We prove that for $100$ out of $105$ families of smooth Fano threefolds, a general member in the family has coregularity $0$; moreover, for $92$ families out of these $100$, any member in the family has coregularity $0$; for the remaining $5$ families, we obtain some partial results. In particular, we show that there exist families of smooth Fano threefolds whose general elements have positive coregularity.



\end{abstract}

\address{\emph{Artem Avilov}
\newline
\textnormal{National Research University Higher School of Economics, Russian Federation, Laboratory of Algebraic Geometry, NRU HSE.}
\newline
\textnormal{\texttt{v07ulias@gmail.com}}}

\address{\emph{Konstantin Loginov}
\newline
\textnormal{Steklov Mathematical Institute of Russian Academy of Sciences, Moscow, Russia.}
\newline
\textnormal{National Research University Higher School of Economics, Russian Federation, Laboratory of Algebraic Geometry, NRU HSE.}
\newline
\textnormal{Laboratory of AGHA, Moscow Institute of Physics and Technology.}
\newline
\textnormal{\texttt{loginov@mi-ras.ru}}}

\address{\emph{Victor Przyjalkowski}
\newline
\textnormal{Steklov Mathematical Institute of Russian Academy of Sciences, Moscow, Russia.}
\newline
\textnormal{National Research University Higher School of Economics, Russian Federation, Laboratory of Mirror Symmetry, NRU HSE.}
\newline
\textnormal{\texttt{victorprz@mi-ras.ru, victorprz@gmail.com}}}

\maketitle
\setcounter{tocdepth}{1} 
\tableofcontents
\section*{Introduction}
A smooth complex projective variety $X$ is called Fano if its anti-canonical class $-K_X$ is ample. 
A natural way to study Fano varieties is by looking at its (pluri-)anti-canonical linear system $|-lK_X|$ for $l\geq 1$. In dimension not greater than $3$ the existence of a smooth element in the linear system $|-K_X|$ was established by V. Shokurov. This allowed to apply inductive arguments to reduce the study of Fano threefolds to the study of its smooth anti-canonical elements. This approach was used to obtain the classification of smooth Fano varieties in dimension not greater than~$3$. In particular, in dimension $3$, smooth Fano varieties belong to $105$ deformation families, see \cite{IP99}. 

It is also fruitful to look at singular elements in $|-lK_X|$. To measure ``how singular'' such elements could be, the following invariants were introduced. The \emph{dual complex} of a reduced simple normal crossing divisor~$D$ on a smooth variety is a topological space that captures the combinatorial complexity of this divisor, see Definition \ref{def-dual-complex}. Using the resolution of singularities, this definition can be generalized to the case of log canonical pairs.

The notion of \emph{regularity} of a log canonical pair was introduced by V. Shokurov in \cite{Sh00}. By definition, it is the dimension of the dual complex of the boundary divisor of this pair. 
As a generalization, \emph{regularity of a Fano variety} was defined in \cite{Mo22}. By definition, it is the maximum of regularities of log canonical complements on a given Fano variety, see Definition \ref{defin-regularity}. For convenience, the dual notion of \emph{coregularity} is introduced. By definition, 
\[
\mathrm{coreg}(X)=\dim X - 1 - \mathrm{reg}(X).
\]
Roughly speaking, the regularity measures how far a given Fano variety is from being \emph{exceptional}. The latter means that any log canonical complement is in fact Kawamata log terminal. For technical reasons, for any $l\geq 1$ we introduce the $l$-th regularity of a Fano variety, which by definition is the maximum of dimensions of dual complexes of log canonical $l$-complements on a given Fano variety. The $l$-th coregularity is defined as
$
\mathrm{coreg}_l(X)=\dim X - 1 - \mathrm{reg}_l(X).
$

The study of coregularity of Fano varieties drew a lot of attention recently, see \cite{Mo22}, \cite{FFMP22}, \cite{ABBdVILW23}, \cite{Du22} and references therein. It is expected that Fano varieties of coregularity~$0$ should enjoy some good properties. However, not much is known about the coregularity of concrete families of Fano varieties, see Section \ref{sec-dP} for the results in dimension $2$. Another example is the case of toric Fano varieties: it is easy to check that they have coregularity~$0$.  

In this paper, we deal with smooth Fano varieties of dimension $3$. We use the notation for the classification of smooth Fano threefolds as in \cite{IP99}, see also Table \ref{sec-the-table}. Our main result is as follows:
\begin{theorem}
\label{intro-main-theorem}
Let $X$ be a smooth Fano threefold. Put 
\[
\aleph=\{1.1, 1.2\}, \quad \quad \quad \beth=\{1.3, 1.4\}, \quad \quad \quad \gimel=\{1.5\}, \quad \quad \quad \daleth=\{1.6, 1.7, 1.8, 1.9, 1.10, 1.11, 2.1, 10.1\}.
\]
Then the following holds.
\begin{enumerate}
\item
If $X$ is any smooth threefold that belongs to any family except for the families in $\aleph, \beth, \gimel, \daleth$, then $\mathrm{coreg}(X)=0$.
\item
If $X$ is a general member in one of the families $\aleph$, we have $\mathrm{coreg}(X)\geq 1$.
\item
If $X$ is a general member in one of the families $\beth$ we have $\mathrm{coreg}_1(X)=2$.
\item
If $X$ is a general member in the family $\gimel$, we have $\mathrm{coreg}(X)\leq 1$,
\item
If $X$ is a general member in one of the families $\daleth$, we have $\mathrm{coreg}(X)=0$,
\end{enumerate}
\end{theorem}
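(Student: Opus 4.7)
The plan is to prove the theorem family-by-family, following the Iskovskikh--Mukai classification as laid out in \cite{IP99}. Bounding $\mathrm{coreg}(X) \leq k$ requires exhibiting a log canonical complement whose dual complex has dimension at least $2 - k$, while bounding $\mathrm{coreg}(X) \geq k$ requires ruling out complements with higher-dimensional dual complexes. The two directions demand different techniques, and the case-by-case nature of the result precludes a genuinely uniform argument.

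For the $100$ families asserted to have $\mathrm{coreg} = 0$, the goal is to produce, for each $X$, a log canonical $l$-complement $D \sim_\qq -K_X$ whose dual complex contains a $2$-simplex. The simplest such $D$ is a sum of three prime surfaces meeting transversally at a point. Natural sources for such configurations are: the toric boundary, when $X$ admits a torus action or toric degeneration; the preimage of a reducible divisor under a Mori fiber space structure; exceptional divisors together with strict transforms of anti-canonical members, when $X$ is described as a blow-up; and triples of hyperplane sections passing through a common point, when $X$ is embedded explicitly. In each case the construction is more or less forced by the geometry, and the verification of log canonicity is routine. For item (1), one additionally checks that the construction extends to every (not merely the general) member; for the $8$ families in $\daleth$ this extension fails, hence the weaker statement (5).

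For the exceptional families, the inequality $\mathrm{coreg}_1(X) \leq 2$ in item (3) is immediate (a smooth anti-canonical K3 surface is a $1$-complement whose dual complex is a single vertex), while the matching lower bound $\mathrm{coreg}_1(X) \geq 2$ reduces to showing that a general member of families $1.3$ and $1.4$ has only irreducible anti-canonical sections---a consequence of index $1$ and Picard rank $1$. The bound $\mathrm{coreg}(X) \leq 1$ in item (4) is again a construction: a reducible $l$-complement whose dual complex has dimension $1$, arising from the explicit projective geometry of family $1.5$. The difficult bound is $\mathrm{coreg}(X) \geq 1$ in item (2). Here one must rule out, for general $X_2$ (family $1.1$) or $X_4$ (family $1.2$), every $l$-complement whose dual complex has dimension $2$. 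The strategy is to apply adjunction from $(X, D)$ to a suitable anti-canonical elephant $S$, reducing the question to the dual complex of the restricted pair $(S, D|_S)$, and then to combine Birkar's boundedness of complements with a detailed analysis of singular members of $|-lK_X|$.

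The principal obstacle is this lower bound for $\aleph$. Once boundedness reduces the problem to finitely many values of $l$, one must classify divisors in $|-lK_X|$ that can carry a $1$-dimensional chain of log canonical centers on the elephant, and show that for a general quartic $3$-fold or degree $2$ double cover no such divisor exists. This amounts to a careful study of hyperplane-section singularities, respectively of branch-locus singularities, of these two Picard-rank-$1$ families whose geometry is known to be rigid. A secondary obstacle, for the uniformity in item (1), is that a complement constructed for the general member may degenerate at walls of the moduli space; such special members require alternative constructions, produced ad hoc and checked to preserve log canonicity and the triple-point configuration.
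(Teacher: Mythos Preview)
Your overall plan for items (1), (4), and (5)---explicit construction of log canonical complements with the required dual-complex dimension, organized along the Mori--Mukai list---matches the paper's approach and is correct in outline.

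However, your treatment of item (3) contains a genuine error. You claim that $\mathrm{coreg}_1(X)\geq 2$ (equivalently $\mathrm{reg}_1(X)\leq 0$) follows from every anti-canonical section being irreducible, which you attribute to index $1$ and Picard rank $1$. Irreducibility of $D\in|-K_X|$ is not enough: the dual complex $\mathcal{D}(X,D)$ is computed on a log resolution, and an irreducible $D$ with a strictly lc singularity (simple elliptic or cusp) will produce exceptional divisors with coefficient $1$ in the log pullback, yielding $\dim\mathcal{D}(X,D)\geq 1$. For instance, if $D$ has an ordinary triple point (tangent cone a smooth cubic), one blow-up already gives a $1$-simplex. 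What the paper actually proves (Propositions~\ref{prop-23-reg1} and~\ref{prop-222-reg1}) is that for general $X$ in families $1.3$ and $1.4$, every hyperplane section has at worst du~Val singularities, so $(X,D)$ is plt and the dual complex is a single point. This requires the local singularity analysis of Lemma~\ref{lem-striclty-lc-1-complement} together with a dimension count over the parameter space of $X$ (Lemma~\ref{lem-codimention-23}), not merely irreducibility.

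Your strategy for item (2) is also off target. The paper does not use adjunction to an elephant or general Birkar boundedness; both would be awkward here (restricting a complement to a K3 section does not produce a log CY pair on the section in any useful way, and Birkar's bound on complement indices is not effective enough). The key input is Theorem~\ref{thm-1-2-complements} (from \cite{FFMP22}): if $\mathrm{coreg}(X)=0$ then already $\mathrm{coreg}_1(X)=0$ or $\mathrm{coreg}_2(X)=0$. This reduces the problem to ruling out $2$-dimensional dual complexes among $1$- and $2$-complements only. The paper then shows $\mathrm{reg}_1(X)=\mathrm{reg}_2(X)=0$ for general $X$ in families $1.1$ and $1.2$ via the same kind of local-equation analysis as above (Lemmas~\ref{prop-sextic-reg1}--\ref{prop-sextic-reg2} and~\ref{lem-no-cusps}--\ref{lem-no-cusps2}). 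Without the $1$-or-$2$ complement theorem your proposed route has no obvious way to terminate.
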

Consequently, for $100$ out of $105$ families of smooth Fano threefolds, a general member in the family has coregularity $0$; moreover, for $92$ families out of these $100$, any member in the family has coregularity $0$; for the remaining $5$ families, we obtain some partial results. Namely, for the families in $\aleph$, that is for  
a sextic double solid and a quartic, we show that a general member in the family has coregularity at least $1$. For the families in $\beth$, that is for the 
intersection of a quadric and a cubic and the intersection of three quadrics, our methods allowed to compute only the first regularity $\mathrm{reg}_1$. 
Finally, for a general Fano variety in the family~$\gimel$, that is for a general Fano variety of genus $6$, we prove that the coregularity is at least $1$. The following corollary is straightforward.
\begin{corollary}
\label{int-main-cor}
Let $X$ be a smooth Fano threefold. 
If $(-K_X)^3\geq 12$ then for a general $X$ we have $\mathrm{coreg}(X)=~0$. If $(-K_X)^3\geq 24$ then for any $X$ we have $\mathrm{coreg}(X)=0$.
\end{corollary}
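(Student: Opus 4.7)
The plan is to deduce Corollary \ref{int-main-cor} directly from Theorem \ref{intro-main-theorem} by a simple inspection of the anti-canonical degrees of the families excluded in the theorem. First, I would note that by parts (1) and (5) of Theorem \ref{intro-main-theorem}, the general member of every family outside $\aleph \cup \beth \cup \gimel$ already has coregularity zero. Hence the first assertion reduces to checking that the hypothesis $(-K_X)^3 \geq 12$ forces $X$ to lie outside $\aleph \cup \beth \cup \gimel$. This is immediate from the classical Iskovskikh--Prokhorov classification: the families $1.1, 1.2, 1.3, 1.4, 1.5$ are respectively the sextic double solid, the quartic, the $(2,3)$-complete intersection, the $(2,2,2)$-complete intersection, and the Fano threefold of genus $6$, with anti-canonical degrees $2, 4, 6, 8, 10$. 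All five values are strictly less than $12$, so the hypothesis excludes all of them and Theorem \ref{intro-main-theorem} then yields $\mathrm{coreg}(X)=0$ for a general $X$.

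For the second assertion, the families in $\daleth$ must additionally be ruled out, since for them only a \emph{general} member is known to have coregularity zero. Consulting Table \ref{sec-the-table}, the families $1.6, 1.7, 1.8, 1.9, 1.10$ have $(-K_X)^3 = 12, 14, 16, 18, 22$; the index-two threefold $V_1$ in family $1.11$ has $(-K_X)^3 = 8$; family $2.1$ has $(-K_X)^3 = 4$; and family $10.1$ is $\mathbb{P}^1 \times S_1$ with $(-K_X)^3 = 3 \cdot 2 \cdot (-K_{S_1})^2 = 6$. The maximum over all families in $\aleph \cup \beth \cup \gimel \cup \daleth$ is therefore $22$, which is strictly less than $24$. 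Consequently the hypothesis $(-K_X)^3 \geq 24$ places $X$ in one of the $92$ families covered by Theorem \ref{intro-main-theorem}(1), giving $\mathrm{coreg}(X)=0$ for every such $X$.

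The only task is the degree bookkeeping above, which is a direct lookup in the Iskovskikh--Mori--Mukai tables; no additional geometric input beyond Theorem \ref{intro-main-theorem} is required, which is precisely why the corollary is labelled straightforward.
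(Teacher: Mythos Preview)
Your proposal is correct and is precisely the argument the paper has in mind: the corollary is stated as ``straightforward'' with no proof given, and the intended justification is exactly this degree bookkeeping against Table~\ref{sec-the-table}, using that $\max_{\aleph\cup\beth\cup\gimel}(-K_X)^3=10<12$ and $\max_{\aleph\cup\beth\cup\gimel\cup\daleth}(-K_X)^3=22<24$.
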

For now, we do not know whether the bounds in Corollary \ref{int-main-cor} are sharp. Our computations show that in many cases the second regularity $\mathrm{reg}_2(X)$ (see Definition \ref{defin-regularity}) is equal to $0$ for general elements in the families~$\beth$, 
which by Theorem \ref{thm-1-2-complements} implies that the coregularity is positive for general elements of these families. However, we do not know how to compute the coregularity for a general element in the family $\gimel$. 
The following problem was suggested by V. Shokurov:
\begin{problem}
For any $i\geq 0$, find a (reasonable) function $f_i\colon \mathbb{N}\to \mathbb{R}$ with the following property. 
Assume that $X$ is a klt Fano variety of dimension $n$ and the inequality  
$(-K_X)^n>f_i(n)$ holds. Then $\mathrm{coreg}(X)\leq i$.
\end{problem}

To prove Theorem \ref{intro-main-theorem}, we use the following strategy. In the cases when the coregularity is equal to $0$, we explicitly construct the anti-canonical boundary divisor which evaluates it. It turns out that it is not hard to do in the case when either the index, or the Picard number of a given Fano threefold is at least two. Therefore, the most interesting varieties are Fano threefolds of the main series, that is, Fano varieties of index~$1$ and Picard rank $1$. In this case, we apply the method of the double projection due to Iskovskikh, which gives a natural birational tranformation to another, more simple, Fano threefold. This allows to construct a boundary of coregularity $0$ on this simpler Fano threefold and then pull it back to our initial variety. A kind of surprise is that, unlike in the case of del Pezzo surfaces, the coregularity is not always equal to $0$ for a general element in all families of Fano threefolds. To show this, we use the following crucial result

\begin{theorem}[{\cite[Theorem 4]{FFMP22}}]
\label{thm-1-2-complements}
If $X$ is a smooth Fano variety with $\mathrm{coreg}(X)=0$ then either $\mathrm{coreg}_1(X)=0$ or $\mathrm{coreg}_2(X)=0$.
\end{theorem}

In other words, if the coregularity is $0$, then it is evaluated either on the elements of the anti-canonical, or of the bi-anti-canonical linear system. Thus, to show that coregularity is greater or equal to $1$, it is enough to analyze the singularities of the elements that belong to either of these two linear systems. The proof of Theorem \ref{intro-main-theorem} is summarized in Table \ref{sec-the-table}. 
We propose the following
\begin{question}
\label{question:reg 1}
Let $X$ be a smooth Fano variety. Is it true that $\mathrm{reg}(X)=\mathrm{reg}_1(X)$?
\end{question}
From the proof of Theorem \ref{intro-main-theorem} it follows that the answer to this question is affirmative for all Fano threefolds possibly except those in the families $\aleph, \beth, \gimel$, and for general Fano threefolds in the families from~$\daleth$. 

We provide some motivation for the problem of computing the coregularity of Fano varieties from the point of view of mirror symmetry. The normal crossing anti-canonical complements play an important role in mirror symmetry considerations. 
%
Let us consider a smooth Fano variety $X$ 
together with a normal crossing anti-canonical complement
$D=D_1+\ldots+D_r$ and an open Calabi--Yau variety $U=X\setminus D$. Then the homotopy type of the dual intersection
complex of $D$ is an invariant of $U$. Also, the homology groups of the dual complex of $D$ determine
the weight $0$ part of the Deligne's canonical mixed Hodge structure on the 
cohomology groups of $U$ with compact support. Note that the dimension $k$ of the maximal cell of the dual complex is the depth of the
weight filtration on the mixed Hodge structure.

On the other hand, the mirror symmetry predicts the existence of a mirror dual open Calabi--Yau variety~$U^\vee$.
One can also define Deligne's mixed Hodge structure for $U^\vee$. Moreover, one can consider
the affinization maps $U\to \mathrm{Spec}\,(\mathbb{C}[U])$ and $U^\vee\to \mathrm{Spec}\,(\mathbb{C}[U^\vee])$
and the \emph{perverse Leray filtrations} defined by these maps, see~\cite{CM10, HKP20}.
Under some natural mild assumptions the depth of this filtration for $U^\vee$ is $k$, see~\cite[Remark 1.4]{HKP20}.
Moreover, the \emph{Mirror P=W Conjecture} claims that the dimensions of the associated graded quotients for the
weight filtration for $U$ corresponds to the dimensions of the associated graded quotients for perverse Leray filtration
for $U^\vee$, and vice versa. For more details and precise statements, see \cite{HKP20}.
Thus, the most interesting case is when the dimension 
of the base $U^\vee\to \mathrm{Spec}\,(\mathbb{C}[U^\vee])$
is maximal possible (say, from the point of view of counting vanishing Lagrangian cycles), which by mirror symmetry corresponds to the case when the Fano variety $X$ has the dual complex of $D$ of maximal possible dimension, and so the coregularity of $X$ is equal to $0$.

\begin{question}
\label{question: lc mirror}
Is it possible to generalize the construction of perverse Leray filtration and Mirror P=W Conjecture
for lc pairs?
\end{question}

Answers on Questions~\ref{question:reg 1} and~\ref{question: lc mirror} may be interesting for the Gross--Siebert program of constructing mirror duals to Fano varieties (see~\cite{GrossSiebert}). We recall that this program suggests to construct a mirror of a Fano variety $X$ as a spectrum
of a certain ring whose Krull dimension is equal to a number of components of a simple normal crossing
decomposition of an anti-canonical divisor.  However, in private 
communication M.\,Gross suggested that the approach can be generalized and this ring can be constructed by a log resolution of an anti-canonical divisor on a deformation of $X$,
so that the dimension of the ring is equal to the first regularity of the deformation.

The paper is organized as follows.
In Section \ref{sec-prelim}, we collect the main definitions and fix the notation. 
In Section~\ref{sec-dP}, we compute coregularity of smooth del Pezzo surfaces, that is, Fano varieties of dimension $2$. 
In Section \ref{sec-singularity-theory}, we recall some notions and facts from the singularity theory. 
In Section \ref{sec-preparations}, we prove some preparatory results on the dual complexes of certain pairs. 
In Section \ref{sec-Fanos-large-index}, we start to analyze the Fano threefolds. We explicitly construct boundaries on such varieties which evaluate coregularity $0$. 
More precisely, we consider Fano threefolds of index at least $2$.
In Section \ref{sec-prime-Fano-threefolds}, we consider Fano threefolds of the main series, that is, Fano threefolds of Picard rank $1$ and index $1$. 
In Section \ref{sec-sextic-double-solid}, we deal with the case of a sextic double solid, that is, a Fano threefold in the family $1.1$. 
In Section \ref{sec-quartic}, we consider the case of a quartic hypersurface, that is, a Fano threefold in the family $1.2$. 
In Section \ref{sec-23}, an intersection of a quadric and a cubic, that is, a Fano threefold in the family $1.3$, is analyzed. 
In Section \ref{sec-222}, we study an intersection of three quadrics, that is, a Fano threefold in the family $1.4$.
In Section \ref{sec-Fanos-that-are-not-blow-ups}, we work with Fano threefolds with the Picard rank at least $2$ that cannot be obtained as a blow up of some other Fano variety. 
In sections \ref{sec-Fano-with-big-rho} and \ref{sec-Fano-with-big-rho2}, we work with Fano threefolds with the Picard rank $2$ and at least $3$, respectively, that can be obtained as a blow up of some other Fano variety. 
Finally, in Section \ref{sec-the-table}, we organize all our results in one table.  

\textbf{Acknowledgements.}
The work of K.\,Loginov was performed at the Steklov International Mathematical Center and supported by the Ministry of Science and Higher Education of the Russian Federation (agreement no. 075-15-2022-265), and supported by
the HSE University Basic Research Program, and the Simons Foundation. He is a Young Russian Mathematics award winner and would like to thank its sponsors and jury. 
The work of V.\, Przyjalkowski was performed at the Steklov International Mathematical Center and supported by the Ministry of Science and Higher Education of the Russian Federation (agreement no. 075-15-2022-265). The authors thank Alexander Kuznetsov, Dmitry Mineyev, Joaqu\'in Moraga, Yuri Prokhorov and Vyacheslav Shokurov for helpful conversations, and Constantin Shramov for reading the draft of the paper.

\section{Preliminaries}
\label{sec-prelim}
In what follows, all varieties are
projective and defined over $\mathbb{C}$ unless  stated otherwise. We
use the language of the minimal model program (the MMP for short), see
e.g. \cite{KM98}.

\subsection{Contractions} By a \emph{contraction} we mean a surjective
morphism $f\colon X \to Y$ of normal varieties such that $f_*\OOO_X =
\OOO_Y$. In particular, $f$ has connected fibers. A
\emph{fibration} is defined as a contraction $f\colon X\to Y$ such
that $\dim Y<\dim X$.

\subsection{Pairs and singularities} A \emph{pair} $(X, B)$ consists
of a normal variety $X$ and a boundary $\mathbb{Q}$-divisor $B$ with
coefficients in $[0, 1]$ such that $K_X + B$ is $\mathbb{Q}$-Cartier.
Let $\phi\colon W \to X$ be a
log resolution of $(X,B)$ and let $K_W +B_W = \phi^*(K_X +B)$.
The \emph{log discrepancy} of a prime divisor $D$ on $W$ with respect
to $(X, B)$ is $1 - \mathrm{coeff}_D B_W$ and it is denoted by $a(D, X, B)$. We
say $(X, B)$ is lc (resp. klt) if $a(D, X, B)$
is $\geq 0$ (resp.~$> 0$) for every $D$. We say that the pair is \emph{plt}, if $a(D, X, B)>0$ holds for any $\phi$-exceptional divisor $D$ and for any log resolution $\phi$. 
We say that the pair is \emph{dlt}, if $a(D, X, B)>0$ hold for any $\phi$-exceptional divisor $D$ and for some log resolution $\phi$.

\subsection{Complements and log Calabi-Yau pairs}
\label{sect-log-CY}
Let $(X, B)$ be an lc pair. 
We say $(X,B)$ is \emph{log Calabi-Yau pair} (or \emph{log CY} for short) if $K_X + B
\sim_{\mathbb{Q}} 0$. In this case, $B$ is called a $\mathbb{Q}$-complement of $K_X$. If $N(K_X + B)\sim 0$ for some $N$, we say that $B$ is an $N$-complement of $K_X$. 
\subsection{Dual complex and coregularity}
Let $D=\sum D_i$ be a Cartier divisor on a smooth variety $X$. Recall that $D$ has \emph{simple normal crossings} (snc for short), if all the components $D_i$ of $D$ are smooth, and any point in $D$ has an open neighborhood in the analytic topology that is analytically equivalent to the union of coordinate hyperplanes.

\begin{definition}
\label{def-dual-complex}
\emph{The dual complex}, denoted by $\mathcal{D}(D)$, of a simple normal crossing divisor $D=\sum_{i=1}^{r} D_i$ on a smooth variety $X$ is a CW-complex constructed as follows. 
The simplices $v_Z$ of $\mathcal{D}(D)$ are in bijection with irreducible components $Z$ of the intersection $\bigcap_{i\in I} D_i$ for any subset $I\subset \{ 1, \ldots, r\}$, and the dimension of $v_Z$ is equal to $\#I-1$.
The gluing maps are constructed as follows. 
For any subset $I\subset \{ 1, \ldots, r\}$, let $Z\subset \bigcap_{i\in I} D_i$ be any irreducible component, and for any $j\in I$ let $W$ be a unique component of $\bigcap_{i\in I\setminus\{j\}} D_i$ containing $Z$. Then the gluing map is the inclusion of $v_W$ into $v_Z$ as a face of $v_Z$ that does not contain the vertex $v_i$. 

Note that the dimension of $\mathcal{D}(D)$ does not exceed $\dim X-1$. If $\mathcal{D}(D)$ is empty, we say that $\dim \mathcal{D}(D)=-1$. In what follows, for a divisor $D$ by $D^{=1}$ we denote the sum of the components of $D$ that have coefficient $1$ in it. For a lc log CY pair $(X, D)$, we define $\mathcal{D}(X, D)$ as $\mathcal{D}(D_Y^{=1})$ where $f\colon (Y, D_Y)\to (X, D)$ is a log resolution of $(X, D)$, so that the formula
\[
K_{Y} + D_Y= f^*(K_X + D)
\]
is satisfied. It is known that the PL homeomorphism class of $\mathcal{D}(D_Y^{=1})$ does not depend on the choice of a log resolution
, see \cite{dFKX17}. 
\end{definition}

For more results on the topology of dual complexes of Calabi-Yau pairs, see \cite{KX16}. In the case of log Fano pairs, see also \cite{Lo19} and \cite{LM20}.

\begin{definition}[{\cite[7.9]{Sh00}},{\cite{Mo22}}]
\label{defin-regularity}
Let $X$ be a klt Fano variety of dimension $n$. By the \emph{regularity} $\mathrm{reg}(X, D)$ of an lc log CY pair $(X, D)$ we mean the number $\dim \mathcal{D}(X, D)$. For $l\geq 1$, we define the $l$-th \emph{regularity} of $X$ by the formula 
\[
\mathrm{reg}_l(X) = \max \{ \mathrm{reg}(X, D)\ |\ D\in \frac{1}{l} |-lK_X|\}.
\]
Then the \emph{regularity} of $X$ is 
\[
\mathrm{reg}(X) = \max_{l\geq 1} \{\mathrm{reg}_l(X)\}.
\]
Note that $\mathrm{reg}(X)\in \{-1, 0,\ldots, \dim X-1\}$ where by convention we say that the dimension of the empty set is $-1$. The \emph{coregularity} of an lc log CY pair $(X, D)$ is defined as the number $n-1-\mathrm{reg}(X, D)$. Also, we define:
\[
\mathrm{coreg}_l(X) = n-1-\mathrm{reg}_l(X),\quad \quad \quad \mathrm{coreg}(X) = n-1-\mathrm{reg}(X).
\]
Clearly, one has $\mathrm{reg}_l(X)\leq \mathrm{reg}_{kl} (X)$ for any $k,l\geq1$.
\end{definition}

We recall the following property of the coregularity.

\begin{proposition}[{\cite[Proposition 3.28]{Mo22}}]
\label{proposition:coreg of fibration}
Let $\phi\colon X\to Y$ be a fibration. Assume that $X$ is of Fano
type (this holds, for example, if $X$ is a Fano variety). Then $\coreg(X)\ge\coreg(Y)$.
In particular, if $\coreg(X)=0$, then $\coreg(Y)=0$.
\end{proposition}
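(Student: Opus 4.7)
My plan is to transfer a log canonical log Calabi--Yau structure realizing $\coreg(X)$ from $X$ to $Y$ via the canonical bundle formula, and then compare minimal log canonical centers on the two sides.

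I would first choose an lc log CY pair $(X,B)$ with $\coreg(X,B)=\coreg(X)$, pass to a dlt modification, and pick a minimal log canonical center $W$ of $(X,B)$; from the definition of $\coreg$ in terms of minimal lc centers one has $\dim W=\coreg(X)$. Since $X$ is of Fano type and $\phi$ is a contraction, $Y$ is also of Fano type by Prokhorov--Shokurov.

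Next, I would apply the canonical bundle formula of Kawamata--Ambro to $(X,B)$ over $Y$, producing a decomposition
\[
K_X+B\sim_{\qq}\phi^*\bigl(K_Y+B^{\mathrm{disc}}_Y+M_Y\bigr),
\]
where $B^{\mathrm{disc}}_Y$ is the effective discriminant divisor on $Y$ and $M_Y$ is the nef $\qq$-moduli b-divisor. Because $Y$ is of Fano type, I can realize $M_Y$ by an effective $\qq$-divisor $M'_Y\sim_{\qq}M_Y$ chosen in sufficiently general position so that $(Y,B'_Y)$ with $B'_Y:=B^{\mathrm{disc}}_Y+M'_Y$ is an lc log CY pair whose log canonical centers correspond to the images $\phi(W')$ of log canonical centers $W'\subset X$ of $(X,B)$ that do not dominate $Y$.

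Finally, I would split into two cases. If $\phi(W)=Y$, then $\dim W\ge\dim Y\ge\coreg(Y)$ and the inequality is immediate. Otherwise $W$ is vertical over $Y$, and $\phi(W)$ contains a log canonical center $V$ of $(Y,B'_Y)$, which gives
\[
\coreg(Y)\le\coreg(Y,B'_Y)\le\dim V\le\dim\phi(W)\le\dim W=\coreg(X).
\]
The hard part will be the effectivization step: realizing the moduli class $M_Y$ by an honest effective $\qq$-divisor in general position, while preserving the correspondence between log canonical centers of $(X,B)$ and those of $(Y,B'_Y)$. This is precisely where the Fano type hypothesis on $X$ (and hence on $Y$) is used, and it can be carried out by combining a Prokhorov--Shokurov-style absorption of the moduli part into the boundary with a Bertini-type general-position argument for the choice of $M'_Y$.
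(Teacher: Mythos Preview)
The paper does not supply its own proof of this proposition; it is quoted verbatim from \cite[Proposition~3.28]{Mo22} and used as a black box, so there is no in-paper argument to compare your proposal against.

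As a standalone argument your plan is the standard one and is essentially sound, but two points deserve care. First, the identification $\dim W=\coreg(X)$ is valid for a minimal stratum on the dlt model corresponding to a \emph{maximal-dimensional} simplex of the dual complex; it is not automatic for every minimal lc center, so you should choose $W$ accordingly rather than invoke it as a definition. Second, and more substantially, the effectivization step needs a precise statement: what you actually use is Ambro's result that the images of the vertical lc centers of $(X,B)$ are exactly the lc centers of the discriminant pair $(Y,B^{\mathrm{disc}}_Y)$ (not merely of the generalized pair), so that $\phi(W)$ is already an lc center of $(Y,B^{\mathrm{disc}}_Y)$. The moduli part $M_Y$ is only b-nef, so to realize it effectively you may have to pass to a higher birational model $Y'\to Y$ on which $M_{Y'}$ is an honest nef divisor; since $Y'$ is again of Fano type, $M_{Y'}$ is semiample there, and a general member $M'_{Y'}\in|M_{Y'}|_{\qq}$ avoids the preimage of $\phi(W)$, so $(Y',B^{\mathrm{disc}}_{Y'}+M'_{Y'})$ is lc and still has an lc center over $\phi(W)$. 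Pushing forward (or using that coregularity is preserved under crepant birational maps of Fano type varieties) then finishes the argument.
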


\section{Del Pezzo surfaces}
\label{sec-dP}
As a warm-up, we compute the coregularity of smooth del Pezzo surfaces, that is, of smooth Fano varieties of dimension $2$. By the degree of a smooth del Pezzo surface $X$ we mean the number $d=(-K_X)^2$. It is well-known that $1\leq d\leq 9$.


\begin{theorem}[{\cite[Theorem 3.2]{Mo22}}]
\label{theorem:del Pezzo degree 2 and more}
Let $X$ be a smooth del Pezzo of degree $d\geq 2$.
Then $\coreg(X)=0$.
\end{theorem}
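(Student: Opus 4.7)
The plan is to construct explicitly, for every smooth del Pezzo surface $X$ of degree $d\geq 2$, a reduced snc divisor $D\in |-K_X|$ whose dual complex $\mathcal{D}(X,D)$ has dimension $1$. Since $\dim \mathcal{D}(X,D)\leq \dim X - 1 = 1$ automatically, and since $(X,D)$ is lc log Calabi--Yau whenever $D$ is reduced snc with $D\sim -K_X$, this will give $\reg(X)=1$ and therefore $\coreg(X)=0$.

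For $d\geq 3$, I would realize $X$ either as $\mathbb{P}^1\times \mathbb{P}^1$ (one of the two families with $d=8$) or as $\mathrm{Bl}_{p_1,\ldots,p_{9-d}}\mathbb{P}^2$ at points in general position. In the first case take $D$ to be the union of two distinct fibers from each ruling, so that $\mathcal{D}(D)$ is a $4$-cycle. In the second case, pick a subset $S\subseteq \{p_1,\ldots,p_{9-d}\}$ with $|S|\leq 5$ and $|S^c|\leq 2$, a smooth conic $Q\subset \mathbb{P}^2$ through exactly the points of $S$, and a line $L$ through exactly the points of $S^c$; such generic choices exist because the blow-up points are in general position. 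The proper transforms then satisfy
\[
[\widetilde{Q}] + [\widetilde{L}] = \Bigl(2H - \sum_{i\in S}E_i\Bigr) + \Bigl(H - \sum_{i\notin S}E_i\Bigr) = -K_X,
\]
and for generic $Q, L$ the intersection $\widetilde{Q}\cap \widetilde{L}$ consists of exactly two transverse points, both avoiding the exceptional locus, so the dual complex of $D = \widetilde{Q}+\widetilde{L}$ is a $2$-cycle.

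The delicate case is $d=2$, where $-K_X$ is no longer very ample: the anti-canonical morphism presents $X$ as a double cover $\pi\colon X\to \mathbb{P}^2$ branched along a smooth plane quartic $C$, with $|-K_X| = \pi^\ast |\mathcal{O}_{\mathbb{P}^2}(1)|$. I would invoke the classical existence of $28$ bitangent lines to $C$ and pick any bitangent $L$, tangent to $C$ at two distinct points $p, q$. Then the defining equation $f$ of $C$ restricts to a section of $\mathcal{O}_L(4)$ with double zeros at $p, q$, hence is the square of a section $g\in H^0(L, \mathcal{O}_L(2))$. Consequently $\pi^{-1}(L)\to L$ is the trivial degree-$2$ cover $z^2 = g^2$ and splits as $z=\pm g$, producing two smooth rational components $D_1, D_2\cong \mathbb{P}^1$ meeting transversely at the two ramification points above $p,q$. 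Thus $D=D_1+D_2\in |-K_X|$ is snc with dual complex a $2$-cycle, and $\coreg(X)=0$.

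The main obstacle is precisely the case $d=2$, which requires combining the double-cover structure of the anti-canonical morphism with the classical geometry of bitangents to a smooth plane quartic, and then a local computation to verify the pullback genuinely splits as two transversely meeting smooth rationals. The higher-degree cases reduce to routine manipulations of line and conic classes in $\mathrm{Pic}(\mathrm{Bl}\,\mathbb{P}^2)$.
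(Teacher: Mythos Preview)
Your proof is correct and takes a genuinely different route from the paper's. The paper argues uniformly: blow up $d-2$ general points on $X$ to reduce to a degree-$2$ del Pezzo $X'$, observe that the preimage of a \emph{simply} tangent line to the branch quartic under the anticanonical double cover $X'\to\mathbb{P}^2$ is an irreducible nodal rational curve in $|-K_{X'}|$, push it forward to a nodal curve $C\in|-K_X|$, and then blow up the node to exhibit a $1$-dimensional dual complex. You instead construct an snc boundary directly, treating $d\ge 3$ and $d=2$ separately: for blowups of $\mathbb{P}^2$ you take the proper transforms of a transverse conic--line pair summing to $-K_X$, and for $d=2$ you use a \emph{bitangent} rather than a simple tangent, so that $\pi^{-1}(L)$ already splits as two $(-1)$-curves meeting transversally at two points---no auxiliary blowup is needed. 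Your approach is more explicit and avoids both the degree reduction and the node resolution; the paper's approach is more uniform across all $d$.

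One point to tighten in the $d=2$ case: you pick ``any bitangent $L$, tangent to $C$ at two distinct points $p,q$,'' but the classical count of $28$ bitangents does not by itself guarantee that all (or any) of them are honest bitangents rather than hyperflex tangents. For an arbitrary smooth plane quartic some bitangents may degenerate to hyperflex tangents, in which case your local computation yields two branches meeting \emph{tangentially} (a tacnode) rather than transversally, and $D_1+D_2$ fails to be snc. This is easily repaired: the Hessian of $C$ has degree $6$, so $C$ meets its Hessian in $24$ points counted with multiplicity, and each hyperflex contributes at least $2$; hence there are at most $12$ hyperflexes and therefore at least $16$ honest bitangents. Adding this sentence makes your $d=2$ argument work for every smooth quartic, not just a general one.
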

\begin{proof}
First we show that for $d\geq 2$ in the linear system $|-K_X|$ there always exists a nodal curve. Indeed, let us blow up $d-2$ general points on $X$ to obtain a morphism $f\colon X'\to X$ where $X'$ is a del Pezzo surface of degree $2$. The anti-canonical linear system $|-K_{X'}|$ defines a double cover $g\colon X'\to \mathbb{P}^2$ ramified in a smooth quartic curve $C'\subset \mathbb{P}^2$. The preimage $g^{-1}(L')$ of a general tangent line $L'$ to $C'$ is a nodal curve in $|-K_{X'}|$. Then $C=f(C')$ is the desired nodal curve on $X$. Blowing up the node of $C$ on $X$ we obtain a boundary that evaluates coregularity $0$ for $X$. 
\end{proof}

\begin{remark}
\label{rem-curve-dP1}
We check that on a general del Pezzo surface $X$ of degree $1$ there exists an element $C$ in $|-K_X|$ which is a nodal curve. Indeed, since this condition is open in the space of all del Pezzo surfaces of degree $1$ it is enough to show that there exists a del Pezzo surface $X$ of degree $1$ such that all $12$ singular elements in $|-K_X|$ are nodal curves. Such a surface is given by the equation
\begin{equation}
X = \{ w^2 + z^3 + zf_4(x, y) + f_6(x, y) = 0\} \subset \mathbb{P}(1,1,2,3)
\end{equation}
where $(x, y, z, w)$ are the coordinates with weights $(1,1,2,3)$, the polynomials $f_4$ and $f_6$ are general and homogeneous of degree $4$ and $6$, respectively. 

On the other hand, there exist del Pezzo surfaces of degree $1$ such that $|-K_X|$ contains either smooth or cuspidal curves. Indeed, consider a family of surfaces
\[
X = \{ w^2 + z^3 + f_6(x, y) = 0\} \subset \mathbb{P}(1,1,2,3)
\]
where $f_6(x, y)$ is a general homogeneous polynomial of degree $6$. Then $|-K_X|$ has exactly $6$ singular elements, all of which are cuspidal curves. 
\end{remark}

\begin{proposition}
\label{prop:delPezzo1}
A general del Pezzo surface of degree $1$ has coregularity $0$. For a
special del Pezzo surface $X$ of degree $1$, one has $\mathrm{reg}_1(X)=0$ and $\mathrm{reg}_2(X)=0$. As a consequence, $\mathrm{coreg}(X)=1$.
\end{proposition}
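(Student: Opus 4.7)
The proof splits according to the two statements of the proposition. For a general $X$, the plan is to mimic the argument for degree $\geq 2$: by Remark~\ref{rem-curve-dP1}, $|-K_X|$ contains a nodal member $C$. Blowing up the node yields a log resolution whose reduced boundary $\widetilde{C}+E$ is snc, with $\widetilde{C}$ meeting the exceptional $\mathbb{P}^1$-curve $E$ at the two preimages of the node. Therefore $\mathcal{D}(X,C)$ is a cycle with two vertices and two edges, homeomorphic to $S^1$, of dimension $1$. Consequently $\mathrm{reg}(X)\geq 1$, and the trivial upper bound $\mathrm{reg}(X)\leq\dim X-1=1$ gives $\mathrm{coreg}(X)=0$.

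For a special $X$ as in Remark~\ref{rem-curve-dP1} (so that every singular member of $|-K_X|$ is cuspidal), first consider $\mathrm{reg}_1(X)$. Every $L\in|-K_X|$ is irreducible: any effective decomposition $L=L_1+L_2$ would force $L_i\cdot(-K_X)=0$ for some $i$, contradicting ampleness of $-K_X$. By adjunction $p_a(L)=1$, so $\delta\leq 1$ and the singularities of $L$ are at worst a node or a cusp. Nodes are excluded by hypothesis, and a cuspidal $L$ satisfies $\mathrm{lct}(X,L)=5/6<1$, so $(X,L)$ is not lc. Thus only smooth members of $|-K_X|$ give lc log Calabi--Yau $1$-complements, each with dual complex a single point, proving $\mathrm{reg}_1(X)=0$.

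The main step is $\mathrm{reg}_2(X)=0$. For $D'\in|-2K_X|$, set $D=D'/2$ and decompose $D'=\sum n_iC_i$. Since $\sum n_i(-K_X\cdot C_i)=2$ with each $-K_X\cdot C_i\geq 1$, only the following cases arise: (i) $D'=2F$ with $F\in|-K_X|$, reducing to the $1$-complement $D=F$; (ii) $D'=L_1+L_2$ with distinct $L_i\in|-K_X|$, meeting transversely at the unique base point of the pencil; (iii) $D'=E_1+E_2$ with distinct $(-1)$-curves satisfying $E_1\cdot E_2=3$; or (iv) $D'$ reduced and irreducible, with $p_a(D')=2$ and hence $\delta\leq 2$. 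In case (i) the dual complex is a single point for smooth $F$ and empty for cuspidal $F$. In cases (ii)--(iv) every component of $D$ carries coefficient $1/2$, and each local singularity of the support of $D$ is of type $A_n$ on a smooth surface: explicitly, $A_1$ in (ii); $A_n$ with odd $n\leq 5$ in (iii); and $A_n$ with $n\leq 4$ in (iv), using $\delta(A_n)=\lfloor (n+1)/2\rfloor$. Since $\mathrm{lct}(A_n)=1$ for $n=1$ and $\mathrm{lct}(A_n)=\tfrac{1}{2}+\tfrac{1}{n+1}$ for $n\geq 2$, we always have $\mathrm{lct}>1/2$, so $(X,D)$ is klt at every such point and no exceptional divisor of a log resolution acquires log discrepancy $0$. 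Therefore $D_Y^{=1}$ is at most a single component, and $\mathrm{reg}_2(X)=0$, with equality realized by any smooth $L\in|-K_X|$. Combining with $\mathrm{reg}_1(X)=0$, we have $\mathrm{coreg}_1(X)=\mathrm{coreg}_2(X)=1$, so the contrapositive of Theorem~\ref{thm-1-2-complements} gives $\mathrm{coreg}(X)\geq 1$, matched by the trivial bound $\dim X-1=1$.

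The main obstacle is the classification of $2$-complements by $(-K_X)$-degree together with the singularity bound from the genus formula; once this classification is in hand, the uniform inequality $\mathrm{lct}(A_n)>1/2$ on a smooth surface makes the klt conclusion immediate, bypassing explicit log-discrepancy bookkeeping for tangent configurations of $(-1)$-curves.
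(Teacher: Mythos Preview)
Your proof is correct and takes a genuinely different route from the paper. The paper pushes every complement forward along a blow-down $\pi\colon X\to\mathbb{P}^2$ and analyzes the image $B_2$ there via incidence constraints with the eight blown-up points: for $1$-complements it rules out $B_2$ being three lines or a line plus a conic by counting how many of the eight points can lie on each piece; for $2$-complements it runs a longer case analysis on half-integral plane boundaries of degree $3$, using auxiliary cubics through the eight points to force contradictions. You instead work intrinsically on $X$: the single constraint $D'\cdot(-K_X)=2$ bounds the number and anticanonical degree of components, the Hodge index inequality pins the degree-$1$ components to $(-1)$-curves or members of $|-K_X|$, and then the genus formula bounds the total $\delta$-invariant, forcing all singularities of the support to be $A_n$ with $n\leq 5$, whence the uniform estimate $\mathrm{lct}(A_n)=\tfrac12+\tfrac1{n+1}>\tfrac12$ finishes. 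This is more elementary and bypasses the $\mathbb{P}^2$ casework entirely; the paper's approach, on the other hand, would generalize more readily to situations where the intrinsic degree bound is less restrictive.

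Two small points worth tightening. In your case (ii), one or both $L_i$ may themselves be cuspidal (this is precisely the special hypothesis), so the support $L_1\cup L_2$ can acquire $A_2$ points away from the base point in addition to the $A_1$ there; your conclusion is unaffected since $\mathrm{lct}(A_2)=5/6>1/2$. In case (iv), the passage from $\delta\leq 2$ to ``singularities are $A_n$'' implicitly uses $\delta_p\geq\binom{m_p}{2}$ to force multiplicity $m_p\leq 2$, together with the fact that reduced planar double points are exactly the $A_n$; this deserves a sentence. Finally, in case (i) with $F$ cuspidal you should say that $(X,F)$ is not lc (so it simply does not contribute to $\mathrm{reg}_2$) rather than that its dual complex is empty, since $\mathcal{D}(X,F)$ is only defined for lc pairs.
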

\begin{proof}
Let $X$ be a general del Pezzo surface of degree $1$. By Remark \ref{rem-curve-dP1}, there exists
a nodal curve $C\in|-K_X|$. Then it is clear that the pair $(X, C)$ is
lc. Blow up the node to obtain a pair $(Y, C'+E)$ which evaluates
$\coreg (Y)=0$. 

Now let $X$ be any del Pezzo surface of degree $1$. Assume that there is no nodal curve in $|-K_X|$. We prove that
$\coreg(X)=1$ in this case. Note that $X$ is a blow up of a projective plane at
$8$ points in a general position (that means that no $3$ of them lie on
a line, no $6$ of them lie on a conic and there is no cubic curve
passing through all of them and singular at one of them). The proper
transform of a cubic curve passing through all of these points is a
$1$-complement, so the regularity of a del Pezzo surface of degree $1$ is
at least~$0$. 

Due to Theorem \ref{thm-1-2-complements} we need to study only
$1$- and $2$-complements on $X$. Let $B$ be a boundary
such that $(X, B)$ is a log CY pair and $2(K_{X}+B)\sim 0$.
Let $\pi\colon X\to \pp^{2}$ be a blow up of the $8$ points and let $B_{2}$
be a push-forward of $B$. Then $2(K_{\pp^{2}}+B_{2})\sim 0$ and
$K_{X}+B=\pi^{*}(K_{\pp^{2}}+B_{2})$. Thus $B_{2}$ has multiplicity at
least~$1$ at each point blown-up by $\pi$.

First we consider the case when $B$ is a $1$-complement. 
If $B_{2}$ is a sum of three lines, then every line contains at most
two blown-up points, so there are $6$ of them in total. Since $B$ is an effective divisor, this case does not occur. If
$B_{2}$ is a sum of a line and a conic, then the conic contains at
most $5$ blown-up points and there are $7$ of them in total. Thus this case does not occur as well. 
Finally, if $B_{2}$ is a cubic curve, then the multiplicity of $B_{2}$
at the blown-up points is equal to $1$ and $B$ is a proper
transform of $B_{2}$. Since the pair $(X, B)$ is lc, this shows that $B$ is smooth. This shows that $\mathrm{reg}_1(X)=0$. 

Now we consider the case when $B$ is a $2$-complement (which is not a $1$-complement). It means that all
coefficients of $B_{2}$ are positive numbers of the form $\frac{a}{2},
a\in\mathbb{N}$. Assume that one of these coefficients is greater than
$\frac{1}{2}$. Then we have $B_{2}=C+B_{3}$ where $C$ is a curve of
degree $2$ or $1$ and $B_{3}$ is not a $\mathbb{Z}$-divisor. If $\deg
C=2$, then $B_{3}=\frac{1}{2}C_{2}$, where $C_{2}$ is a conic
(possibly reducible). The curve $C$ contains at most $5$ blown-up points and in any case the divisor $B_{3}$ cannot have multiplicity
$1$ at three remaining points. Since $B$ is an effective divisor, this case does not occur.

If $\deg C=1$, then $C$ contains at most two blown-up points and
$B_{3}$ must have multiplicity at least~$1$ at the six remaining points.
Let $C_{2}$ be a conic passing through five of them. If $C_{2}$ is not
a component of $B_{3}$, then $4=B_{3}\cdot C_{2}\geq 5$, a
contradiction. Thus $B_{3}=\frac{1}{2}C_{2}+B_{4}$. But again
$2=B_{4}\cdot C_{2}\geq \frac{5}{2}$, a contradiction.

So we know that $B_{2}=\frac{1}{2}B_{3}$ where $B_{3}$ is a reduced
divisor of degree $6$. We know that $B_{3}$ is singular at every point
of the blow up (and maybe somewhere else). If all singularities of $B_2$ are
double points then $B$ is a proper transform of $B_{2}$, the pair $(X, B)$ is dlt and $\mathcal{D}(X, B)$ is empty. 

Assume
that $B_{3}$ contains two of the blown up points of multiplicity $3$ or a
blown up point of multiplicity $4$. Then we can consider a cubic
curve $C$ passing through all the blown up points and a general point of
$B_{3}$. If $C$ is not a component of $B_{3}$ then we have $18=C\cdot
B_{3}\geq 19$, which is impossible. So in this case $B_{3}$ is a union
of two cubic curves $B'$ and $B''$ passing through the blown-up points. 
The same argument easily excludes the case
of at least $9$ singular points such that one of them is at least
triple.

So we have the last case: $B_{3}$ has one triple point and $7$ double
points. Then $B=\frac{1}{2}(\widetilde{B}+E)$ where $\widetilde{B}$ is
the proper transform of $B_{3}$ and $E$ is an exceptional divisor over
the triple point. There exists a curve $C\in |-K_{X}|$ passing through the
intersection point of $\widetilde{B}$ and $E$. Then one can calculate
that $1=C\cdot \widetilde{B}$, so any such point of $B$ is a double
point. Thus again the pair $(X, B)$ is dlt and $\mathcal{D}(X, B)$ is empty.

Now, Theorem \ref{thm-1-2-complements} implies that $\mathrm{coreg}(X)=1$.
\end{proof}

\begin{remark}
Note that Theorem 3.2 in~\cite{Mo22} claims, in particular, that any
smooth del Pezzo surface of degree $1$ has coregularity $1$.
\end{remark}

Theorem~\ref{theorem:del Pezzo degree 2 and more} and
Proposition~\ref{prop:delPezzo1} imply that a general smooth del Pezzo surface has coregularity $0$.


\begin{corollary}
\label{corollary:P1 x del Pezzo}
Let $X=\mathbb{P}^1\times S$ where $S$ is a smooth del Pezzo surface
of degree at least $2$. Then $\coreg(X)=~0$.
\end{corollary}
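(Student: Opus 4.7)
The plan is to exhibit an explicit anti-canonical boundary on $X = \mathbb{P}^1\times S$ whose dual complex has the maximal possible dimension $\dim X-1=2$, by pulling back a coregularity-zero boundary on $S$ and adding the two fibres $\{0\}\times S$ and $\{\infty\}\times S$ of the first projection. Since Theorem~\ref{theorem:del Pezzo degree 2 and more} produces such a boundary on $S$ in the form of a nodal curve $C\in|-K_S|$, this will supply one extra dimension in the dual complex, raising the regularity from~$1$ to~$2$, exactly as required.

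Explicitly, let $p_1\colon X\to\mathbb{P}^1$ and $p_2\colon X\to S$ denote the two projections, and let $p\in S$ be the node of $C$. Using $K_X=p_1^*K_{\mathbb{P}^1}+p_2^*K_S$ together with $-K_{\mathbb{P}^1}\sim\{0\}+\{\infty\}$, one checks that
\[
B \;=\; \{0\}\times S \;+\; \{\infty\}\times S \;+\; p_2^{-1}(C)
\]
is an anti-canonical complement, so $(X,B)$ is a log Calabi--Yau pair. It then remains to verify that $(X,B)$ is log canonical and to compute $\mathcal{D}(X,B)$.

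For this I would pass to the log resolution $g=\mathrm{id}_{\mathbb{P}^1}\times f\colon\tilde X=\mathbb{P}^1\times\tilde S\to X$, where $f\colon\tilde S\to S$ is the blow-up of $p$ with exceptional divisor $E$. Writing $\tilde C$ for the strict transform of $C$ and using $\mathrm{mult}_pC=2$, a direct multiplicity computation yields
\[
g^*(K_X+B) \;=\; K_{\tilde X} + \{0\}\times\tilde S + \{\infty\}\times\tilde S + \mathbb{P}^1\times\tilde C + \mathbb{P}^1\times E,
\]
so the coefficient of the exceptional $\mathbb{P}^1\times E$ equals $\mathrm{mult}_pC-1=1$ and the four-component boundary on the right is snc thanks to the product structure; hence $(X,B)$ is lc. The triple intersection
\[
(\{0\}\times\tilde S)\cap(\mathbb{P}^1\times\tilde C)\cap(\mathbb{P}^1\times E) \;=\; \{0\}\times(\tilde C\cap E)
\]
is nonempty (two reduced points), producing a $2$-simplex in $\mathcal{D}(X,B)$. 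Thus $\mathrm{reg}(X)=2$ and $\coreg(X)=0$. The only step requiring genuine care is this multiplicity bookkeeping along the blown-up curve $\mathbb{P}^1\times\{p\}$; it is routine because it reduces to the observation that $\mathrm{mult}_pC=2$ matches the discrepancy of the blow-up of a smooth curve in a smooth threefold.
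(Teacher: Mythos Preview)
Your proof is correct and follows essentially the same approach as the paper: both take the boundary $\{0\}\times S+\{\infty\}\times S+\mathbb{P}^1\times C$ for a nodal $C\in|-K_S|$, blow up the curve $\mathbb{P}^1\times\{p\}$ over the node, and read off a $2$-simplex in the dual complex from the resulting snc log pullback. Your write-up is more explicit about the multiplicity bookkeeping, but the construction and the key idea are identical.
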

\begin{proof}
Let $p_1,p_2$ be two distinct points on
$\mathbb{P}^1$ and let $C$ be a nodal anti-canonical curve on $S$ which exists by the proof of Theorem \ref{theorem:del Pezzo degree 2 and more}. Let $p\in C$ be a node. Put $D=D_1+D_2+D_3$ where 
$D_1=p_1\times S$, $D_2=p_2\times S$, $D_3=\mathbb{P}^1\times C$. Then $K_X+D\sim 0$, the pair $(X, D)$ is lc and blowing up $\mathbb{P}^1\times p$ on $X$ we see that $\dim \mathcal{D}(X, D)=2$, and so $\mathrm{coreg}(X)=0$. 
\end{proof}

Analogously, we obtain

\begin{corollary}
\label{lemma:coreg P1xS1}
Let $X=\mathbb{P}^1\times S$ where $S$ is a smooth del Pezzo surface
of degree $1$. Then for general $X$, we have $\coreg(X)=0$.
\end{corollary}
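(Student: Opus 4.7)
The plan is to mirror the construction in the proof of Corollary \ref{corollary:P1 x del Pezzo}; the only ingredient that differs in the degree $1$ case is the existence of a nodal member of $|-K_S|$. Remark \ref{rem-curve-dP1} guarantees that for a general smooth del Pezzo surface $S$ of degree $1$, the linear system $|-K_S|$ contains a nodal curve $C$ with node at some point $p \in S$; the phrase ``general $X$'' in the statement should be interpreted precisely as this genericity condition on the second factor.

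With $C$ in hand, I would form the boundary $D = D_1 + D_2 + D_3$ on $X = \mathbb{P}^1 \times S$ exactly as before: take $D_1 = \{p_1\} \times S$ and $D_2 = \{p_2\} \times S$ for two distinct points $p_1, p_2 \in \mathbb{P}^1$, and $D_3 = \mathbb{P}^1 \times C$. The relation $K_X + D \sim 0$ is immediate from the Künneth-type decomposition of the canonical divisor on a product. The pair $(X, D)$ is lc because away from $\mathbb{P}^1 \times \{p\}$ the divisor $D$ is snc, while near any point of $\mathbb{P}^1 \times \{p\}$ the pair is analytically a product of a smooth curve with $(S, C)$ at a node, giving a local model of the form $\{xy = 0\} \cup \{z = c\} \subset \mathbb{A}^3$, which is evidently lc.

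To evaluate the dual complex, I would blow up the smooth curve $\mathbb{P}^1 \times \{p\}$ on $X$; this produces a log resolution on which the exceptional divisor $E$ meets the proper transform $\widetilde{D}_3$ of $D_3$ along two sections, one for each branch of $C$ at $p$. The triple intersection $D_1 \cap E \cap \widetilde{D}_3$ is a single reduced point, which exhibits a $2$-simplex in $\mathcal{D}(X, D)$. This forces $\dim \mathcal{D}(X, D) = 2 = \dim X - 1$, and hence $\coreg(X) = 0$. I do not expect any serious obstacle: the argument is essentially a direct transcription of the degree $\geq 2$ case in Corollary \ref{corollary:P1 x del Pezzo}, with the single genericity assumption on $S$ being exactly what is needed to supply the required nodal anti-canonical curve.
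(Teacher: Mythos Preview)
Your proposal is correct and is precisely the argument the paper intends: the corollary is stated immediately after Corollary~\ref{corollary:P1 x del Pezzo} with the remark ``Analogously, we obtain,'' and the only change is that the existence of a nodal anti-canonical curve on $S$ now comes from Remark~\ref{rem-curve-dP1} under the genericity hypothesis rather than from the proof of Theorem~\ref{theorem:del Pezzo degree 2 and more}. Your description of the boundary, the lc verification, and the blow-up producing a $2$-simplex matches the paper's construction verbatim.
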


Let $X=\mathbb{P}^1\times S$ where $S$ is a smooth del Pezzo surface of degree $1$. Then Proposition~\ref{proposition:coreg of fibration} applied to the fibration $X=\mathbb{P}^1\times S\to S$ and Proposition \ref{prop:delPezzo1} imply that for a special $X$ one has $\coreg(X)=1$.


\section{Singularity theory}
\label{sec-singularity-theory}
Recall some special types of surface singularities.

\subsection{Simple elliptic and cusp singularities}
\label{subsec-simple-ell-cusp}
By the classification (see e.g. \cite{KM98}), striclty lc Gorenstein surface singularities are either simple elliptic or cusp singularities. 
\label{subsec-cusp-elliptic}
Recall that a normal surface $D$ has a \emph{simple elliptic singularity} at the point $0\in D$, if $D$ is strictly lc at $0$ and in the minimal resolution $f\colon \widetilde{D}\to D$ the preimage of $0$ is a smooth elliptic curve. The classification implies that for a simple elliptic singularity we have either $\mult_0 D\geq 3$, or after an analytic coordinate change the local equation of $0\in D$ is
\begin{equation}
\label{eq-simple-elliptic-sing-1}
x_1^2 + q(x_2, x_3) = 0\ \quad \quad \text{or}\ \quad \quad x_1^2+x_2^3+ax_2x_3^4+bx_3^6=0\
\end{equation}
where $q$ is homogeneous of degree $4$ and $a,b\in\mathbb{C}$.

\label{hypersurface-cusp-sing}
A normal surface $D$ has a \emph{cusp singularity} $0\in D$, if $D$ is strictly lc at $D$ and in the minimal resolution $f\colon \widetilde{D}\to D$ the preimage of $0$ is either a cycle of smooth rational curves, or an irreducible rational curve with one node. By the classification, any two-dimensional hypersurface cusp singularity up to analytic change of coordinates has the following form:
\begin{equation}
\label{eq-hypersurface-cusp-sing}
T_{p,q,r}:\quad x_1^p + x_2^q + x_3^r + x_1 x_2 x_3 = 0
\end{equation}
where $1/p+1/q+1/r< 1$ and $p\leq q\leq r$.

The following lemma is well-known.

\begin{lemma}[cf. {\cite[11.1]{AGV85}}]
\label{lem-corank-second-differenetial}
Let $f(x_1, \ldots, x_n)$ be a germ of a holomorphic function such that $0\in \mathbb{C}^n$ is its critical point. Assume that the quadratic term in the Taylor expansion near $0$ has rank $k$. Then up to a holomorphic change of coordinates we have
\[
f(x_{1},\ldots, x_n) = x_1^2 + \ldots + x_k^2 + g(x_{k+1},\ldots, x_n)
\]
where $g(x_{k+1},\ldots, x_n)$ is a germ of a holomorphic function whose quadratic term is equal to zero.
\end{lemma}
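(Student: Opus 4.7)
The plan is to prove this as a version of the classical Morse (or ``splitting'') lemma in the holomorphic setting, by induction on $k$. As a preliminary step, a linear change of coordinates over $\mathbb{C}$ diagonalizes the quadratic term of $f$: since the Hessian at $0$ is a symmetric bilinear form of rank $k$, one can choose coordinates in which the quadratic part becomes $x_1^2+\ldots+x_k^2$. The case $k=0$ is then trivial, and it suffices to explain the inductive step in which one variable (say $x_1$) is split off.

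Since $\partial^2 f/\partial x_1^2(0)=2\neq 0$, the holomorphic implicit function theorem applied to the equation $\partial f/\partial x_1=0$ produces a holomorphic function $\varphi(x_2,\ldots,x_n)$ with $\varphi(0)=0$ such that $\partial f/\partial x_1(\varphi(x'),x')\equiv 0$, where $x'=(x_2,\ldots,x_n)$. Taylor-expanding $f$ in the variable $x_1$ around the point $x_1=\varphi(x')$ and using that the first-order term vanishes by construction of $\varphi$, one obtains a Hadamard-type factorization
\[
f(x_1,x') = f(\varphi(x'),x') + (x_1-\varphi(x'))^2\, A(x_1,x'),
\]
with $A$ holomorphic near the origin and $A(0,0)=\tfrac{1}{2}\,\partial^2 f/\partial x_1^2(0)=1$.

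Because $A(0,0)=1$, there is a holomorphic square root $B=\sqrt{A}$ in a neighborhood of $0$. I then introduce the new coordinate $y_1 := B(x_1,x')\,(x_1-\varphi(x'))$, keeping $x_2,\ldots,x_n$ unchanged; since $\partial y_1/\partial x_1(0)=1$, this is a genuine local holomorphic change of variables, in which $f=y_1^2+g_1(x_2,\ldots,x_n)$ with $g_1(x'):=f(\varphi(x'),x')$. The function $g_1$ is the restriction of $f$ to the smooth hypersurface $\{x_1=\varphi(x')\}$, which is tangent to $\{x_1=0\}$ at the origin; therefore the Hessian of $g_1$ at $0$ equals the restriction of the Hessian of $f$ at $0$ to the hyperplane $\{x_1=0\}$, so its quadratic part is exactly $x_2^2+\ldots+x_k^2$, of rank $k-1$. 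Applying the inductive hypothesis to $g_1$ peels off $k-1$ further squares and leaves a germ $g(x_{k+1},\ldots,x_n)$ whose Hessian at $0$ vanishes identically. Composing all the changes of coordinates gives the asserted normal form.

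The argument is essentially forced; the main technical point to verify is the Hadamard factorization in the display above (which one checks by writing $f(x_1,x')-f(\varphi,x')=\int_0^1\partial_t f(\varphi+t(x_1-\varphi),x')\,dt$ and iterating the trick, using $\partial f/\partial x_1(\varphi,x')\equiv 0$), together with the rank-drop computation for the Hessian of $g_1$. No other serious obstacle is expected, since the holomorphic implicit function theorem and the holomorphic square root are available as soon as the relevant leading term is non-zero at the origin.
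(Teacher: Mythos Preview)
Your proof is correct and follows the standard argument for the holomorphic Morse (splitting) lemma. The paper, however, does not supply its own proof of this lemma: it is stated as ``well-known'' with a reference to \cite[11.1]{AGV85}, and no argument is given. Your inductive proof via the implicit function theorem and Hadamard factorization is precisely the classical approach found in that reference, so there is nothing to compare.
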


Our main goal in this section is to obtain the characterization of strictly lc $1$- and $2$-complements in terms of their local equations. In what follows, if $f=0$ is an analytic equation near a point $0\in \mathbb{C}^n$, we denote by $f_k$ its $k$-th term in the Taylor expansion $f=\sum_{i\geq0} f_i$ near the point $0$. 
The next lemma is inspired by \cite[Proposition 2.1.1]{CPW14}.
\begin{lemma}
\label{lem-striclty-lc-1-complement}
Let $0\in D=\{f=0\}$ be a germ of a reduced irreducible normal surface in $\mathbb{C}^3$. 
Assume that the pair $(\mathbb{C}^3, D)$ is strictly lc at $0\in\mathbb{C}^3$. Then  
\begin{itemize}
\item
either $f_2=0$, 
\item
or up to a change of coordinates $f_2=x_1^2$ and $(f|_{x_1=0})_3=0$,
\item
or up to a change of coordinates $f_2=x_1^2$, $(f|_{x_1=0})_3=x_2^3$ and $(f|_{x_1=x_2=0})_4=0$. 
\end{itemize}
\end{lemma}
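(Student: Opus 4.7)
My plan is to combine the classification recalled in Subsection~\ref{subsec-simple-ell-cusp} with Lemma~\ref{lem-corank-second-differenetial}. Since $D$ is a reduced, normal Cartier divisor on the smooth ambient $\mathbb{C}^3$, inversion of adjunction identifies strict log canonicity of $(\mathbb{C}^3, D)$ at $0$ with $D$ itself having a strictly log canonical Gorenstein surface singularity at $0$; hence by Subsection~\ref{subsec-simple-ell-cusp}, the germ $0\in D$ is analytically either a simple elliptic or a cusp singularity, and $f$ is analytically equivalent to one of the normal forms~\eqref{eq-simple-elliptic-sing-1} or~\eqref{eq-hypersurface-cusp-sing}.

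I proceed by case split on $k=\rank f_2 \in \{0,1,2,3\}$. If $k=3$, the holomorphic Morse lemma gives $f\sim x_1^2+x_2^2+x_3^2$, an ordinary double point, which is $A_1$ Du Val and hence klt, contradicting strict log canonicity. If $k=2$, Lemma~\ref{lem-corank-second-differenetial} puts $f=x_1^2+x_2^2+h(x_3)$ with $\ord_0 h\geq 3$; irreducibility of $D$ forces $h\not\equiv 0$, and extracting a holomorphic root of the unit factor of $h$ yields $f=x_1^2+x_2^2+x_3^m$ for some $m\geq 3$, a Du Val $A_{m-1}$ singularity, again klt and contradicting the hypothesis. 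Thus $k\in\{0,1\}$: if $k=0$ the first conclusion holds, and if $k=1$ Lemma~\ref{lem-corank-second-differenetial} gives $f=x_1^2+g(x_2,x_3)$ with $\ord_0 g\geq 3$, so $f_2=x_1^2$ and $g=f|_{x_1=0}$; in this situation $g_3=0$ produces the second conclusion.

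In the remaining case $k=1$ with $g_3\neq 0$, I consult the multiplicity-two normal forms of Subsection~\ref{subsec-simple-ell-cusp}. The simple elliptic form $x_1^2+q(x_2,x_3)$ with $q$ homogeneous of degree $4$ has $g_3=0$ and is excluded. The simple elliptic form $x_1^2+x_2^3+ax_2x_3^4+bx_3^6$ has $g_3=x_2^3$ and $g|_{x_2=0}=bx_3^6$, whose degree-$4$ part vanishes. The cusp $T_{p,q,r}$ with $\rank f_2=1$ must have $p=2$, so $g=x_2^q+x_3^r$ with $q\leq r$ and $1/2+1/q+1/r<1$; demanding $g_3\neq 0$ forces $q=3$ and then $r\geq 7$, whence $g_3=x_2^3$ and $g|_{x_2=0}=x_3^r$ again has zero degree-$4$ part. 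In either subcase the third conclusion follows.

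The step I expect to be most delicate is the compatibility of the coordinate changes coming from Lemma~\ref{lem-corank-second-differenetial} and from the classification: the analytic change bringing $f$ into the quoted normal form must be chosen to preserve $f_2=x_1^2$ up to a rescaling of $x_1$, so that the identities $g_3=x_2^3$ and $(g|_{x_2=0})_4=0$ hold in a single coordinate system. Tracking linear parts, this holds because such a change acts on the rank-one quadratic $f_2$ as a change of basis, which must factor as a rescaling of $x_1$ together with a linear transformation of $(x_2,x_3)$.
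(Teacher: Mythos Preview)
Your proof is correct and follows essentially the same approach as the paper: reduce to simple elliptic or cusp singularities via the classification in Subsection~\ref{subsec-simple-ell-cusp}, rule out $\rank f_2\ge 2$ using Lemma~\ref{lem-corank-second-differenetial} (Du Val, hence klt), and then read off the conditions from the normal forms~\eqref{eq-simple-elliptic-sing-1} and~\eqref{eq-hypersurface-cusp-sing}. Your derivation of $r\ge 7$ in the $T_{2,3,r}$ case is a bit more explicit than the paper's, but the logic is identical.

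The concern in your last paragraph is unnecessary. You do not need to reconcile the Morse-split coordinates with the normal-form coordinates: once $k\in\{0,1\}$ is established, simply \emph{work in the normal-form coordinates}, which already satisfy $f_2=x_1^2$ when $\mult_0 D=2$, and compute $(f|_{x_1=0})_3$ and $(f|_{x_1=x_2=0})_4$ directly there. The statement only asks for the conclusions to hold after \emph{some} coordinate change, so no compatibility argument is required.
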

\begin{proof}
Assume that the pair $(\mathbb{C}^3, D)$ is strictly lc at $0\in\mathbb{C}^3$. Of course, this implies that $f_0=f_1=0$. Also, if the rank of $f_2$ is at least two, then by Lemma \ref{lem-corank-second-differenetial} we see that $D$ has du Val singularity at $0$, so $(\mathbb{C}^3, D)$ cannot be strictly lc. Hence either $f_2$ vanishes, and we are done, or $f_2$ has rank $1$, so up to change of coordinates we have $f_2=x_1^2$. 
By the classification of strictly lc singularities \ref{subsec-simple-ell-cusp}, we see that $D$ has either simple elliptic or cusp singularities. In the case of cusp singularities, looking at the equation \eqref{eq-hypersurface-cusp-sing}, we observe that either in the triple $(p,q,r)$ we have $p, q, r\geq 3$, so $\mult_0 D\geq 3$, and hence $f_2=0$, or $\mult_0 D=2$, so we may assume that $p=2$, $q\geq 3$ and $r\geq 5$. Thus up to a change of coordinates we have two cases: either $f_2=x_1^2$, $(f|_{x_1=0})_3=x_2^3$, or $f_2=x_1^2$, $(f|_{x_1=0})_3=0$, 
and $(f|_{x_1=x_2=0})_4=0$.

Now consider the case of simple elliptic singularities. Again, $\mult_0 D\geq 3$ implies $f_2=0$. If $\mult_0 D=2$, in the first case of \eqref{eq-simple-elliptic-sing-1} we have $f_2=x_1^2$ and $(f|_{x_1=0})_3=0$, while in the second case $f_2=x_1^2$, $(f|_{x_1=0})_3=x_2^3$ and $(f|_{x_1=x_2=0})_4=0$. The claim follows.
\end{proof}

We will prove an analogous statement for boundaries with half-integer coefficients.

\begin{proposition}
\label{lem-striclty-lc-2-complement}
Let $0\in D=\{f=0\}$ be a germ of a reduced surface in $\mathbb{C}^3$.
Assume that the pair $(\mathbb{C}^3, 1/2D)$ is strictly lc at $0\in\mathbb{C}^3$. Then $f_2=0$ and
\begin{itemize}
\item
either $f_3=0$, 
\item
or up to a change of coordinates $f_3=x_1^2x_2$ and $f_4$ is not divisible by~$x_1$,
\item
or up to a change of coordinates $f_3=x_1^3$ and $f_4$ is not divisible by~$x_1^2$.
\end{itemize}
\end{proposition}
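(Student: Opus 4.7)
The plan is to mimic the proof of Lemma~\ref{lem-striclty-lc-1-complement}, analyzing the Taylor expansion of $f$ at $0$ in stages. Throughout, strict log canonicity of $(\mathbb{C}^3,\tfrac{1}{2}D)$ is equivalent to $\lct_0(f)=\tfrac{1}{2}$.

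First I show $f_2=0$. Suppose $\rank(f_2)=k\geq 1$. By the splitting lemma (Lemma~\ref{lem-corank-second-differenetial}), after an analytic change of coordinates one may write $f=x_1^2+\cdots+x_k^2+g(x_{k+1},\ldots,x_3)$ for some germ $g$ of multiplicity at least $3$ in the remaining variables. Reducedness of $D$ forces $g$ to be either zero (only possible in the snc case $k=2$, where $f=(x_1+ix_2)(x_1-ix_2)$ has $\lct_0=1$) or a non-zero reduced germ with $\lct_0(g)>0$. The Thom--Sebastiani formula then yields
\[
\lct_0(f)\geq\min\Bigl(1,\ \tfrac{k}{2}+\lct_0(g)\Bigr)>\tfrac{1}{2},
\]
contradicting the hypothesis. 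Hence $f_2=0$.

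Next, assume $f_3\neq 0$; I claim $f_3$ is, up to a linear change of coordinates, either $x_1^2 x_2$ or $x_1^3$. The rescaling $f_\varepsilon(x):=\varepsilon^{-3}f(\varepsilon x)$ satisfies $\lct_0(f_\varepsilon)=\lct_0(f)=\tfrac{1}{2}$ for $\varepsilon\neq 0$ and tends to $f_3$ as $\varepsilon\to 0$, so lower semicontinuity of log canonical thresholds gives $\lct_0(f_3)\leq\tfrac{1}{2}$. Since $f_3$ is homogeneous of degree $3$ in three variables, one has $\lct_0(f_3)=\min(1,\min_{p}\lct_p(V(f_3)\subset\mathbb{P}^2))$. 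If $f_3$ were square-free, the reduced plane cubic $V(f_3)$ would have singularities at most an ordinary triple point (three concurrent lines, $\lct_p=\tfrac{2}{3}$), a tacnode $A_3$ ($\lct_p=\tfrac{3}{4}$), a cusp $A_2$ ($\lct_p=\tfrac{5}{6}$), or a node ($\lct_p=1$), giving $\lct_0(f_3)\geq\tfrac{2}{3}>\tfrac{1}{2}$. So $f_3$ is not square-free, i.e.\ $f_3=\ell^2 m$ or $\ell^3$, and a linear change of coordinates brings $f_3$ to the form $x_1^2 x_2$ or $x_1^3$.

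It remains to derive the condition on $f_4$. For $f_3=x_1^2 x_2$, I decompose $f=x_1\,A(x_1,x_2,x_3)+B(x_2,x_3)$, which forces $A_2=x_1 x_2$ and $B_3=0$, and observe that $x_1\mid f_4$ is equivalent to $B_4=0$. I plan to prove that $B_4=0$ forces $(\mathbb{C}^3,\tfrac{1}{2}D)$ to be klt (contradicting strict log canonicity) by passing to the double cover $\pi\colon Y=\{t^2=f\}\to\mathbb{C}^3$ branched along $D$: by Riemann--Hurwitz, $(\mathbb{C}^3,\tfrac{1}{2}D)$ is strictly lc at $0$ if and only if $Y$ is strictly log canonical at the origin, and the vanishing $B_4=0$ (together with $f_3=x_1^2 x_2$) forces the 3-fold hypersurface $Y$ to have compound Du Val singularities, hence canonical and so klt. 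The case $f_3=x_1^3$ with $x_1^2\mid f_4$ is handled analogously: splitting $f$ by powers of $x_1$ and analyzing $Y$ via the splitting lemma applied to $t^2-f=0$, the double cover turns out to be cDV. The main technical obstacle is this last step, namely verifying the cDV structure in each sub-case by working out the Newton polytope of $t^2-f$ and invoking the classification of canonical three-fold hypersurface singularities.
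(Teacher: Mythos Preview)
Your first two steps are correct and essentially match the paper's approach: the vanishing $f_2=0$ is exactly Lemma~\ref{lem-sum-lct}, and constraining $f_3$ to $\ell^2 m$ or $\ell^3$ via semicontinuity of the log canonical threshold together with the computation of $\lct$ for plane cubics is the content of Lemma~\ref{lem-kuwata}.

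The third step has a genuine gap: you are attempting to prove the wrong implication. The stated condition on $f_4$ is a typo; comparing with Lemma~\ref{lem-striclty-lc-1-complement} and with the paper's own derivation (which cites Corollary~\ref{cor-surface-x2y} and Lemma~\ref{lem-kuwata}), the intended conclusion is that $f_4$ \emph{is} divisible by $x_1$ (respectively $x_1^2$). Those corollaries show that if $f_3=x_1^2x_2$ and $x_1\nmid f_4$ (respectively $f_3=x_1^3$ and $x_1^2\nmid f_4$), then $(\mathbb{C}^3,\tfrac{1}{2}D)$ is klt; the contrapositive is the corrected proposition. Your plan instead tries to show that $x_1\mid f_4$ forces klt via a cDV argument on the double cover, and this is false. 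Take $f=x_1^3+x_2^6$: then $D$ is reduced, $f_3=x_1^3$, $f_4=0$ is divisible by $x_1^2$, and $\lct_0(f)=\tfrac{1}{3}+\tfrac{1}{6}=\tfrac{1}{2}$, so $(\mathbb{C}^3,\tfrac{1}{2}D)$ is strictly lc. The double cover $Y=\{t^2=x_1^3+x_2^6\}\subset\mathbb{C}^4$ is the product of the simple elliptic surface singularity $\widetilde{E}_8$ with a line, hence lc but not canonical and certainly not cDV. This simultaneously refutes the proposition as literally stated and your proposed cDV verification.

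The paper's actual argument for the (corrected) $f_4$ condition is direct: one restricts to a generic $2$-plane through the origin and applies a weighted estimate (Lemma~\ref{lem-weights} with weights such as $(3,2)$ or $(4,3)$) to obtain $\lct_0(f)\geq\tfrac{5}{8}$, $\tfrac{7}{12}$, or $\tfrac{5}{9}$, all strictly greater than $\tfrac{1}{2}$; see Corollaries~\ref{cor-curve-x2y} and~\ref{cor-surface-x2y}.
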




Recall that the largest real number $\lambda$ such that the pair $(X, \lambda D)$ is log canonical is called a \emph{log canonical threshold} of $(X, D)$. We denote it by $\mathrm{lct}(X, D)$. We will use the following estimate.

\begin{lemma}[{\cite[Proposition 8.13]{Ko97}}]
\label{lem-weights}
Let $f$ be a holomorphic function near $0\in \mathbb{C}^{n}$. Assign rational weights $w(x_{i})$ to the coordinates $x_1,\ldots, x_n$ in $\mathbb{C}^{n}$ and let $w(f)$ be the weighted multiplicity of $f$. Let $f_{w}$ denote the weighted homogeneous leading term of $f$. Then
\[
\mathrm{lct}(\mathbb{C}^n, \{f=0\})\leq\frac{\sum w(x_{i})}{w(f)}
\]
where $w(f)$ is the weight of $f_w$ with respect to $w(x_{i})$. 
If $\{f_{w}=0\}\subset \mathbb{C}^{n}$ is lc outside $0$, then the equality holds.
\end{lemma}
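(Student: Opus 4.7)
Since the coefficient is $\tfrac12$, the pair $(\mathbb{C}^3,\tfrac12 D)$ is strictly lc at $0$ exactly when $\lct_0(\mathbb{C}^3,D)=\tfrac12$. The plan is to rule out every germ $f$ outside the listed normal forms by showing that either $\lct_0(\mathbb{C}^3,D)>\tfrac12$ (giving klt, not strict lc) or $\lct_0(\mathbb{C}^3,D)<\tfrac12$ (failing lc). Throughout I would rely on Lemma~\ref{lem-weights}, Lemma~\ref{lem-corank-second-differenetial}, and the lower semicontinuity of lct along the $\mathbb{G}_m$-equivariant Taylor deformation $f_t=f_3+tf_{\ge 4}$, which yields $\lct_0(\mathbb{C}^3,\{f=0\})\ge\lct_0(\mathbb{C}^3,\{f_3=0\})$.

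First I would show $f_2=0$. If $f_2\ne 0$, then by Lemma~\ref{lem-corank-second-differenetial} either $\rank(f_2)\ge 2$, in which case $D$ is du Val and $\lct_0=1$, or $\rank(f_2)=1$ and after a linear change $f=x_1^2+g(x_2,x_3)$ with $\mult_0 g\ge 3$. In the second subcase a Thom--Sebastiani-type estimate (or the cyclic-cover interpretation of $D$ as a double cover of $\mathbb{A}^2_{x_2,x_3}$ branched along $\{g=0\}$) gives $\lct_0(\mathbb{C}^3,D)\ge\min\{1,\tfrac12+\lct_0(\mathbb{A}^2,g)\}>\tfrac12$. Either way $(\mathbb{C}^3,\tfrac12 D)$ is klt, contradicting strict lc.

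Assuming $f_2=0$ and $f_3\ne 0$, I would next analyse the projectivized tangent cone $C_3=V(f_3)\subset\mathbb{P}^2$ and prove it is non-reduced. If $C_3$ is reduced, its $\mathrm{PGL}_3$-class is one of: smooth, nodal, cuspidal, conic plus secant line, conic plus tangent line, triangle of three lines, three concurrent lines. In the first four cases and the triangle case every singularity of $C_3$ is $A_1$ or $A_2$, so the affine cone $\{f_3=0\}\subset\mathbb{C}^3$ is lc outside $0$ and the equality case of Lemma~\ref{lem-weights} with weights $(1,1,1)$ gives $\lct_0(\mathbb{C}^3,\{f_3=0\})=1$. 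For the tacnode (conic plus tangent) and the ordinary triple point (three concurrent lines) one checks by an explicit weighted blow-up---or, in the triple-point case, by the cylindrical structure of $f_3$---that $\lct_0(\mathbb{C}^3,\{f_3=0\})=\tfrac34$ and $\tfrac23$ respectively. The deformation semicontinuity then forces $\lct_0(\mathbb{C}^3,\{f=0\})>\tfrac12$ in every reduced case, contradicting strict lc. Hence $C_3$ is non-reduced, i.e.\ $f_3=L^2 M$ for linear forms $L,M$, and up to a linear change either $f_3=x_1^3$ (when $M\in\langle L\rangle$) or $f_3=x_1^2 x_2$.

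Finally I would verify the auxiliary conditions on $f_4$. Suppose $f_3=x_1^2 x_2$ but $x_1\mid f_4$. If $x_1$ divides every $f_k$, then $f=x_1\widetilde f$ with $\widetilde f_2=x_1 x_2$ of rank $2$; so $V(\widetilde f)$ is du Val, and inspecting the pair $(\mathbb{C}^3,\tfrac12(V(x_1)+V(\widetilde f)))$ directly by adjunction along $V(x_1)\cap V(\widetilde f)$ shows it is klt. Otherwise some $f_k$ with $k\ge 5$ is not divisible by $x_1$; a weighted blow-up with $w(x_1)>w(x_2)=w(x_3)$ (say $(2,1,1)$) in Lemma~\ref{lem-weights}, together with an analysis of the leading weighted form on the exceptional divisor, bounds $\lct_0$ strictly away from $\tfrac12$. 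The case $f_3=x_1^3$ with $x_1^2\mid f_4$ is handled by parallel bookkeeping. The main obstacle is the tacnodal-cubic subcase above: since the cone $\{f_3=0\}$ fails to be lc outside $0$, the easy equality in Lemma~\ref{lem-weights} is unavailable and one must carry out an embedded log resolution---blowing up the singular ruling of the cone and then separating the two tangent branches---to pin down the sharp $\lct_0=\tfrac34$.
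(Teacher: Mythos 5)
Your proposal does not prove the statement at hand. The target is Koll\'ar's weighted log canonical threshold bound (\cite[Proposition 8.13]{Ko97}): the inequality $\lct(\mathbb{C}^n,\{f=0\})\leq \sum w(x_i)/w(f)$ for arbitrary rational weights, together with the equality criterion when $\{f_w=0\}$ is lc outside the origin. What you have written is instead a proof sketch of Proposition \ref{lem-striclty-lc-2-complement} --- the classification of surface germs $0\in D\subset\mathbb{C}^3$ with $(\mathbb{C}^3,\tfrac12 D)$ strictly lc --- and you repeatedly \emph{invoke} Lemma \ref{lem-weights} as a standing tool, which would be circular if this text were offered as its proof. (The paper itself does not reprove the lemma; it is quoted from \cite{Ko97}.) Nothing in your argument establishes either the upper bound or the equality assertion for a general weight vector $w$.

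For the record, the actual proof is short and entirely different in character from your classification scheme. For the inequality, let $\pi$ be the weighted blow-up of $0\in\mathbb{C}^n$ with weights $(w(x_1),\ldots,w(x_n))$ and $E$ its exceptional divisor; then the log discrepancy of $E$ with respect to $(\mathbb{C}^n,0)$ is $\sum w(x_i)$, while $\ord_E(f)=w(f)$, so log canonicity of $(\mathbb{C}^n,c\{f=0\})$ forces $c\,w(f)\leq \sum w(x_i)$, i.e.\ $\lct\leq \sum w(x_i)/w(f)$. For the equality, the $\mathbb{G}_m$-action $x_i\mapsto t^{w(x_i)}x_i$ degenerates $f$ to its weighted leading term $f_w$, and lower semicontinuity of the log canonical threshold gives $\lct(f)\geq\lct(f_w)$; since $\{f_w=0\}$ is lc outside $0$ and $f_w$ is weighted homogeneous, the threshold of $f_w$ can only be computed by a divisor over the origin, and adjunction along $E$ on the weighted blow-up shows $\lct(f_w)=\sum w(x_i)/w(f)$. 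None of these steps --- the discrepancy computation for the weighted blow-up, the equivariant degeneration to $f_w$, or the use of the lc-outside-the-origin hypothesis --- appears in your proposal, so the statement remains unproved as written.
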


\begin{lemma}
\label{lem-sum-lct}
Let $0\in D$ be a germ of a reduced hypersurface in $\mathbb{C}^n$ such that $\mult_0 D=2$. Then the pair $(\mathbb{C}^n, 1/2 D)$ is klt near $0$.
\end{lemma}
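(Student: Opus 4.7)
First note that the hypotheses force $n \ge 2$, since a reduced germ of a hypersurface in $\mathbb{C}^1$ has multiplicity one. My plan is to compute all relevant discrepancies on an embedded resolution. By Hironaka, I may take $\pi\colon Y \to \mathbb{C}^n$ to be a composition of point blowups at $p_1 = 0, p_2, \ldots, p_N$ with $p_i \in Y_{i-1}$, making the total transform of $D$ an snc divisor. The crucial input is the classical fact that the multiplicity of a strict transform of a hypersurface never increases under a blowup at a smooth point; since $\mult_0 D = 2$, this means that at every blowup center the multiplicity $m_i := \mult_{p_i}\widetilde{D}^{(i-1)}$ of the current strict transform satisfies $m_i \le 2$ throughout.

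The main computation uses the standard formula for the discrepancy of an exceptional divisor under a point blowup,
\[
a\bigl(E_i, \mathbb{C}^n, \tfrac{1}{2} D\bigr) \;=\; (n-1) - \tfrac{1}{2} m_i + \sum_{\substack{j<i \\ p_i \in E_j}} a\bigl(E_j, \mathbb{C}^n, \tfrac{1}{2} D\bigr).
\]
I would prove by induction on $i$ that $a(E_i, \mathbb{C}^n, \tfrac{1}{2} D) \ge n - 2 \ge 0$. The base case $i=1$ is immediate: $a(E_1) = (n-1) - 1 = n-2$. For the inductive step, the bound $m_i \le 2$ and the inductive hypothesis give
\[
a(E_i) \;\ge\; (n-1) - 1 + k_i (n-2) \;=\; (n-2)(1+k_i) \;\ge\; n - 2,
\]
where $k_i$ counts the previous exceptional divisors passing through $p_i$.

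Combined with the observation that the strict transform $\widetilde{D}$ enters the log pullback with coefficient $\tfrac{1}{2} < 1$, this shows that all log discrepancies on $Y$ are strictly positive, and hence $(\mathbb{C}^n, \tfrac{1}{2} D)$ is klt at $0$. I do not anticipate a substantial obstacle; the only ingredient beyond direct bookkeeping is the multiplicity bound $m_i \le 2$ throughout the resolution, which is essentially automatic from the initial hypothesis together with the classical multiplicity-nonincrease lemma for strict transforms under point blowups.
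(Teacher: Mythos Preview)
Your argument is correct when $n=2$, where embedded resolution of plane curves by successive point blowups is classical. But for $n\ge 3$ the sentence ``by Hironaka, I may take $\pi$ to be a composition of point blowups'' is false, and this is a genuine gap. Hironaka's theorem produces a log resolution by blowing up smooth centers, which are in general positive--dimensional. Here is a concrete counterexample with $\mult_0 D=2$: take $D=\{x_1^2-x_2^2x_3=0\}\subset\mathbb{C}^3$ (the Whitney umbrella). Blowing up the origin, in the chart $x_3=u,\ x_1=uv_1,\ x_2=uv_2$ the strict transform is $v_1^2-v_2^2u=0$, again a Whitney umbrella; no tower of point blowups ever makes the total transform snc. (The same phenomenon occurs for $D=\{x_1^2+x_2^2=0\}\subset\mathbb{C}^3$, two planes meeting along a line.) Since your sequence does not terminate in a log resolution, the discrepancy bounds you obtain for the $E_i$ do not suffice to conclude that \emph{every} divisorial valuation has positive log discrepancy.

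The idea can be salvaged. In a careful resolution algorithm (Bierstone--Milman, Villamayor) one blows up smooth centers $Z_i$ of codimension $c_i\ge 2$ contained in the maximal--multiplicity locus, the centers are transverse to the previously created exceptional divisors, and the multiplicity of the strict transform along each center never exceeds the initial multiplicity $2$. The recursion then reads
\[
a(E_i)\;=\;(c_i-1)-\tfrac{1}{2}m_i+\sum_{Z_i\subset E_j}a(E_j),
\]
and your induction gives $a(E_i)\ge (c_i-1)-1\ge 0$, hence klt. But this requires invoking and stating these finer properties of the resolution process, not merely Hironaka's existence theorem. (A minor point: your formula computes ordinary discrepancies, whereas the paper's $a(\,\cdot\,)$ denotes the \emph{log} discrepancy; the conclusion is unaffected.)

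For comparison, the paper's proof avoids resolution entirely. It uses the analytic Morse splitting to write $f\sim x_1^2+\cdots+x_k^2+g(x_{k+1},\ldots,x_n)$ and then applies the Thom--Sebastiani formula for log canonical thresholds, $\mathrm{lct}(f)=\min\bigl(1,\ \mathrm{lct}(x_1^2+\cdots+x_k^2)+\mathrm{lct}(g)\bigr)$, giving $\mathrm{lct}(f)\ge\tfrac12+\mathrm{lct}(g)>\tfrac12$ in one line.
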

\begin{proof}
By Lemma \ref{lem-corank-second-differenetial} we may assume that 
\[
D=\{x_1^2+\ldots+x_k^2+g(x_{k+1},\ldots, x_{n})=0\}\subset \mathbb{C}^{n}
\] 
where $k\geq 1$ and $\mult_0 g\geq 3$. Put
\[
D_1=\{x_1^2+\ldots+x_k^2=0\}\subset\mathbb{C}^k, \quad \quad \quad D_2=\{g(x_{k+1},\ldots, x_{n})=0\}\subset \mathbb{C}^n.
\]
First assume that $g$ is a zero polynomial. Then since $D$ is reduced, we see that $k\geq 2$, so $D_1$ has an ordinary double point at the origin, and $\mathrm{lct}(\mathbb{C}^n, D) = 1$. So we may assume that $g$ is a non-zero polynomial. By \cite[Proposition 8.21]{Ko97} we have 
\[
\mathrm{lct}(\mathbb{C}^{n}, D)) 
= \min \{1,\mathrm{lct}(\mathbb{C}^{k},D_1) + \mathrm{lct}(\mathbb{C}^{n-k},D_2)\}.
\]
We may assume that $\mathrm{lct}(\mathbb{C}^{k},D_1) + \mathrm{lct}(\mathbb{C}^{n-k},D_2)\leq 1$, otherwise $\mathrm{lct}(\mathbb{C}^{n}, D))=1$ and the pair $(X, 1/2)$ is klt. Hence
\[
\mathrm{lct}(\mathbb{C}^{n}, D)) \geq 1/2+ \mathrm{lct}(\mathbb{C}^{n-k},D_2)>1/2
\]
 where we used the fact that $\mathrm{lct}(\mathbb{C}^k, D_1)\geq1/2$ which is straightforward. 
 This shows that the pair $(X, 1/2D)$ is klt as claimed.
\end{proof}

\begin{lemma}
\label{lem-kuwata}
Let $0\in D=\{f=0\}$ be a germ of a reduced surface in $\mathbb{C}^3$ such that $\mult_0 D=3$.  
Let $f_3$ be a degree $3$ term of $f$, and let $C_3$ be a curve in $\mathbb{P}^2$ given by the equation $f_3=0$. Then if $C_3$ does not contain a double line then the pair $(\mathbb{C}^3, 1/2 D)$ is klt near~$0$.
\end{lemma}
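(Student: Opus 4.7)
The plan is to prove the stronger inequality $\lct_0(\mathbb{C}^3, D) > \tfrac{1}{2}$, by comparing $D$ with the affine tangent cone $X_3 := \{f_3 = 0\} \subset \mathbb{C}^3$, which is a cone over $C_3 \subset \mathbb{P}^2$. The argument proceeds in three steps: degeneration to the tangent cone, a single blowup computation, and a local classification of reduced plane cubics.

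First, I would use the $\mathbb{G}_m$-family $f_t(x) := t^{-3} f(tx)$, which equals $f$ at $t=1$ and degenerates to $f_3$ at $t=0$. For $t\ne 0$ the germ of $\{f_t = 0\}$ at the origin is projectively equivalent to that of $D$, so lower semicontinuity of the log canonical threshold in flat families gives $\lct_0(\mathbb{C}^3, D) \geq \lct_0(\mathbb{C}^3, X_3)$. Next, I would blow up the origin $\pi\colon Y\to \mathbb{C}^3$ with exceptional divisor $E\cong \mathbb{P}^2$; the formula
\[
\pi^*(K_{\mathbb{C}^3} + \lambda X_3) \ =\ K_Y + \lambda\widetilde{X}_3 + (3\lambda - 2)E
\]
shows that the coefficient of $E$ is at most $1$ iff $\lambda\leq 1$, while adjunction along $E$ identifies the remaining lc condition with that of $(\mathbb{P}^2, \lambda C_3)$. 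Hence $\lct_0(\mathbb{C}^3, X_3) = \min\bigl(1,\ \lct(\mathbb{P}^2, C_3)\bigr)$.

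It remains to bound $\lct(\mathbb{P}^2, C_3)$ from below. The only non-reduced plane cubics are $2L + L'$ and $3L$, so the hypothesis forces $C_3$ to be reduced. The singularities of a reduced plane cubic are then among: node, cusp, tacnode (from a conic plus tangent line), and ordinary triple point (from three concurrent lines). Applying Lemma~\ref{lem-weights} to each germ with appropriate weighted coordinates yields $\lct_p(\mathbb{P}^2, C_3)$ equal to $1$, $5/6$, $3/4$, and $2/3$ respectively. Therefore $\lct(\mathbb{P}^2, C_3) \geq 2/3$, and combining with the previous two steps gives $\lct_0(\mathbb{C}^3, D) \geq 2/3 > 1/2$, so $(\mathbb{C}^3, \tfrac{1}{2}D)$ is klt at the origin.

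For klt in a punctured neighborhood of $0$, I would apply Lemma~\ref{lem-sum-lct} at points $q$ with $\mult_q D \leq 2$. The remaining locus $\{\mult_q D = 3\}$ is at most one-dimensional through $0$; along it, the condition ``the tangent cone contains no double line'' is open and holds at $0$, so a two-dimensional version of the same tangent-cone argument repeats in a neighborhood. The main technical point is the third step, where one must exhaustively list the singularities of reduced plane cubics and carry out four weighted blowup computations; the rest is a routine application of semicontinuity and adjunction.
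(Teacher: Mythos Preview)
Your proof is correct, and the route differs from the paper's in a way worth noting. The paper splits into two cases. When $C_3$ is lc (smooth, nodal, a conic plus a transversal line, or a triangle of lines), it applies Lemma~\ref{lem-weights} with weights $(1,1,1)$: the cone $\{f_3=0\}\subset\mathbb{C}^3$ is then lc outside the origin, so the equality case of that lemma gives $\lct(\mathbb{C}^3,D)=1$ directly. For the three non-lc reduced cubics (cuspidal, conic plus tangent line, three concurrent lines), the paper simply cites Kuwata~\cite{Ku99} for the bound $\lct(\mathbb{C}^3,D)>1/2$. You instead treat all cases uniformly: degenerate to the tangent cone via semicontinuity, identify $\lct_0(\mathbb{C}^3,X_3)=\min\bigl(1,\lct(\mathbb{P}^2,C_3)\bigr)$ by the single blowup plus the product structure of $(Y,\lambda\widetilde{X}_3+(3\lambda-2)E)$ along $E$, and then read off $\lct(\mathbb{P}^2,C_3)\geq 2/3$ from the short list of singularities of a reduced plane cubic. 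The upside is that your argument is self-contained and does not outsource the hard cases to~\cite{Ku99}; the cost is that you invoke lower semicontinuity of the lct and the cone formula, both standard but not stated in the paper.

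One minor remark: your final paragraph is unnecessary. By definition, $\lct_0(\mathbb{C}^3,D)>1/2$ means that $(\mathbb{C}^3,\tfrac12 D)$ is klt on some open neighborhood of $0$, not merely at the single point, so there is nothing further to check in a punctured neighborhood. (The phrase ``two-dimensional version'' is also off: at any nearby triple point with the same tangent-cone hypothesis, it is the identical three-dimensional argument that would apply---but again, all of this is already subsumed in the local definition of the lct.)
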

\begin{proof}
Assume that the cubic term $f_3$ is not divisible by a square of a linear form. Consider the subset $C_3=\{ f_3=0\}\subset \mathbb{P}^2$. Assume that $C_3$ is lc. This happens if $C_3$ is either a smooth cubic curve, or a nodal curve, or the union of a line and a conic that intersect transversally, or the union of three lines that form a triangle. Then the subset $\{f_3=0\}\subset \mathbb{C}^3$ is lc outside the origin, so assigning the weights $(1,1,1)$ to the coordinates $x_1,x_2,x_3$ by Lemma \ref{lem-weights} we conclude that the pair $(\mathbb{C}^3, D)$ is lc, so the pair $(\mathbb{C}^3, 1/2D)$ is klt as claimed.

Now assume the curve $C_3$ is not lc, then $C_3$ is one of the following: a cuspidal cubic, a line tangent to a conic, or three lines passing through a point. In these cases the log canonical thresholds of the pair $(\mathbb{C}^3, D)$ were estimated in the proof of the main theorem in \cite{Ku99} and in all these cases they are greater than $1/2$, so the pair $(\mathbb{C}^3, 1/2D)$ is klt. This completes the proof.
\end{proof}

\begin{corollary}
\label{cor-curve-x2y}
\label{cor-curve-x3}
Let $0\in C=\{f=0\}\subset \mathbb{C}^2$ be a curve germ where 
\[
f = x_1^2 x_2 + f_4(x_1,x_2) + f_{\geq 5}(x_1,x_2)
\]
with $\deg f_4=4$ and $\mult_0 (f_{\geq 5})\geq 5$. Assume that $f_4|_{x_1=0}\neq 0$. 
Then the pair $(\mathbb{C}^2, 1/2C)$ is klt. 
\end{corollary}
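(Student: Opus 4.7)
The plan is to apply the weighted log canonical threshold bound (Lemma \ref{lem-weights}) with a carefully chosen quasi-homogeneous grading that captures exactly the two ``essential'' monomials of $f$, namely $x_1^2 x_2$ (which comes from the given expansion) and $x_2^4$ (whose coefficient is non-zero by the hypothesis $f_4|_{x_1=0}\neq 0$). I assign the weights $w(x_1)=3$ and $w(x_2)=2$, so that both monomials $x_1^2x_2$ and $x_2^4$ have weighted degree~$8$.

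The first step is to compute the weighted initial term $f_w$. Every other monomial of $f_4$ has the form $x_1^i x_2^j$ with $i+j=4$ and $i\geq 1$, so its weight equals $3i+2j = 2(i+j)+i = 8+i \geq 9$. Every monomial appearing in $f_{\geq 5}$ satisfies $i+j\geq 5$, so its weight is at least $2(i+j)\geq 10$. Hence
\[
f_w = x_1^2 x_2 + c\, x_2^4, \qquad c\neq 0,
\]
where $c$ is the coefficient of $x_2^4$ in $f_4$.

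The second step is to verify that $\{f_w=0\}\subset \mathbb{C}^2$ is lc away from the origin; then Lemma~\ref{lem-weights} gives an equality rather than just an inequality. I factor
\[
f_w = x_2\bigl(x_1^2 + c\, x_2^3\bigr),
\]
which is the union of the smooth line $\{x_2=0\}$ and the cuspidal cubic $\{x_1^2+cx_2^3=0\}$. The cubic is smooth outside its cusp at the origin (its gradient $(2x_1, 3cx_2^2)$ vanishes only at $0$), and the two components intersect only at the origin (setting $x_2=0$ in the cubic forces $x_1=0$). Hence $\{f_w=0\}$ is a smooth curve outside the origin, in particular lc there.

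By Lemma~\ref{lem-weights}, equality holds in the weighted bound, giving
\[
\mathrm{lct}\bigl(\mathbb{C}^2, C\bigr) = \frac{w(x_1)+w(x_2)}{w(f)} = \frac{3+2}{8} = \frac{5}{8} > \frac{1}{2},
\]
so the pair $(\mathbb{C}^2, \tfrac{1}{2}C)$ is klt, as claimed. The whole argument is essentially a bookkeeping exercise once the weights $(3,2)$ are chosen; the only mild subtlety is verifying that no additional weight-$8$ contributions hide in $f_4$ or $f_{\geq 5}$, which is immediate from the degree count above.
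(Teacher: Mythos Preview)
Your proof is correct and follows essentially the same approach as the paper: assign weights $w=(3,2)$ and apply Lemma~\ref{lem-weights} to obtain $\mathrm{lct}(\mathbb{C}^2,C)=5/8>1/2$. Your version is in fact slightly more streamlined than the paper's, which first performs an analytic coordinate change to kill the monomials $x_1x_2^3$ and $x_1^2r(x_1,x_2)$ before applying the weight argument; you correctly observe that this is unnecessary, since those monomials already have weight $\geq 9$ and hence drop out of $f_w$ automatically. You also explicitly verify that $\{f_w=0\}$ is lc outside the origin, which the paper uses implicitly when invoking the equality case of Lemma~\ref{lem-weights}.
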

\begin{proof}
Observe that the condition $f_4|_{x_1=0}\neq 0$ is invariant under analytic changes of coordinates which preserve the cubic term of the equation.
Indeed, this follows from the fact that  
$f_4|_{x_1=0} = (f|_{x_1=0})_4$ where the latter means the quartic (and thus the leading) term of the polynomial $f|_{x_1=0}$. 
We can write down the equation of $C$ in the form
\begin{equation}
\label{eq-cor-curve-x2y}
f = x_1^2 x_2 + ax_2^4 + b x_1x_2^3 + x_1^2 r(x_1, x_2) + s(x_1, x_2) = 0
\end{equation}
where $r(x_1, x_2)$ is homogeneous of degree $2$ (or is a zero polynomial), $\mult_0 s(x_1,x_2)\geq 5$, $a,b\in\mathbb{C}$ and $a\neq 0$. 
After an analytic coordinate change 
we obtain the equation 
\[
x_1^2 x_2 + ax_2^4 + s(x_1, x_2) = 0
\]
with $\mult_0 s(x_1,x_2)\geq 5$ and $a\neq 0$. Assign the weights $w=(3,2)$ to the coordinates $x_1, x_2$. Then we have $f_w=x_1^2 x_2 + ax_2^4$ and by Lemma \ref{lem-weights} we have $\mathrm{lct}(\mathbb{C}^2, C) = 5/8 > 1/2$. 
Hence the pair $(\mathbb{C}^2, 1/2C)$ is klt.
\end{proof}

\begin{corollary}
\label{cor-curve-x3}
Let $0\in C=\{f=0\}\subset \mathbb{C}^2$ be a curve germ where 
\[
f = x_1^3 + f_4(x_1,x_2) + f_{\geq 5}(x_1,x_2)
\]
with $\deg f_4=4$ and $\mult_0 (f_{\geq 5})\geq 5$. Assume that $f_4$ is not divisible by $x_1^2$ (in particular, $f_4\neq 0$).
Then the pair $(\mathbb{C}^2, 1/2C)$ is klt. 
\end{corollary}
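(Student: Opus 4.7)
The plan is to mimic the proof of Corollary~\ref{cor-curve-x2y}: we apply Lemma~\ref{lem-weights} with a cleverly chosen weight system so that the resulting upper bound for $\mathrm{lct}(\mathbb{C}^2, C)$ is strictly larger than $1/2$, which is exactly what is needed for $(\mathbb{C}^2, \tfrac{1}{2} C)$ to be klt.

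First I would expand $f_4 = \alpha_0 x_2^4 + \alpha_1 x_1 x_2^3 + x_1^2 r(x_1,x_2)$, where $r$ is homogeneous of degree $2$ (possibly zero). The hypothesis that $f_4$ is not divisible by $x_1^2$ is equivalent to $(\alpha_0,\alpha_1) \neq (0,0)$, and this is the natural case split.

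In the first case ($\alpha_0 \neq 0$) I would assign weights $w(x_1)=4$, $w(x_2)=3$, so that $x_1^3$ and $x_2^4$ both have weight $12$. A short bookkeeping check shows every other monomial occurring in $f$ has weight strictly greater than $12$, so the weighted leading form is $f_w = x_1^3 + \alpha_0 x_2^4$, a standard cuspidal cubic which is smooth, and in particular lc, outside the origin. Lemma~\ref{lem-weights} then gives $\mathrm{lct}(\mathbb{C}^2, C) = 7/12 > 1/2$. In the second case ($\alpha_0=0$, $\alpha_1 \neq 0$) I would switch to weights $w(x_1) = 3$, $w(x_2) = 2$: now $x_1^3$ and $x_1 x_2^3$ both have weight $9$, the remaining terms of $f_4$ are divisible by $x_1^2$ (hence have weight $\geq 10$), and every monomial of total degree $\geq 5$ has weight $\geq 10$. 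Thus $f_w = x_1\bigl(x_1^2 + \alpha_1 x_2^3\bigr)$, a transverse union of a line and a cuspidal cubic meeting only at the origin, and lc elsewhere. Applying Lemma~\ref{lem-weights} again yields $\mathrm{lct}(\mathbb{C}^2, C) = 5/9 > 1/2$.

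The only slightly delicate step, which I expect to be the main obstacle, is the verification in the second case that no leftover monomial from $x_1^2 r$ or from $f_{\geq 5}$ has weight smaller than $9$; here the hypothesis $\alpha_0 = 0$ is essential, since the monomial $x_2^4$ would otherwise have weight $8$ and spoil the identification of $f_w$. Once this bookkeeping is done, both cases yield an lct strictly greater than $1/2$, and the corollary follows immediately.
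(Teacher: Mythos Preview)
Your proposal is correct and follows essentially the same approach as the paper: the same two-case split on the coefficients of $x_2^4$ and $x_1x_2^3$, the same weight systems $(4,3)$ and $(3,2)$, and the same application of Lemma~\ref{lem-weights} yielding $\mathrm{lct}=7/12$ and $5/9$. The only cosmetic difference is that the paper first performs an analytic change of coordinates to absorb the $x_1^2 r(x_1,x_2)$ term into the higher-order part before applying the weight argument, whereas you keep this term and check directly that its monomials $x_1^4,\,x_1^3x_2,\,x_1^2x_2^2$ have weight strictly above the cutoff in both cases; your bookkeeping is correct and makes the coordinate change unnecessary.
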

\begin{proof}
Observe that the condition that $f_4$ is not divisible by $x_1^2$ is invariant under analytic changes of coordinates which preserve the cubic term of the equation. Indeed, after an analytic change of coordinates 
we may assume that our equation has the form
\[
x_1^3 + ax_2^4 + b x_1x_2^3 + s(x_1, x_2) = 0.
\]
where $\mult_0 s(x_1,x_2)\geq 5$, $a,b\in\mathbb{C}$ and either $a\neq 0$ or $b\neq0$. 
If $a\neq 0$, assign the weights $w=(4,3)$ to the coordinates $x_1,x_2$. Then we have $f_w=x_1^3 + ax_2^4$ and by Lemma \ref{lem-weights} we have $\mathrm{lct}(\mathbb{C}^2, C) = 7/12 > 1/2$. If $a=0$ and $b\neq 0$ assign the weights $w=(3, 2)$ to the coordinates $x_1, x_2$. Then we have $f_w=x_1^3 + ax_1x_2^3$ and by Lemma \ref{lem-weights} we have $\mathrm{lct}(\mathbb{C}^2, C) = 5/9 > 1/2$. Hence in both cases the pair $(\mathbb{C}^2, 1/2C)$ is klt.
\end{proof}

\begin{corollary}
\label{cor-surface-x2y}
Let $0\in D\subset \mathbb{C}^3$ be a germ of a reduced surface $D=\{f=0\}$ where 
\[
f = x_1^2 x_2 + f_4(x_1,x_2,x_3) + f_{\geq 5}(x_1,x_2,x_3) 
\]
with $\deg f_4=4$ and $\mult_0 f_{\geq 5}\geq 5$. Assume that $f_4$ is not divisible by $x_1$ (in particular, $f_4\neq 0$). 
Then the pair $(\mathbb{C}^3, 1/2S)$ is klt.
\end{corollary}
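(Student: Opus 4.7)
The strategy is to reduce the surface case to the curve case of Corollary~\ref{cor-curve-x2y} by restricting $f$ to a generic hyperplane through the origin, and then transfer the resulting klt property back to $\mathbb{C}^3$ via inversion of adjunction.

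More precisely, I would consider the pencil $H_c = \{x_2 - c\, x_3 = 0\}$ of hyperplanes through the origin, parametrized by $c \in \mathbb{C}^*$. Using $(x_1, x_3)$ as coordinates on $H_c$ and rescaling via $v = c\, x_3$, the restriction of $f$ takes the form
\[
f|_{H_c} \;=\; x_1^2\, v \;+\; f_4(x_1,\, v,\, v/c) \;+\; G_{\geq 5}(x_1, v),
\]
where $f_4(x_1, v, v/c)$ is homogeneous of degree $4$ in $(x_1, v)$ and $\mult_0 G_{\geq 5} \geq 5$. Expanding $f_4 = \sum_{i+j+k=4} c_{ijk}\, x_1^i x_2^j x_3^k$, the key computation is
\[
f_4(x_1,\, v,\, v/c)\big|_{x_1 = 0} \;=\; v^4 \sum_{k=0}^{4} c_{0,\,4-k,\,k}\, c^{-k}.
\]
Since $f_4(0, x_2, x_3) \not\equiv 0$ by hypothesis, not all $c_{0,j,k}$ vanish, and the right-hand side is a nonzero polynomial in $c^{-1}$ that vanishes for only finitely many $c$. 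For any generic $c$, the curve $D \cap H_c$ is reduced and its defining equation satisfies the hypotheses of Corollary~\ref{cor-curve-x2y}, whence $(H_c,\, \tfrac{1}{2}(D|_{H_c}))$ is klt at the origin.

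To lift this conclusion, I would apply inversion of adjunction to $(\mathbb{C}^3,\, H_c + \tfrac{1}{2} D)$. Because $H_c$ is a smooth Cartier divisor in the smooth ambient space, the different satisfies $\mathrm{Diff}_{H_c}(\tfrac{1}{2}D) = \tfrac{1}{2}\, D|_{H_c}$, so klt-ness of the restriction implies that $(\mathbb{C}^3,\, H_c + \tfrac{1}{2}D)$ is plt in a neighborhood of $H_c$. Subtracting the Cartier divisor $H_c$ from the boundary of a plt pair leaves a klt pair along $H_c$ (on any log resolution, the coefficients of exceptional divisors only decrease, and the strict transform of $H_c$ picks up log discrepancy $1$), proving that $(\mathbb{C}^3,\, \tfrac{1}{2}D)$ is klt at the origin.

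The main obstacle — and the reason a generic member of the pencil is essential — is preserving the nondivisibility hypothesis of Corollary~\ref{cor-curve-x2y} under restriction: restricting to a coordinate hyperplane such as $\{x_2 = 0\}$ or $\{x_3 = 0\}$ may annihilate the quartic term (consider $f_4 = x_2^3 x_3$, which is not divisible by $x_1$ but vanishes on either coordinate plane), whereas the generic slice $\{x_2 = c\, x_3\}$ mixes these coordinates and guarantees that $f_4(0,\, v,\, v/c)$ remains nonzero.
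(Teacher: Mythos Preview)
Your proof is correct and follows essentially the same approach as the paper: both restrict to a generic hyperplane in the pencil $\{\lambda x_2 = \mu x_3\}$ (you parametrize it as $\{x_2 = c\,x_3\}$), apply Corollary~\ref{cor-curve-x2y} to the restricted curve, and transfer the klt conclusion back via inversion of adjunction. Your argument is in fact slightly more explicit than the paper's about why the quartic hypothesis of Corollary~\ref{cor-curve-x2y} survives the restriction and about the plt-to-klt step.
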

\begin{proof}
We may write down the equation of $f$ in the form
\[
f = x_1^2 x_2 + p_4(x_2, x_3) + x_1 q_3(x_2, x_3) + x_1^2 r_2(x_2, x_3) + s(x_1, x_2, x_3) = 0
\]
where $\deg r_2(x_2, x_3) = 2$ (or $r_2(x_2, x_3)=0$), $\mult_0 s(x_1, x_2, x_3)\geq 5$ and either $p_4(x_2, x_3)\neq 0$ and $\deg p_4(x_2, x_3)=~4$, or $q_3(x_2, x_3)\neq 0$ and $\deg q_3(x_2, x_3)=3$. Let $H=\{\lambda x_2=\mu x_3\}$ be a plane where $\lambda, \mu\in\mathbb{C}$. Note that the pair $(\mathbb{C}^3, H + 1/2D)$ is plt if and only if the pair $(H, 1/2D|_H)$ is klt. Then the equation becomes (assume without loss of generality that $\mu=1$)
\[
f = x_1^2 x_2 + x_2^4 p_4(1,\lambda) + x_1 x_2^3 q_3(1,\lambda) + x_1^2 x_2^2 r_2(1,\lambda) + s(x_1, x_2, \lambda x_2) = 0.
\]
This equation has the form \eqref{eq-cor-curve-x2y} for some $\lambda$ and $\mu$, and the claim follows from Corollary \ref{cor-curve-x2y}.
\end{proof}

Analogously, we obtain

\begin{corollary}
\label{cor-surface-x2y}
Let $\in D\subset \mathbb{C}^3$ be a germ of a reduced surface $D=\{f=0\}$ where 
\[
f = x_1^3 + f_4(x_1,x_2,x_3) + f_{\geq 5}(x_1,x_2,x_3)
\]
with $\deg f_4=4$ and $\mult_0 f_{\geq 5}\geq 5$. Assume that $f_4$ is not divisible by $x_1^2$ (in particular, $f_4\neq 0$). 
Then the pair $(\mathbb{C}^3, 1/2S)$ is klt.
\end{corollary}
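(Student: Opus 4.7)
The plan is to follow the same hyperplane-section strategy used in the preceding corollary (for the case $f = x_1^2 x_2 + f_4 + \cdots$), but applying Corollary~\ref{cor-curve-x3} instead of Corollary~\ref{cor-curve-x2y}. So first I would decompose $f_4$ as a polynomial in $x_1$ with coefficients in $\mathbb{C}[x_2, x_3]$:
\[
f_4 = p_4(x_2, x_3) + x_1 q_3(x_2, x_3) + x_1^2 r_2(x_2, x_3) + x_1^3 s_1(x_2, x_3) + a\, x_1^4,
\]
where each $p_i, q_i, r_i, s_i$ is homogeneous of the indicated degree. The assumption that $f_4$ is not divisible by $x_1^2$ translates exactly to: at least one of $p_4$ or $q_3$ is a nonzero polynomial in $(x_2, x_3)$.

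Next, I would take a generic smooth plane $H = \{x_3 = \lambda x_2\}$ through the origin for $\lambda \in \mathbb{C}$. Substituting yields
\[
f|_H = x_1^3 + \tilde f_4(x_1, x_2) + \tilde f_{\geq 5}(x_1, x_2),
\]
with $\tilde f_4 = p_4(1,\lambda)\, x_2^4 + q_3(1,\lambda)\, x_1 x_2^3 + r_2(1,\lambda)\, x_1^2 x_2^2 + s_1(1,\lambda)\, x_1^3 x_2 + a\, x_1^4$ and $\mult_0 \tilde f_{\geq 5} \geq 5$. Since either $p_4 \not\equiv 0$ or $q_3 \not\equiv 0$, for generic $\lambda$ the value $p_4(1,\lambda)$ or $q_3(1,\lambda)$ is nonzero, and hence $\tilde f_4$ is a nonzero quartic not divisible by $x_1^2$. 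Thus the hypotheses of Corollary~\ref{cor-curve-x3} are satisfied for the curve $D|_H$, and we conclude that $(H, \tfrac{1}{2} D|_H)$ is klt.

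Finally, by inversion of adjunction on the smooth divisor $H \subset \mathbb{C}^3$, the klt property of $(H, \tfrac{1}{2} D|_H)$ implies that the pair $(\mathbb{C}^3, H + \tfrac{1}{2} D)$ is plt near the origin. Since removing the boundary component $H$ only increases discrepancies, and the remaining coefficient $\tfrac{1}{2} < 1$ forces klt rather than merely lc, we deduce that $(\mathbb{C}^3, \tfrac{1}{2} D)$ is klt at $0$, as required.

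The only non-routine point is verifying that the plane section restriction really lands in the form required by Corollary~\ref{cor-curve-x3}, and this is exactly the step above where genericity of $\lambda$ preserves the non-divisibility condition; since it is a generic condition on a single parameter, this is not a serious obstacle, and no further computation beyond what appears in the preceding corollary is needed.
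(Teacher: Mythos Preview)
Your proof is correct and follows essentially the same approach as the paper, which simply says ``Analogously, we obtain'' in reference to the preceding corollary. You have made explicit the decomposition of $f_4$, the genericity argument for $\lambda$, and the final inversion-of-adjunction step (passing from plt for $(\mathbb{C}^3, H + \tfrac{1}{2}D)$ to klt for $(\mathbb{C}^3, \tfrac{1}{2}D)$), all of which the paper leaves implicit.
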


Now, Proposition \ref{lem-striclty-lc-2-complement} follows from 
Lemma \ref{cor-surface-x2y} and Lemma \ref{lem-kuwata}.

\section{Preparations}
\label{sec-preparations}
In this section, we prove some technical results that we will use later.
\begin{lemma}
\label{lem-smooth-section}
Let $C$ be a reduced curve on a smooth threefold $X\subset \mathbb{P}^N$ such that for any singular point $Q$ of~$C$ we have $\dim T_Q C = 2$. Also assume that $C$ is a scheme-theoretic intersection of all elements from a (not necessarily complete) linear system $\mathcal{H}$. Then there exists a smooth element $D\in \mathcal{H}$.
\end{lemma}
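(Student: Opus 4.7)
The plan is to establish this via a refined Bertini-type dimension count on the incidence variety
\[
I \;=\; \{(D,Q) \in \mathcal{H}\times C \mid D \text{ is singular at }Q\}.
\]
By the classical Bertini theorem in characteristic zero, a general $D \in \mathcal{H}$ is smooth away from the base scheme of $\mathcal{H}$, which by hypothesis is $C$. Hence the only remaining issue is to force a general $D$ to be smooth at every point of $C$ itself, i.e.\ to show that the projection $I \to \mathcal{H}$ is not surjective.

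Fix $Q\in C$ and let $V \subset H^0(X,\mathcal{O}_X(D))$ be the vector subspace underlying $\mathcal{H}$. After trivialising $\mathcal{O}_X(D)$ near $Q$, each section $s\in V$ has a local equation $f_s \in \mathfrak{m}_{X,Q}$, and the divisor $\{f_s=0\}$ is singular at $Q$ precisely when $f_s \in \mathfrak{m}_{X,Q}^2$. I would therefore study the linear map
\[
\rho_Q \colon V \longrightarrow \mathfrak{m}_{X,Q}/\mathfrak{m}_{X,Q}^2,\qquad s \mapsto \overline{f_s}.
\]
The key observation is that, because $C$ is the scheme-theoretic intersection of $\mathcal{H}$, the local functions $f_s$ generate $\mathcal{I}_{C,Q}$ as an $\mathcal{O}_{X,Q}$-module; writing an arbitrary element of $\mathcal{I}_{C,Q}$ as an $\mathcal{O}_{X,Q}$-linear combination of the $f_s$ and reducing modulo $\mathfrak{m}_{X,Q}^2$ (absorbing the factors $(h_j - h_j(Q))f_{s_j}\in\mathfrak{m}_{X,Q}^2$) shows that the image of $\rho_Q$ coincides with $(\mathcal{I}_{C,Q}+\mathfrak{m}_{X,Q}^2)/\mathfrak{m}_{X,Q}^2$. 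This subspace is the conormal image of $C$ at $Q$ inside $T_Q^*X$, whose annihilator in $T_QX$ is exactly $T_QC$; so its dimension equals $3-\dim T_QC$.

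Consequently $\ker\rho_Q$ has codimension $3-\dim T_QC$ in $V$. At a smooth point of $C$ this codimension is $2$, while by the planarity hypothesis it is $1$ at each of the finitely many singular points of $C$. Splitting $I\to C$ into the $1$-dimensional piece over $C^{sm}$ and the finite piece over $\operatorname{Sing}(C)$ and bounding fibre dimensions gives
\[
\dim I \;\le\; \max\bigl\{\dim C^{sm}+(\dim\mathcal{H}-2),\; 0+(\dim\mathcal{H}-1)\bigr\} \;=\; \dim\mathcal{H}-1,
\]
so the projection $I\to\mathcal{H}$ is not dominant. Combined with Bertini away from $C$, a general $D\in\mathcal{H}$ is smooth everywhere.

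The main obstacle --- and the precise reason for imposing $\dim T_QC=2$ at singular points --- is the degenerate case $\dim T_QC=3$. There one would have $\mathcal{I}_{C,Q}\subset\mathfrak{m}_{X,Q}^2$, so $\rho_Q=0$ and \emph{every} $D\in\mathcal{H}$ would be singular at $Q$, collapsing the dimension count. The planarity assumption $\dim T_QC=2$ at each singular point of $C$ excludes exactly this pathology, after which the argument becomes purely formal.
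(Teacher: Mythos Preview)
Your proof is correct and is essentially the same argument as the paper's, just packaged more systematically. The paper also invokes Bertini away from $C$, then at a smooth point $P\in C$ observes that two elements of $\mathcal{H}$ have independent differentials there (your $\mathrm{codim}\,\ker\rho_P=2$), and at each of the finitely many singular points $Q$ uses $\dim T_QC=2$ to produce at least one element of $\mathcal{H}$ smooth at $Q$ (your $\mathrm{codim}\,\ker\rho_Q\ge 1$). Your incidence-variety formulation and the explicit identification of $\mathrm{im}\,\rho_Q$ with the conormal image make the dimension count cleaner, but the content is identical.
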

\begin{proof}
By Bertini theorem, a general element $D\in \mathcal{H}$ is smooth outside $C$. Hence it is enough to prove that $D$ is smooth at all the points of $C$. 
Fix a smooth point $P\in C$. There exist two elements $D_1, D_2\in \mathcal{H}$ such that the intersection $D_1\cap D_2$ is smooth at $P$. Let $D_1$ and $D_2$ locally are given by the equations $f_1=0$ and $f_2=0$, respectively. Then the differentials $d_P f_1$ and $d_P f_2$ are linearly independent at $P$. Hence, to be singular at $P$ is a codimension $2$ condition on the elements of $\mathcal{H}$. Since the curve $C$ is one-dimensional, a general element $D$ in $\mathcal{H}$ is smooth at all smooth points of $C$.

Now fix a singular point $Q\in C$. By assumption, $\dim T_Q C = 2$. Hence all elements of $\mathcal{H}$ cannot be singular at $Q$, so there exists an element $D'\in\mathcal{H}$ such that $T_Q D'=T_Q C$. This means that for a general element $D\in\mathcal{H}$ one has $T_Q D=T_Q C$. In particular, a general element in $\mathcal{H}$ is smooth at $Q$. The same argument shows that a general element in $\mathcal{H}$ is smooth at all singular points of $C$, and the claim follows.
\end{proof}

As an immediate corollary, we see that there exists a smooth hyperplane section passing through a line $L$ on a smooth projective threefold $X$.

\begin{lemma}
\label{lem-line-conic}
\label{reducible_cubic_surface}
Let $D_1$ and $D_2$ be two normal surfaces with at worst du Val singularities in a smooth threefold~$X$. Assume that the scheme-theoretical intersection $D_1\cap D_2$ is a reducible curve $C+L$ where both $C$ and $L$ are smooth rational curves that belong to the smooth locus of $D_1$ and $D_2$, and $C$ intersects $L$ transversally. 
Also assume that one of the following holds:
\begin{enumerate}
\item
$L\cdot D_1=1, \quad N_{L/X}=\oo\oplus\oo\quad \text{or} \quad N_{L/X}=\oo(-1)\oplus\oo(1)$.
\item
$L\cdot D_1=2, \quad N_{L/X}=\oo\oplus\oo(1)$.
\item
$L\cdot D_1=3, \quad N_{L/X}=\oo(1)\oplus\oo(1)$.
\end{enumerate}
Then $\dim \mathcal{D}(X, D) = 2$ where $D=D_1+D_2$.
\end{lemma}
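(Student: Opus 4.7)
The plan is to log-resolve $(X, D)$ by two blow ups above $P := C \cap L$ and to exhibit a triple intersection stratum in the resulting snc divisor; combined with the bound $\dim \dd(X, D) \le \dim X - 1 = 2$, this will give the desired equality $\dim \dd(X, D) = 2$.

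First one shows that $D_1$ and $D_2$ share a tangent plane at $P$: each of the $2$-planes $T_P D_i \subset T_P X$ must contain the two transverse directions $T_P C$ and $T_P L$, which forces $T_P D_1 = T_P D_2$. Choose analytic coordinates $(x, y, z)$ at $P$ with $D_1 = \{z = 0\}$, writing $D_2 = \{z = \phi(x, y)\}$ with $\ord_0 \phi \ge 2$. Using the hypothesis that $D_1 \cap D_2$ is the reduced nodal curve $C + L$ at $P$, the quadratic part of $\phi$ must have rank $2$; an analytic change of coordinates preserving $D_1$ then reduces $\phi$ to $xy$, so that $C = \{x = z = 0\}$ and $L = \{y = z = 0\}$.

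Next, let $f \colon Y \to X$ be the blow up of $P$, with exceptional divisor $E \cong \pp^{2}$. In the chart $y = xs$, $z = xt$, the strict transforms become $D_1' = \{t = 0\}$, $D_2' = \{t = xs\}$, and $E = \{x = 0\}$; both $D_i'$ contain the line $\ell := \{x = t = 0\} \subset E$, namely the projectivization of the common tangent plane. A log-pullback computation gives $f^{*}(K_X + D) = K_Y + D_1' + D_2'$, so $E$ contributes with coefficient $0$. Then let $g \colon Z \to Y$ be the blow up of $\ell$, with exceptional divisor $F$. In the chart $t = bx$ one computes $F = \{x = 0\}$, $D_1'' = \{b = 0\}$, and $D_2'' = \{b = s + O(x)\}$; at $Q := (0, 0, 0)$ the three tangent planes $\{x = 0\}$, $\{b = 0\}$, and $\{b = s\}$ are linearly independent, so $(Z, F + D_1'' + D_2'')$ is snc at $Q$. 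A further log-pullback computation gives $(f \circ g)^{*}(K_X + D) = K_Z + D_1'' + D_2'' + F$, so $F$ enters with coefficient $1$ (and the strict transform of $E$ retains coefficient $0$). Outside the fiber over $P$, the divisor $D_1 + D_2$ is already snc along the smooth parts of $C$ and $L$ in the smooth threefold $X$. Hence the triple intersection $D_1'' \cap D_2'' \cap F \ni Q$ furnishes a $2$-simplex in $\dd(X, D)$.

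The main technical obstacle I expect is the analytic normalization $\phi = xy$: one must use the transversality of $C$ and $L$ at $P$ to pin down the rank-$2$ quadratic term of $\phi$, and then eliminate higher-order corrections via an analytic coordinate change fixing $D_1$. The three numerical hypotheses on $L \cdot D_1$ and $N_{L/X}$ each force $L$ to be a $(-1)$-curve on $D_1$ but do not enter this local analysis at $P$; they will be used in the applications to ensure that $D = D_1 + D_2$ arises as a log Calabi--Yau complement on the Fano threefold $X$.
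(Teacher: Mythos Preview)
Your argument is correct, but it follows a genuinely different route from the paper's.

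The paper blows up the \emph{curve} $L$ rather than a point. With $g\colon X'\to X$ the blow up along $L$ and $E$ the exceptional ruled surface, the log pullback is $K_{X'}+D_1'+D_2'+E$, so $E$ appears with coefficient~$1$ after a single blow up. The paper then uses the numerical hypotheses (1)--(3) to compute the class $D_1'|_E$ on the Hirzebruch surface $E$ and to check that it is ample; since $D_1'|_E$ and $D_2'|_E$ lie in the same ample class, they meet, and this produces the zero-dimensional stratum $D_1'\cap D_2'\cap E$. Your approach instead blows up a point $P\in C\cap L$ and then the line $\ell\subset E$; it is purely local at $P$ and, as you correctly observe, never uses the hypotheses on $L\cdot D_1$ or $N_{L/X}$. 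In effect you prove a stronger statement: the conclusion $\dim\mathcal D(X,D)=2$ holds without (1)--(3). The paper's computation could also be short-circuited by a local coordinate check at $P$ (after blowing up $L$, the sections $D_1'|_E$ and $D_2'|_E$ visibly cross over each point of $C\cap L$), but the authors chose the global numerical verification instead.

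Two small remarks on your write-up. First, $C\cap L$ may consist of several points (in the applications it is typically two), so your construction should be carried out at each of them; this does not affect the argument, since exhibiting the triple stratum over one such $P$ already gives a $2$-simplex in $\mathcal D(X,D)$, and the remaining non-snc loci (the other points of $C\cap L$ and the du~Val points of the $D_i$) can be resolved afterwards without disturbing that simplex. Second, your closing sentence slightly mislocates the role of the hypotheses: in the paper they are used in the proof of \emph{this} lemma (for the ampleness computation), not merely in the applications. Your proof shows they are in fact unnecessary.
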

\begin{proof}
Observe that $D$ is not snc. However, the pair $(X, D)$ where $D=D_1+D_2$ is lc by inversion of adjunction. Blow up $L$ to obtain a morphism $g\colon X'\to X$. Denote by $D'_i$ the strict transform of~$D_i$ and by~$E$ the $g$-exceptional divisor. We consider three cases. 
\begin{enumerate}
\item
Assume that $N_{L/X}=\oo(-a)\oplus\oo(a)$ where $a\in\{0, 1\}$, so $E\simeq \mathbb{F}_{2a}$. Let $s$ be a $(-2a)$-curve and $f$ be a fiber on $E$. We have $E|_E = -s-cf$ for some $c$. From the equation
\[
0=-\deg N_{L/X}=E^3 = (E|_{E})^2 = (-s-c f)^2 = -2a +2c
\]
we obtain $c=a$. Using $g^* D_1 = D'_1 + E$, compute 
\[
f = (L\cdot D_1)f = E g^*D_1 = E^2 + E D'_1 = - s - a f + E D'_1, 
\]
thus \[D'_1|_E\sim s+(a+1)f.\] 
\item
Assume that $N_{L/\mathbb{P}}=\oo\oplus\oo(1)$, so $E\simeq \mathbb{F}_1$. Let $s$ be a $(-1)$-curve and $f$ be a ruling on $E$. 
Analogously to the above considered case, we obtain \[D'_1|_E\sim s+2f.\]
\item
Assume that $N_{L/\mathbb{P}}=\oo(1)\oplus\oo(1)$, so $E\simeq \mathbb{P}^1\times \mathbb{P}^1$. Let $s$ and $f$ be two different rulings on $E$. 
Analogously, we obtain \[D'_1|_E\sim s+4f.\]
\end{enumerate}
Observe that in all three cases $D'_1|_E$ is ample. Hence $D'_1$ and $D'_2$ intersect on $E$. Note that the pair \[(X', D'_1+D'_2+E)\] is snc and admits a zero-dimensional stratum, so the claim follows.
\end{proof}

\begin{lemma}
\label{lem-nodal-curve}
Let $D_1$ and $D_2$ be two normal surfaces with at worst du Val singularities in a smooth threefold~$X$. Assume that the scheme-theoretical intersection $D_1\cap D_2$ is a rational curve $C$ of arithmetic genus $1$ with one node $P\in C$. Also assume that $C$ belongs to the smooth locus of $D_1$ and $D_2$. Then $\dim \mathcal{D}(X, D) = 2$ where $D=D_1+D_2$.
\end{lemma}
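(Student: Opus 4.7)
The plan is to produce a $0$-dimensional snc stratum over the node $P$ by making two successive blow-ups. First, the pair $(X,D_1+D_2)$ is lc near $C$ by inversion of adjunction: since $D_2$ is Cartier and $C=D_1\cap D_2$ is contained in the smooth locus of $D_1$, lc-ness of $(X,D_1+D_2)$ is equivalent to lc-ness of $(D_1,C)$, which holds because $C$ is a nodal curve on a smooth surface. Because $C$ has a node at $P$, both branches span the tangent planes $T_PD_1$ and $T_PD_2$, so these tangent planes coincide. Applying the implicit function theorem and the Morse lemma to $C\subset D_1$, I can choose local analytic coordinates $(x,y,z)$ at $P$ with
\[
D_1=\{z=0\},\qquad D_2=\{z+xy=0\}.
\]

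Now blow up $P$: let $g\colon X'\to X$ with exceptional divisor $E\simeq\mathbb{P}^2$. Since each $D_i$ is smooth at $P$, the standard calculation gives $g^*(K_X+D_1+D_2)=K_{X'}+D'_1+D'_2$, so $E$ has log discrepancy $1$ and is not an lc place. However, the coincidence of tangent planes forces $D'_1\cap E=D'_2\cap E=:\ell$ to be one and the same line in $E$. Next blow up $\ell$: let $h\colon X''\to X'$ with exceptional divisor $F$. Because $\ell$ is contained in both of the smooth surfaces $D'_1$ and $D'_2$, one computes
\[
h^*(K_{X'}+D'_1+D'_2)=K_{X''}+D''_1+D''_2+F,
\]
so $F$ is an lc place.

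In the explicit chart $x=u$, $y=uv$, $z=uw$ for $g$ followed by $u=s$, $w=st$ for $h$, the three boundary components become
\[
D''_1=\{t=0\},\qquad D''_2=\{t+v=0\},\qquad F=\{s=0\},
\]
which meet at the single point $(s,t,v)=(0,0,0)$ with normals $ds$, $dt$, $dt+dv$ linearly independent. Thus we have an snc triple intersection, providing a $0$-dimensional stratum, hence a $2$-simplex, in a (local) snc log resolution of $(X,D_1+D_2)$. Combined with the automatic bound $\dim\mathcal{D}(X,D_1+D_2)\le\dim X-1=2$, this yields $\dim\mathcal{D}(X,D_1+D_2)=2$.

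The main obstacle is conceptual rather than computational: a single blow-up of $P$ is not enough, because the shared tangent plane makes the exceptional $\mathbb{P}^2$ have log discrepancy $1$ and forces $D'_1$ and $D'_2$ to meet it along the same line $\ell$. The right remedy is to blow up $\ell$ afterwards: since $\ell$ lies in both smooth strict transforms, the new exceptional divisor $F$ is an lc place, and an elementary coordinate check shows that $D''_1$, $D''_2$, $F$ meet transversally at a point.
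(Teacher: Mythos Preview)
Your proof is correct and follows the same two-step strategy as the paper: blow up the node $P$, observe that the exceptional $\mathbb{P}^2$ is not an lc place but that $D'_1$ and $D'_2$ share a line $\ell$ in it, then blow up $\ell$ to obtain an lc exceptional divisor $F$ meeting $D''_1$ and $D''_2$ in an snc triple point.

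The execution differs. The paper argues via intersection theory: it identifies $N_{L/X'}\cong\oo(-1)\oplus\oo(1)$ (from $L=E\cap D'_i$), whence the new exceptional divisor is $\mathbb{F}_2$, and then computes $D''_i|_{E_1}\sim s+2f$ to see that the two sections must meet. You instead invoke the Morse lemma to put the pair into the local normal form $D_1=\{z=0\}$, $D_2=\{z+xy=0\}$ and carry out both blow-ups in explicit charts. Your route is more elementary and makes the snc verification at the triple point completely transparent; the paper's route is coordinate-free and yields the global picture on the exceptional $\mathbb{F}_2$ (two $(+2)$-sections), which is slightly more information than is needed here. Either way, the substance of the argument is the same.
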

\begin{proof}
The pair $(X, D)$ where $D=D_1+D_2$ is lc by inversion of adjunction. Blow up $P$ to obtain a morphism $f\colon X'\to X$ with the exceptional divisor $E\simeq \mathbb{P}^2$. Note that the strict transforms $D'_1$ and $D'_2$ intersect in a reducible curve $C'+L$ where $L\subset E$ is a line, $C'$ is a strict transform of $C$ and the components $C'$ and $L$ intersect transversally in two distinct points. Note that $L$ is a $(-1)$-curve on both $D'_i$. Then the pair $(X', D')$ where $D'=D'_1+D'_2$ is the log pullback of the pair $(X, D)$.

Note that $L$ is a complete intersection of $E$ and $D'_1$ (or $E$ and $D'_2$), hence \[N_{L/X'}=\oo(-1)\oplus\oo(1).\] Blow up $L$ on $X'$ to obtain a morphism $g\colon X''\to X'$ with the exceptional divisor $E_1\simeq \mathbb{F}_2$. We have $E_1|_{E_1}=-s-cf$ for some $c$ where $s$ is a $(-2)$-section and $f$ is a fiber on $E$. Compute 
\[
0=-\deg N_{L/X'}=E_1^3 = E_1|_{E_1}^2 = ( -s-cf)^2.
\]
Hence $c=1$. 
Also,
\[
L\cdot D'_1 = L (f^* D_1 - E) = 1.
\]
Then we have 
\[
f = (L\cdot D'_1)f = E_1\cdot g^*D'_1 = E_1^2 + E_1\cdot D''_1 = - s - f + E_1\cdot D''_1, 
\]
thus \[D''_1|_{E_1}\sim s+2f.\] Hence $D''_1|_{E_1}$ is a section of $E\simeq \mathbb{F}_2$ with self-intersection $2$. Thus the pair \[(X'', D''_1+D''_2+E_1)\] is snc and admits a zero-dimensional stratum, and the claim follows.
\end{proof}

\section{Fano threefolds of index $\geq 2$}
\label{sec-Fanos-large-index}

Recall that the index $i(X)$ of a Fano variety $X$ is a maximal natural number $m$ such that $-K_X\sim mH$ where $H$ is an element of the Picard group. 
We start to compute the coregularity of smooth Fano threefolds. First we make the following simple observartion:

\begin{remark}
\label{remark:toric}
For a toric Fano variety, taking the torus-invariant boundary, we see
that the coregularity is equal to $0$. Here is the list of toric Fano
threefolds:
\[
1.17,\ 2.33,\ 2.34,\ 2.35,\ 2.36,\ 3.25,\ 3.26,\ 3.27,\ 3.28,\ 3.29,\
3.30,\ 3.31,\ 4.9,\ 4.10,\ 4.11,\ 4.12,\ 5.2,\ 5.3.
\]
\end{remark}

As a consequence of Remark \ref{remark:toric}, we obtain 
\begin{lemma}
\label{lemma:Pn}
Let $X=\mathbb{P}^n$.
Then $\coreg(X)=0$.
\end{lemma}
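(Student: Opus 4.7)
The plan is to simply exhibit an explicit boundary on $\mathbb{P}^n$ realizing coregularity $0$, which is an instance of the general toric fact recorded in Remark~\ref{remark:toric}. Concretely, I would take $D = H_0 + H_1 + \cdots + H_n$, the sum of the $n+1$ coordinate hyperplanes. Since $-K_{\mathbb{P}^n} \sim (n+1)H$, one has $K_{\mathbb{P}^n} + D \sim 0$, so $(\mathbb{P}^n, D)$ is a log Calabi--Yau pair.

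Next I would check that $(\mathbb{P}^n, D)$ is log canonical by observing that $D$ is simple normal crossing: the coordinate hyperplanes meet transversally, with $H_{i_0} \cap \cdots \cap H_{i_k}$ a linear subspace of codimension $k+1$ for every subset of indices. Thus $D$ serves as a $1$-complement of $K_{\mathbb{P}^n}$, and no log resolution is needed to compute $\mathcal{D}(\mathbb{P}^n, D)$.

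Finally, I would identify the dual complex directly from the definition: the strata of $D$ are in bijection with non-empty subsets $I \subseteq \{0, 1, \dots, n\}$, each stratum is irreducible, and the incidence pattern is that of the boundary of the standard $n$-simplex. Hence $\mathcal{D}(\mathbb{P}^n, D) \cong \partial \Delta^n$, which is $(n-1)$-dimensional. Therefore $\mathrm{reg}_1(\mathbb{P}^n) \geq n-1$, and since regularity is at most $\dim X - 1 = n-1$, equality holds. Consequently $\mathrm{coreg}(\mathbb{P}^n) = n - 1 - (n-1) = 0$.

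There is no substantive obstacle here; the only thing to verify is the snc property of the coordinate hyperplane arrangement, which is immediate in affine charts.
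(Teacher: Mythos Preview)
Your proposal is correct and follows exactly the paper's approach: the lemma is stated as a consequence of Remark~\ref{remark:toric}, and your boundary $D = H_0 + \cdots + H_n$ of coordinate hyperplanes is precisely the torus-invariant boundary invoked there. The computation of the dual complex as $\partial\Delta^n$ makes explicit what the paper leaves implicit.
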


\begin{lemma}
\label{lemma:quadrics}
Let $X$ be a smooth quadric in $\mathbb{P}^{n+1}$.
Then $\coreg(X)=0$.
\end{lemma}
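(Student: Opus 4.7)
The plan is to exhibit an explicit snc log Calabi--Yau boundary that evaluates coregularity $0$. Recall that a smooth quadric $X\subset\mathbb{P}^{n+1}$ has index $n$, so if $H$ denotes the hyperplane section class on $X$ one has $-K_X\sim nH$. Hence for any choice of $n$ divisors $H_1,\ldots,H_n\in|H|$ the divisor $D=H_1+\cdots+H_n$ satisfies $K_X+D\sim 0$.

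I would take $H_1,\ldots,H_n$ to be general hyperplane sections of $X$. Applying Bertini to the (very ample) linear system $|H|$ successively on $X$, $H_1$, $H_1\cap H_2$, etc., one sees that for a general choice each partial intersection
\[
Z_k=H_{i_1}\cap\cdots\cap H_{i_k}\subset X
\]
is smooth of pure codimension $k$. In particular, for every point $P$ of any stratum the local equations of the $H_{i_j}$ have linearly independent differentials at $P$ inside $T_PX$, so $D=H_1+\cdots+H_n$ is simple normal crossing; since all coefficients equal $1$, the pair $(X,D)$ is lc.

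It remains to check that $\mathcal{D}(X,D)$ has the maximum possible dimension $n-1$, i.e.\ that $D$ admits a zero-dimensional stratum. The intersection $H_1\cap\cdots\cap H_n$ is the scheme-theoretic intersection $X\cap L$ where $L\subset\mathbb{P}^{n+1}$ is the line cut out by the $n$ hyperplanes defining $H_1,\ldots,H_n$; for general choice, $L$ meets the quadric $X$ transversally in two points. These two points are zero-dimensional strata of $D$, and each one contributes an $(n-1)$-simplex to $\mathcal{D}(X,D)$. Thus
\[
\dim\mathcal{D}(X,D)=n-1=\dim X-1,
\]
so $\reg(X)\geq n-1$ and consequently $\coreg(X)=0$. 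There is no serious obstacle here; the only point that deserves a line of justification is the transversality of the partial intersections and of $L$ with $X$, both of which are standard general-position statements for a smooth quadric.
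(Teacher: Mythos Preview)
Your proof is correct and follows exactly the paper's approach: take $n$ general hyperplane sections, observe that $D=\sum H_i$ is an snc anti-canonical divisor with a zero-dimensional stratum, and conclude $\coreg(X)=0$. The paper's proof is terser, leaving the Bertini verification and the description of the zero-dimensional stratum as $X\cap L$ for a general line $L$ implicit, but the argument is the same.
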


\begin{proof}
Observe that $-K_X\sim nH$ where $H$ is a hyperplane section of $X$.
Consider a pair $(X, D=\sum_{i=1}^n H_i)$ where $H_i\in|H|$ are
general hyperplane sections. Note that $D$ has simple normal
crossings and it has a zero-dimensional stratum. This shows that
$\coreg(X)=0$.
\end{proof}

Let $X$ be a del Pezzo threefold, that is, a smooth Fano threefold with $i(X)=2$. By the degree of $X$ we mean the number $d=H^3$ where $-K_X\sim 2H$. We assume that $\rho(X)=1$. Then it is known that $1\leq d\leq
5$. 
By a line on $X$ we mean a smooth rational curve whose
intersection with $H$ equals $1$. It is known that for any line $L\subset X$ on a del Pezzo threefold one has  $N_{L/X}=\oo(a)\oplus \oo(-a)$ for $a\in\{0, 1\}$, see e.g. \cite[Proposition 2.2.8.]{KPS18}. 
\begin{lemma}
\label{lemma:Fano del Pezzo}
Let $X$ be a del Pezzo threefold with $3\leq d\leq
5$. Then $\coreg (X)=0$.
\end{lemma}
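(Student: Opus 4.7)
The plan is to construct an explicit $1$-complement $D = H_1 + H_2$ that evaluates coregularity $0$, where $H_1, H_2$ are smooth hyperplane sections of $X$ sharing a common line.

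First I would select a line $L\subset X$ with normal bundle $N_{L/X} = \oo\oplus\oo$; such lines exist on any del Pezzo threefold of degree $d\geq 3$ and are in fact generic, see \cite[Proposition 2.2.8]{KPS18}. Let $\mathcal{H}_L \subset |H|$ denote the sublinear system of hyperplane sections through $L$. Since $L$ is smooth, Lemma \ref{lem-smooth-section} shows that a general element of $\mathcal{H}_L$ is smooth, so I would pick two such elements $H_1$ and $H_2$.

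By construction $H_1 \cap H_2 = L + C$ as divisors, and on the smooth del Pezzo surface $H_1$ of degree $d$ the residual curve $C$ belongs to the linear system $|-K_{H_1} - L|$. Standard intersection computations on $H_1$ give $C^2 = d-3$, $C \cdot L = 2$, $-K_{H_1}\cdot C = d-1$ and $p_a(C) = 0$. For $d = 3$ the system $|-K_{H_1}-L|$ is a base-point free pencil cut out by the planar sections of the cubic surface $H_1 \subset \pp^3$ containing $L$, so a general member is a smooth conic meeting $L$ transversally in two points. For $d \in \{4, 5\}$ the linear system has dimension at least two, and Bertini together with the base-point freeness yields a smooth irreducible $C$ meeting $L$ transversally in two points.

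I would then set $D = H_1 + H_2$. Since $-K_X \sim 2H$ we have $K_X + D \sim 0$, and the pair $(X, D)$ is lc by inversion of adjunction, because $H_2|_{H_1} = L+C$ is snc on the smooth surface $H_1$. The intersection number $L \cdot H_1 = L \cdot H = 1$ together with $N_{L/X} = \oo \oplus \oo$ places us precisely in case (1) of Lemma \ref{lem-line-conic}, which directly yields $\dim \mathcal{D}(X, D) = 2$ and hence $\coreg(X) = 0$. The main subtlety in the argument is verifying smoothness and transversality of the residual curve $C$ in the low degree case $d=3$, where $|-K_{H_1}-L|$ is only a pencil; this is handled by identifying the pencil with the residuation of planar sections of the smooth cubic surface through $L$ and checking that the generic residual conic is smooth and transverse to $L$.
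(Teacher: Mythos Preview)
Your proof is correct and follows essentially the same approach as the paper: construct $D = H_1 + H_2$ with two smooth hyperplane sections through a common line $L$, verify that the residual curve $C$ in $|-K_{H_1} - L|$ is smooth and meets $L$ transversally, and conclude via Lemma~\ref{lem-line-conic}. The only cosmetic differences are that the paper first fixes a smooth $H_1$ and then a line on it (rather than the reverse), and that it does not restrict to lines with $N_{L/X} = \oo\oplus\oo$---an unnecessary restriction on your part, since case~(1) of Lemma~\ref{lem-line-conic} already covers both possible normal bundle types $\oo\oplus\oo$ and $\oo(-1)\oplus\oo(1)$.
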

\begin{proof}
Let $X$ be a del Pezzo threefold with $-K_X\sim 2H$ and $3\leq
d=H^3\leq 5$. Note that in this case $H$ is very ample, so it defines an embedding of $X$ into the projective space $\mathbb{P}^{d+1}$. 
Let $L$ be a line in $X$ on some
smooth prime divisor $H_1$ such that $H_1\in |H|$. Such a line exists
since $H_1$ is a smooth del Pezzo surface of degree $d$. We construct a boundary $D=H_1+H_2$ where $H_2$ is a general divisor with
$L\subset H_2$ and $H_2\in |H|$. 
By Lemma \ref{lem-smooth-section}, we can pick the surface
$H_2$ to be smooth. 
Note that $L$ is a $(-1)$-curve on both surfaces $H_1$ and $H_2$.
We show that for a general divisor $H_2$, the
restriction $H_2|_{H_1}$ is a union of a line $L$ and a smooth
rational curve $C$ that intersects~$L$ transversally. 
Indeed, consider an exact sequence
\begin{equation}
0\to H^0(X, \OOO_X) \to H^0(X, \OOO_X(H)) \to
H^0(H_1, \OOO_{H_1}(-K_{H_1})) \to 0
\end{equation}
where by adjunction we have $H|_{H_1}\sim -K_{H_1}$ and the last term is $0$ by the Kodaira vanishing theorem. 
It follows that there exists an element $H_2\in |H|$ such that $H_1\cap
H_2$ is a given anti-canonical curve in $H_1$. Since $H_1$ is a del Pezzo surface of degree $d\geq 2$, a general reducible anti-canonical curve is a union of a line and a smooth rational curve such that they intersect transversally in two points. Hence for a general $H_2$, the curve $C$ intersects $L$ transversally in two points. 
It follows that the pair $(X, D)$ is lc by inversion of
adjunction on $H_1$. 
Then we can apply Lemma \ref{lem-line-conic}, which shows that $\mathrm{coreg}(X)=0$.
\end{proof}

\begin{lemma}
\label{lemma:Fano del Pezzo2}
Let $X$ be del Pezzo threefold with $d=2$. Then $\coreg (X)=0$.
\end{lemma}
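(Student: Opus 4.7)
The plan is to mirror the strategy of Lemma~\ref{lemma:Fano del Pezzo} with one key modification: because for $d=2$ the class $-K_{H_1}-L$ on a smooth $H_1\in|H|$ is rigid (it is represented only by the Geiser conjugate $L'=\iota(L)$ coming from the deck involution of the anticanonical double cover $\pi\colon X\to\mathbb{P}^3$), I would replace the ``line-plus-conic'' construction by a nodal anticanonical curve and invoke Lemma~\ref{lem-nodal-curve} in place of Lemma~\ref{lem-line-conic}.

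Concretely, let $\pi\colon X\to\mathbb{P}^3$ be the anticanonical double cover with smooth quartic branch $B$. I would fix a plane $H'\subset\mathbb{P}^3$ transverse to $B$, so that $H_1:=\pi^{-1}(H')\in|H|$ is a smooth del Pezzo surface of degree $2$ and $\pi|_{H_1}\colon H_1\to H'\simeq\mathbb{P}^2$ is its anticanonical double cover, branched over the smooth quartic $B\cap H'$. Members of $|-K_{H_1}|$ correspond to lines in $H'$, and nodal members correspond to lines simply tangent to $B\cap H'$. Choosing such a tangent line $\ell\subset H'$ at a general point of $B\cap H'$, the curve $C:=\pi^{-1}(\ell)\subset H_1$ is a rational nodal curve of arithmetic genus $1$, with its unique node $p$ at the preimage of the tangency point.

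The next step is to produce a smooth $H_2\in|H|$ with $H_2\cap H_1=C$. By Kodaira vanishing the restriction map $H^0(X,H)\twoheadrightarrow H^0(H_1,-K_{H_1})$ is surjective, so the family $\mathcal{P}\subset|H|$ of divisors pulled back from planes in $\mathbb{P}^3$ through $\ell$ is a pencil whose scheme-theoretic base locus equals $\pi^{-1}(\ell)=C$. Since $C$ is reduced and $\dim T_pC=2$ at its unique singular point, Lemma~\ref{lem-smooth-section} produces a smooth $H_2\in\mathcal{P}$ different from the member $H_1$ (which sits in $\mathcal{P}$ as the pullback of $H'$ itself). The pair $(X,H_1+H_2)$ is lc by inversion of adjunction, and Lemma~\ref{lem-nodal-curve} applied with $D_1=H_1$ and $D_2=H_2$ (both smooth, intersecting along the nodal genus-$1$ rational curve $C$ in their smooth loci) yields $\dim\mathcal{D}(X,H_1+H_2)=2$, hence $\mathrm{coreg}(X)=0$.

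The main obstacle, which this nodal-curve substitution circumvents, is the verification of transversality of $L$ and $L'=\iota(L)$ at their two intersection points on $H_1$: a direct application of Lemma~\ref{lem-line-conic} as in Lemma~\ref{lemma:Fano del Pezzo} would require a separate ramification-theoretic analysis showing that $L\cap L'$ consists of two distinct points of the branch locus $R\cap H_1$ rather than a single tangential intersection, a phenomenon that cannot be improved by choosing $C$ ``generally'' because the linear system $|-K_{H_1}-L|$ is a single point. Working instead with a nodal anticanonical curve bypasses this, since simple tangent lines to the smooth quartic $B\cap H'$ exist unconditionally and make the hypotheses of Lemma~\ref{lem-nodal-curve} automatic for every smooth del Pezzo threefold of degree $2$.
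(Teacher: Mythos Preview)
Your proof is correct and follows a genuinely different route from the paper. Both arguments start from the anticanonical double cover $\pi\colon X\to\mathbb{P}^3$ branched over a smooth quartic $B$, but diverge in the choice of line in a plane section:

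The paper picks a general smooth $H_1\in|H|$ and a $(-1)$-curve $L\subset H_1$, whose image $\pi(L)$ is a \emph{bitangent} to $B\cap H'$; the preimage of this bitangent is $L+\iota(L)$, a pair of $(-1)$-curves meeting transversally at two points, and the argument closes with Lemma~\ref{lem-line-conic}. You instead take a line \emph{simply} tangent to $B\cap H'$, whose preimage is an irreducible nodal anticanonical curve, and finish with Lemma~\ref{lem-nodal-curve}. Your appeal to Lemma~\ref{lem-smooth-section} for the smoothness of $H_2$ is fine; the paper just notes that the pencil through the chosen line already contains the smooth member $H_1$.

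Your final paragraph overstates the obstacle in the bitangent approach: for a smooth plane quartic, any bitangent meets the curve in exactly two points each of multiplicity~$2$ (the total intersection number is $4$), so the two $(-1)$-curves always meet transversally at two distinct points of the ramification locus and the ``separate ramification-theoretic analysis'' is a one-line observation. That said, your nodal-curve substitution is a perfectly good alternative that parallels the argument of Lemma~\ref{lemma:Fano del Pezzo1} rather than Lemma~\ref{lemma:Fano del Pezzo}, and it sidesteps the question entirely.
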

\begin{proof}
The linear system $|H|$ defines a double cover $g\colon X\to\mathbb{P}^3$ ramified in a smooth quartic surface
$Q\subset \mathbb{P}^3$. Let~$H_1$ be a general hyperplane section of $X$, and let $L\subset H_1$. Note that $L$ is a $(-1)$-curve on $H_1$. Put $H'_1=g(H_1)$. Observe that the restriction $g|_{H_1}\colon H_1\to H'_1\simeq \mathbb{P}^2$  is a double cover ramified in a smooth quartic curve $Q\cap H'_1$. Then $L'=g(L)$ is a line in $\mathbb{P}^3$ which is bitangent to $Q$. In particular, $L'\not\subset Q$. Consider a pencil of planes
passing through $L'$. Since $H'_1$ is a plane passing through $L'$
which corresponds to a smooth hyperplane section $H_1$ on $X$, then the preimage $H_2$ of a general plane $H'_2$ passing through $L'$ is smooth as well. Thus, $H_1\cap H_2$ is a union of two smooth rational curves intersecting transversally in two points, so we can apply Lemma \ref{lem-line-conic} and conclude that $\mathrm{coreg}(X)=0$.
\end{proof}

\begin{lemma}
\label{lem-dP1-nodal-element}
Let $X$ be a general del Pezzo threefold of degree $1$. Put $-K_X\sim 2H$. Let $H_1\in |H|$ be a general element which is a
smooth del Pezzo surface of degree~$1$. Then there exists an irreducible curve $C\in |-K_{H_1}|$ such that it has arithmetic genus $1$ and one nodal singular point. 
\end{lemma}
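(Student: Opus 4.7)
The plan is to reduce the statement to Remark~\ref{rem-curve-dP1} via the standard Weierstrass description of del Pezzo threefolds of degree~$1$. By the classification, a general smooth del Pezzo threefold $X$ of degree~$1$ with $\rho(X)=1$ is isomorphic to a weighted sextic hypersurface
\[
X=\bigl\{w^{2}=z^{3}+zf_{4}(x_{0},x_{1},x_{2})+f_{6}(x_{0},x_{1},x_{2})\bigr\}\subset\pp(1,1,1,2,3),
\]
where $f_{4}$ and $f_{6}$ are general homogeneous forms of degrees~$4$ and~$6$ in $x_{0},x_{1},x_{2}$; here the class $H$ corresponds to the restriction of a linear form in $x_{0},x_{1},x_{2}$. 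One first checks that the $w^{2}$ and $z^{3}$ terms can be put in this clean Weierstrass shape by the usual elementary changes of variables $w\mapsto w+A(x,z)$ and $z\mapsto z+B(x)$.

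Next, I would take a general $H_{1}\in|H|$, cut out by a linear form $\ell(x_{0},x_{1},x_{2})=0$. Choosing coordinates $(y_{0},y_{1})$ on the line $\{\ell=0\}\subset\pp^{2}$, the section $H_{1}$ is visibly
\[
H_{1}\cong\bigl\{w^{2}=z^{3}+z\widetilde f_{4}(y_{0},y_{1})+\widetilde f_{6}(y_{0},y_{1})\bigr\}\subset\pp(1,1,2,3),
\]
with $\widetilde f_{i}=f_{i}|_{\ell=0}$. This is precisely the Weierstrass form of a del Pezzo surface of degree~$1$ studied in Remark~\ref{rem-curve-dP1}, which exhibits irreducible nodal anti-canonical curves as the vanishing loci of simple roots of the discriminant $\Delta(\lambda)=4\widetilde f_{4}(\lambda,1)^{3}+27\widetilde f_{6}(\lambda,1)^{2}$.

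The main step is to check that $(\widetilde f_{4},\widetilde f_{6})$ lies in the generic locus to which Remark~\ref{rem-curve-dP1} applies. Since any pair of binary forms of degrees~$4$ and~$6$ extends (e.g.\ by trivial homogenization in $x_{2}$) to a trivariate pair, the restriction map $(f_{4},f_{6},\ell)\mapsto(\widetilde f_{4},\widetilde f_{6})$ is surjective. Thus for general $X$ and general~$\ell$, the pair $(\widetilde f_{4},\widetilde f_{6})$ is generic, the polynomial $\Delta$ has twelve simple roots, and each gives an irreducible member of $|-K_{H_{1}}|$ with a single node and arithmetic genus~$1$. The only real subtlety in the argument is this genericity step, and it is handled entirely by openness together with the explicit surjectivity of the restriction map.
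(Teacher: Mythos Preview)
Your proof is correct and follows essentially the same approach as the paper's: write a general $X$ as a Weierstrass sextic in $\pp(1,1,1,2,3)$, restrict to a hyperplane section to obtain a del Pezzo surface of degree~$1$ in Weierstrass form, and reduce to Remark~\ref{rem-curve-dP1}. The only stylistic difference is that the paper first invokes openness of the property in moduli and then exhibits a single specific section (given by $x_3=0$), whereas you argue directly via surjectivity of the restriction map $(f_4,f_6,\ell)\mapsto(\widetilde f_4,\widetilde f_6)$; these are two phrasings of the same genericity step.
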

\begin{proof}
Observe that for a del Pezzo threefold $X$ of degree $1$ to have an element in $|H|$ with at worst nodal singularities is an open property in the family of all del Pezzo threefolds of degree $1$. Hence it is enough to show that there exists one such del Pezzo threefold $X$ with an element $H_1\in |H|$ where $-K_X\sim 2H$ such that $|-K_{H_1}|$ contains a nodal curve. Let $X\subset \mathbb{P}(1,1,1,2,3)$ be given by the equation
\[
w^2 + z^3 + f_4(x_1, x_2, x_3) z + f_6(x_1, x_2, x_3) = 0
\]
where $(x_1, x_2, x_3, z, w)$ have weights $(1,1,1,2,3)$, and $f_4$ and $f_6$ are general homogeneous polynomials of degree $4$ and $6$, respectively. Let $H_1$ be given by the condition $x_3=0$, so the equation for $H_1$ in $\mathbb{P}(1,1,2,3)$ is
\[
w^2 + z^3 + f_4(x_1, x_2, 0) z + f_6(x_1, x_2, 0) = 0,
\]
and $f_4(x_1, x_2, 0)$, $f_6(x_1, x_2, 0)$ also are general homogeneous polynomials of degree $4$ and $6$, respectively. 
Then by Remark \ref{rem-curve-dP1} we see that there exists a nodal curve $C\in |-K_{H_1}|$.
\end{proof}

\begin{lemma}
\label{lemma:Fano del Pezzo1}
Let $X$ be a general del Pezzo threefold with $d=1$. Then $\coreg (X)=0$.
\end{lemma}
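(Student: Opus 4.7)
The plan is to mimic the strategy used in Lemmas \ref{lemma:Fano del Pezzo} and \ref{lemma:Fano del Pezzo2}: exhibit two smooth surfaces $H_1, H_2 \in |H|$ whose scheme-theoretic intersection is a nodal rational curve of arithmetic genus $1$ lying in their smooth loci, and then invoke Lemma \ref{lem-nodal-curve}. The new input is Lemma \ref{lem-dP1-nodal-element}, which supplies a smooth element $H_1 \in |H|$ together with an irreducible nodal curve $C \in |-K_{H_1}|$ on it.

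First, by adjunction $H|_{H_1} \sim -K_{H_1}$, and from the short exact sequence
\[
0\to \OOO_X \to \OOO_X(H)\to \OOO_{H_1}(-K_{H_1})\to 0
\]
together with the Kodaira vanishing $H^1(X,\OOO_X)=0$, the restriction map $H^0(X, \OOO_X(H))\to H^0(H_1, -K_{H_1})$ is surjective. Therefore there exists a pencil $\mathcal{H}_C \subset |H|$ (the fiber of this surjection over $[C]\in \mathbb{P}(H^0(H_1,-K_{H_1}))$, projectivized together with the class of $H_1$) of divisors $H'_2 \in |H|$ satisfying $H'_2|_{H_1}=C$. Two generic members of $\mathcal{H}_C$ intersect exactly along $C$, so the scheme-theoretic base locus of $\mathcal{H}_C$ equals $C$ set-theoretically; in fact the same argument as in Lemma \ref{lemma:Fano del Pezzo} shows it equals $C$ scheme-theoretically as well.

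Second, I want to pick a smooth $H_2 \in \mathcal{H}_C$. Since $C$ is reduced and its unique singular point is a node (so the tangent space there is $2$-dimensional), the hypotheses of Lemma \ref{lem-smooth-section} apply to $\mathcal{H}_C$, and a general $H_2 \in \mathcal{H}_C$ is smooth. Thus $H_1$ and $H_2$ are smooth surfaces in $X$, their intersection $C$ lies in both smooth loci, and $C$ is a rational curve of arithmetic genus $1$ with one node.

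Finally, set $D=H_1+H_2$. Then $K_X+D\sim 0$, and by inversion of adjunction together with the fact that $(H_1,C)$ is lc (being snc away from the node and nodal at the node), the pair $(X,D)$ is lc. Lemma \ref{lem-nodal-curve} now gives $\dim \mathcal{D}(X,D)=2$, hence $\coreg(X)=0$. The only delicate point is the existence of a smooth $H_2$ in the pencil $\mathcal{H}_C$, but this is handled by Lemma \ref{lem-smooth-section} once we have checked that the node of $C$ has a two-dimensional tangent space.
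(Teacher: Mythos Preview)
Your proof is correct and follows essentially the same route as the paper's: produce a smooth $H_1\in|H|$ with a nodal anti-canonical curve $C$ via Lemma~\ref{lem-dP1-nodal-element}, lift $C$ to a second member $H_2\in|H|$ using the surjectivity from the exact sequence, and then apply Lemma~\ref{lem-nodal-curve}. The only minor difference is that the paper obtains smoothness of a general $H_2$ in the pencil simply by observing that $H_1$ itself already lies in $\mathcal{H}_C$ and is smooth (so smoothness, being open, holds generically), whereas you appeal to Lemma~\ref{lem-smooth-section}; your route is valid but the paper's is shorter.
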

\begin{proof}
Let $H_1$ be a general element in $|H|$. By Lemma \ref{lem-dP1-nodal-element}, there exists an element $C$ in $|-K_{H_1}|$ which is an irreducible nodal curve. Consider an exact sequence
\[
0\to H^0(X, \OOO_X) \to H^0(X, \OOO_X(H)) \to
H^0(H_1, \OOO_{H_1}(-K_{H_1}))\to 0
\]
where by adjunction we have $H|_{H_1}\sim -K_{H_1}$. It follows that there exists an element $H_2\in |H|$ such that $H_1\cap
H_2=C$. Note that in the linear system $|H-C|$ of hyperplane sections passing through $C$, there exists a smooth element $H_1$. It follows that a general element $H_2$ in this linear system is smooth as well. Put $D = H_1+H_2$. The pair $(X, D)$ is lc by inversion of
adjunction on $H_1$. 
Applying Lemma \ref{lem-nodal-curve} we conclude that
$\coreg(X)=0$.
\end{proof}

\section{Fano threefolds of the main series} 
\label{sec-prime-Fano-threefolds}
In this section, we work in the following setting. Let $X$ be a smooth Fano threefold with $\rho(X)=1$ and $i(X)=1$. They are called Fano threefolds of the main series. In what follows, by the \emph{genus} of a smooth Fano threefold $X$ of the main series we mean the number 
\[
g(X) = h^0(X, \oo(-K_X))-2.
\] 
It is known that $2\leq g\leq 12$, $g\neq 11$. By \cite[Proposition 1 and Corollary 1]{Isk89}, for any line on $X$, that is for any smooth rational curve $L$ such that $L\cdot (-K_X)=1$, we have either 
\[
N_{L/X}=\oo\oplus\oo(-1),\quad \quad \text{or} \quad \quad N_{L/X}=\oo(-1)\oplus\oo(2).
\] 
If for all lines on $X$ the second possibility is realized, then $X$ is called \emph{exotic}. It is known that for $g(X)\geq 9$ there exists a unique exotic Fano threefold $X$, namely, the Mukai-Umemura example \cite{MU82}. Moreover, a general smooth Fano threefold of the main series is not exotic, cf. \cite[Theorem 4.2.7]{IP99}. It follows that for a general smooth Fano threefold of the main series, we can always find a line with a normal bundle of the form $\oo\oplus\oo(-1)$. 

In the remainder of this section, we consider Fano threefolds of the main series with $g(X)\geq 6$. By $|H|$ we will denote the linear system of the hyperplane sections in the anti-canonical embedding of $X$, that is, $H\sim -K_X$.


\begin{lemma}
\label{lemma:1-5}
Let $X$ be a general Fano threefold in the family \textnumero\,$1.5$.
Then $\coreg(X)\leq 1$.
\end{lemma}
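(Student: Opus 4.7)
The goal is to exhibit an effective anticanonical boundary $D\in|-K_X|$ on a general $X$ in family 1.5 such that the pair $(X,D)$ is strictly log canonical with a non-Du Val singularity at some point $p\in X$. Such a boundary immediately yields $\dim\mathcal{D}(X,D)\geq 1$ via the local computation outlined below, and hence $\coreg(X)\leq 1$ by definition.

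To construct such a $D$ I plan to exploit the birational geometry of $X$ via the double projection from a line, in the spirit of Iskovskikh's method for Fano threefolds of the main series. Since a general $X$ in family 1.5 is not exotic, it contains a line $L\subset X$ with normal bundle $N_{L/X}=\oo_L\oplus\oo_L(-1)$. Let $\pi\colon \tilde X\to X$ be the blow-up of $L$, with exceptional divisor $E$. The linear system $|\pi^{*}(-K_X)-2E|$ determines, after the appropriate sequence of flops and flips, a Sarkisov link $\tilde X\dashrightarrow Y$ to a simpler Fano variety $Y$. On $Y$, an anticanonical snc boundary of coregularity~$0$ is available by the techniques of Section~\ref{sec-Fanos-large-index}, and I plan to take $D\in|-K_X|$ to be the birational transform of such a boundary back to~$X$.

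The candidate $D$ is an element of $|-K_X|$ which, due to the contractions and extractions carried out by the link, is expected to inherit either a simple elliptic singularity or a cusp singularity of type $T_{p,q,r}$ at some point $p\in X$, rather than being smooth or merely Du Val. In the simple elliptic case ($\mult_p D=3$ with smooth plane cubic tangent cone), blowing up $p$ gives $\sigma\colon X'\to X$ with exceptional divisor $F\cong\mathbb{P}^{2}$, and
\[
\sigma^{*}(K_X+D)=K_{X'}+\tilde D+(\mult_p D-2)F=K_{X'}+\tilde D+F,
\]
so $F$ appears with coefficient $1$ in the log pullback. The intersection $\tilde D\cap F$ is a smooth plane cubic inside $F\cong\mathbb{P}^{2}$, an elliptic curve; consequently the pair $(X',\tilde D+F)$ is snc and its dual complex has a vertex for $\tilde D$, a vertex for $F$, and an edge joining them. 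A parallel computation, using Lemma~\ref{lem-striclty-lc-1-complement}, handles the cusp case.

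The main obstacle is the existence of the boundary $D$ with the required singularity for a general $X$: one has to make the Sarkisov link associated with the double projection from~$L$ on $V_{10}$ explicit, identify the birational transform of the coregularity-$0$ boundary constructed on~$Y$, and carry out a local analysis to check that the resulting divisor $D\subset X$ acquires a strictly log canonical non-Du Val singularity rather than a mild (Du Val or klt) one or a non-lc one. The criteria of Section~\ref{sec-singularity-theory} are tailored for precisely this local verification.
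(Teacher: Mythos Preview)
Your overall strategy---use the Sarkisov link given by the double projection from a line, construct a boundary on the target, and transport it back---is exactly the framework the paper employs. However, the execution you outline diverges from the paper's in two significant ways, and one of these creates a genuine gap.

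First, the target of the link for $X_{10}$ is not a smooth del Pezzo threefold of index~$\geq 2$: it is a \emph{singular} complete intersection $V=V_{2\cdot 3}\subset\mathbb{P}^5$ with eleven ordinary double points, containing the image $F\simeq\mathbb{F}_1$ of the exceptional divisor $E$ as a cubic scroll. Hence Section~\ref{sec-Fanos-large-index} does not directly furnish a coregularity-$0$ boundary on the target, and your plan to invoke it needs a different justification. Second, and more importantly, the paper does not try to produce an \emph{irreducible} hyperplane section of $X$ with a simple-elliptic or cusp point. Instead it exploits the fact that the hyperplane section $\{x_5=0\}$ of $V$ is \emph{reducible}: one has $D_V=F+R$ with $R$ a smooth cubic surface, and $R|_F\in|-K_F-e|$ corresponds (via the obvious birational map $\mathbb{P}^2\dashrightarrow F$) to a plane cubic which is smooth for general parameters. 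So $F$ and $R$ are two smooth surfaces meeting transversally along a smooth elliptic curve; the pair $(V,F+R)$ is therefore lc with $\dim\mathcal{D}(V,F+R)=1$ on the nose, with no delicate local analysis required. Since $\alpha$ is small and $\phi$ contracts $E$ (the strict transform of $F$), the dual complex is preserved all the way back to $X$.

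Your route---transforming a coregularity-$0$ boundary on the target into a single $D\in|-K_X|$ and then detecting a simple elliptic singularity at a point of $X$---is in principle coherent, but it buys you a harder verification: you would need to identify the singular point, compute the tangent cone, and check that Lemma~\ref{lem-striclty-lc-1-complement} applies, none of which is automatic. The paper's approach sidesteps this entirely by staying with the reducible boundary $E+R'$ on $Y$, where the intersection curve is manifestly a smooth elliptic curve.
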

\begin{proof}
Let $X$ be a general Fano threefold in the family $1.5$. It is known that $X$ can be realized as a section of the Grassmannian $\mathrm{Gr}(2,5)\subset\mathbb{P}^9$ by a subspace of codimension $2$ and a quadric. We have $g(X)=6$ and $X\subset \mathbb{P}^7$. Let $L\subset X$ be a general line on $X$, so $N_{L/X}=\oo\oplus \oo(-1)$. 
By \cite[Theorem 5.3]{Isk78} the variety $X$ is birational to a singular threefold which is the intersection of a quadric and a cubic hypersurfaces in $\mathbb{P}^5$. More precisely, there exists the following diagram 
\[
\begin{tikzcd}
Y \ar[rrd, "\alpha"] \ar[d, "\phi"] &  & \ \\
X = X_{10} \subset
\mathbb{P}^{7} \ar[rr, dashed, "\beta"] & & V_{2\cdot 3} \subset \mathbb{P}^5
\end{tikzcd}
\]
where 
\begin{itemize}
\item
$\phi$ is a blow up of a line $L\subset X$, 
\item
the birational map $\beta$ is given by the linear system $|H-L|$, 
\item
the birational morphism $\alpha$ is given by the linear system $|H-E|$ where $E\simeq \mathbb{F}_1$ is the $\phi$-exceptional divisor. Moreover, $\alpha$ is a small contraction, and contracts precisely the strict transforms of $11$ lines on $X$ intersecting $L$, cf. \cite[6.5]{Ma83}.
\end{itemize}
The image $V=V_{2\cdot 3}$ of $\alpha$ is the intersection of a quadric and a cubic, and it has $11$ ordinary double points $p_1,\ldots, p_{11}$ as singularities. Moreover, $V$ contains the isomorphic image $F=F_3$ of $E$, and $F$ is a surface of degree $3$ in $\mathbb{P}^4\subset \mathbb{P}^5$ that passes through $p_1,\ldots, p_{11}$. 

Let $x_0, \ldots, x_5$ be the coordinates on $\mathbb{P}^5$. We may assume that surface $F\subset \mathbb{P}^4=\{x_5=0\}$ is given by the equations
\[
Q_1 = x_0 x_3 - x_1 x_2 = 0, \quad \quad \quad \quad Q_2 = x_0 x_4 - x_2^2 = 0, \quad \quad \quad \quad Q_3 = x_1 x_4 - x_2 x_3 = 0
\]
and $V$ is given by the equations
\begin{equation}
\label{eq-3-quadric-cubic}
Q_1 L_1 + Q_2 L_2 + x_5 Q= 0,\quad \quad \quad \quad
Q_3 + x_5 L = 0
\end{equation}
where $L_{i}$ and $L$ are linear forms in coordinates $x_0,\ldots, x_5$ and $Q$ is a quadratic form in $x_0,\ldots, x_5$. 

Vice versa, a general smooth Fano threefold $X_{10}\subset \mathbb{P}^7$ can be obtained in the following way. Start from an intersection of a quadric and a cubic $V\subset \mathbb{P}^5$ given by the equations \eqref{eq-3-quadric-cubic}. Then consider its small resolution $\alpha\colon Y\to V$ such that for the strict transform $E$ of $F$ one has $\alpha(E)\simeq F$. Then one can blow down $E$ to obtain a morphism $\phi\colon Y\to X$ which is the desired Fano threefold.

Consider a hyperplane section $D_V$ of $V$ given by $x_5=0$. Note that $D_V$ is reducible. In fact, $D_V=F+R$ where $R$ is a smooth surface of degree $3$ in $\mathbb{P}^4$. 
Consider a birational map $\phi\colon \mathbb{P}^2\dashrightarrow F\subset \mathbb{P}^4$ given by the formula
\[
(y_0: y_1: y_2) \mapsto (y_0^2: y_0y_2: y_0y_1: y_1y_2: y_1^2).
\]
Let $q_0=(0:0:1)$ be the point in $\mathbb{P}^2$ where $\phi$ is not defined. 
Note that $R$ restricts to $F$ as an element of the linear system $|-K_{F} - e|$ where $e$ is the $(-1)$-curve on $F\simeq \mathbb{F}_1$. One checks that $C=\phi^{-1}_*(R|_F)$ is a cubic curve in $\mathbb{P}^2$ given by the equation
\[
y_2 L_1 + y_1 L_2 = 0.
\]
In particular, for general $L_i$ the curve $C$ is smooth. Consider the pair $(V, D_V)$ where $D_V = F + R$. Then the pair $(V, D_V)$ is lc. Let $(Y, D_Y)$ be its strict transform on $Y$. It follows that $\dim \mathcal{D}(Y, D_Y)=1$, and we conclude that $\mathrm{coreg}(Y)=\mathrm{coreg}(X)\leq 1$.
\end{proof}

Note that, in the above proof, taking the forms $L_i$ to be special, namely such that the curve $C$ is a rational curve with one node and using Lemma \ref{lem-nodal-curve}, we can obtain Fano threefolds in the family $1.5$ with $\mathrm{coreg}(X)=0$.

\begin{lemma}
\label{lemma:1-6}
Let $X$ be a general Fano threefold in the family \textnumero\,$1.6$.
Then $\coreg(X)=0$.
\end{lemma}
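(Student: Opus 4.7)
The approach is to imitate the proof of Lemma~\ref{lemma:1-5}, applying Iskovskikh's double projection from a line. Let $X$ be a general Fano threefold in family~$1.6$, that is, a smooth prime Fano threefold of genus $7$ with $-K_X\sim H$ and $H^3=12$. Since $X$ is general, by \cite{Isk89} there exists a line $L\subset X$ with $N_{L/X}=\oo\oplus\oo(-1)$. The double projection from $L$, given by the linear system $|H-2L|$, produces a diagram
\[
\begin{tikzcd}
Y \ar[rrd, "\alpha"] \ar[d, "\phi"] &  & \ \\
X \ar[rr, dashed, "\beta"] & & X'
\end{tikzcd}
\]
in which $\phi$ is the blow-up of $L$, with exceptional divisor $E\simeq\mathbb{F}_1$. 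By the classification of Iskovskikh and Mukai, for family~$1.6$ the target $X'$ is a smooth three-dimensional quadric $Q\subset\mathbb{P}^4$; the morphism $\alpha$ contracts the proper transform of $E$ onto a smooth curve $\Gamma\subset Q$ and contracts the strict transforms of finitely many lines on $X$ meeting $L$.

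Next I would construct an anti-canonical complement on $Q$ of coregularity $0$ that is adapted to the contracted curve $\Gamma$. Since $Q$ has index $3$, one has $-K_Q\sim 3H_Q$; take $D_Q=H_1+H_2+H_3$ where each $H_i$ is a hyperplane section of $Q$ containing $\Gamma$. Analyzing the linear subsystem of hyperplanes through $\Gamma$, a generic triple forms a simple normal crossings configuration with a zero-dimensional stratum lying on $\Gamma$ (the argument parallels Lemma~\ref{lemma:quadrics} with the extra incidence condition through $\Gamma$), so $(Q,D_Q)$ is a log Calabi-Yau pair with $\dim\mathcal{D}(Q,D_Q)=2$.

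Finally, I would transfer the boundary back to $X$: lift $D_Q$ to $Y$ via $D_Y=\alpha^{*}D_Q$ (which is crepant by the construction of the double projection, $-K_Y=\alpha^{*}(-K_Q)$) and push forward $D_X=\phi_{*}D_Y$. Since the dual complex of a log Calabi-Yau pair is a crepant birational invariant up to PL homeomorphism (cf.~\cite{dFKX17}), we obtain $\dim\mathcal{D}(X,D_X)=\dim\mathcal{D}(Q,D_Q)=2$, which gives $\coreg(X)=0$.

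The main obstacle is the local analysis of $\alpha$ near its flopping curves (strict transforms of lines meeting $L$) and along the exceptional divisor over $\Gamma$: one must choose the sections $H_i$ generically within $|H_Q-\Gamma|$ so that their proper transforms on $Y$ meet the flopping and exceptional loci transversally and remain lc after $\phi_{*}$. Genericity of $X$ and of $L$, combined with the freedom in choosing $H_1,H_2,H_3$, should suffice to avoid any pathological configurations and preserve the two-dimensional stratum of the dual complex on~$X$.
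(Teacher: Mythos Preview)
Your proposal contains a factual error that breaks the argument. For family~$1.6$ (genus~$7$), the double projection from a line does \emph{not} land in a smooth quadric threefold; that is the picture for family~$1.9$ (genus~$10$), as used in Lemma~\ref{lemma:1-9}. For $g=7$ the linear system $|H-2L|$ produces a quintic del Pezzo fibration over $\mathbb{P}^1$, not a Fano threefold, so there is no ``$D_Q=H_1+H_2+H_3$'' to write down. A secondary inconsistency: you assert both that $\alpha$ contracts the proper transform of $E$ to a curve $\Gamma$ (a divisorial contraction) and that $-K_Y=\alpha^*(-K_Q)$. These are incompatible; a divisorial contraction of a ruled surface over a curve is never crepant, so the pullback $\alpha^*D_Q$ would not lie in $|-K_Y|$. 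Even in the genus~$10$ case the paper does not take all three hyperplanes through $\Gamma$ for exactly this reason: only the single distinguished surface $F$ containing $\Gamma$ appears in the boundary.

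The paper's actual proof follows a genuinely different route. It uses the \emph{single} projection $|H-L|$ (cf.\ \cite[Theorem~6.1(vii)]{Isk78}), under which $X$ maps birationally to a \emph{singular} complete intersection $V=V_{2\cdot 2\cdot 2}\subset\mathbb{P}^6$ with eight ordinary double points; the map $\alpha\colon Y\to V$ is a small resolution and the image of $E$ is a cubic scroll $F\simeq\mathbb{F}_1$ through the nodes. One then works with explicit equations~\eqref{eq-3-quadrics} and studies the pencil of hyperplanes $\langle x_5,x_6\rangle$: each member restricts to $V$ as $F+R$ with $R$ a residual quintic surface, and the restriction $R|_F$ traces out a pencil of cubic curves on $F$ passing through the eight marked points. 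For general $L_{ij}$ this pencil has an irreducible nodal member disjoint from the nodes; taking $D_V=F+R$ for that member and invoking Lemma~\ref{lem-nodal-curve} gives $\dim\mathcal{D}(Y,D_Y)=2$. If you wish to salvage your approach, you would have to work with the del Pezzo fibration target instead, which requires constructing a coregularity~$0$ boundary compatible with a section and a fibre---a substantially different computation.
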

\begin{proof}
Let $X$ be a general Fano threefold in the family $1.6$. Then $g(X)=7$ and $X\subset \mathbb{P}^8$. By \cite[Theorem 6.1(vii)]{Isk78} the variety $X$ is birational to a singular threefold which is the intersection of $3$ quadric hypersurfaces in $\mathbb{P}^6$. More precisely, there exists the following diagram 
\[
\begin{tikzcd}
Y \ar[rrd, "\alpha"] \ar[d, "\phi"] &  & \ \\
X = X_{12} \subset
\mathbb{P}^{8} \ar[rr, dashed, "\beta"] & & V_{2\cdot 2\cdot 2} \subset \mathbb{P}^6
\end{tikzcd}
\]
where 
\begin{itemize}
\item
$\phi$ is a blow up of a line $L\subset X$, 
\item
$\alpha$ is a small contraction, and it contracts precisely the strict transforms of $8$ lines on $X$ passing through $L$,
\item
the map $\beta$ is given by the linear system $|H-L|$, 
\item 
the  morphism $\alpha$ is given by the linear system $|H-E|$ where $E\simeq \mathbb{F}_1$ is the $\phi$-exceptional divisor. 
\end{itemize}
The image of $\alpha$ is the intersection of $3$ quadrics $V=V_{2\cdot 2\cdot 2}$ which has $8$ ordinary double points $p_1,\ldots, p_8$ as singularities. Moreover, $V$ contains the isomorphic image $F=F_3$ of $E$, and $F$ is a surface of degree $3$ in $\mathbb{P}^4\subset \mathbb{P}^6$ that passes through $p_1,\ldots, p_8$. 

Let $x_0, \ldots, x_6$ be the coordinates on $\mathbb{P}^6$. We may assume that surface $F$ is given by the equations
\[
Q_1=x_0 x_3 - x_1 x_2 = 0, \quad  \quad \quad Q_2=x_0 x_4 - x_2^2 = 0, \quad \quad \quad Q_3=x_1 x_4 - x_2 x_3 = 0.
\]
Then $V$ is given by the equations
\begin{equation}
\label{eq-3-quadrics}
Q_1 + x_5 L_{11} + x_6 L_{12} = 0, \quad \quad \quad \quad
Q_2 + x_5 L_{21} + x_6 L_{22} = 0, \quad \quad \quad \quad
Q_3 + x_5 L_{31} + x_6 L_{32} = 0
\end{equation}
where $L_{ij}$ are linear forms in coordinates $x_0,\ldots, x_6$. Vice versa, a general smooth Fano threefold $X_{12}\subset \mathbb{P}^8$ can be obtained in the following way. Start from an intersection of three quadrics $V\subset \mathbb{P}^6$ given by the equations \eqref{eq-3-quadrics}. Then consider its small resolution $\alpha\colon Y\to V$ such that for the strict transform $E$ of $F$ one has $\alpha(E)\simeq F$. Then one can blow down $E$ to obtain a morphism $\phi\colon Y\to X$ which is the desired Fano threefold.

Consider a pencil of hyperplane sections $\mathcal{H}$ spanned by $x_5$ and $x_6$ on $V$. Elements in $\mathcal{H}$ on $V$ are reducible. In fact, each such element is the union of $F$ and a surface $R$ of degree $5$. Let $\mathcal{R}$ be the pencil of such residual surfaces. One checks that $R\in \mathcal {R}$ restricts to $F$ as an element of the linear system $|-K_{F}-\sum p_i|$ on $F\simeq \mathbb{F}_1$. 
Consider a birational map $\phi\colon \mathbb{P}^2\dashrightarrow F\subset \mathbb{P}^4$ given by the formula
\[
(y_0: y_1: y_2) \mapsto (y_0^2: y_0y_2: y_0y_1: y_1y_2: y_1^2).
\]
One checks that $\mathcal{C}=\phi^{-1}_*(\mathcal{R}|_F)$ is a pencil of cubic curves in $\mathbb{P}^2$ spanned by the curves $C_1$ and $C_2$ given by the equations
\[
y_0 L_{31} + y_1 L_{11} - y_2 L_{21} = 0 \quad \quad \quad \text{and} \quad \quad \quad
y_0 L_{32} + y_1 L_{12} - y_2 L_{22} = 0.
\]
Moreover, the base points of $\mathcal{C}$ are precisely the points $q_0, q_1,\ldots q_8$ in $\mathbb{P}^2$ where $q_0=(0:0:1)$ is the point where $\phi$ is not defined and $\phi(q_i) = p_i$ for $1\leq i\leq 8$. For general $L_{i, j}$, the degenerate curves in this linear systems are irreducible rational curves with one node.

Consider the pair $(V, D_V)$ where $D_V = F + R$ and $R\in \mathcal{R}$ is such an element that $R|_F$ is an irreducible curve with one node, and this node is disjoint from $p_1,\ldots p_8$. Then the pair $(V, D_V)$ is lc by inversion of adjunction on $F$. Let $(Y, D_Y)$ be its strict transform on $Y$. By Lemma \ref{lem-nodal-curve} wee see that $\dim \mathcal{D}(Y, D_Y)=2$. We conclude that $\mathrm{coreg}(Y)=\mathrm{coreg}(X)=0$.
\end{proof}

\begin{lemma}
\label{lemma:1-7}
Let $X$ be a general Fano threefold in the family \textnumero\,$1.7$.
Then $\coreg(X)=0$.
\end{lemma}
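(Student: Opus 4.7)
The plan is to run the same double-projection strategy as in Lemmas \ref{lemma:1-5} and \ref{lemma:1-6}. A general member $X$ in family $1.7$ is a smooth Fano threefold $X_{14}\subset \mathbb{P}^9$ of genus $8$, and by Iskovskikh's theorem the double projection from a general line $L\subset X$ (with $N_{L/X}=\oo\oplus\oo(-1)$, which exists since $X$ is non-exotic) fits into a diagram
\[
\begin{tikzcd}
Y \ar[rrd, "\alpha"] \ar[d, "\phi"] & & \\
X = X_{14}\subset \mathbb{P}^9 \ar[rr, dashed, "\beta"] & & V_3 \subset \mathbb{P}^4
\end{tikzcd}
\]
where $\phi$ is the blow up of $L$, $\beta$ is induced by $|H-2L|$, and $\alpha$ is a small contraction onto a cubic threefold $V=V_3$ whose nodes are the images of the strict transforms of the (finitely many) lines on $X$ meeting~$L$. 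The $\phi$-exceptional divisor $E\simeq\mathbb{F}_1$ is mapped isomorphically by $\alpha$ onto a surface $F\subset V$ passing through all the nodes of $V$.

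First I would write down explicit equations for $F\subset\mathbb{P}^4$ and for $V_3$, mirroring the presentations of $F_3\subset V_{2,3}$ and $F_3\subset V_{2,2,2}$ in the previous two lemmas: namely, describe $V_3$ as a cubic whose equation is a combination of the defining equations of $F$ together with additional linear and quadratic terms, and fix a birational parametrization $\psi\colon\mathbb{P}^2\dashrightarrow F$ analogous to the Veronese-type map used in Lemmas \ref{lemma:1-5} and \ref{lemma:1-6}. Next I would consider the (one-parameter) linear system $\mathcal{H}_F$ of hyperplane sections of $V_3$ containing $F$; each element of $\mathcal{H}_F$ splits as $F+R$, where $R$ is a residual surface in the relevant $\mathbb{P}^3$. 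I would then pull the linear system $\{R|_F\}\subset|(-K_F-\text{nodes})|$ back through $\psi$ to obtain a linear system of plane curves on $\mathbb{P}^2$ whose base points are the point where $\psi$ is undefined together with the preimages of the nodes of $V_3$.

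The key step is then to check that this linear system on $\mathbb{P}^2$ contains, for generic parameters defining $V_3$, an irreducible nodal rational curve whose node lies off all the base points and off the exceptional section of $E\to F$; equivalently, off $F\cap \mathrm{Sing}(V)$. Granting this, I take $D_V=F+R$ for such an $R$; the pair $(V,D_V)$ is lc by inversion of adjunction on $F$, and its strict transform $D_Y$ on $Y$ together with the nodal intersection curve satisfies the hypotheses of Lemma \ref{lem-nodal-curve}. That lemma yields $\dim\mathcal{D}(Y,D_Y)=2$, and pushing down via $\phi$ to $X$ (which is a log crepant birational contraction) gives $\coreg(X)=0$.

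The main obstacle is the combinatorial/genericity verification underlying that last claim: because $V_3$ has fewer nodes than the targets in $1.5$ and $1.6$, the pencil of residual plane curves is larger, but one must still confirm by an explicit dimension count that a generic degenerate member is a genuine nodal rational curve rather than (for instance) a cuspidal or reducible curve whose node sits at one of the nodes of $V$ or on the exceptional section of $F\simeq\mathbb{F}_1$. Everything else, namely the explicit description of $F\subset V_3$, the Bertini-type smoothness of a general $R$, and the pullback of $D_V$ to $X$, is a routine adaptation of the computations performed for families $1.5$ and $1.6$.
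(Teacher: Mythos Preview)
Your geometric setup is incorrect: the double projection from a line on $X_{14}$ does not land on a cubic threefold. By \cite[Theorem~4.3.3]{IP99}, after blowing up a general line $L\subset X$ and performing $6$ Atiyah flops one obtains $Y'$ equipped with a standard conic bundle $\psi\colon Y'\to\mathbb{P}^2$ with discriminant curve $\Delta$ of degree~$5$; there is no small contraction $Y\to V_3$ in this picture. (It is true that $X_{14}$ is birational to a smooth cubic threefold, but that link is built differently and is not the projection from a line.) Even granting your claim, the argument could not proceed: if the image of $E\simeq\mathbb{F}_1$ were the cubic scroll $F_3\subset\mathbb{P}^4$ as in Lemmas~\ref{lemma:1-5} and~\ref{lemma:1-6}, then $F_3$ spans $\mathbb{P}^4$, so \emph{no} hyperplane section of $V_3\subset\mathbb{P}^4$ contains it and your linear system $\mathcal{H}_F$ is empty. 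In the earlier lemmas this worked only because the ambient space was $\mathbb{P}^5$ or $\mathbb{P}^6$, leaving room for hyperplanes through $F_3$.

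The paper's actual argument uses the conic bundle. The flopped image $E'$ of $E$ is a bisection of $\psi$, so $\psi|_{E'}\colon E'\to\mathbb{P}^2$ is a double cover branched over a smooth quartic $\Gamma_4$. For a general line $l\subset\mathbb{P}^2$ tangent to $\Gamma_4$ and transverse to $\Delta$, the surface $H'_l=\psi^{-1}(l)$ is smooth, $H'_l+E'\sim -K_{Y'}$, and $H'_l\cap E'$ is an irreducible nodal rational curve of arithmetic genus~$1$. Lemma~\ref{lem-nodal-curve} then yields $\dim\mathcal{D}(Y',H'_l+E')=2$, and hence $\mathrm{coreg}(X)=0$.
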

\begin{proof}
Let $X$ be a general Fano threefold in the family $1.7$. Then $g(X)=8$ and $X\subset \mathbb{P}^{9}$. Since $X$ is general, we may assume that it is not exotic. Consider a general line $L$ on $X$. It is known that $N_{L/X} = \oo\oplus\oo(-1)$. Then exactly $6$ other lines $L_i$ intersect $L$. Moreover, $N_{L_i/X} = \oo\oplus\oo(-1)$. By 
\cite[Theorem 4.3.3]{IP99}, the variety $X$ can be included in the following diagram 
\[
\begin{tikzcd}
Y \ar[rr, dashed, "\alpha"] \ar[d, "\phi"] & & Y' \ar[d, "\psi"] \\
X = X_{14} \ar[rr, dashed, "\beta"] & & \mathbb{P}^2
\end{tikzcd}
\]
where 
\begin{itemize}
\item
$\phi$ is a blow up of a line $L\subset X$ with the $\phi$-exceptional divisor $E$, 
\item
the map $\beta$ is given by the linear system $|H-2L|$ where $H$ is a hyperplane section, 
\item
$\psi$ is a standard conic bundle with the discriminant curve $\Delta\subset \mathbb{P}^2$ of degree $5$,
\item
the map $\alpha$ is the composition of $6$ Atiyah flops with the centers in $L'_i$ where $L'_{i}$ are the strict transforms of $L_i$ on $Y$. 
\end{itemize}
One has $E\simeq \mathbb{F}_1$. Put $E'=\alpha(E)$.  Also,  
\[
(H_Y-2E)|_E\sim 2s+3f\sim -K_E
\]
where $H_Y$ is a strict transform of $H$ on $Y$, $f$ is a ruling and $s$ is a unique $(-1)$-curve on the Hirzebruch surface $E$. 
Note that the restriction $\psi|_{E'}\colon E'\to \mathbb{P}^2$ is a double cover. 

By the Hurwitz formula, the ramification curve $\Gamma_4\subset \mathbb{P}^2$ of $\psi|_{E'}$ has degree $4$. Since $E'$ is smooth, $\Gamma_4$ is smooth as well. Consider a general line $l\subset\mathbb{P}^2$ that is tangent to $\Gamma_4$. Then the preimage $l_{E'}=\psi|_{E'}^{-1}(l)$ is a rational curve with one node. Put $H'_l=\psi^{-1}(l)$. Note that for a general $l$ the surface $H'_l$ is smooth since $l$ intersects the discriminant curve $\Delta$ of the standard conic bundle $\psi$ transversally. Observe that $H'_l + E' \sim -K_{Y'}$. Since both $H'_l$ and $E'$ are smooth and $l_{E'}=H'_l\cap E'$ is a nodal curve, by inversion of adjunction the pair $(Y', H'_l + E)$ is lc. Apply Lemma \ref{lem-line-conic} to the pair $(Y, F' + H'_1)$ to conclude that $\dim \mathcal{D}(Y, H'_l + E')=2$, and hence $\dim \mathcal{D}(Y, H_l + E)=2$ where $H_l$ is the strict transform of $H'_l$ on $Y$. Consequently, $\mathrm{coreg}(X)=0$.
\end{proof}

\begin{lemma}
\label{lemma:1-8}
Let $X$ be a general Fano threefold in the family \textnumero\,$1.8$.
Then $\coreg(X)=0$.
\end{lemma}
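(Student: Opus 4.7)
The plan is to mimic the strategy of Lemma~\ref{lemma:1-7}. Since $X$ lies in family $1.8$, we have $g(X)=9$ and, for general $X$, the threefold is not exotic, so we may pick a general line $L\subset X$ with $N_{L/X}\simeq \oo\oplus\oo(-1)$. By Iskovskikh's double projection from $L$ (\cite[Theorem 4.3.3]{IP99}), we obtain a diagram
\[
\begin{tikzcd}
Y \ar[rr, dashed, "\alpha"] \ar[d, "\phi"] & & Y' \ar[d, "\psi"] \\
X \ar[rr, dashed, "\beta"] & & V
\end{tikzcd}
\]
in which $\phi$ is the blow-up of $L$ with exceptional divisor $E\simeq \mathbb{F}_1$, the map $\beta$ is given by $|H-2L|$, the map $\alpha$ is a composition of Atiyah flops in the strict transforms of the finitely many lines on $X$ meeting $L$, and $\psi$ is the resulting morphism whose target $V$ has a known simple structure for $g=9$.

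First I would identify the target $V$ precisely, together with the image $E'=\alpha(E)$ of the exceptional divisor; for $g=9$ the variety $V$ is well-understood from~\cite{IP99} and supports either a conic/quadric bundle or a direct embedding that makes the construction of boundaries transparent. Next I would build an snc anti-canonical boundary $D_{Y'}=H'+E'$ on $Y'$ by choosing $H'$ as a general member of the appropriate linear system so that the scheme-theoretic intersection $H'\cap E'$ is either an irreducible anti-canonical curve with one node (as in Lemma~\ref{lem-nodal-curve}) or a reducible curve whose components meet transversely in the smooth loci of $H'$ and $E'$ (as in Lemma~\ref{lem-line-conic}). In either case, the invoked lemma yields $\dim\mathcal{D}(Y',D_{Y'})=2$.

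Finally I would transport $D_{Y'}$ across the flop $\alpha$ (an isomorphism in codimension~$1$) and push forward along $\phi$ to produce a log Calabi--Yau pair $(X,D)$ of regularity $2$, hence of coregularity~$0$. The main obstacle is the existence of a suitable $H'$ with the prescribed singular intersection $H'\cap E'$. This step is routine in principle but requires a Bertini-type analysis in the spirit of Lemma~\ref{lem-smooth-section}: the pencil (or linear system) of anti-canonical divisors on $Y'$ containing the distinguished section of $E'$ must be shown to contain a member whose restriction to $E'$ has exactly one ordinary node (or the required transverse reducible configuration), a property that should hold generically by a dimension count on the locus of singular divisors. Verifying this genericity on the concrete target $V$ associated to $g=9$ is the only non-formal step in the argument.
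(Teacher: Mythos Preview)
Your outline is essentially the paper's strategy, but the proposal stops precisely where the actual work begins, and the citation you give points you in the wrong direction. For $g=9$ the relevant result is \cite[Theorem~4.3.7(iii)]{IP99}, not Theorem~4.3.3: the map $\psi$ is \emph{not} a conic bundle but the blow-up $Y'=\mathrm{Bl}_\Gamma\mathbb{P}^3$ of a smooth non-hyperelliptic curve $\Gamma\subset\mathbb{P}^3$ of degree~$7$ and genus~$3$. Moreover $\Gamma$ lies on a unique irreducible cubic surface $F$ (with at worst du Val singularities for a general line $L$), and under the flop $\alpha$ the divisor $E'$ you want to use is exactly the strict transform $F'$ of $F$, so that $-K_{Y'}\sim F'+H$ with $H$ a plane class.

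Once this is known, the ``non-formal'' step you worry about dissolves: the boundary is built directly on $\mathbb{P}^3$ as $D=F+H_1$, where $H_1$ is a plane chosen through a line $L_0$ on the cubic $F$. The residual intersection $F\cap H_1$ is then automatically $L_0+C$ with $C$ a conic meeting $L_0$ transversally in two points, and Lemma~\ref{lem-line-conic} applies with $N_{L_0/\mathbb{P}^3}=\oo(1)\oplus\oo(1)$ (case~(3) there) to give $\dim\mathcal{D}(\mathbb{P}^3,D)=2$. No genericity dimension count on singular anti-canonical divisors is needed; the reducible hyperplane section of a cubic surface is a classical and entirely explicit object. Pulling back through $\psi$ and transporting across the flop $\alpha$ finishes the argument as you indicate.
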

\begin{proof}
Let $X$ be a general Fano threefold in the family $1.8$. Then $g(X)=9$ and $X\subset \mathbb{P}^{10}$. Since $X$ is general, we may assume that it is not exotic. By
\cite[Theorem 4.3.7(iii)]{IP99} the variety $X$ can included in the following diagram 
\[
\begin{tikzcd}
Y \ar[rr, dashed, "\alpha"] \ar[d, "\phi"] &  & Y' = \mathrm{Bl}_\Gamma\mathbb{P}^3 \ar[d, "\psi"]  \\
X = X_{16} \subset
\mathbb{P}^{10} \ar[rr, dashed, "\beta"] & & \mathbb{P}^3 
\end{tikzcd}
\]
where 
\begin{itemize}
\item
$\phi$ is the blow up of a line on $X$, 
\item
$\alpha$ is a composition of flops, 
\item
the map $\beta$ is given by the linear system $|H-2L|$ where $H$ is a hyperplane section, 
\item 
$\psi$ is blow up of a smooth non-hyperelliptic curve $\Gamma\subset\mathbb{P}^3$ of degree $7$ and genus $3$. 
\end{itemize}
It is known that $\Gamma$
lies on a unique irreducible cubic
surface $F=F(\Gamma)$ with at worst du Val singularities provided that the line on $X$ is general. 
Consider the
following boundary on $\mathbb{P}^3$: $D = F + H_1$ where $H_1$ is a
plane that intersects $F$ in a union of a line $L$ and a conic $C$ that intersect transversally. By inversion of adjunction the $(\mathbb{P}^3, D)$ is lc.  We have
\begin{equation}
\psi^*(K_{\mathbb{P}^3} + F + H_1) = K_{Y'} + F' + H'_1 \sim 0, \quad \ K_{Y} + F'' + H''_1 \sim 0, \quad \ \phi_*(K_{Y} + F'' + H''_1) = K_X + H \sim 0
\end{equation}
where $F'$ and $H'_1$ are the strict transform of $F$ and $H_1$ on $Y'$, $F''$ and $H''_1$ are their strict transforms on~$Y$. It follows that the pair $(X, H)$ is lc. Apply Lemma \ref{lem-line-conic} to the pair $(Y, F' + H'_1)$ to conclude that $\dim \mathcal{D}(Y, F' + H'_1)=2$. Consequently, $\mathrm{coreg}(X)=0$.
\end{proof}

\begin{lemma}
\label{lemma:1-9}
Let $X$ be a general Fano threefold in the family \textnumero\,$1.9$.
Then $\coreg(X)=0$.
\end{lemma}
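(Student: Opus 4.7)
The plan is to mimic the proof of Lemma~\ref{lemma:1-8}, replacing $\mathbb{P}^3$ by a smooth three-dimensional quadric $Q\subset\mathbb{P}^4$ and the cubic surface there by a quartic del Pezzo $F=Q\cap Q_2\subset Q$. By \cite[Theorem~4.3.7]{IP99}, a general (in particular, non-exotic) $X=X_{18}$ in family~$1.9$ fits, for a general line $L\subset X$, into a double projection diagram
\[
\begin{tikzcd}
Y \ar[rr, dashed, "\alpha"] \ar[d, "\phi"] & & Y'=\mathrm{Bl}_\Gamma Q \ar[d, "\psi"] \\
X=X_{18}\subset\mathbb{P}^{11} \ar[rr, dashed, "\beta"] & & Q\subset\mathbb{P}^4,
\end{tikzcd}
\]
where $\phi$ is the blow-up of $L$, $\beta$ is given by $|H-2L|$, $\alpha$ is a composition of Atiyah flops in the strict transforms of the finitely many lines of $X$ meeting $L$, and $\psi$ blows up a smooth irreducible curve $\Gamma\subset Q$ of degree~$7$ and arithmetic genus~$2$.

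The dimension count $h^0(Q,\OOO_Q(2H_Q))=14>13=h^0(\Gamma,\OOO_\Gamma(2H_Q))$ guarantees that $\Gamma$ lies on at least one quadric section $F=Q\cap Q_2$ of $Q$; for general $\Gamma$ this $F$ is unique, irreducible and a del Pezzo surface of degree~$4$ with at worst du Val singularities. Pick a $(-1)$-curve $L_0\subset F$ contained in the smooth locus of $F$ (degree-$4$ del Pezzos carry only finitely many lines, so a generic one avoids the finitely many du Val points) and a general hyperplane $H_1\in|H_Q|$ containing $L_0$; the hyperplanes through $L_0$ sweep out a $\mathbb{P}^2$-pencil, so by Bertini the residual intersection on $F$ reads
\[
H_1\cap F = L_0 + C_3,
\]
with $C_3$ a smooth twisted cubic meeting $L_0$ transversally in two points. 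This is dictated by the intersection calculus on $F$: $C_3\sim -K_F-L_0$, $C_3^2 = 4-2-1=1$, $-K_F\cdot C_3 = 4-1=3$ and $L_0\cdot C_3 = 1-(-1)=2$, so $L_0+C_3$ is a nodal anticanonical curve on $F$ of arithmetic genus one.

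Since $F+H_1\sim 3H_Q\sim -K_Q$, the pair $(Q,D:=F+H_1)$ is log Calabi--Yau, and inversion of adjunction applied to $F$ (whose restriction $L_0+C_3$ is lc) shows that $(Q,D)$ is lc. Pulling back along $\psi$, transferring across the flops $\alpha$, and contracting via $\phi$ produces an lc $1$-complement $H\in|-K_X|$. Finally, a line on the smooth quadric threefold has $N_{L_0/Q}=\OOO\oplus\OOO(1)$ and $L_0\cdot F=2H_Q\cdot L_0=2$, so case~(2) of Lemma~\ref{lem-line-conic} applied to $(Q,F+H_1)$ gives $\dim\mathcal{D}(Q,F+H_1)=2$; by birational invariance of the dual complex for lc log Calabi--Yau pairs this yields $\dim\mathcal{D}(X,H)=2$, hence $\coreg(X)=0$. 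The main obstacle is the verification of the two geometric facts underlying the construction: uniqueness and du Val singularity type of the quadric section $F=F(\Gamma)$ for a general curve $\Gamma$, and the smoothness and transversality of the residual twisted cubic $C_3$ for a generic hyperplane through $L_0$. Both should reduce to standard dimension counts together with Bertini's theorem once one controls the Hilbert scheme of degree-$7$, genus-$2$ curves on smooth quadric threefolds; the flops $\alpha$ and the blow-downs $\phi$, $\psi$ are then handled exactly as in the proof of Lemma~\ref{lemma:1-8}.
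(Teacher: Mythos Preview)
Your proof is correct and follows essentially the same approach as the paper: the double-projection diagram to $Q$, the boundary $F+H_1$ with $F\sim 2H_Q$ and $H_1\cap F=L_0+C_3$, and the appeal to Lemma~\ref{lem-line-conic} are all identical to the paper's argument. You supply more explicit detail (the dimension count for $F$, the intersection numbers on $F$, the normal-bundle check for case~(2)) and apply Lemma~\ref{lem-line-conic} directly on $Q$ rather than on the flopped blow-up $Y$, but these are cosmetic differences.
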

\begin{proof}
Let $X$ be a general Fano threefold in the family $1.9$. Then $g(X)=10$ and $X\subset \mathbb{P}^{11}$. By
\cite[Theorem 4.3.7(ii)]{IP99} the variety $X$ can included in the following diagram 
\[
\begin{tikzcd}
Y \ar[rr, dashed, "\alpha"] \ar[d, "\phi"] &  & Y' = \mathrm{Bl}_\Gamma Q \ar[d, "\psi"]  \\
X = X_{18} \subset
\mathbb{P}^{11} \ar[rr, dashed, "\beta"] & & Q
\end{tikzcd}
\]
where 
\begin{itemize}
\item
$\phi$ is the blow up of a line on $X$, 
\item
$\alpha$ is a composition of flops, 
\item
the map $\beta$ is given by the linear system $|H-2L|$ where $H$ is a hyperplane section, 
\item 
$\psi$ is the blow up of a smooth curve $\Gamma\subset Q$ of degree $7$ and genus $2$ on  a smooth quadric $Q\subset\mathbb{P}^4$. 
\end{itemize}
It is known that $\Gamma$ lies on a unique irreducible surface $F$ such that $F\sim 2H_Q$ where $H_Q= \OOO_Q(1)$. 
Consider the
following boundary on $Q$: $D = F + H_1$ where $H_1$ is a
hyperplane section that intersects $F$ in a union of a line $L$ and a twisted cubic $C$. Let $F'$ and $H'_1$ be the strict transform of $F$ and $H_1$ on $Y'$, and let $Y''$ and $H''_1$ be their strict transforms on $Y$. Apply Lemma \ref{lem-line-conic} to the pair $(Y, F'' + H''_1)$ to conclude that $\dim \mathcal{D}(Y, F' + H'_1)=2$. Consequently, $\mathrm{coreg}(X)=0$.
\end{proof}

\begin{lemma}
\label{lemma:1-10}
Let $X$ be a general Fano threefold in the family \textnumero\,$1.10$.
Then $\coreg(X)=0$.
\end{lemma}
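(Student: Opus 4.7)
The plan is to follow exactly the strategy of Lemmas~\ref{lemma:1-8} and~\ref{lemma:1-9}, applying the Iskovskikh double projection from a general line $L \subset X$. The variety in family $1.10$ is a prime Fano threefold of genus $12$, i.e., $X = X_{22} \subset \mathbb{P}^{13}$, and by \cite[Theorem 4.3.7]{IP99} the double projection from a general line on $X$ realizes it inside a diagram
\[
\begin{tikzcd}
Y \ar[rr, dashed, "\alpha"] \ar[d, "\phi"] &  & Y' = \mathrm{Bl}_\Gamma \mathbb{P}^3 \ar[d, "\psi"] \\
X = X_{22} \subset \mathbb{P}^{13} \ar[rr, dashed, "\beta"] & & \mathbb{P}^3
\end{tikzcd}
\]
where $\phi$ is the blow up of $L$, $\alpha$ is a composition of flops, $\beta$ is the birational map given by $|H - 2L|$ for $H$ a hyperplane section, and $\psi$ is the blow up of a smooth rational normal quintic $\Gamma \subset \mathbb{P}^3$.

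First I would construct an anticanonical boundary $D_{\mathbb{P}^3} = F + H_1$ on $\mathbb{P}^3$ with $F$ an irreducible cubic surface containing $\Gamma$ and having at worst du Val singularities, and $H_1$ a plane; this gives $D_{\mathbb{P}^3} \sim 4H = -K_{\mathbb{P}^3}$. Such a cubic exists for a general choice of $\Gamma$: for a rational normal quintic one checks that $h^0(\mathcal{I}_\Gamma(3)) \geq 4$, so the linear system of cubics through $\Gamma$ has positive projective dimension, and a general element is an irreducible cubic surface with at worst du Val singularities. Next I would choose $H_1$ to contain a line $L_1 \subset F$, so that $H_1 \cap F = L_1 + C$ is the union of $L_1$ and a smooth conic $C$, meeting $L_1$ transversally in two points (the standard hyperplane section through a line on a cubic surface). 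By inversion of adjunction on $F$, the pair $(\mathbb{P}^3, F + H_1)$ is lc.

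Then, exactly as in the proofs of Lemmas~\ref{lemma:1-8} and~\ref{lemma:1-9}, pulling back through $\psi$ gives $K_{Y'} + F' + H_1' \sim 0$; the composition of flops $\alpha$ transports this to an lc anticanonical complement $K_Y + F'' + H_1'' \sim 0$ on $Y$; and pushing forward through $\phi$ produces an anticanonical complement $K_X + H \sim 0$ on $X$. To evaluate the dual complex, I would apply Lemma~\ref{lem-line-conic}(3) to the pair $(Y', F' + H_1')$ on the $\mathbb{P}^3$-side: the line $L_1$ in $\mathbb{P}^3$ has normal bundle $N_{L_1/\mathbb{P}^3} = \oo(1) \oplus \oo(1)$ and satisfies $L_1 \cdot F = 3$, putting us in case~(3) of that lemma, which yields $\dim \mathcal{D}(Y', F' + H_1') = 2$. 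Since the double projection preserves the dual complex of the lc anticanonical pair, this forces $\mathrm{coreg}(X) = 0$.

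The main obstacle is the appeal to the precise form of the double projection for $V_{22}$, namely the identification of the right-hand side of the diagram with the blow up of $\mathbb{P}^3$ along a smooth rational quintic, together with the verification that for a general line $L \subset X$ the resulting curve $\Gamma$ is contained in a cubic surface with at worst du Val singularities. Once these classical facts are invoked, the remainder is a direct application of Lemma~\ref{lem-line-conic}(3), with the numerical verification $L_1 \cdot F = 3$ and $N_{L_1/\mathbb{P}^3} = \oo(1) \oplus \oo(1)$ being immediate.
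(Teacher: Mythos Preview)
Your overall strategy---build an lc anticanonical boundary on the target of the double projection and transport it back through the diagram via Lemma~\ref{lem-line-conic}---is exactly right, and matches the paper. But you have misidentified the target of the double projection. For a prime Fano threefold $X_{22}$ of genus $12$, the double projection from a general line lands on the del Pezzo threefold $V_5$, not on $\mathbb{P}^3$: by \cite[Theorem~4.3.7(i)]{IP99} the right-hand column is $Y'=\mathrm{Bl}_\Gamma V_5 \to V_5$, where $\Gamma\subset V_5$ is a smooth rational curve of degree $5$. (You may be conflating this with the genus~$9$ case of Lemma~\ref{lemma:1-8}, where the target genuinely is $\mathbb{P}^3$; the pattern across Lemmas~\ref{lemma:1-8}--\ref{lemma:1-10} is $\mathbb{P}^3$, $Q$, $V_5$ as the genus increases.) As a side remark, a ``rational normal quintic'' lives in $\mathbb{P}^5$, not $\mathbb{P}^3$, so the phrase is already a warning sign.

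Once the target is corrected, the construction changes accordingly. The curve $\Gamma$ lies on a unique irreducible surface $F\sim H_{V_5}$ (a hyperplane section, hence a del Pezzo surface of degree~$5$), and one takes $D=F+H_1$ with $H_1$ a hyperplane section meeting $F$ in a line $L$ plus a smooth rational quartic $C$, as in Lemma~\ref{lemma:Fano del Pezzo}. On $V_5$ a line has $N_{L/V_5}=\oo\oplus\oo$ or $\oo(-1)\oplus\oo(1)$ and $L\cdot F=1$, so you invoke case~(1) of Lemma~\ref{lem-line-conic}, not case~(3). With these corrections your argument goes through and coincides with the paper's.
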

\begin{proof}
Let $X$ be a Fano threefold in the family $1.10$ different from the Mukai-Umemura example, so it is not exotic. Then $g(X)=12$ and $X\subset \mathbb{P}^{13}$. By
\cite[Theorem 4.3.7(i)]{IP99} the variety $X$ can included in the following diagram 
\[
\begin{tikzcd}
Y \ar[rr, dashed, "\alpha"] \ar[d, "\phi"] &  & Y' = \mathrm{Bl}_\Gamma V_5 \ar[d, "\psi"]  \\
X = X_{22} \subset
\mathbb{P}^{13} \ar[rr, dashed, "\beta"] & & V_5
\end{tikzcd}
\]
where 
\begin{itemize}
\item
$\phi$ is the blow up of a line on $X$, 
\item
$\alpha$ is a composition of flops, 
\item
the map $\beta$ is given by the linear system $|H-2L|$ where $H$ is a hyperplane section, 
\item 
$\psi$ is blow up of a smooth rational curve $\Gamma\subset V_5$ of degree $5$ where $V_5$ is a del Pezzo threefold of degree $5$. 
\end{itemize}
It is known that $\Gamma$ lies on a unique irreducible surface $F$ such that $F\sim H_{V_5}$ where $H_{V_5}=\OOO_{V_5}(1)$. 
Consider the boundary $D$ on $V_5$ where $D = F + H_1$ and $H_1$ is a
hyperplane section that intersects $F$ in a union of a line $L$ and a smooth rational curve $C$ of degree $4$. Let $F'$ and $H'_1$ be the strict transform of $F$ and $H_1$ on $Y'$, and let $Y''$ and $H''_1$ be their strict transforms on $Y$. Apply Lemma \ref{lem-line-conic} to the pair $(Y, F' + H'_1)$ to conclude that $\dim \mathcal{D}(Y, F' + H'_1)=2$. Consequently, $\mathrm{coreg}(X)=0$.
\end{proof}


\section{Sextic double solid}
\label{sec-sextic-double-solid}
In this section, we consider smooth Fano threefolds in the family \textnumero\,1.1. Our goal is to prove the following 
\begin{proposition}
\label{cor-gen-sextic-double-solid}
Let $X$ be a general sextic double solid, that is, a smooth Fano threefold in the family \textnumero\,1.1. Then $\mathrm{coreg}(X)\geq 1$.
\end{proposition}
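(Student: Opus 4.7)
The plan is to apply Theorem~\ref{thm-1-2-complements} and verify both $\mathrm{coreg}_1(X)\geq 1$ and $\mathrm{coreg}_2(X)\geq 1$, where $\pi\colon X\to\mathbb{P}^3$ denotes the double cover with smooth branch sextic surface $B\subset\mathbb{P}^3$. The crucial structural input is that $\pi_\ast\OOO_X=\OOO_{\mathbb{P}^3}\oplus\OOO_{\mathbb{P}^3}(-3)$, which yields $|{-K_X}|=\pi^\ast|\OOO_{\mathbb{P}^3}(1)|$ and $|{-2K_X}|=\pi^\ast|\OOO_{\mathbb{P}^3}(2)|$. Since $\mathrm{Pic}(X)=\mathbb{Z}\cdot(-K_X)$, every prime divisor on $X$ is a pullback from $\mathbb{P}^3$, so I can enumerate the possible $1$- and $2$-complements: a $1$-complement is a preimage $D=\pi^{-1}(H)$ of a hyperplane, while a $2$-complement is one of $\pi^{-1}(H)$, $\tfrac{1}{2}\pi^{-1}(Q)$ for an irreducible quadric $Q$, or $\tfrac{1}{2}(\pi^{-1}(H_1)+\pi^{-1}(H_2))$ for two distinct hyperplanes.

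For the $1$-complement case, $D=\pi^{-1}(H)$ is irreducible (as $H\not\subset B$) and is a double cover of $\mathbb{P}^2$ branched in the plane sextic $C=H\cap B$. If $\dim\mathcal{D}(X,D)=2$, then since $D^{=1}=D$ is a single prime component, a dlt modification of $(X,D)$ must extract at least two exceptional lc places whose $0$-dimensional strata meet $\widetilde D$ at a common point, forcing $(X,D)$ to be strictly lc at some point $P\in D$. I would apply Lemma~\ref{lem-striclty-lc-1-complement} at $P$ together with the explicit double-cover form $w^{2}=g(x,y)$, where $g$ is the local equation of $C$ at $\pi(P)$, in order to classify the possible singularity types of $D$ at $P$; a case analysis then verifies that the dlt modification extracts only a single additional lc place over $P$, so $\dim\mathcal{D}(X,D)\leq 1$. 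The few configurations that would formally permit a second lc place (higher-order tangencies producing, e.g., long cusp singularities on $D$) require $C$ to have very degenerate singularities, and a parameter count on hyperplane sections of a general smooth sextic surface $B\subset\mathbb{P}^3$ rules them out for general $X$.

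The $2$-complement case is treated in parallel. When $D=\tfrac{1}{2}\pi^{-1}(Q)$ or $D=\tfrac{1}{2}(\pi^{-1}(H_1)+\pi^{-1}(H_2))$, no prime component of $D$ carries coefficient $1$, so $\dim\mathcal{D}(X,D)=2$ would require a dlt modification producing three pairwise transverse lc places sharing a zero-dimensional stratum over a single point of $X$. Using Proposition~\ref{lem-striclty-lc-2-complement} to classify the local analytic normal forms of such strictly lc points and translating each through the double-cover structure $\pi$ (so that the local equation becomes a constraint on how $Q\cap B$, resp.\ $H_i\cap B$, meets at $\pi(P)$), I would show no such configuration arises on a general sextic double solid.

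The principal technical obstacle throughout is the cusp case on $D$: there the minimal surface resolution carries a cycle of exceptional rational curves, and one must rule out the possibility that this cycle lifts to several ambient lc places on a dlt modification of $(X,D)$ which share a zero-stratum. I expect to handle this by an explicit local computation showing that the weighted blow-up realizing the dlt modification extracts a single divisor whose trace on $\widetilde D$ recovers the full cycle, so that the dual complex contribution at $P$ is at most an interval.
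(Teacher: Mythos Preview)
Your high-level strategy --- reduce to $\mathrm{coreg}_1$ and $\mathrm{coreg}_2$ via Theorem~\ref{thm-1-2-complements}, and enumerate complements via $\pi_*\OOO_X=\OOO_{\mathbb{P}^3}\oplus\OOO_{\mathbb{P}^3}(-3)$ --- matches the paper.  The execution, however, is substantially more complicated than necessary, and the paper's route avoids every one of the delicate dlt-modification arguments you outline.

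The key simplification you are missing is to push the whole problem down to $\mathbb{P}^3$ via the Hurwitz formula rather than work on $X$.  For a $1$-complement $D=\pi^{-1}(H)$ one has
\[
K_X+D=\pi^*\bigl(K_{\mathbb{P}^3}+\tfrac12 B+H\bigr),
\]
so $(X,D)$ is plt if and only if $(\mathbb{P}^3,\tfrac12 B+H)$ is plt, equivalently $(H,\tfrac12(B\cap H))$ is klt.  For a \emph{general} smooth sextic $B$, every hyperplane section $B\cap H$ has at worst isolated double points (Lemma~\ref{lem-section-double-points}), and then Lemma~\ref{lem-sum-lct} gives klt immediately.  Thus $(X,D)$ is plt and $\mathrm{reg}_1(X)=0$: there are no strictly lc points at all, so your proposed classification of cusp/simple elliptic singularities on $D$ and the ensuing dlt-modification analysis never arises.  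The paper proves more than you aim for ($\mathrm{reg}_1=0$, not merely $\le 1$) with much less effort.

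For $2$-complements the contrast is sharper.  Writing $K_X+\tfrac12\pi^{-1}(Q)=\pi^*(K_{\mathbb{P}^3}+\tfrac12 B+\tfrac12 Q)$, the paper observes that $(\mathbb{P}^3,B)$ and $(\mathbb{P}^3,Q)$ are both plt (the first because $B$ is smooth, the second because an irreducible quadric has at worst an $A_1$ point), so their half-sum is klt by linearity of log discrepancies.  The reducible case $Q=H_1+H_2$ is a convex combination of two plt pairs already handled.  No local normal-form analysis via Proposition~\ref{lem-striclty-lc-2-complement} is needed.

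Your proposal is not wrong in principle, but the steps ``a case analysis then verifies\ldots'', ``I would show no such configuration arises'', and especially the cusp-case ``I expect to handle this by an explicit local computation'' are not proofs, and carrying them out rigorously would be considerable work.  The Hurwitz/convexity trick makes all of it unnecessary.
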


Consider the product $\mathcal{S} = \mathbb{P}^n\times \mathbb{P}\mathrm{H}^0(\mathbb{P}^n, \OOO_{\mathbb{P}^n}(d))$ together with the natural projections $p\colon \mathcal{S} \to \mathbb{P}^n$ and $q\colon \mathcal{S} \to \mathbb{P}\mathrm{H}^0(\mathbb{P}^n, \OOO_{\mathbb{P}^n}(d))$. We have $\dim \mathbb{P}\mathrm{H}^0(\mathbb{P}^n, \OOO_{\mathbb{P}^n}(d))={n+d \choose n}-1$. 
For an element $F\in \mathrm{H}^0(\mathbb{P}^n, \OOO_{\mathbb{P}^n}(d))$, put $X_F = \{ F = 0 \}\subset \mathbb{P}^n$.
Define the incidence subvariety
\[
\mathcal{I}=\{ (P, F)\in\mathcal{S}\ |\ F(P)=0\ \text{and}\ X_F\ \text{is smooth}\,\}.
\]
Note that the fiber $p|_{\mathcal{I}}^{-1}(P)$ has dimension ${n+d \choose n}-2$.

\begin{lemma}
\label{lem-codimention-n}
In the notation as above, let $S\subset \mathcal{I}$ be a non-empty algebraic subset.
Assume that for any point $P\in \mathbb{P}^n$ the subset $p|_{S}^{-1}(P)$ has codimension at least $n$ in $p|_{\mathcal{I}}^{-1}(P)$. Then $q(S)$ is a proper subset in $\mathbb{P}\mathrm{H}^0(\mathbb{P}^n, \OOO_{\mathbb{P}^n}(d))$. 
\end{lemma}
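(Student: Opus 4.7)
The argument is a straightforward dimension count using the two projections. First I would compute $\dim \mathcal{I}$. The projection $p|_{\mathcal{I}}\colon \mathcal{I}\to \mathbb{P}^n$ is surjective: through any point $P\in \mathbb{P}^n$ there passes a smooth hypersurface of degree $d$ (e.g. a generic one, once one checks by Bertini that a general element of the hyperplane $\{F(P)=0\}\subset \mathbb{P}\mathrm{H}^0(\mathbb{P}^n,\OOO_{\mathbb{P}^n}(d))$ is smooth). The fiber $p|_{\mathcal{I}}^{-1}(P)$ is a nonempty open subset of this hyperplane and therefore has dimension $\binom{n+d}{n}-2$, as stated in the paragraph preceding the lemma. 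Consequently
\[
\dim \mathcal{I}=n+\binom{n+d}{n}-2.
\]

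Next I would bound $\dim S$ from above. Working irreducible component by irreducible component, for each component $S'\subseteq S$ the projection $p|_{S'}\colon S'\to \mathbb{P}^n$ has image of some dimension $\leq n$, and by the hypothesis every nonempty fiber $p|_{S'}^{-1}(P)\subseteq p|_S^{-1}(P)$ has dimension at most $\bigl(\binom{n+d}{n}-2\bigr)-n$. The fiber-dimension inequality then gives
\[
\dim S'\;\leq\;\dim p(S')\;+\;\max_{P}\dim p|_{S'}^{-1}(P)\;\leq\; n+\Bigl(\binom{n+d}{n}-2-n\Bigr)\;=\;\binom{n+d}{n}-2.
\]
Taking the maximum over components yields $\dim S\leq \binom{n+d}{n}-2$.

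Finally, since $q$ is a morphism of varieties it cannot raise dimension, so
\[
\dim q(S)\;\leq\;\dim S\;\leq\;\binom{n+d}{n}-2\;<\;\binom{n+d}{n}-1\;=\;\dim \mathbb{P}\mathrm{H}^0(\mathbb{P}^n,\OOO_{\mathbb{P}^n}(d)).
\]
Because $q(S)$ is constructible (being the image of a morphism on an algebraic set) and strictly lower-dimensional than its ambient projective space, it is a proper subset, which is what the lemma claims.

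There is no real obstacle here beyond bookkeeping. The only mildly delicate point is handling the fact that $\mathcal{I}$ is defined using the openness of the smoothness condition, so one should remember that $p|_{\mathcal{I}}$ is onto $\mathbb{P}^n$ rather than just dominant, and that fibers of $p|_S$ are taken inside $p|_{\mathcal{I}}^{-1}(P)$ (so the codimension bound translates directly into an absolute upper bound on fiber dimensions).
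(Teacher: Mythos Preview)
Your proof is correct and follows essentially the same dimension-count argument as the paper's own proof: bound the fiber dimension of $p|_S$ using the codimension hypothesis, deduce $\dim S\leq \binom{n+d}{n}-2$, and conclude that $q(S)$ has dimension strictly less than that of $\mathbb{P}\mathrm{H}^0(\mathbb{P}^n,\OOO_{\mathbb{P}^n}(d))$. Your version is simply more explicit about the bookkeeping (surjectivity of $p|_{\mathcal{I}}$, working componentwise, constructibility of the image), but the strategy is identical.
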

\begin{proof}
We have $\dim p|_{S}^{-1}(P) \leq {n+d \choose n}-2-n$. It follows that $\dim S\leq {n+d \choose n}-2$. Thus, $\dim q(S)\leq {n+d \choose n}-2$, and $q(S)$ is a proper subset in $\mathbb{P}\mathrm{H}^0(\mathbb{P}^n, \OOO_{\mathbb{P}^n}(d))$. The claim follows.
\end{proof}

The following simple lemma is well-known.
\begin{lemma}
\label{lem-section-double-points}
For a general hypersurface $S$ of degree $d$ in $\mathbb{P}^n$ with $n\geq 2$, any hyperplane section $H$ of~$S$ has at worst isolated singular points of multiplicity $2$.
\end{lemma}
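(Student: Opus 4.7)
The plan is to apply Lemma~\ref{lem-codimention-n} to the ``bad locus''
\[
T_{\mathrm{bad}}=\{(P,F)\in\mathcal{I}\mid\exists\text{ hyperplane }H\ni P,\ X_F\cap H\text{ has a non-isolated singularity at }P\text{ or }\mult_P(X_F\cap H)\geq 3\}.
\]
It will suffice to bound, for each fixed $P\in\mathbb{P}^n$, the codimension of $p|_{T_{\mathrm{bad}}}^{-1}(P)$ inside $p|_{\mathcal{I}}^{-1}(P)$ from below by $n$, since then Lemma~\ref{lem-codimention-n} forces $q(T_{\mathrm{bad}})$ to be a proper subvariety of $\mathbb{P}H^{0}(\mathbb{P}^{n},\mathcal{O}(d))$.

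My first move would be to dispose of the non-isolated alternative. If $X_F$ is smooth and $P\in X_F\cap H$, then $X_F\cap H$ is singular at $P$ precisely when $H=T_PX_F$, so the singular locus of $X_F\cap H$ is the fiber over $H$ of the Gauss map $\gamma_F\colon X_F\to(\mathbb{P}^n)^{*}$, $Q\mapsto T_QX_F$. The Gauss map of a smooth hypersurface of degree $\geq 2$ is a finite morphism (a classical fact, e.g.\ via the non-degeneracy of the second fundamental form), so every such fiber is $0$-dimensional. Consequently the non-isolated branch of $T_{\mathrm{bad}}$ is empty, and only the multiplicity condition needs to be controlled.

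For the multiplicity condition I would run a direct local calculation. Dehomogenize $F$ in affine coordinates $y_1,\dots,y_n$ centered at $P$ as $f=f_1+f_2+\cdots$, where $f_k$ is homogeneous of degree $k$. Smoothness at $P$ forces $f_1\neq 0$, and after a linear change of variables we may take $f_1=y_n$, so that $H=T_PX_F=\{y_n=0\}$ is the unique hyperplane through $P$ giving a singular section. The leading term of the restriction $f|_{y_n=0}$ is $f_2(y_1,\dots,y_{n-1},0)$, hence $\mult_P(X_F\cap H)\geq 3$ is equivalent to $y_n\mid f_2$. The space of quadratic forms in $n$ variables has dimension $\binom{n+1}{2}$, and those divisible by $y_n$ form a linear subspace of dimension $n$, giving codimension $\binom{n}{2}$. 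Since $\binom{n}{2}\geq n$ for $n\geq 3$, Lemma~\ref{lem-codimention-n} applies and $q(T_{\mathrm{bad}})$ is proper; combined with the properness of the discriminant locus, this shows that a general $F$ has all the required properties.

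The delicate step is the finiteness of the Gauss map, which is what lets me avoid a separate and more intricate analysis of positive-dimensional singular loci and reduce everything to a codimension count. The count itself is tight — only just clearing $\binom{n}{2}\geq n$ — so there is no slack in the argument, which is also why the statement really wants $n\geq 3$ in the relevant range of applications.
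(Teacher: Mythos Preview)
Your proof is correct and follows essentially the same route as the paper's: both reduce to the local condition $f_2|_{y_n=0}=0$, count this as codimension $\binom{n}{2}=n(n-1)/2\geq n$ for $n\geq 3$, and invoke Lemma~\ref{lem-codimention-n}. Your explicit Gauss-map argument for isolatedness is a welcome addition that the paper leaves implicit; note also that the paper disposes of the trivial cases $n=2$ and $d=1$ separately at the outset.
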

\begin{proof}
We may assume that $S$ is smooth. For $d=1$ or $n=2$, the statement is clear. So we assume that $d\geq 2$ and $n\geq 3$.
 We may assume that in an affine chart $\mathbb{C}^n$ near a point $P\in S$ the equation of $X$ has the form 
\[
f (x_1,\ldots, x_n) = x_n + f_2(x_1,\ldots, x_n) +\ldots + f_d(x_1,\ldots, x_n)= 0
\]
where $f_i$ are homogeneous polynomials of degree $i$. Then a hyperplane section $H\cap S$ is singular at $S$ if and only if $H$ is given by the equation $x_n=0$. Observe that
the vanishing of $f_2(x_1,\ldots, x_{n-1}, 0)$ is a condition of codimension $n(n-1)/2$ for the coefficients of $f$. Note that for $n\geq 3$ we have $n(n-1)/2\geq n$, so we can apply Lemma \ref{lem-codimention-n} to conclude that for a general $S$ and for any plane $H$ in $\mathbb{P}^n$, the intersection $H\cap S$ has at worst isolated singular points of multiplicity $2$.
\end{proof}

\begin{lemma}
\label{prop-sextic-reg1}
Let $X$ be a general sextic double solid. Then $\mathrm{reg}_1(X)=0$.
\end{lemma}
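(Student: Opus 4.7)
The plan is to show that for a general sextic double solid $X$, every $1$-complement $D\in|-K_X|$ is lc with $\dim\mathcal{D}(X,D)=0$, which gives $\mathrm{reg}_1(X)=0$ directly. Let $\pi\colon X\to\mathbb{P}^3$ be the double cover branched along a smooth sextic surface $B$. Since $-K_X\sim\pi^{*}\mathcal{O}_{\mathbb{P}^3}(1)$, every $D\in|-K_X|$ has the form $D=\pi^{-1}(H)$ for some plane $H\subset\mathbb{P}^3$.

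First I would invoke Lemma~\ref{lem-section-double-points} with $n=3$ and $d=6$: for a general $B$, every hyperplane section $H\cap B$ has only isolated singular points of multiplicity $2$. Adding one further (codimension-high) genericity condition on $B$ guaranteeing that no $H\cap B$ contains a non-reduced component, we obtain that every $D=\pi^{-1}(H)$ is irreducible. Then one analyses the singularities of $D$ locally: away from $B$ the map $\pi$ is étale and $D$ is smooth; above a smooth point of $H\cap B$, the surface $D$ is smooth; at a multiplicity-$2$ singular point of $H\cap B$ with local equation $f(x,y)=0$ (so $f_{2}\neq 0$, necessarily of rank $1$ for the smooth branch case and of rank $2$ for a node, but in every case the rank of $f_2$ is at least $1$), the surface $D$ has equation $z^{2}=f(x,y)$, which after completing the square takes the normal form $z^{2}=x^{2}+g(y)$ with $\mathrm{ord}_{0}g\ge 2$. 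This is an $A_{n-1}$ du Val singularity.

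Hence $D$ is a normal surface with only du Val singularities, so it has canonical singularities. By inversion of adjunction the pair $(X,D)$ is plt. Now take a crepant log resolution $\phi\colon Y\to X$ of $(X,D)$ that also resolves the du Val singularities of $D$: every $\phi$-exceptional divisor has strictly positive log discrepancy, so the only coefficient-$1$ component of the log pullback boundary is the strict transform $\widetilde{D}$ of $D$, which is smooth and connected. Thus $\mathcal{D}(X,D)=\mathcal{D}(\widetilde{D})$ is a single vertex, giving $\dim\mathcal{D}(X,D)=0$, and therefore $\mathrm{reg}_{1}(X)\le 0$. For the reverse inequality one takes a general plane $H$, for which $D=\pi^{-1}(H)$ is a smooth del Pezzo surface of degree~$2$, the pair $(X,D)$ is plt, and the dual complex is a single vertex; so $\mathrm{reg}_{1}(X)\ge 0$.

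The main step is the local normal-form analysis at multiplicity-$2$ branch points, where one must use the rank of $f_2$ to complete the square and identify the singularity as $A_{n-1}$; once this is in hand, the plt conclusion via inversion of adjunction and the absence of higher-dimensional simplices in the dual complex are standard from the structure of du Val singularities. The only other point requiring genericity of $B$ beyond Lemma~\ref{lem-section-double-points} is the absence of non-reduced hyperplane sections of $B$, which is a codimension-high open condition.
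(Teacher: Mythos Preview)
Your proof is correct and reaches the same conclusion as the paper---that $(X,D)$ is plt for every $D\in|-K_X|$, hence $\dim\mathcal{D}(X,D)=0$---but the key step is carried out differently. The paper works on the base: it writes $K_X+D=\pi^*(K_{\mathbb{P}^3}+\tfrac12 B+H)$, shows that the pair $(H,\tfrac12\, B|_H)$ is klt using Lemma~\ref{lem-sum-lct} (since $B|_H$ is a reduced curve with only multiplicity-$2$ points), and then pulls this back through the double cover (via \cite[20.3]{CKM88}) to conclude that $(D,0)$ is klt. You instead compute the singularities of $D$ directly and identify them as $A_n$ du Val points. Your route is more elementary, avoiding both the log-canonical-threshold estimate and the covering lemma; the paper's approach, on the other hand, fits into the framework of analysing the pair $(\mathbb{P}^3,\tfrac12 B+\text{divisor})$, which is reused verbatim in the next lemma for $\mathrm{reg}_2(X)$. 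One small remark: your extra genericity hypothesis (that no $H\cap B$ has a non-reduced component) is actually automatic for every smooth sextic $B$, since the Gauss map of a smooth hypersurface of degree $\geq 2$ is finite; the paper handles reducedness by citing~\cite{Ish82}.
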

\begin{proof}
The anti-canonical linear system $|-K_X|$ defines a double cover $\pi\colon X\to \mathbb{P}^3$ ramified in a smooth surface $S\subset \mathbb{P}^3$ of degree $6$. Since we assume that $X$ is general it follows that $S$ is general as well. For a divisor $D\in |-K_X|$ we have that $H=\pi(D)$ is a plane in $\mathbb{P}^3$. By Lemma \ref{lem-section-double-points}, for a general $S$ the intersection $R_H = H\cap S$ has at worst isolated singular points of multiplicity $2$. Consider a double cover $\pi|_{D}\colon D\to H$ which is ramified along $R_H$. The Hurwitz formula  
yields 
\[
K_X + D = \pi^*(K_{\mathbb{P}^3} + 1/2 S + H ) \sim 0. 
\]
By adjunction,
\[
K_D = \pi^*(K_H + 1/2 S|_H). 
\]
Since $S|_H$ is a reduced (cf. \cite{Ish82}) curve with at worst double points, by Lemma \ref{lem-sum-lct} we have that the pair $(H, 1/2 S|_H)$ is klt, hence $(D, 0)$ is klt (see e.g. \cite[Proposition 20.3]{CKM88}). By inversion of adjunction, $(X, D)$ is plt. This shows that the dual complex is a point, and $\mathrm{reg}_1(X)=0$ for a general $X$.
\end{proof}

\begin{lemma}
\label{prop-sextic-reg2}
Let $X$ be a general sextic double solid. Then $\mathrm{reg}_2(X)=0$.
\end{lemma}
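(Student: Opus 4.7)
The plan is to follow the same push-forward strategy as in Lemma \ref{prop-sextic-reg1}, now applied to the bi-anticanonical system. By the projection formula $\pi_{*}\OOO_{X}(-2K_{X})=\OOO_{\mathbb{P}^{3}}(2)\oplus\OOO_{\mathbb{P}^{3}}(-1)$, so the second summand has no global sections and every element of $|{-}2K_{X}|$ is the pullback $\pi^{*}Q$ of a quadric $Q\subset\mathbb{P}^{3}$. Writing a $2$-complement as $B$ with $2B=\pi^{*}Q$, its coefficients lie in $\{1/2,1\}$, and the argument splits into two cases. If $Q=2H$ is a double plane, then $B=\pi^{*}H$ is actually a $1$-complement and Lemma \ref{prop-sextic-reg1} gives $\mathrm{reg}(X,B)\le 0$. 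Otherwise $Q$ is reduced; I would use that for a general $S$ every plane section $H\cap S$ is a reduced irreducible plane sextic (so $\pi^{*}H$ is irreducible) to conclude that $\pi^{*}Q$ is reduced and that $B=\tfrac12\pi^{*}Q$ has every coefficient equal to $1/2$. The remaining task is then to prove that $(X,B)$ is \emph{klt}, which forces the dual complex to be empty.

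The main tool is Lemma \ref{lem-sum-lct}: at any point $P\in X$ with $\mathrm{mult}_{P}(\pi^{*}Q)\le 2$ the pair is automatically klt. So I would bound the multiplicity point by point. Working in local coordinates $(x,y,z)$ on $\mathbb{P}^{3}$ with $p=\pi(P)$ at the origin and $S=\{x=f(y,z)\}$, and parametrizing $X$ by $(u,y,z)$ via $\pi(u,y,z)=(u^{2}+f(y,z),y,z)$, one expands $\pi^{*}Q=Q(u^{2}+f,y,z)$ and reads off its low-order terms. A case analysis on the rank of $Q$ shows that if $Q$ is smooth at $p$, or if $Q$ is an irreducible cone whose apex lies on $S$ at $p$, then the quadratic form of $\pi^{*}Q$ at $P$ is always non-zero (the $u^{2}$ from the ramification survives when $T_{p}Q=T_{p}S$; for a rank-$3$ cone the $(y,z)$-restriction of the quadratic form is non-zero), so $\mathrm{mult}_{P}(\pi^{*}Q)\le 2$. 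The only configuration producing multiplicity $3$ is $Q=H_{1}+H_{2}$ with $H_{1}\neq H_{2}$, $p\in S\cap H_{1}\cap H_{2}$, and exactly one of the $H_{i}$ equal to $T_{p}S$.

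The main obstacle is handling this last configuration, and I would do so by invoking Lemma \ref{lem-kuwata}. After a coordinate change I may take $H_{2}=T_{p}S=\{x=0\}$ and $H_{1}=\{y=0\}$, so the local equation of $\pi^{*}Q$ becomes $y\cdot(u^{2}+f(y,z))$ with cubic term $f_{3}=y\bigl(u^{2}+f_{2}(y,z)\bigr)$. Because $S$ is general, Lemma \ref{lem-section-double-points} forces $f_{2}\neq 0$, and the plane cubic $\{f_{3}=0\}\subset\mathbb{P}^{2}$ is therefore a line plus a smooth conic (when $f_{2}$ is non-degenerate) or a union of three distinct lines (when $f_{2}$ has rank $1$); in neither case does $f_{3}$ factor as $L^{2}M$. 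Lemma \ref{lem-kuwata} then gives klt at $P$, finishing the multiplicity-$3$ analysis. Combining this with the multiplicity-at-most-$2$ cases and the double-plane case, $\mathrm{reg}(X,B)\le 0$ for every $2$-complement $B$, and since $\mathrm{reg}_{2}(X)\ge\mathrm{reg}_{1}(X)=0$ we conclude $\mathrm{reg}_{2}(X)=0$.
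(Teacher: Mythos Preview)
Your proof is correct, but it takes a substantially different route from the paper. Both arguments begin the same way: identify every element of $|{-}2K_X|$ with $\pi^*Q$ for a quadric $Q\subset\mathbb{P}^3$, dispose of $Q=2H$ via Lemma~\ref{prop-sextic-reg1}, and then aim to show that for reduced $Q$ the pair $(X,\tfrac12\pi^*Q)$ is klt. From here the two proofs diverge.

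The paper exploits the linearity of log discrepancies rather than any local computation. For $Q=H_1+H_2$ with $H_1\neq H_2$ it notes that Lemma~\ref{prop-sextic-reg1} already gives $(X,\pi^*H_i)$ plt, so the convex combination $(X,\tfrac12\pi^*H_1+\tfrac12\pi^*H_2)$ is plt, hence klt. For $Q$ irreducible it works on $\mathbb{P}^3$ instead: both $(\mathbb{P}^3,S)$ and $(\mathbb{P}^3,Q)$ are plt (the latter because an irreducible quadric is either smooth or a cone with a single $A_1$ point), so $(\mathbb{P}^3,\tfrac12 S+\tfrac12 Q)$ is plt and hence klt, and pulling back along the finite cover gives the same for $(X,\tfrac12\pi^*Q)$. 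This is a two-line argument in each case.

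Your approach is more hands-on: you bound $\mathrm{mult}_P(\pi^*Q)$ pointwise using the explicit parametrization of the double cover, invoke Lemma~\ref{lem-sum-lct} when the multiplicity is at most $2$, and fall back on Lemma~\ref{lem-kuwata} for the single multiplicity-$3$ configuration (which you correctly identify and analyse). This buys you a concrete picture of the local geometry and avoids appealing to the convexity principle, at the cost of a longer case analysis. One small remark: reducedness of $\pi^*Q$ follows more directly from the fact that no component of $Q$ equals the branch divisor $S$ (degree reasons), so you do not actually need irreducibility of each $\pi^*H$.
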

\begin{proof}
Since $\mathrm{reg}_1(X)=0$, we have $\mathrm{reg}_2(X)\geq 0$.  
Let $Q$ be a quadric in $\mathbb{P}^3$. In the notation as in the proof of Lemma \ref{prop-sextic-reg1}, write 
\[
K_X + 1/2 D = \pi^*(K_{\mathbb{P}^3} + 1/2 S + 1/2Q )\sim 0
\]
where $D=\pi^{-1}(Q)$. 
First assume that $Q$ is reducible, so $Q=H_1+H_2$ where $H_1\neq H_2$. 
Then $D=D_1+D_2$ where $\pi(D_i)=H_i$. By Lemma \ref{prop-sextic-reg1}, we know that $(X, D_i)$ is plt for $i=1,2$. Hence $(X, 1/2D)$ is plt as well. This shows that the dual complex of this pair is empty.

Now assume that $Q$ is an irreducible quadric. It is enought to show that the pair $(\mathbb{P}^3, 1/2S + 1/2Q)$ is klt. Since $S$ is smooth, the pair $(\mathbb{P}^3, S)$ is plt. One easily checks that the pair $(\mathbb{P}^3, Q)$ is plt is well. Then the pair $(\mathbb{P}^3, 1/2S+1/2Q)$ is plt, and hence klt. This shows that the dual complex of this pair is empty. Consequently, $\mathrm{reg}_2(X)=0$.
\end{proof}

The proof of Proposition \ref{cor-gen-sextic-double-solid} follows from Lemmas \ref{prop-sextic-reg1}, \ref{prop-sextic-reg2} and Theorem \ref{thm-1-2-complements}.

\section{Quartic hypersurface}
\label{sec-quartic}
In this section, we analyze smooth Fano threefolds in the family \textnumero\,1.2. A general element in this family is a quartic hypersurface in $\mathbb{P}^4$. Our goal is to prove the following 

\begin{proposition}
\label{quartic-coreg-0}
For a general quartic hypersurface $X$ in $\mathbb{P}^4$, 
we have $\mathrm{coreg}(X)\geq 1$.
\end{proposition}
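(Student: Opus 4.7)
By Theorem \ref{thm-1-2-complements}, showing $\mathrm{coreg}(X)\geq 1$ reduces to establishing both $\mathrm{reg}_1(X)\leq 1$ and $\mathrm{reg}_2(X)\leq 1$ for a general quartic $X$, by analogy with the sextic double solid treated in Lemmas \ref{prop-sextic-reg1} and \ref{prop-sextic-reg2}.

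For $\mathrm{reg}_1(X)\leq 1$ (in fact $\mathrm{reg}_1(X)=0$ for a general $X$), note that $-K_X\sim H|_X$ and $\mathrm{Pic}(X)=\mathbb{Z}\cdot H$ by Grothendieck--Lefschetz, so every $1$-complement is an irreducible reduced hyperplane section $D=X\cap H$, a quartic surface in $H\cong\mathbb{P}^3$. A parameter count in the style of Lemma \ref{lem-section-double-points} and Lemma \ref{lem-codimention-n} shows that for a general quartic $X$ the second fundamental form of $X$ has rank at least $2$ at every point, and the higher Taylor terms at the points where rank drops are generic; consequently every singularity of every such $D$ is du Val, the pair $(X,D)$ is plt by inversion of adjunction, and $\mathcal{D}(X,D)$ is a single vertex.

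For $\mathrm{reg}_2(X)\leq 1$, any $2$-complement has the form $\tfrac{1}{2}D$ with $D=X\cap Q$ for a quadric $Q\subset\mathbb{P}^4$. If $Q=2H$ is non-reduced then $\tfrac{1}{2}D$ is a $1$-complement and the preceding step applies. If $Q=H_1+H_2$ is reducible with $H_1\neq H_2$, then for a general $X$ the surfaces $D_i=X\cap H_i$ are smooth and meet transversally, and all coefficients $\tfrac{1}{2}<1$ make $(X,\tfrac{1}{2}D)$ klt with empty dual complex. The essential case is $Q$ irreducible: if $\dim\mathcal{D}(X,\tfrac{1}{2}D)=2$ then there is a zero-dimensional strictly lc center $P\in X$, and Proposition \ref{lem-striclty-lc-2-complement} together with Lemma \ref{lem-kuwata} forces $\mathrm{mult}_P D\geq 3$ and requires the cubic term $f_3$ of the local equation at $P$ to contain a double line.

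The concluding step is a dimension count on the incidence
\[
\mathcal{I}=\bigl\{(X,Q,P)\in V_X\times V_Q\times\mathbb{P}^4:P\in X\cap Q,\ \mathrm{mult}_P(X\cap Q)\geq 3,\ f_3\ \text{has a double line}\bigr\},
\]
where $\dim V_X=69$ and $\dim V_Q=14$. For a fixed smooth $X$ and a fixed $P\in X$, the conditions $P\in Q$, tangency $T_PQ\supset T_PX$, and vanishing of the quadratic form of $Q|_X$ cut the quadrics out to a $4$-dimensional linear family; the induced assignment $Q\mapsto f_3$ takes values in the $10$-dimensional space of cubics in three tangent variables, inside which the double-line locus has codimension $5$. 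Letting $P$ vary over the $3$-dimensional $X$ and invoking a version of Lemma \ref{lem-codimention-n} shows that the projection $\mathcal{I}\to V_X$ is non-dominant, so a general $X$ lies outside its image and no strictly lc $2$-complement of the irreducible type occurs. The main obstacle is to justify this transversality count rigorously, since the map $Q\mapsto f_3$ depends linearly on $X$ and could fail to be transverse to the double-line locus for special $X$; it should suffice to verify the count at one explicit specialization (for instance a Fermat-type quartic) and then propagate to a generic $X$ by openness of the non-dominance condition.
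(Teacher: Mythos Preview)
Your overall strategy is the same as the paper's: reduce via Theorem~\ref{thm-1-2-complements} to bounding $\mathrm{reg}_1$ and $\mathrm{reg}_2$, and your treatment of $\mathrm{reg}_1$ is essentially Lemma~\ref{lem-no-cusps}. In the reducible case of $\mathrm{reg}_2$ there is a small slip: you cannot assert that \emph{every} hyperplane section $D_i=X\cap H_i$ is smooth (tangent hyperplane sections are always singular). The paper instead uses that each $(X,D_i)$ is plt by the $\mathrm{reg}_1$ step, and then convexity of log discrepancies gives that $(X,\tfrac12 D_1+\tfrac12 D_2)$ is klt.

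The genuine gap is in the irreducible case. Your dimension count stops at the cubic term, and that is not enough. Concretely: for fixed $(X,P)$ the cubic $\widetilde g_3$ of $Q|_X$ depends on the quadric only through the linear form $g_{2,1}$ (three parameters); the remaining parameter $g_{2,0}$ does not enter until the quartic term. So the image of your ``$Q\mapsto f_3$'' is an affine $3$-plane in the $10$-dimensional space of cubics. The double-line locus has codimension~$5$, and letting $P$ range over the $3$-dimensional $X$ gives an expected intersection of dimension $3+3+5-10=1$: for a general $X$ one \emph{does} expect a one-parameter family of pairs $(P,Q)$ whose cubic term is $x_1^2x_2$. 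Equivalently, in the framework of Lemma~\ref{lem-codimention-n} the double-line condition is only codimension~$2$ on the coefficients of the quartic $f$ at a fixed point, whereas one needs codimension $\geq 4$. Your incidence $\mathcal I$ therefore has dimension $\geq\dim V_X$ and the projection to $V_X$ can very well be dominant; no transversality check at a single $X$ will repair this, because the deficit is in the expected dimension itself, not in a failure of genericity.

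What the paper does to close this gap (Lemma~\ref{lem-no-cusps2}) is to push one order further: once $\widetilde g_3=x_1^2x_2$, Corollary~\ref{cor-surface-x2y} shows that $(X,\tfrac12 D)$ is still klt unless the quartic term $\widetilde g_4$ restricted to $x_1=0$ vanishes identically, i.e.\ the five coefficients $a_0,\dots,a_4$ of $x_2^ix_3^{4-i}$ all vanish. This is an additional codimension-$5$ constraint on $f$, and combined with the codimension-$2$ cubic condition it exceeds the threshold required by Lemma~\ref{lem-codimention-n}. So the missing ingredient in your plan is precisely this analysis of the degree-$4$ term; without it the argument does not conclude.
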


\begin{lemma}
\label{lem-no-cusps}
For a general quartic hypersurface $X$ in $\mathbb{P}^4$, 
we have $\mathrm{reg}_1(X)=0$.
\end{lemma}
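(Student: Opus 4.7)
The plan is to show that for a general quartic threefold $X\subset\mathbb{P}^4$, every effective anticanonical divisor $D\in|-K_X|$ making $(X,D)$ log canonical is in fact purely log terminal, so $\mathcal{D}(X,D)$ is a single vertex and $\mathrm{reg}(X,D)=0$. Since a general smooth hyperplane section gives a valid $1$-complement realizing $\mathrm{reg}=0$, the above bound then yields $\mathrm{reg}_1(X)=0$. As $-K_X\sim H$, every $1$-complement is of the form $D=X\cap H$ for some hyperplane $H\subset\mathbb{P}^4$. By the Noether--Lefschetz theorem (or a direct parameter count for surfaces of degree at most three contained in $X$), a general quartic threefold has $\Pic(X)=\mathbb{Z}[H]$, so $X$ contains no surface of degree less than four, and every hyperplane section is irreducible. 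By Lemma~\ref{lem-section-double-points}, every singular point of any such $D$ has multiplicity at most two; by inversion of adjunction $(X,D)$ is lc if and only if $D$ is lc; and among multiplicity-two surface hypersurface singularities, the lc ones are precisely du Val (klt), simple elliptic, and cusp. Hence $(X,D)$ is plt (with $\mathrm{reg}(X,D)=0$), strictly lc (with $\mathrm{reg}(X,D)\geq 1$), or not lc (and hence not a valid complement), and it suffices to rule out the strictly lc case.

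The main step is a dimension count via Lemma~\ref{lem-codimention-n} with $n=4$. A hyperplane section $D=X_F\cap H$ is singular at $P\in X_F$ only when $H=T_PX_F$, so the relevant parameter space is the incidence $\mathcal{I}=\{(P,F):F(P)=0,\ X_F\text{ smooth at }P\}$. By Lemma~\ref{lem-striclty-lc-1-complement}, a strictly lc germ of $D_{P,F}=X_F\cap T_PX_F$ at $P$ falls, after an analytic change of coordinates, into one of three normal forms. Working in affine coordinates with $P=0$ and $T_PX_F=\{x_4=0\}$, and writing $\bar F_i=F_i|_{x_4=0}$, each case imposes codimension at least six on the $68$-dimensional fiber of $\mathcal{I}$ over $P$: case (i), $\bar F_2=0$, gives codimension $6$ (the six coefficients of the restricted quadratic form vanish); case (ii), $\bar F_2$ of rank one together with $\bar F_3|_{x_1=0}=0$, gives $3+4=7$; case (iii), $\bar F_2=x_1^2$ (codim $3$), $\bar F_3|_{x_1=0}$ a cube of a linear form (codim $2$ in the four-dimensional space of cubics in two variables), and the $x_3^4$-coefficient of $\bar F_4$ vanishing (codim $1$), gives $3+2+1=6$. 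Each codimension exceeds $\dim\mathbb{P}^4=4$, so by Lemma~\ref{lem-codimention-n} the projection of each bad locus to $|\mathcal{O}_{\mathbb{P}^4}(4)|$ is a proper closed subset; the union of the three is still proper, so for a general quartic no tangent hyperplane section is strictly lc.

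The main obstacle will be making these codimension counts rigorous: the normal forms of Lemma~\ref{lem-striclty-lc-1-complement} are stated after an analytic coordinate change, so one must verify that the loci defined by cases (ii) and (iii) are $\mathrm{GL}(T_PX_F)$-invariant constructible subvarieties of the jet space at $P$ with the claimed codimensions. A clean way to avoid explicit normal forms is to work on the projective bundle of tangent hyperplanes over the universal quartic and phrase the rank-one condition on $\bar F_2$ and the triple-line condition on $\bar F_3|_{x_1=0}$ intrinsically, reading off their codimensions from the second Veronese variety (for rank-one quadratic forms) and the third Veronese variety (for cubes of linear forms). The final vanishing condition on $\bar F_4$ along the distinguished line $\{x_1=x_2=0\}$ is itself intrinsic, since that line is canonically determined by $\bar F_2$ and $\bar F_3|_{x_1=0}$ once they have been put in their normal forms.
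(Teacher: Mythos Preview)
Your proposal is correct and follows essentially the same route as the paper's proof: characterize strictly lc hyperplane sections via Lemma~\ref{lem-striclty-lc-1-complement}, then show via Lemma~\ref{lem-codimention-n} that each resulting condition has codimension at least $4$ in the fiber over any point. The paper's version is terser---it cites \cite{Ish82} for normality rather than invoking Noether--Lefschetz and Lemma~\ref{lem-section-double-points}, lumps your cases (ii) and (iii) into a single ``codimension at least $4$'' locus without the sharper counts $7$ and $6$, and does not pause over the analytic-versus-algebraic coordinate issue you raise---but the core argument is the same.
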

\begin{proof}
Let $H$ be a hyperplane section of $X$. By \cite{Ish82}, $H$ is normal. 
We show that for a general $X$, the surface $H$ cannot be strictly lc and hence $\mathrm{reg}_1(X)=0$. Fix a point $P\in X$. 
First of all, note that $H$ may be singular at the point $P\in X$ only if the tangent space $T_P X$ coincides with the hyperplane that corresponds to $H$. We may assume that the point $P$ belongs to the chart $x_0\neq 0$, and that $T_P X$ is given by the equation $x_4=0$. Then $X$ locally near $P$ is given by the equation
\begin{equation}
0 = f(x_1, \ldots, x_4) = x_4 + f_2(x_1, \ldots, x_4) + f_3(x_1, \ldots, x_4) + f_4(x_1, \ldots, x_4)
\end{equation}
where $\deg f_i=i$. 
By Lemma \ref{lem-striclty-lc-1-complement} the points on $X$ with the condition that $H=T_P X\cap X$ is strictly lc are such that 
 one of the following two conditions is satisfied: 
\begin{enumerate}
\item
$f_2(x_1, x_2, x_3, 0) = 0$, 
\item
$f_2 = l(x_1, x_2, x_3)^2$ where $l(x_1, x_2, x_3)$ is a linear form, and if we assume that $l=x_3$, then either $f_3(x_1, x_2, 0, 0)=0$, or $f_3(x_1, x_2, 0, 0)=m(x_1, x_2)^3$.
\end{enumerate}
Note that 
the first of the above conditions is a condition of codimension $6$ in the space of quartic hypersurfaces that contain $P$, while the second condition is of codimension at least $4$. 
By Lemma \ref{lem-codimention-n} we conclude that on a general quartic threefold, there are no such points. 
Thus, $H$ is plt, hence the pair $(X, H)$ is plt, and $\mathrm{reg}_1(X)=0$, which completes the proof.
\end{proof}

\begin{lemma}
\label{lem-no-cusps2}
For a general quartic hypersurface $X$ in $\mathbb{P}^4$, we have $\reg_2(X)=0$.
\end{lemma}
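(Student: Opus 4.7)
The plan is to classify all $2$-complements of $X$ by the type of quadric cutting them out in $\mathbb{P}^4$ and handle each type separately. Since $H^1(\mathbb{P}^4,\mathcal{O}_{\mathbb{P}^4}(-2))=0$, the restriction map $H^0(\mathbb{P}^4,\mathcal{O}_{\mathbb{P}^4}(2))\to H^0(X,\mathcal{O}_X(2))$ is surjective, so every $D\in|-2K_X|$ arises as $X\cap Q$ for some quadric $Q\subset\mathbb{P}^4$. I would distinguish the cases $Q=2H$ a double hyperplane, $Q=H_1+H_2$ a sum of two distinct hyperplanes, and $Q$ irreducible.

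In the first case $\tfrac12 D=X\cap H$ is a $1$-complement, so by Lemma \ref{lem-no-cusps} the pair is plt and the dual complex is a single vertex. In the second, setting $D_i=X\cap H_i$, both $(X,D_i)$ are plt by Lemma \ref{lem-no-cusps}, and the linearity of log discrepancies
\[
a(E,X,\tfrac12 D_1+\tfrac12 D_2)=\tfrac12\,a(E,X,D_1)+\tfrac12\,a(E,X,D_2)
\]
immediately gives positive log discrepancy for every exceptional divisor $E$ over $X$, while the log discrepancy along $D_i$ itself equals $\tfrac12$; hence the pair is klt and the dual complex is empty.

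The substantive case is $Q$ irreducible, where I would show that for a general $X$ the pair $(X,\tfrac12 D)$ is automatically klt by a codimension count combined with Proposition \ref{lem-striclty-lc-2-complement}. Fix a point $P$, choose affine coordinates $(x_1,x_2,x_3,x_4)$ centred at $P$ with $T_PX=\{x_4=0\}$, and write $F=x_4+F_2+F_3+F_4$ and $q=l+cx_4+h+x_4 m+dx_4^2$, where $l,m$ are linear and $h$ is quadratic in $x_1,x_2,x_3$. Substituting the implicit solution $x_4=g(x_1,x_2,x_3)$ of $F=0$ into $q$, the successive homogeneous parts of $f=q|_X$ at $P$ begin
\[
f_1=l,\qquad f_2=h-c\alpha,\qquad f_3=-c\beta+c\gamma\alpha-\alpha m,
\]
where $\alpha=F_2(x_1,x_2,x_3,0)$, $\beta=F_3(x_1,x_2,x_3,0)$ and $\gamma=\partial_{x_4}F_2(x_1,x_2,x_3,0)$. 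By Proposition \ref{lem-striclty-lc-2-complement}, strict log canonicity at $P$ forces $f_1=0$ and $f_2=0$ together with one of three structural conditions on $(f_3,f_4)$.

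For a general $X$ the form $\alpha$ does not vanish at any point of $X$ (a byproduct of the proof of Lemma \ref{lem-no-cusps}), so $f_1=0$ is $3$ independent conditions on the coefficients of $q$ and $h=c\alpha$ is $5$ further ones; each of the three sub-cases of Proposition \ref{lem-striclty-lc-2-complement} contributes further codimension at least $10$ on $(F,q)$. Consequently the bad locus in the $85$-dimensional incidence variety $\{(F,q,P):F(P)=q(P)=0\}\subset|\mathcal{O}_{\mathbb{P}^4}(4)|\times|\mathcal{O}_{\mathbb{P}^4}(2)|\times\mathbb{P}^4$ has codimension at least $18$, hence dimension at most $67<69=\dim|\mathcal{O}_{\mathbb{P}^4}(4)|$. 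A fibre count in the spirit of Lemma \ref{lem-codimention-n} then shows that its projection to the space of quartics is not dominant, and the extremal solution $c=m=0$ of $f_3=0$ reproduces only $q=dx_4^2$, a double hyperplane excluded from this case. Combined with the first two cases this yields $\reg_2(X)\leq 0$, and the inequality $\reg_2(X)\geq\reg_1(X)=0$ from Lemma \ref{lem-no-cusps} gives equality. The main obstacle is verifying that the structural conditions on $f_3$ and $f_4$ genuinely impose the claimed independent codimensions on $(F,q)$, and controlling the exceptional stratum $\alpha=0$, which itself has sufficiently high codimension on the space of smooth quartics by the proof of Lemma \ref{lem-no-cusps}.
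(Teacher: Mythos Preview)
Your overall strategy coincides with the paper's: split by the type of the quadric $Q$, dispose of the reducible cases via Lemma~\ref{lem-no-cusps} and convexity of log discrepancies, and for irreducible $Q$ run a local computation at a point $P$ together with a dimension count using Proposition~\ref{lem-striclty-lc-2-complement}. The reducible cases and the reduction to $f_1=f_2=0$ are handled correctly.

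The gap is precisely the step you flag as ``the main obstacle'': the assertion that each sub-case of Proposition~\ref{lem-striclty-lc-2-complement} contributes codimension at least $10$ on $(F,q)$ is not established, and this is the entire content of the lemma. Two issues arise. First, as written in the paper, the conditions on $f_4$ in cases (ii) and (iii) of the Proposition are \emph{open} (``not divisible by $x_1$''), so they cannot contribute codimension; the correct contrapositive of Corollaries~\ref{cor-surface-x2y} is that strict log canonicity forces $f_4$ to \emph{be} divisible by $x_1$ (resp.\ $x_1^2$), and it is this closed condition one must use. Second, even with the correct condition, the count is delicate: for instance, the condition that $f_3$ be of the form $l_1^2 l_2$ is only codimension $2$ on $F$ (as the paper shows), not close to $10$, so one must extract further codimension from the explicit shape of $f_4$.

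The paper does exactly this. It works with conditions on $F$ alone (needing codimension~$\ge 4$ via Lemma~\ref{lem-codimention-n}) rather than on $(F,q)$ jointly, which is cleaner because the free parameters of $q$ can always be spent to achieve $\widetilde g_2=0$ and to adjust $\widetilde g_3$ by an element of $f_2(\,\cdot\,,0)\cdot\{\text{linear forms}\}$. The cases $\widetilde g_3=0$ and $\widetilde g_3=l^3$ then give codimension $7$ and $4$ on $F$ directly. In the remaining case $\widetilde g_3=x_1^2x_2$ (codimension $2$), the paper computes the degree-$4$ term explicitly and shows, using Corollary~\ref{cor-surface-x2y}, that the pair is klt unless a further codimension-$5$ condition on $f_4(\,\cdot\,,0)$ holds. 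This explicit quartic analysis is what your proposal is missing; without it the dimension count does not close.
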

\begin{proof}
Since $\mathrm{reg}_1(X)=0$, we have $\mathrm{reg}_2(X)\geq 0$.  
Consider the pair $(X, 1/2D)$ where $D\in |-2K_X|$. Assume first
that $D=D_1+D_2$ is reducible where both $D_i$ are hyperplane
sections with $D_1\neq D_2$. 
By Lemma \ref{lem-no-cusps}, the pairs $(X, D_i)$ are plt for $i=1,2$. Hence the pair $(X, 1/2D)$ is plt as well, so the dual complex of the pair $(X, 1/2D)$ is empty.



Thus we may assume that $D$ is irreducible and reduced, and the pair $(X, 1/2D)$ is lc.  
In other words, $D$ is a restriction of a quadric $Q$ in $\mathbb{P}^4$ whose rank is at least $3$. 
Note that the intersection $X\cap Q$ is reduced. Indeed, if not, we have $D =a D'$ where $D'$ is a reduced divisor on $X$. Since $\mathrm{Pic}(X)\simeq \mathbb{Z}$, we have $a=2$ and $D'$ is a hyperplane section of $X$. Restrict a quadric  to the hyperplane $H$ such that $H\cap X=D$ to see that this is absurd. 

Let $X$ locally near $P$ be given by the equation
\begin{align*}
\label{eq-X}
0 = f(x_1, \ldots, x_4) =\ &x_4 + f_2(x_1, \ldots, x_4) + f_3(x_1, \ldots, x_4) + f_4(x_1, \ldots, x_4) \\
=\ &x_4 + 
f_{2,0} x_4^2 + x_4 f_{2,1}(x_1, x_2, x_3) + f_2(x_1, x_2, x_3, 0) \\
&+ f_{3,0} x_4^3 + x_4^2 f_{3,1}(x_1, x_2, x_3) + x_4 f_{3,2}(x_1, x_2, x_3) + f_3(x_1, x_2, x_3, 0) \\
&+f_{4,0} x_4^4 + x_4^3 f_{4,1}(x_1, x_2, x_3) + x_4^2 f_{4,2}(x_1, x_2, x_3) + x_4 f_{4,3}(x_1, x_2, x_3) + f_4(x_1, x_2, x_3,0)
\end{align*} 
where $f_j$ and $f_{i,j}$ are homogeneous polynomials of degree $j$. 
From this equation, we can express $x_4$ as follows:
\begin{align}
  -x_4=&f_2(x_1, x_2, x_3, 0) + f_3(x_1, x_2, x_3, 0) - f_{2,1}(x_1, x_2, x_3) f_2(x_1, x_2, x_3, 0) \nonumber\\ 
  +& f_{2,1}(x_1, x_2, x_3) f_3(x_1, x_2, x_3, 0) + f_{2,1}(x_1, x_2, x_3)^2 f_2(x_1, x_2, x_3, 0) - c_2 f_2(x_1, x_2, x_3, 0)^2 \nonumber\\ 
  -& f_2(x_1, x_2, x_3, 0) f_{3,2}(x_1, x_2, x_3) + f_4(x_1, x_2, x_3, 0) \nonumber\\ 
  +& \text{(terms of degree $\geq 5$)}. 
  \end{align}

\subsection{
Assume first that $D$ is singular at $P$}
Let $D$ locally near $P$ be given by the equation
\begin{equation}
\label{eq-quadric}
0 = g_2(x_1, \ldots, x_4) = g_{2,0} x_4^2 + x_4 g_{2,1}(x_1, x_2, x_3) + g_2(x_1, x_2, x_3, 0),
\end{equation}
where $g_j$ and $g_{2,j}$ are homogenous polynomials of degree $j$. Note that $g_2(x_1, x_2, x_3, 0)\neq 0$. Indeed, otherwise $D$ is reducible, which contradicts to our assumption. Put the expression of $x_4$ from \eqref{eq-expression-for-x4} into \eqref{eq-quadric} to obtain the equation of $D$ on $X$ near $P$:
\begin{align*}
0 =\ \widetilde{g}(x_1, x_2, x_3) =\ & g_2(x_1, x_2, x_3, 0) 
-  f_2(x_1, x_2, x_3, 0) g_{2,1}(x_1, x_2, x_3)  \\
&+  g_{2,0} f_2(x_1, x_2, x_3, 0)^2  + f_{2,1}(x_1, x_2, x_3) f_2(x_1, x_2, x_3,0)g_{2,1}(x_1, x_2, x_3) \\
&- f_3(x_1, x_2, x_3,0)g_{2,1}(x_1, x_2, x_3) +  \text{(terms of degree $\geq 4$)}.
\end{align*}
In particular, the quadratic term in $\widetilde{g}(x_1, x_2, x_3)$ is non-zero. Then by Lemma \ref{lem-sum-lct} the pair $(X, 1/2D)$ is klt, and hence the dual complex is empty.

\subsection{
Now assume that $D$ is smooth at $P$} 
Let $D$ locally near $P$ be given by
\begin{equation}
\label{eq-quadric2}
0 = g_2(x_1, \ldots, x_4) = x_4 + g_{2,0} x_4^2 + x_4 g_{2,1}(x_1, x_2, x_3) + g_2(x_1, x_2, x_3, 0)
\end{equation}
where $g_j$ and $g_{2,j}$ are homogenous polynomials of degree $j$. 
Put the expression of $x_4$ from \eqref{eq-quadric2} into \eqref{eq-expression-for-x4} to obtain the equation of $D$ on $X$ near $P$:
\begin{align*}
0 =\ & \widetilde{g}(x_1, x_2, x_3) = f_2(x_1, x_2, x_3, 0) - g_2(x_1, x_2, x_3, 0) \\
&+ f_3(x_1, x_2, x_3, 0) - f_{2,1}(x_1, x_2, x_3) g_2(x_1, x_2, x_3, 0) + g_2(x_1, x_2, x_3, 0) g_{2,1}(x_1, x_2, x_3) \\
&+ f_4(x_1, x_2, x_3,0) + f_{2,0} g_2(x_1, x_2, x_3, 0)^2 - g_2(x_1, x_2, x_3, 0) f_{3,2}(x_1, x_2, x_3)\\
&- g_{2,0} g_2(x_1, x_2, x_3, 0)^2 - g_2(x_1, x_2, x_3, 0)g_{2,1}^2(x_1, x_2, x_3) + f_{2,1}(x_1, x_2, x_3)g_2(x_1, x_2, x_3,0)g_{2,1}(x_1, x_2, x_3)\\
=&\widetilde{g}_2(x_1, x_2, x_3)+\widetilde{g}_3(x_1, x_2, x_3)+\widetilde{g}_4(x_1, x_2, x_3)+\text{(terms of degree $\geq 5$)}
\end{align*}
where $\widetilde{g}_j$ are homogeneous polynomials of degree $j$. We may assume that $\widetilde{g}_2$ is zero, so $g_2(x_1, x_2, x_3, 0) = - f_2(x_1, x_2, x_3, 0)$. Indeed, otherwise arguing as above we see that the dual complex is empty. Now we consider the term $\widetilde{g}_3$. 
Let $C_3$ be a subset of $\mathbb{P}^2$ defined by the equation $\widetilde{g}_3=0$. 
According to Lemma \ref{lem-kuwata} we may assume that $C_3$ belongs to the following list:
\begin{enumerate}
\item
the union of a double line and a line, 
\item
a triple line,
\item
the whole projective plane.
\end{enumerate}
Start with the last case. Then 
\[
f_3(x_1, x_2, x_3, 0) = f_2(x_1, x_2, x_3, 0)( f_{2,1}(x_1, x_2, x_3) - g_{2,1}(x_1, x_2, x_3) ) \\
\]
which is a codimension $7$ condition on the coefficients of $f$, hence for a general $X$ this case does not occur. In the second case, we have a codimension $4$ condition on the coefficients of $f$, hence for a general $X$ this case does not occur as well.

So it remains to consider the first case. In this case, we have a codimension $2$ condition on the coefficients of $f$. After a change of coordinates, we may assume that $\widetilde{g}_3=x_1^2x_2$. We have
\begin{align*}
0 =\ & \widetilde{g}(x_1, x_2, x_3) = x_1^2x_2
+ f_4(x_1, x_2, x_3,0) + f_{2,0} f_2(x_1, x_2, x_3, 0)^2 - f_2(x_1, x_2, x_3, 0) f_{3,2}(x_1, x_2, x_3)\\
&- g_{2,0} f_2(x_1, x_2, x_3, 0)^2 - f_2(x_1, x_2, x_3, 0)g_{2,1}^2(x_1, x_2, x_3) + f_{2,1}(x_1, x_2, x_3)f_2(x_1, x_2, x_3,0)g_{2,1}(x_1, x_2, x_3).
\end{align*}
Moreover, once we fix $\widetilde{g}_3$, we may assume that $g_{2,1}$ is fixed as well. 
Let us rewrite this equation as
\[
\widetilde{g} = x_1^2x_2 + a_0 x_2^4 + a_1 x_2^3 x_3 + a_2 x_2^2 x_3^2 + a_3 x_2 x_3^3 + a_4 x_3^4 + G, \quad \quad a_i \in \mathbb{C}
\]
where $G$ are the sum of higher terms with respect to the weights $w=(3,2,2)$. 
Note that if at least one of the $a_i$ is not equal to zero then by Corolllary \ref{cor-surface-x2y} we are done. On the other hand, the condition that all $a_i$ are equal to $0$ has codimension $5$ on the coefficients of $f$. So this cannot happen for a general $f$. This concludes the proof.
\end{proof}

The proof of Proposition \ref{quartic-coreg-0} follows from Lemmas \ref{lem-no-cusps}, \ref{lem-no-cusps2} and Theorem \ref{thm-1-2-complements}.

\section{Intersection of a quadric and a cubic}
\label{sec-23}
In this section, we consider smooth Fano threefolds in family \textnumero\,1.3. 
Put \[
\mathcal{P} = \mathbb{P}\mathrm{H}^0(\mathbb{P}^5, \OOO(2))\times \mathbb{P}\mathrm{H}^0(\mathbb{P}^5, \OOO(3)).
\]
Consider the product 
$
\mathcal{S} = \mathbb{P}^5\times \mathcal{P}
$
together with the natural projections 
\[
p\colon \mathcal{S} \to  \mathbb{P}^5\quad \text{and}\quad q\colon \mathcal{S} \to \mathcal{P}.
\] 
Put 
$\label{eq-dim-polynomials-on-quadric}
m = \dim \mathcal{P}$.  
For an element $F=(F_1, F_2)\in \mathrm{H}^0(\mathbb{P}^5, \OOO(2))\times \mathrm{H}^0(\mathbb{P}^5, \OOO(3))$ put $X_{F} = \{ F_1 = F_2 = 0 \}\subset \mathbb{P}^5$.
Define the incidence subvariety
\[
\mathcal{I}=\{ (P, F)\in\mathcal{S}\ |\ F_1(P)=F_2(P)=0\ \text{and}\ X_F\ \text{is smooth}\,\}.
\]
Note that $\dim p|_{\mathcal{I}}^{-1}(P) = m - 2$. In this section, we will need the following lemma.

\begin{lemma}
\label{lem-codimention-23}
In the notation as above, let $S\subset \mathcal{I}$ be a non-empty algebraic subset.
Assume that for any point $P\in \mathbb{P}^5$ the subset $p|_{S}^{-1}(P)$ has codimension at least $4$ in $p|_{\mathcal{I}}^{-1}(P)$. Then $q(S)$ is a proper subset in $\mathcal{P}$. 
\end{lemma}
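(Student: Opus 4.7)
The plan is to mimic the dimension count used in the proof of Lemma \ref{lem-codimention-n}, since the setup is essentially identical but with the parameter space $\mathcal{P}$ now being a product of two complete linear systems rather than a single one, and with the ambient variety $\mathbb{P}^5$ of dimension $5$ in the role of $\mathbb{P}^n$. The codimension requirement $4$ in the statement is exactly $\dim \mathbb{P}^5 - 1$, which is what makes the bound go through.

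First I would examine the restriction of the projection $p$ to $S$, namely $p|_S\colon S\to \mathbb{P}^5$. For each point $P\in \mathbb{P}^5$ the hypothesis gives
\[
\dim p|_{S}^{-1}(P)\ \leq\ \dim p|_{\mathcal{I}}^{-1}(P)-4\ =\ (m-2)-4\ =\ m-6.
\]
By the fiber dimension inequality applied to $p|_S$, this yields
\[
\dim S\ \leq\ \dim \mathbb{P}^5 + (m-6)\ =\ 5+(m-6)\ =\ m-1.
\]

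Then I would finish by noting that $q(S)$ is the image of $S$ under a morphism, so $\dim q(S)\leq \dim S\leq m-1 < m=\dim\mathcal{P}$. Hence $q(S)$ cannot equal all of $\mathcal{P}$, and therefore it is a proper subset, as claimed.

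There is no real obstacle here: the statement is a routine generic-smoothness dimension count, and the only subtlety is to verify that the fiber dimension of $p|_{\mathcal{I}}$ is correctly asserted to equal $m-2$ (which follows because smoothness of $X_F$ is an open condition, while requiring $F_1(P)=F_2(P)=0$ for a fixed $P\in\mathbb{P}^5$ imposes exactly two independent linear conditions on the pair $F=(F_1,F_2)\in\mathcal{P}$). This, combined with the chosen codimension threshold $4=\dim\mathbb{P}^5-1$, is what makes the argument work verbatim in this setting.
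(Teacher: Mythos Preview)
Your proof is correct and follows essentially the same dimension count as the paper: bounding $\dim p|_S^{-1}(P)\le m-6$, then $\dim S\le 5+(m-6)=m-1$, hence $\dim q(S)\le m-1<m=\dim\mathcal{P}$. The paper's version is just a terser rendition of the same argument.
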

\begin{proof}
We have $\dim p|_{S}^{-1}(P) \leq m - 2 - 4 = m-6$ where $m$ is defined above. It follows that 
\[
\dim q(S)\leq \dim S\leq m - 1.
\] 
Thus, $q(S)$ is a proper subset in $\mathbb{P}\mathrm{H}^0(\mathbb{P}^5, \OOO(2))\times \mathbb{P}\mathrm{H}^0(\mathbb{P}^5, \OOO(3))$. The claim follows.
\end{proof}

Let $f=0$ be the equation of a quadric $Q$, and $g=0$ be the equation of a cubic $Y$ in $\mathbb{P}^5$. Since $X=Q\cap Y$ is general, we may assume that $Q$ is smooth. Note that the equation of a cubic is not defined uniquely by $X$ since we can replace $g$ with $g+lf$ for arbitrary linear form $l$. Let $P\in X$ be a point. Near $P$ we can write
\begin{align}
\label{eq-23-quadric}
0 =& f(x_1, \ldots, x_5) = x_4 + f_2(x_1,\ldots, x_5),\\
\label{eq-23-cubic}
0 =& g(x_1, \ldots, x_5) = x_5 + g_2(x_1,\ldots, x_5) + g_3(x_1,\ldots, x_5).
\end{align}

\begin{proposition}
\label{prop-23-reg1}
For a general $X$, we have $\mathrm{reg}_1(X)=0$.
\end{proposition}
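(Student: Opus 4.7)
}

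Following the strategy of Lemmas \ref{prop-sextic-reg1} and \ref{lem-no-cusps}, the plan is to show that for a general $X$, every hyperplane section $D \in |-K_X|$ yields a plt pair $(X,D)$. Since a smooth $D \in |-K_X|$ exists by Bertini (giving dual complex of dimension $0$), and a plt pair has dual complex consisting of the single vertex $v_D$, this will imply $\mathrm{reg}_1(X)=0$. By \cite{Ish82}, every hyperplane section $D = X\cap H$ is normal and hence integral, so one may apply Lemma \ref{lem-striclty-lc-1-complement} to detect strictly lc behavior at any point $P \in X$. Because $(X,D)$ is automatically log smooth wherever $D$ is smooth, the only bad points are those where $H \supset T_P X$; in the local coordinates of equations \eqref{eq-23-quadric}-\eqref{eq-23-cubic}, the tangent space is $\{x_4=x_5=0\}$, so the tangent hyperplanes at $P$ form the pencil $\{ax_4+bx_5=0 : [a:b]\in\mathbb{P}^1\}$.

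Iteratively substituting $x_4 = -f_2(x_1,\dots,x_5)$ and $x_5 = -g_2 - g_3$ to express $X$ as a graph over $(x_1,x_2,x_3)$, the local equation of $D_{[a:b]}$ on $X$ takes the form $h = h_2 + h_3 + O(4)$ with quadratic part
\[
h_2 = -(a\,f_2|_0 + b\,g_2|_0),
\]
where $f_2|_0, g_2|_0$ denote the restrictions of $f_2, g_2$ to $\{x_4=x_5=0\}$; moreover $g_3|_0$ enters $h_3$ linearly (with coefficient $-b$), independently of the data in $f_2, g_2$ (and analogously for $f_3|_0$ with coefficient $-a$). By Lemma \ref{lem-striclty-lc-1-complement}, strict log canonicity at $P$ forces either (A) $h_2 = 0$ for some $[a:b]$, or (B) $h_2 = l^2$ for some $[a:b]$ together with an extra condition on $(h_3)|_{l=0}$.

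Case (A) is equivalent to $f_2|_0$ and $g_2|_0$ being linearly dependent in the six-dimensional space of quadratic forms in three variables, which is codimension $5$ on $(F_1,F_2)$. For Case (B), the rank-$\leq 1$ locus in this six-dimensional space is the three-dimensional affine cone over the Veronese surface, of codimension $3$; a pencil meeting it is a codimension $2$ condition on $(f_2|_0,g_2|_0)$, while the specific $[a:b]$ is then determined. The additional requirement that $(h_3)|_{l=0}$ be zero or a cube of a linear form cuts out codimension at least $2$ in the space of cubic forms in two variables; since $g_3|_0$ (when $b \neq 0$, and $f_3|_0$ when $a\neq 0$) contributes freely to $h_3$, this is a genuine further codimension $\geq 2$ condition on $(F_1,F_2)$. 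Thus Case (B) contributes codimension at least $4$. Applying Lemma \ref{lem-codimention-23} gives that the image in $\mathcal{P}$ of bad $(P,F)$ is a proper subset, so for general $X$ every $(X,D)$ with $D \in |-K_X|$ is plt, yielding $\mathrm{reg}_1(X)=0$. The main obstacle is verifying that the cubic condition in Case (B) contributes codimension $\geq 2$; this hinges on keeping track that $g_3|_0$ (respectively $f_3|_0$) enters $h_3$ independently of the rank-$1$ constraint on $h_2$, so that $(h_3)|_{l=0}$ can indeed be prescribed as an arbitrary cubic form in two variables.
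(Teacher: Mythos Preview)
Your approach is essentially the paper's: reduce to tangent hyperplanes $\{ax_4+bx_5=0\}$, compute the local equation of $H\cap X$ at $P$, invoke Lemma~\ref{lem-striclty-lc-1-complement}, and feed the resulting codimension estimate into Lemma~\ref{lem-codimention-23}. Your bookkeeping is in fact tidier than the paper's, since you treat the whole pencil $\{af_2|_0+bg_2|_0\}$ at once and count the condition ``pencil meets the rank-$\leq 1$ Veronese cone'' as codimension $2$, then add the cubic constraint (codimension $2$) for a total of $4$.

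There is, however, one genuine slip. You write that $f_3|_0$ enters $h_3$ with coefficient $-a$ and appeal to this when $b=0$; but $f$ is the equation of a \emph{quadric} (equation~\eqref{eq-23-quadric}) and has no cubic part at all. So at $[a:b]=[1:0]$ your free cubic parameter disappears, and your argument that the cubic constraint contributes an honest codimension $\geq 2$ breaks down there. The paper closes exactly this gap by handling $[a:b]=[1:0]$ separately: since $X$ is general one may take $Q$ smooth, whence $Q\cap T_PQ\subset\mathbb{P}^4$ has rank $4$ and its further hyperplane slice $f_2(x_1,x_2,x_3,0,0)$ has rank at least $2$ at every $P$ (see equations~\eqref{eq-23-QT_PQ}--\eqref{eq-23-intersection-first-case}). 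Thus $h_2=-f_2|_0$ is never of rank $\leq 1$ at $[1:0]$, and Case~(B) is never realised there. Once you insert this observation, the determined $[a:b]$ in Case~(B) always has $b\neq 0$, the free contribution from $g_3|_0$ is genuine, and your count goes through.
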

\begin{proof}
We use the above notation. Fix a point $P\in X$.  
Let $H$ be a hyperplane in $\mathbb{P}^5$. If $H$ intersects $T_P X$ transversally, then the pair $(X, H|_X)$ is log smooth at $P$. So we may assume that $H$ contains $T_P X$. Then $H$ is given by the equation $a x_4 + b x_5 = 0$ for $a,b\in\mathbb{C}$, see equations \eqref{eq-23-quadric} and \eqref{eq-23-cubic}. First assume that $H$ is given by the equation $x_4=0$. Then the intersection $X\cap H$ is given by the equations
\begin{align}
\label{eq-23-QT_PQ}
0 =& f(x_1, x_2, x_3, 0, x_5) =  f_2(x_1,x_2, x_3, 0, x_5),\notag\\
0 =& g(x_1, x_2, x_3, 0, x_5) = x_5 + g_2(x_1, x_2, x_3, 0, x_5) + g_3(x_1, x_2, x_3, 0, x_5).
\end{align}
Since $X$ is general, we may assume that $Q$ is smooth. Hence $f_2(x_1,x_2, x_3, 0, x_5)$ defines a quadric of rank~$4$. Express $x_5$ from the second equation and put it into the first equation to obtain the equation of $X\cap H$ near~$P$:
\begin{equation}
\label{eq-23-intersection-first-case}
0 = f_2(x_1, x_2, x_3, 0, 0) + \text{(terms of degree $\geq 3$)}.
\end{equation}
Then $f_2(x_1, x_2, x_3, 0, 0)=0$ defines a conic of rank at least $2$. Thus, by Lemma \ref{lem-corank-second-differenetial} the equation \eqref{eq-23-intersection-first-case} is an equation of a du Val singularity. In particular, the pair $(H|_X, 0)$ is klt, hence the pair $(X, H|_X)$ is plt, and its dual complex is one point. So we are done in this case.

If $H$ is given by the equation $ax_4+x_5 = 0$ for $a\in \mathbb{C}$, 
we have the following equations of $X\cap H$ near $P$: 
\begin{align}
\label{eq-23-QT_PQ'}
0 =& f(x_1, \ldots, x_4, 0) = x_4 + f_2(x_1,\ldots, x_4, -ax_4),\\
\label{eq-23-QT_PQ''}
0 =& g(x_1, \ldots, x_4, 0) = -ax_4 + g_2(x_1,\ldots, x_4, -ax_4 ) + g_3(x_1,\ldots, x_4, -ax_4 ) \notag\\
=& g_2(x_1,x_2,x_3,0,0) + (\widetilde{g_{2,1}}(x_1,x_2,x_3)-a)x_4 + \widetilde{g_{2,0}}x_4^2 + g_3(x_1,\ldots, x_4, 0) + \ldots
\end{align}
where $\widetilde{g_{i, j}}$ are homogeneous polynomials of degree $j$ whose coefficients depend on the coefficients of the polynomials $g_k$. 
Express $x_4$ from \eqref{eq-23-QT_PQ'} and put it into \eqref{eq-23-QT_PQ''} to obtain the equation of $X\cap H$ near~$P$:
\begin{align}
\label{eq-23-intersection-final-case}
0 = g_2(x_1,x_2, x_3, 0, 0) &+ a f_2(x_1,x_2,x_3, 0, 0) - \widetilde{g_{2,1}}(x_1,x_2,x_3) f_2(x_1,x_2,x_3, 0, 0)\notag\\
&+ g_3(x_1,x_2, x_3, 0, 0) + \text{(terms of degree $\geq 4$)}.
\end{align}
If the rank of $g_2$ is at least $2$, then by Lemma \ref{lem-corank-second-differenetial} we have an equation of a du Val singularity, and arguing as above we are done. So we may assume that $g_2$ is a square of a linear form, say, $g_2(x_1,x_2, x_3, 0, 0) =x_1^2$. This is a condition of codimension $3$ on the coefficients of $g$. Consider the third order term of \eqref{eq-23-intersection-final-case} restricted to $x_1=0$: 
\[
- g_{2,1}(0,x_2,x_3) f_2(0,x_2,x_3, 0, 0) + g_3(0,x_2, x_3, 0, 0).
\]
The condition that this term either vanishes, or is a cube of a linear form, is a condition of codimension at least $1$. In total, we get a condition of codimension at least $4$, so this cannot happen for a general $X$ by Lemma \ref{lem-codimention-23}. Then by Lemma \ref{lem-striclty-lc-1-complement}, we conclude that for a general $X$, the pair $(X, H|_X)$ is plt, and its dual complex is one point, so $\mathrm{reg}_1(X)=0$. This finishes the proof.
\end{proof}

\section{Intersection of three quadrics}
\label{sec-222}
In this section, we consider smooth Fano threefolds in family \textnumero\,1.4. 
Put 
\[
\mathcal{P}=\mathbb{P}\mathrm{H}^0(\mathbb{P}^6, \OOO(2))\times \mathbb{P}\mathrm{H}^0(\mathbb{P}^6, \OOO(2))\times \mathbb{P}\mathrm{H}^0(\mathbb{P}^6, \OOO(2)).
\] 
Consider the product 
$
\mathcal{S} = \mathbb{P}^6\times \mathcal{P}
$ 
together with the natural projections 
\[
p\colon \mathcal{S} \to  \mathbb{P}^6\quad \text{and} \quad q\colon \mathcal{S} \to \mathcal{P}
\] 
Put 
$\label{eq-dim-polynomials-on-quadric}
m = \dim \mathcal{P}. 
$
For an element $F=(F_1, F_2, F_3)\in \mathrm{H}^0(\mathbb{P}^6, \OOO(2))\times \mathrm{H}^0(\mathbb{P}^6, \OOO(2))\times \mathrm{H}^0(\mathbb{P}^6, \OOO(2))$, put $X_{F} = \{ F_1 = F_2 = F_3 = 0 \}\subset \mathbb{P}^6$.
Define the incidence subvariety
\[
\mathcal{I}=\{ (P, F)\in\mathcal{S}\ |\ F_1(P)=F_2(P)=F_3(P)=0\ \text{and}\ X_F\ \text{is smooth}\,\}.
\]
Note that $\dim p|_{\mathcal{I}}^{-1}(P) = m - 3$. In this section, we will need the following lemma.

\begin{lemma}
\label{lem-codimention-23}
In the notation as above, let $S\subset \mathcal{I}$ be a non-empty algebraic subset.
Assume that for any point $P\in \mathbb{P}^6$ the subset $p|_{S}^{-1}(P)$ has codimension at least $4$ in $p|_{\mathcal{I}}^{-1}(P)$. Then $q(S)$ is a proper subset in $\mathcal{S}$. 
\end{lemma}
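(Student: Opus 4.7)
The plan is a direct dimension count, following exactly the same pattern as Lemmas~\ref{lem-codimention-n} and~\ref{lem-codimention-23}. The key numerical observation is that $\mathcal{I}$ projects to $\mathbb{P}^6$ with fibres of dimension $m-3$ (since imposing $F_1(P)=F_2(P)=F_3(P)=0$ for a fixed point $P$ cuts three independent linear conditions on $\mathcal{P}$), and the base $\mathbb{P}^6$ has dimension $6$. A codimension-$4$ condition on these fibres is exactly what is needed to drop the image under $q$ below full dimension, matching $6 - (4-3) = 5$ in the sense that the total dimension shortfall equals $\dim \mathbb{P}^6 - (4-3) - \dim\mathcal{P} + \dim \mathcal{I}_P = -1$.

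Concretely, first I would fibre $S$ over $\mathbb{P}^6$ via $p$. The hypothesis gives, for every $P\in\mathbb{P}^6$,
\[
\dim p|_S^{-1}(P)\ \leq\ \dim p|_{\mathcal{I}}^{-1}(P)-4\ =\ (m-3)-4\ =\ m-7.
\]
By upper semicontinuity of fibre dimension applied to the restriction $p|_S\colon S\to \mathbb{P}^6$,
\[
\dim S\ \leq\ \dim \mathbb{P}^6 \;+\; \max_{P\in\mathbb{P}^6}\dim p|_S^{-1}(P)\ \leq\ 6+(m-7)\ =\ m-1.
\]
Then since $q$ cannot increase dimension, $\dim q(S)\leq m-1<m=\dim \mathcal{P}$, so $q(S)$ is a proper subvariety of $\mathcal{P}$, as claimed. (Note that the statement as written says ``proper subset in $\mathcal{S}$''; this is a typo for $\mathcal{P}$, as is clear from the analogous Lemmas~\ref{lem-codimention-n} and~\ref{lem-codimention-23}.)

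There is no real obstacle here; the content of the lemma is simply a bookkeeping exercise, isolating the precise codimension threshold ($4$ on the fibres of $p$) that a ``bad locus'' in the universal family over $\mathbb{P}^6$ must violate in order to be detected on a general three-quadric intersection. The only thing worth double-checking is that $\mathcal{I}$ has pure fibres of dimension $m-3$ over every $P\in \mathbb{P}^6$, which follows because the three incidence conditions $F_i(P)=0$ are independent linear forms on the three factors of $\mathcal{P}$, and the smoothness condition on $X_F$ is open.
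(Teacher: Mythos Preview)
Your proof is correct and follows the same dimension-count argument as the paper's own proof; the paper compresses the step $\dim S\leq 6+(m-7)=m-1$ into a single line, but the content is identical. Your observation that ``$\mathcal{S}$'' in the conclusion is a typo for $\mathcal{P}$ is also correct.
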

\begin{proof}
We have $\dim p|_{S}^{-1}(P) \leq m - 3 - 4 = m - 7$ where $m$ is defined above. It follows that 
\[
\dim q(S)\leq \dim S\leq m - 1.
\] 
Thus, $q(S)$ is a proper subset in $\mathcal{S}$. The claim follows.
\end{proof}

Let $X$ be a smooth Fano threefold in the family \textnumero\,1.4. Then $X$ is a complete intersection of three quadrics $Q_1$, $Q_2$ and $Q_3$ in $\mathbb{P}^6$. Fix a point $P\in X$. Up to a change of coordinates, near $P$ we can write the local equations of $X=Q_1\cap Q_2\cap Q_3$ as follows:
\begin{align}
0 =& f(x_1, x_2, x_3, x_4, x_5, x_6) 
= x_4 + f_2(x_1, x_2, x_3, x_4, x_5, x_6) \notag \\
&= x_4  + f_2(x_1, x_2, x_3, 0, 0, 0)\notag \\ 
&+ x_4 l_{f,1} (x_1, x_2, x_3, x_4, x_5, x_6) + x_5 l_{f,2} (x_1, x_2, x_3, x_4, x_5, x_6) + x_6 l_{f,3} (x_1, x_2, x_3, x_4, x_5, x_6) \notag \\
&= x_4 + f_2(x_1, x_2, x_3, 0, 0, 0) + x_4 (l_{f,41}x_1+l_{f,42}x_2+l_{f,43}x_3+l_{f,44}x_4+l_{f,45}x_5+l_{f,46}x_6)\notag \\
&+ x_5 (l_{f,51}x_1+l_{f,52}x_2+l_{f,53}x_3+l_{f,54}x_4+l_{f,55}x_5+l_{f,56}x_6)\notag  \\
&+ x_6 (l_{f,61}x_1+l_{f,62}x_2+l_{f,63}x_3+l_{f,64}x_4+l_{f,65}x_5+l_{f,66}x_6),\notag
\end{align}
\begin{align}
0 =& g(x_1, x_2, x_3, x_4, x_5, x_6) = x_5 + g_2(x_1, x_2, x_3, x_4, x_5, x_6) \notag \\
&=x_5 + g_2(x_1, x_2, x_3, 0, 0, 0) \notag \\
&+x_4 l_{g,1} (x_1, x_2, x_3, x_4, x_5, x_6) + x_5 l_{g,2} (x_1, x_2, x_3, x_4, x_5, x_6) + x_6 l_{g,3} (x_1, x_2, x_3, x_4, x_5, x_6) \notag \\
&=x_5 + g_2(x_1, x_2, x_3, 0, 0, 0) + x_4 (l_{g,41}x_1+l_{g,42}x_2+l_{g,43}x_3+l_{g,44}x_4+l_{g,45}x_5+l_{g,46}x_6)\notag \\
&+ x_5 (l_{g,51}x_1+l_{g,52}x_2+l_{g,53}x_3+l_{g,54}x_4+l_{g,55}x_5+l_{g,56}x_6) \notag \\
&+ x_6 (l_{g,61}x_1+l_{g,62}x_2+l_{g,63}x_3+l_{g,64}x_4+l_{g,65}x_5+l_{g,66}x_6), \notag
\end{align}
\begin{align}
0 =& h(x_1, x_2, x_3, x_4, x_5, x_6) = x_6 + h_2(x_1, x_2, x_3, x_4, x_5, x_6) \notag \\
&=x_6  + h_2(x_1, x_2, x_3, 0, 0, 0) \notag \\
& + x_4 l_{h,1} (x_1, x_2, x_3, x_4, x_5, x_6) + x_5 l_{h,2} (x_1, x_2, x_3, x_4, x_5, x_6) + x_6 l_{h,3} (x_1, x_2, x_3, x_4, x_5, x_6) \notag \\
&=x_6 + h_2(x_1, x_2, x_3, 0, 0, 0) + x_4 (l_{h,41}x_1+l_{h,42}x_2+l_{h,43}x_3+l_{h,44}x_4+l_{h,45}x_5+l_{h,46}x_6) \notag \\
&+ x_5 (l_{h,51}x_1+l_{h,52}x_2+l_{h,53}x_3+l_{h,54}x_4+l_{h,55}x_5+l_{h,56}x_6) \notag \\
&+ x_6 (l_{h,61}x_1+l_{h,62}x_2+l_{h,63}x_3+l_{h,64}x_4+l_{h,65}x_5+l_{h,66}x_6), \notag
\end{align}
where $f_2$, $g_2$ and $h_2$ are homogeneous polynomials of degree $2$,  
$l_{f,i}$, $l_{g,i}$, $l_{h,i}$ are linear forms and $l_{f,ij}$, $l_{g,ij}$, $l_{h,ij}\in\mathbb{C}$ are such that $l_{f,ij}=l_{f,ji}$, $l_{g,ij}=l_{g,ji}$, $l_{h,ij}=l_{h,ji}$ for $4\leq i,j\leq 6$.
Up to degree $5$ we have
\begin{align}
x_4 =& - f_2(x_1, x_2, x_3, 0, 0, 0) + f_2(x_1, x_2, x_3, 0, 0, 0) (l_{f,41}x_1+l_{f,42}x_2+l_{f,43}x_3) \notag\\
&+ g_2(x_1, x_2, x_3, 0, 0, 0) (l_{f,51}x_1+l_{f,52}x_2+l_{f,53}x_3) 
+ h_2(x_1, x_2, x_3, 0, 0, 0) (l_{f,61}x_1+l_{f,62}x_2+l_{f,63}x_3)\notag\\
&- f_2(x_1, x_2, x_3, 0, 0, 0) (l_{f,44} f_2(x_1, x_2, x_3, 0, 0, 0) + l_{f,45} g_2(x_1, x_2, x_3, 0, 0, 0) + l_{f,46} h_2(x_1, x_2, x_3, 0, 0, 0))\notag\\
&- g_2(x_1, x_2, x_3, 0, 0, 0) (l_{f,54} f_2(x_1, x_2, x_3, 0, 0, 0) + l_{f,55} g_2(x_1, x_2, x_3, 0, 0, 0) + l_{f,56} h_2(x_1, x_2, x_3, 0, 0, 0))\notag\\
&- h_2(x_1, x_2, x_3, 0, 0, 0) (l_{f,64} f_2(x_1, x_2, x_3, 0, 0, 0) + l_{f,65} g_2(x_1, x_2, x_3, 0, 0, 0) + l_{f,66} h_2(x_1, x_2, x_3, 0, 0, 0))\notag\\
&+ (f_2(x_1, x_2, x_3, 0, 0, 0) (l_{f,41}x_1+l_{f,42}x_2+l_{f,43}x_3) 
+ g_2(x_1, x_2, x_3, 0, 0, 0) (l_{f,51}x_1+l_{f,52}x_2+l_{f,53}x_3) \notag\\
&+ h_2(x_1, x_2, x_3, 0, 0, 0) (l_{f,61}x_1+l_{f,62}x_2+l_{f,63}x_3))(l_{f,41}x_1+l_{f,42}x_2+l_{f,43}x_3)\notag\\
&+ (f_2(x_1, x_2, x_3, 0, 0, 0) (l_{g,41}x_1+l_{g,42}x_2+l_{g,43}x_3) 
+ g_2(x_1, x_2, x_3, 0, 0, 0) (l_{g,51}x_1+l_{g,52}x_2+l_{g,53}x_3) \notag\\
&+ h_2(x_1, x_2, x_3, 0, 0, 0) (l_{g,61}x_1+l_{g,62}x_2+l_{g,63}x_3))(l_{f,51}x_1+l_{f,52}x_2+l_{f,53}x_3)\notag\\
&+ (f_2(x_1, x_2, x_3, 0, 0, 0) (l_{h,41}x_1+l_{h,42}x_2+l_{h,43}x_3) 
+ g_2(x_1, x_2, x_3, 0, 0, 0) (l_{h,51}x_1+l_{h,52}x_2+l_{h,53}x_3) \notag\\
&+ h_2(x_1, x_2, x_3, 0, 0, 0) (l_{h,61}x_1+l_{h,62}x_2+l_{h,63}x_3))(l_{f,61}x_1+l_{f,62}x_2+l_{f,63}x_3).
\end{align}
We also can write similar expressions for $x_5$ and $x_6$.
Consider a net of quadrics spanned by $Q_1, Q_2, Q_3$. For a general $X$, we may assume that its discriminant curve $\Delta$ of degree $7$ is smooth. 

\begin{proposition}
\label{prop-222-reg1}
For a general $X$, we have $\mathrm{reg}_1(X)=0$.
\end{proposition}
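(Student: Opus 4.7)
The plan is to parallel the proof of Proposition~\ref{prop-23-reg1}. First I would observe that $\mathrm{Pic}(X) = \mathbb{Z}\cdot[H]$, so every $D \in |-K_X|$ is the restriction of a unique hyperplane $H \subset \mathbb{P}^6$ and is necessarily irreducible (any effective decomposition $D = D_1 + D_2$ would force one of the components to be numerically trivial). Consequently, $\dim \mathcal{D}(X, D) \geq 1$ can happen only when $D$ acquires a strictly log canonical surface singularity (simple elliptic or cusp) at some point. Since a general smooth hyperplane section gives a plt pair with a one-point dual complex, yielding $\mathrm{reg}_1(X) \geq 0$, what remains is to rule out $\dim \mathcal{D}(X, D) \geq 1$ for a general $X$.

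Next I would fix $P \in X$ and note that $(X, H|_X)$ is log smooth at $P$ unless $H \supset T_P X$. Assuming the latter, I would choose coordinates so that $T_P X = \{x_4 = x_5 = x_6 = 0\}$ and a basis $f, g, h$ of the net with $d_P f = x_4$, $d_P g = x_5$, $d_P h = x_6$, arranged so that $H = \{x_4 = 0\}$; every $H \supset T_P X$ arises from a two-parameter family of such basis changes. Solving $g|_{x_4=0} = h|_{x_4=0} = 0$ for $x_5, x_6$ as power series in $(x_1, x_2, x_3)$ and substituting into $f|_{x_4=0}$ yields the local equation $\widetilde{f} = \widetilde{f}_2 + \widetilde{f}_3 + \widetilde{f}_4 + \ldots$ of $D = H \cap X$ at $P$, with
\begin{align*}
\widetilde{f}_2 &= f_2(x_1, x_2, x_3, 0, 0, 0), \\
\widetilde{f}_3 &= -g_2(x_1,x_2,x_3,0,0,0)\, l_{f,5}(x_1,x_2,x_3) - h_2(x_1,x_2,x_3,0,0,0)\, l_{f,6}(x_1,x_2,x_3),
\end{align*}
where $l_{f,5}(x_1,x_2,x_3) = l_{f,51} x_1 + l_{f,52} x_2 + l_{f,53} x_3$ and analogously for $l_{f,6}$.

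By Lemma~\ref{lem-striclty-lc-1-complement}, the singularity of $D$ at $P$ is strictly lc only in one of three cases: (i) $\widetilde{f}_2 \equiv 0$ (codimension $6$ on the coefficients of the net); (ii) after a coordinate change $\widetilde{f}_2 = x_1^2$ and $\widetilde{f}_3|_{x_1 = 0} \equiv 0$ (codimension $3 + 4 = 7$, three for $\widetilde{f}_2$ being a square and four for the vanishing of a binary cubic in $(x_2, x_3)$); or (iii) $\widetilde{f}_2 = x_1^2$, $\widetilde{f}_3|_{x_1 = 0}$ is a perfect cube, and $\widetilde{f}_4|_{x_1 = x_2 = 0} \equiv 0$ (codimension $3 + 2 + 1 = 6$). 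The minimum codimension over all cases is thus $6$ for each fixed $(P, H)$. Varying $H$ over the two-parameter family of hyperplanes containing $T_P X$ drops this codimension by at most $2$, leaving codimension at least $4$ on the space of nets through $P$. Lemma~\ref{lem-codimention-23} then implies that the image of the bad locus in $\mathcal{P}$ is a proper subvariety, so for a general $X$ no $(P, H)$ yields a strictly lc hyperplane section, giving $\mathrm{reg}_1(X) = 0$.

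The main obstacle will be verifying that the codimension counts in cases (ii) and (iii) are tight, that is, that the conditions on $\widetilde{f}_3|_{x_1 = 0}$ and $\widetilde{f}_4|_{x_1 = x_2 = 0}$ really impose the claimed number of independent conditions on the coefficients of $f, g, h$ once $\widetilde{f}_2 = x_1^2$ has been imposed. One must check that the maps from the parameter space of nets to the space of binary cubics in $(x_2, x_3)$, and to the single coefficient of $x_3^4$ in $\widetilde{f}_4$, are generically surjective; this requires careful bookkeeping of how $l_{f,5}, l_{f,6}, g_2, h_2$, and the higher-order coefficients of $f$ jointly contribute, as well as accounting for the two degrees of freedom absorbed by varying $H$ within the pencil of hyperplanes through $T_P X$.
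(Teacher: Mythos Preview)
Your proposal is correct and follows essentially the same strategy as the paper. The only organizational difference is that you fix the hyperplane $H=\{x_4=0\}$ first, compute the codimension of the strictly lc locus for that fixed $H$, and then subtract $2$ at the end to account for the $\mathbb{P}^2$ of hyperplanes through $T_PX$; the paper instead substitutes the power series for $x_4,x_5,x_6$ into the moving hyperplane $ax_4+bx_5+cx_6=0$ and carries the parameters $(a\!:\!b\!:\!c)$ through the count (which is why their ``quadratic term is a square'' appears as codimension~$1$ rather than your codimension~$3$---the difference is exactly the $2$ you subtract later). Both routes yield the same total of at least $4$.

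The one place where the paper does more than you is precisely the point you flag as the main obstacle. To verify that the condition on $\widetilde{f}_3|_{x_1=0}$ really has the claimed codimension, the paper splits into three subcases according to whether $l_{f,5}|_{x_1=0}$ and $l_{f,6}|_{x_1=0}$ are both nonzero and non-proportional, proportional, or one of them vanishes. In the non-proportional case the bilinear map $(g_2,h_2,l_{f,5},l_{f,6})\mapsto g_2 l_{f,5}+h_2 l_{f,6}$ surjects onto binary cubics, so ``cube'' and ``zero'' have the expected codimensions $2$ and $4$; in the degenerate cases the drop in rank of this map is compensated by the extra conditions defining the stratum, so the total stays $\geq 6$ (before subtracting~$2$). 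The paper also checks in the non-proportional case that the coefficient of $x_3^4$ in $\widetilde f_4$ genuinely depends on coefficients (such as $f_{55},f_{56},f_{66}$ and $g_{35},h_{35},\ldots$) not already constrained by the lower-order conditions, confirming your ``$+1$'' in case~(iii). So your outline is sound; what remains is exactly the stratified bookkeeping the paper carries out.
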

\begin{proof}
Let $H$ be a hyperplane in $\mathbb{P}^6$. If $H$ intersects $T_P X$ transversally, then the pair $(X, H|_X)$ is log smooth at $P$. So we may assume that $H$ contains $T_P X$. Then $H$ is given by the equation $a x_4 + b x_5 + c x_6 = 0$. 
Express $x_4$, $x_5$ and $x_6$ from the above equations and put it into the equation of $H$ to obtain the following equation of $X\cap Q'$ (written up to degree $4$ terms):
\begin{align}
0 = a x_4 &+ b x_5 + c x_6 = - a f_2(x_1, x_2, x_3, 0, 0, 0) - b g_2(x_1, x_2, x_3, 0, 0, 0) - c h_2(x_1, x_2, x_3, 0, 0, 0)\notag\\
&+ f_2(x_1, x_2, x_3, 0, 0, 0) ( al_{f,1} (x_1, x_2, x_3, 0, 0, 0) + bl_{g,1} (x_1, x_2, x_3, 0, 0, 0) + cl_{h,1} (x_1, x_2, x_3, 0, 0, 0) )\notag\\
&+ g_2(x_1, x_2, x_3, 0, 0, 0) ( al_{f,2} (x_1, x_2, x_3, 0, 0, 0) + bl_{g,2} (x_1, x_2, x_3, 0, 0, 0) + cl_{h,2} (x_1, x_2, x_3, 0, 0, 0) )\notag\\
&+ h_2(x_1, x_2, x_3, 0, 0, 0) ( al_{f,3} (x_1, x_2, x_3, 0, 0, 0) + bl_{g,3} (x_1, x_2, x_3, 0, 0, 0) + cl_{h,3} (x_1, x_2, x_3, 0, 0, 0) ).\notag
\end{align}
Note that the vanishing of the quadratic term of this equation is a condition of codimension $4$, so it cannot happen for a general $X$. Also, the condition that the quadratic term is a square of a linear form is a condition of codimension $1$.  We may assume that $a=1$ and $b=c=0$ and the quadratic term is 
\[
f_2(x_1, x_2, x_3, 0, 0, 0)=x_1^2,
\]
and the cubic term is 
\[
x_1^2 l_{f,1} (x_1, x_2, x_3,0,0,0)
+ g_2(x_1, x_2, x_3, 0, 0, 0) l_{f,2} (x_1, x_2, x_3,0,0,0)
+ h_2(x_1, x_2, x_3, 0, 0, 0) l_{f,3} (x_1, x_2, x_3,0,0,0).
\]
We are going to use Proposition \ref{lem-striclty-lc-2-complement}.
Put $x_1=0$ to obtain
\[
g_2(0, x_2, x_3, 0, 0, 0) l_{f,2} (0, x_2, x_3,0,0,0) + h_2(0, x_2, x_3, 0, 0, 0) l_{f,3} (0, x_2, x_3,0,0,0).
\]
We may assume that both $l_{f,2}(0, x_2, x_3,0,0,0)$ and $l_{f,3}(0, x_2, x_3,0,0,0)$ do not vanish simultaneously (otherwise we get a condition of codimension $4$). 

\subsection{
Assume that $l_{f,2}(0, x_2, x_3,0,0,0)=0$}
Then the condition that the cubic term is a cube of a linear form implies that $g_2(0, x_2, x_3, 0, 0, 0)$ is proportional to $l_{f,2} (0, x_2, x_3,0,0,0)^2$ which is a condition of codimension $2$. In total, we get a codimension $5$ condition which cannot happen for general $X$. Hence we may assume that $l_{f,2}(0, x_2, x_3,0,0,0)\neq 0$. 

\subsection{
Assume that $l_{f,2}(0, x_2, x_3,0,0,0)$ and $l_{f,3}(0, x_2, x_3,0,0,0)$ are proportional}
Then we obtain $l_{f,2}(0, x_2, x_3,0,0,0)=x_2$ and $l_{f,3}(0, x_2, x_3,0,0,0)=ax_2$ for $0\neq a\in \mathbb{C}$. This is a condition of codimension $1$. If the cubic term is zero, we get
\[
g_2(0, x_2, x_3, 0, 0, 0) l_{f,2} (0, x_2, x_3,0,0,0) + a h_2(0, x_2, x_3, 0, 0, 0) 
\]
which is a condition of codimension $3$, so in total we get a codimension $4$ condition, so this case does not happen for general $X$. Assume that the cubic term is a cube of a linear form, so $g_2(0, x_2, x_3, 0, 0, 0)$ and $h_2(0, x_2, x_3, 0, 0, 0)$ are proportional to $x_2^2$. This is a condition of codimension $4$ which cannot happen for general $X$.

\subsection{
Assume that $l_{f,2} (0, x_2, x_3,0,0,0)$ and $l_{f,3} (0, x_2, x_3,0,0,0)$ are not proportional}
Then we may assume that $l_{f,2} (0, x_2, x_3,0,0,0)=x_2$ and $l_{f,3} (0, x_2, x_3,0,0,0)=x_3$. The cubic term is
\begin{align*}
&x_2 g_2(0, x_2, x_3, 0, 0, 0) + x_3 h_2(0, x_2, x_3, 0, 0, 0)\\
=& (a_1x_2^2+a_2x_2x_3+a_3x_3^2)x_2 + (b_1x_2^2+b_2x_2x_3+b_3x_3^2)x_3\\
=& a_1x_2^3+(a_2+b_1)x_2^2x_3+(a_3+b_2)x_2x_3^2 + b_3x_2^3.
\end{align*}
If the cubic term vanishes, we get a condition of codimension $4$ which cannot happen for general $X$. Assume that in this case the cubic term is a cube of a linear form, so it is equal to
\[
l(x_2,x_3)^3 = (\alpha x_2 + \beta x_3)^3 = \alpha^3 x_2^3 + 3 \alpha^2 \beta x_2^2x_3 + 3 \alpha \beta^2 x_2x_3^2 + \beta^3 x_3^3
\]
for some $\alpha, \beta\in \mathbb{C}$, 
which gives a condition of codimension $2$, so we get a codimension $3$ condition in total. 
We may assume that $l(x_2,x_3)=x_2-x_3$. 
We have to analyze the quartic term restricted to $f_2(x_1,x_2,x_3,0,0,0)=x_1^2$, $x_1=0$ and $x_2=x_3$: 
\begin{align*}
&- l_{f,55} g_2(x_1, x_2, x_3, 0, 0, 0)^2 - (l_{f,56}+l_{f,65}) h_2(x_1, x_2, x_3, 0, 0, 0)g_2(x_1, x_2, x_3, 0, 0, 0)\\
&- l_{f,66} h_2(x_1, x_2, x_3, 0, 0, 0))^2+ (g_2(x_1, x_2, x_3, 0, 0, 0) (l_{f,52}+l_{f,53}) \\
&+ h_2(x_1, x_2, x_3, 0, 0, 0) (l_{f,62}+l_{f,63}))(l_{f,42}+l_{f,43})x_2^2\\
&+ (g_2(x_1, x_2, x_3, 0, 0, 0) (l_{g,52}+l_{g,53}) \\
&+ h_2(x_1, x_2, x_3, 0, 0, 0) (l_{g,62}+l_{g,63}))(l_{f,52}+l_{f,53})x_2^2\\
&+ (g_2(x_1, x_2, x_3, 0, 0, 0) (l_{h,52}+l_{h,53}) \\
&+ h_2(x_1, x_2, x_3, 0, 0, 0) (l_{h,62}+l_{h,63}))(l_{f,62}+l_{f,63})x_2^2.
\end{align*}
If it is equal to $0$, then we have an additional codimension $1$ condition, hence in total we have a codimension $4$ condition, so this cannot happen for a general $X$.
\end{proof} 
 
\section{Fanos that are not blow-ups}
\label{sec-Fanos-that-are-not-blow-ups}
We treat the case of smooth Fano threefolds with Picard rank at least two. First we deal with the case of Fano threefolds that cannot be realized as the blow up of some other smooth variety. It is known that such threefolds belong to one of the following families: 
\begin{equation}
\label{Fano-to-treat}
2.2,\ 2.6,\ 2.8,\ 2.18,\ 2.24,\ 2.32,\ 2.34,\ 2.35,\ 2.36,\ 3.1,\ 3.2,\ 3.31.
\end{equation}
Note that the varieties $2.34, 2.35, 2.36, 3.31$ are toric, so by Remark \ref{remark:toric} it suffices to deal with the remaining cases. We will repeatedly use the following lemma.

\begin{lemma}
\label{lemma:blow-up}
Let $Y$ be a smooth threefold, and let $(Y, D')$ be a dlt
(respectively, lc) log CY pair. Let $Z=\sum Z_i$ is a disjoint union of (effective
integral irreducible) curves $Z_i$ and $P=\sum P_i$ be a union of points such that  
\begin{itemize}
\item
$Z$ and $P$ are disjoint,
\item
each $Z_i$ belongs to the smooth locus of some $D'_j$ where $D'_j$ is a component of $D'$ with coefficient~$1$,
\item
each $P_i$ belongs to the smooth locus of the
intersection $D'_j\cap D'_k$ for some $j\neq k$, where $D'_j$ and $D'_k$
are components of $D'$ with coefficient $1$.
\end{itemize}
Let $X=\mathrm{Bl}_{Z\cup P} (Y)$ be the blow up of the union of $Z$ and $P$ on $Y$. Consider the log pullback $(X, D)$ of $(Y, D')$ which
is a dlt (respectively, lc) log CY pair. Then $\mathcal{D}(Y, D')$ is
homeomorphic to $\mathcal{D}(X, D)$. In particular, if $\mathcal{D}(Y, D')=2$
 then $\mathrm{coreg}(Y)=\mathrm{coreg}(X)=0$.
\end{lemma}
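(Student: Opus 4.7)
The plan is to reduce to the case of a single blow-up and then compare via a common log resolution. Since $Z \cup P$ is a disjoint union of smooth, pairwise disjoint centers, the blow-up $f\colon X \to Y$ factors as a composition of blow-ups of the individual $Z_i$'s and $P_j$'s in any order. It therefore suffices to handle (i) the blow-up of a single smooth curve $Z_i$ lying in the smooth locus of a single coefficient-$1$ component $D'_j$, and (ii) the blow-up of a single point $P_j$ lying in the smooth locus of $D'_k \cap D'_l$ for two distinct coefficient-$1$ components, and to iterate.

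The key local computation I would carry out is that, in both cases, the exceptional divisor $E$ of the individual blow-up acquires coefficient $0$ in the log pullback. In case (i) the multiplicity of $D'$ along $Z_i$ is $1$, so $a(E, Y, D') = 2 - 1 = 1$; in case (ii) the multiplicity of $D'$ at $P_j$ is $1 + 1 = 2$, so $a(E, Y, D') = 3 - 2 = 1$. In particular
\[
K_X + D = f^*(K_Y + D'),
\]
where $D$ is the strict transform of $D'$ and has coefficient $0$ on $E$; the dlt/lc property is preserved because the only new discrepancy introduced is that of $E$, and it equals~$1$.

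To compare dual complexes I would then take a log resolution $\pi\colon \widetilde{Y} \to X$ of $(X, D + E)$, which is automatically a log resolution of $(X, D)$; since $\mathrm{Supp}(f^*D') \subseteq \mathrm{Supp}(D + E)$, the composite $\pi \circ f\colon \widetilde{Y} \to Y$ is a log resolution of $(Y, D')$. The crepant equality above implies that the two pullback $\mathbb{Q}$-divisors on $\widetilde{Y}$ coincide, hence their coefficient-$1$ parts coincide as reduced snc divisors, and so $\mathcal{D}(X, D) = \mathcal{D}(Y, D')$ by the definition via log resolution (this is a stronger statement than the claimed homeomorphism). The last assertion follows because $\dim X = 3$ and $\dim \mathcal{D}(X, D) = 2$ gives $\mathrm{coreg}(X) = 0$.

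The main obstacle I anticipate is interpreting the ``smooth locus'' hypotheses correctly. The computation $a(E) = 1$ relies crucially on the fact that each $Z_i$ is contained in \emph{exactly one} coefficient-$1$ component of $D'$ and each $P_j$ in \emph{exactly two}: if, say, $Z_i$ lay in $D'_j \cap D'_k$, then the multiplicity of $D'$ along $Z_i$ would be $2$ and $E$ would acquire coefficient $1$, adding a new vertex to the dual complex and breaking the argument. I would therefore read ``smooth locus of $D'_j$'' as the snc locus of the reduced boundary $(D')^{=1}$ intersected with $D'_j$, which matches the intended usage in the applications.
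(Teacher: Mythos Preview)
Your argument is correct and rests on the same core observation as the paper's proof: the pairs $(X,D)$ and $(Y,D')$ are crepant birational, hence have the same dual complex. The paper dispatches this in one line by invoking \cite[Proposition~11]{dFKX17}, whereas you unpack that implication directly by building a common log resolution $\widetilde{Y}\to X\to Y$ on which the two log pullbacks literally coincide, so that $D_{\widetilde{Y}}^{=1}$ is the same reduced snc divisor for both pairs. Your route is more self-contained and in fact yields equality of the complexes rather than mere PL homeomorphism; the paper's route is shorter but relies on the cited black box. Your discrepancy computations $a(E)=2-1=1$ and $a(E)=3-2=1$ are exactly what is needed to see that no new coefficient-$1$ divisor appears, and your final paragraph correctly identifies the intended reading of ``smooth locus of $D'_j$'' (namely, the snc locus of $D'^{=1}$ inside $D'_j$), which is consistent with every application of the lemma in the paper. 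The only place one might want slightly more care is the dlt preservation: your phrase ``the only new discrepancy introduced is that of $E$'' is a little loose, since dlt is about a specific log resolution rather than a single divisor, but the conclusion is standard once one observes that the blow-up center lies in the snc locus of $(Y,D')$ and hence the non-snc locus of $(X,D)$ maps isomorphically to that of $(Y,D')$.
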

\begin{proof}
The equality $\dim \mathcal{D}(Y, D')=\dim \mathcal{D}(X, D)$ follows from \cite[Proposition 11]{dFKX17} since the pairs $(X, D)$
and $(Y, D')$ are crepant birational. The last statement follows immediately.
\end{proof}

\begin{lemma}
\label{lem-double-cover}
Let $Y$ be a smooth threefold. Let $f\colon X\to Y$ be a finite morphism of degree $2$ with a smooth ramification divisor
$R\subset Y$, so in particular $X$ is smooth as well. Assume that there exists a boundary $D'$ on~$Y$ such that any component of $R$ is not contained in $D'$, the
pair $(Y, D'+R/2)$ is lc log CY. 
Then the pair $(X, D)$ is lc log CY where $D$ is defined by the formula
\[
K_X + D = f^*(K_Y+D'+R/2)\sim_{\mathbb{Q}} 0.
\]
Also, if the pair $(Y, D'+R/2)$ is snc, then $(X, D)$ is snc as well. Consequently, if $\mathrm{coreg}(Y, D'+R/2)=0$ then $\mathrm{coreg}(X, D)=0$.
\end{lemma}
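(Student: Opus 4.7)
The plan is to use the Riemann--Hurwitz formula $K_X = f^*K_Y + R_X$, where $R_X \subset X$ is the (smooth) ramification divisor satisfying $f^*R = 2R_X$. Substituting this into $K_X + D = f^*(K_Y + D' + R/2)$ gives the clean identification $D = f^*D'$: the contributions $\tfrac{1}{2}f^*R = R_X$ and $K_X = f^*K_Y + R_X$ cancel. Since no component of $D'$ is contained in $R$, every component of $D'$ pulls back to a reduced divisor on $X$ (whose coefficient in $D$ equals the coefficient of the corresponding component of $D'$), so $D$ is a genuine boundary with coefficients in $[0,1]$, and $K_X + D \sim_{\mathbb{Q}} 0$ is immediate.

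The lc property of $(X,D)$ is a standard consequence of $(Y, D'+R/2)$ being lc: for a finite surjective morphism $f\colon X \to Y$ of normal varieties and $K_X + D = f^*(K_Y + B)$, the pair $(X, D)$ is lc if and only if $(Y, B)$ is lc; see \cite{KM98}.

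For the snc preservation, I would carry out a local analytic computation. Away from $R$ the morphism $f$ is étale, so snc divisors pull back to snc divisors automatically. Near a point $P \in R$, the snc hypothesis for $(Y, D'+R/2)$ provides local analytic coordinates $(x_1,\ldots, x_{n-1}, z)$ with $R=\{z=0\}$ and every component of $D'$ passing through $P$ of the form $\{x_i=0\}$ for some $i$ (no such component can be $\{z=0\}$, by hypothesis). Since $X$ is locally $\{w^2=z\}$, the functions $(x_1,\ldots, x_{n-1}, w)$ are local coordinates on $X$ above $P$, and the components of $f^*D'$ are cut out by the same $x_i$'s. Hence $D$ is snc on $X$.

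Finally, the coregularity consequence is almost formal: $\mathrm{coreg}(Y, D'+R/2)=0$ combined with the snc hypothesis means the dual complex contains a $2$-simplex, i.e., three coefficient-$1$ components of $D'+R/2$ meet at some point $P$. As $R$ enters with coefficient $\tfrac{1}{2}$, these three components must all be components of $D'$; their pullbacks are coefficient-$1$ components of $D$ that, by the snc preservation above, meet snc-transversally at any point of $f^{-1}(P)$. This produces a $0$-dimensional stratum in $\mathcal{D}(X, D)$, so $\mathrm{coreg}(X, D)=0$. The only non-formal step is the local snc analysis, which is straightforward because $R$ can be taken as a single coordinate hyperplane transverse to $\mathrm{supp}(D')$.
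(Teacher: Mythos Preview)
Your proposal is correct and follows the same line as the paper, which simply cites \cite[20.2, 20.3]{CKM88} for the discrepancy comparison under finite covers; you have unpacked that citation (Hurwitz formula plus the lc equivalence for $K_X+D=f^*(K_Y+B)$, which is equally \cite[Proposition~5.20]{KM98}) and supplied the local snc check and zero-stratum argument that the paper leaves implicit.
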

\begin{proof}
Follows from \cite[20.2, 20.3]{CKM88}.
\end{proof}

We treat the Fano threefolds listed in \eqref{Fano-to-treat} starting from the case of Picard rank $2$. It turns out that the threefolds with larger anti-canonical degree $(-K_X)^3$ are easier to deal with, so we analyze them first.

\begin{lemma}
\label{lemma:2-32}
Let $X$ be a Fano threefold in the family \textnumero\,$2.32$.
Then $\coreg(X)=0$.
\end{lemma}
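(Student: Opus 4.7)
The variety $X$ in family \textnumero\,$2.32$ is a smooth divisor of bidegree $(1,1)$ on $\mathbb{P}^{2}\times\mathbb{P}^{2}$. Writing its equation as $\sum a_{ij}x_{i}y_{j}=0$, smoothness of $X$ is equivalent to non-degeneracy of the matrix $(a_{ij})$. The plan is to exploit the two projections $p_{1},p_{2}\colon X\to\mathbb{P}^{2}$: a standard check shows that both are $\mathbb{P}^{1}$-bundles (no fiber can jump to a $\mathbb{P}^{2}$, since that would force the columns of $(a_{ij})$ to have a common zero, contradicting nondegeneracy). By adjunction, $-K_{X}\sim 2H_{1}|_{X}+2H_{2}|_{X}$, where $H_{1}, H_{2}$ are the pullbacks of hyperplane classes from the two $\mathbb{P}^{2}$ factors.

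First I will pick two general lines $\ell_{1},\ell_{2}\subset\mathbb{P}^{2}$ (first factor) and two general lines $\ell_{1}',\ell_{2}'\subset\mathbb{P}^{2}$ (second factor), and set
\[
D_{i}=p_{1}^{-1}(\ell_{i}),\qquad D_{j}'=p_{2}^{-1}(\ell_{j}'),\qquad D=D_{1}+D_{2}+D_{1}'+D_{2}'.
\]
Then $D\sim 2H_{1}|_{X}+2H_{2}|_{X}\sim -K_{X}$. Each $D_{i}$ (resp.\ $D_{j}'$) is a $\mathbb{P}^{1}$-bundle over $\ell_{i}\cong\mathbb{P}^{1}$, hence a smooth Hirzebruch surface.

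Next I will analyze the stratification. If $p=\ell_{1}\cap\ell_{2}$, then $D_{1}\cap D_{2}=p_{1}^{-1}(p)$ is a fiber of $p_{1}$, a smooth rational curve $F_{p}\subset X$; similarly $D_{1}'\cap D_{2}'$ is a fiber $F_{q}'$ of $p_{2}$. For the mixed intersections $D_{i}\cap D_{j}'$, note that $p_{1}^{-1}(\ell_{i})\cap p_{2}^{-1}(\ell_{j}')$ is identified with the restriction of $X$ to $\ell_{i}\times\ell_{j}'\cong\mathbb{P}^{1}\times\mathbb{P}^{1}$, hence is a divisor of bidegree $(1,1)$ on $\mathbb{P}^{1}\times\mathbb{P}^{1}$, which is a smooth conic for general $\ell_{i},\ell_{j}'$. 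Finally, the triple intersection $D_{1}\cap D_{2}\cap D_{1}'=F_{p}\cap p_{2}^{-1}(\ell_{1}')$ is the intersection of the fiber $F_{p}$ (mapping isomorphically onto a line $m_{p}\subset\mathbb{P}^{2}$ under $p_{2}$) with $p_{2}^{-1}(\ell_{1}')$, which is a single reduced point for $\ell_{1}'$ general (transverse to $m_{p}$). A direct computation in local coordinates, or Bertini together with the $\mathbb{P}^{1}$-bundle structures, shows that the three surfaces $D_{1},D_{2},D_{1}'$ meet transversally at this point.

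Therefore $D$ has simple normal crossings and admits a zero-dimensional stratum, which contributes a $2$-simplex to $\mathcal{D}(X,D)$. The main thing to verify carefully is the transversality of the triple intersection and the smoothness of each $D_{i}\cap D_{j}'$, but both follow from Bertini applied to the $\mathbb{P}^{1}$-bundle structures $p_{1},p_{2}$, for suitably general choices of $\ell_{i},\ell_{j}'$ (and such choices exist for \emph{any} smooth $X$ in this family, since the $\mathbb{P}^{1}$-bundle structures exist unconditionally). Hence $\dim\mathcal{D}(X,D)=2$ and $\mathrm{coreg}(X)=0$.
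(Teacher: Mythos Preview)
Your proof is correct and follows essentially the same approach as the paper: you take the same boundary $D$ consisting of the pullbacks of two general lines from each $\mathbb{P}^{2}$ factor and verify that the triple intersection $D_{1}\cap D_{2}\cap D_{1}'$ is a reduced point, giving a $2$-simplex in the dual complex. The paper's proof is more terse, but your added analysis of the $\mathbb{P}^{1}$-bundle structures and the transversality is accurate and supplies detail that the paper leaves implicit.
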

\begin{proof}
The variety $X$ is a smooth divisor of bidegree $(1,1)$ in
$\mathbb{P}^2\times\mathbb{P}^2$. Let $D=D_1+D_2+D_3+D_4$ be a boundary divisor on $X$ where $D_1$
and $D_2$ have bidegree $(1,0)$, $D_3$ and $D_4$ have bidegree $(0,1)$, and
all the divisors $D_i$ are general. One has $-K_X\sim D$. 
Note that 
$D_1\cap D_2\cap D_3\neq \emptyset$ 
and the pair $(X, D)$ has simple normal crossings
which shows that $\coreg(X)=0$.
\end{proof}

\begin{lemma}
\label{lemma:2-24}
Let $X$ be a Fano threefold in the family \textnumero\,$2.24$.
Then $\coreg(X)=0$.
\end{lemma}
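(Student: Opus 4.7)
The plan is to mirror the construction used in Lemma~\ref{lemma:2-32}. Family $2.24$ consists of smooth divisors $X\subset\mathbb{P}^2\times\mathbb{P}^2$ of bidegree $(1,2)$; writing $H_i$ for the pullback of the hyperplane class from the $i$-th factor, adjunction gives $-K_X\sim(2H_1+H_2)|_X$. I would therefore build an anti-canonical boundary $D=D_1+D_2+D_3$ with $D_1,D_2$ general divisors of bidegree $(1,0)$ and $D_3$ a general divisor of bidegree $(0,1)$, and then verify that the resulting pair $(X,D)$ is snc with a zero-dimensional stratum.

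Explicitly, take $D_i=\pi_i|_X^{-1}(L_i)$ for general lines $L_1,L_1'$ in the first factor (giving $D_1,D_2$) and $L_2$ in the second factor (giving $D_3$). The morphism $\pi_1|_X\colon X\to\mathbb{P}^2$ is a conic bundle with plane cubic discriminant, so for generic $L_1,L_1'$ meeting the discriminant transversally and with $p:=L_1\cap L_1'$ off the discriminant, the surfaces $D_1,D_2$ are smooth and $D_1\cap D_2=\pi_1|_X^{-1}(p)$ is a smooth conic $C$ lying inside the $\mathbb{P}^2$-fiber $\{p\}\times\mathbb{P}^2$. The morphism $\pi_2|_X\colon X\to\mathbb{P}^2$ has generic fiber a line and can degenerate only over the finite set of common zeros of the three quadratic coefficients of the defining equation of $X$; for $L_2$ avoiding these points, the surface $D_3$ is smooth. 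The triple intersection $D_1\cap D_2\cap D_3$ equals $C\cap L_2$ inside the second $\mathbb{P}^2$, which is two reduced points by B\'ezout for generic $L_2$ not tangent to $C$.

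The only remaining check is the snc property at these two intersection points. Pairwise transversality of $D_1$ and $D_2$ along $C$ is immediate from $T_pL_1+T_pL_1'=T_p\mathbb{P}^2$ together with the smoothness of $\pi_1|_X$ at $p$, and transversality of $D_3$ to the curve $C$ at the two intersection points reduces to the condition that $L_2$ is not tangent to $C$ in $\mathbb{P}^2$, an open generic condition. The main thing to be careful about is ensuring that all these genericity conditions on $L_1,L_1',L_2$ are simultaneously satisfiable for every smooth member of the family rather than just a general one; this reduces to checking that each individual condition cuts out an open nonempty subset of the corresponding parameter space, which it does since in each case an explicit general choice works. I do not foresee a serious obstacle, and one concludes that $\mathrm{coreg}(X)=0$.
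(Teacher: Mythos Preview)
Your proof is correct and follows essentially the same approach as the paper's own argument: both take $D=D_1+D_2+D_3$ with $D_1,D_2$ general of bidegree $(1,0)$ and $D_3$ general of bidegree $(0,1)$, and observe that the resulting boundary is snc with a zero-dimensional stratum. Your version simply supplies the verifications (smoothness of the $D_i$, description of $D_1\cap D_2$ as a smooth conic in a fiber of $\pi_1$, transversality at the two points of $C\cap L_2$) that the paper leaves implicit under the word ``general''.
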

\begin{proof}
The variety $X$ is a smooth divisor on $\mathbb{P}^2\times\mathbb{P}^2$
of bidegree $(1, 2)$. 
Let $D=D_1+D_2+D_3$ be a boundary divisor on $X$ where $D_1$
and $D_2$ have bidegree $(1,0)$, $D_3$ has bidegree $(0,1)$, and
all the divisors $D_i$ are general.
By adjunction,
one has $-K_X\sim D$. 
Note that 
$D_1\cap D_2\cap D_3\neq \emptyset$,
the pair $(X, D)$ has simple normal crossings 
which shows that $\coreg(X)=0$.
\end{proof}

\begin{lemma}
\label{lemma:2-18}
Let $X$ be a Fano threefold in the family \textnumero\,$2.18$.
Then $\coreg(X)=0$.
\end{lemma}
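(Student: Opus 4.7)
The plan is to realize a Fano threefold $X$ in family $2.18$ as a double cover $f\colon X\to Y$ where $Y=\mathbb{P}^1\times\mathbb{P}^2$, with smooth branch divisor $R$ of bidegree $(2,2)$, and then reduce the problem to constructing an snc log CY boundary on $Y$ containing $\frac{1}{2}R$, via Lemma \ref{lem-double-cover}. Since $-K_Y\sim(2,3)$ and $R\sim(2,2)$, we have $-K_Y-\frac{1}{2}R\sim(1,2)$, so the complementary boundary $D'$ should have bidegree $(1,2)$ and total coefficients $\leq 1$.

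The natural choice is $D'=D'_1+D'_2+D'_3$ where $D'_1\in|(1,0)|$ is a fiber of the projection $Y\to\mathbb{P}^2$, and $D'_2,D'_3\in|(0,1)|$ are pullbacks of two distinct lines in $\mathbb{P}^2$. The triple intersection $D'_1\cap D'_2\cap D'_3$ is a single point $p\in Y$. Since $R$ is a proper closed subset of $Y$, I can choose $p$ outside $R$, and then pick the divisors $D'_i$ through $p$ and in sufficiently general position so that the pair $(Y,D'_1+D'_2+D'_3+\frac{1}{2}R)$ is snc. This is a standard Bertini-type argument: the linear systems $|(1,0)|$ and $|(0,1)|$ are base-point free, so a general member of each meets $R$ and the remaining components transversally. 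The resulting pair is log CY of coregularity $0$, with $\{p\}$ a zero-dimensional stratum of the dual complex built on $(D')^{=1}=D'_1+D'_2+D'_3$.

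Finally, I would apply Lemma \ref{lem-double-cover} to pull the pair back through $f$: setting $K_X+D=f^*(K_Y+D'+\frac{1}{2}R)\sim 0$ produces an snc log CY pair on $X$ with $\coreg(X,D)=\coreg(Y,D'+\frac{1}{2}R)=0$, yielding $\coreg(X)=0$. The only substantive step is ensuring that the triple point $p$ of the snc boundary lies off $R$ (otherwise it would become a quadruple intersection and violate snc in dimension three); this is immediate since $R$ is a proper divisor and both $|(1,0)|$ and $|(0,1)|$ are base-point free, so we have ample flexibility in locating $p$.
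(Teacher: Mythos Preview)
Your proof is correct and follows essentially the same approach as the paper: realize $X$ as a double cover of $Y=\mathbb{P}^1\times\mathbb{P}^2$ branched along $R\sim(2,2)$, take $D'=D'_1+D'_2+D'_3$ with bidegrees $(1,0),(0,1),(0,1)$ so that $(Y,D'+\tfrac{1}{2}R)$ is an snc log CY pair with a zero-dimensional stratum, and apply Lemma~\ref{lem-double-cover}. Your explicit care that the triple point $p$ avoids $R$ is a reasonable elaboration of what the paper handles by simply taking the $D'_i$ general.
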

\begin{proof}
The variety $X$ is realized as a double cover $f\colon X\to Y=\mathbb{P}^1\times\mathbb{P}^2$ ramified in a smooth divisor $R$ of bidegree $(2, 2)$ on~$Y$. The Hurwitz formula yields
\[
K_X = f^* ( K_Y + R/2 ). 
\]
Choose a complement of $K_Y + R/2\sim(-1,-2)$ of the form $D'=D'_1+D'_2+D'_3$
where $D'_i$ are general divisors on $Y$ of bidegree $(1,0)$,
$(0,1)$ and $(0,1)$, respectively. By construction, the pair $(Y, D'+R/2)$ has simple normal crossings and $D'$ has a zero-dimensional stratum. We apply Lemma \ref{lem-double-cover} to conclude that $\coreg(X)=0$. 
\end{proof}

\begin{lemma}
\label{lemma:2-8}
Let $X$ be a Fano threefold in the family \textnumero\,$2.8$.
Then $\coreg(X)=0$.
\end{lemma}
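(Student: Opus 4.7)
The plan is to realize $X$ as a double cover $f\colon X\to V_7$, where $V_7=\mathrm{Bl}_p\mathbb{P}^3$ is the blow-up of $\mathbb{P}^3$ at a point $p$ and the branch locus is a smooth divisor $R\in|{-K_{V_7}}|$, and then to construct an snc log Calabi--Yau boundary of coregularity $0$ on $V_7$ and pull it back using Lemma~\ref{lem-double-cover}.

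Let $H$ denote the pullback of the hyperplane class and $E\simeq\mathbb{P}^2$ the exceptional divisor of $V_7\to\mathbb{P}^3$. Since $-K_{V_7}=4H-2E$ is even in $\Pic(V_7)$, the class $-K_{V_7}/2=2H-E$ is Cartier, so $-K_{V_7}-\tfrac{1}{2}R\sim 2H-E$. The plan is then to take $D'=D'_1+D'_2+E$, where $D'_1,D'_2\in|H-E|$ are general proper transforms of two planes through $p$. One has $D'\sim(H-E)+(H-E)+E=2H-E$, so $K_{V_7}+D'+\tfrac{1}{2}R\sim 0$. The candidate zero-dimensional stratum is $\{q\}=D'_1\cap D'_2\cap E$: indeed, $D'_1\cap D'_2$ is the proper transform of a general line through $p$ and meets $E$ in a single point $q$.

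The main technical step is to verify that $(V_7,\,D'_1+D'_2+E+\tfrac{1}{2}R)$ is snc and that the stratum $\{q\}$ is disjoint from $R$. On $E\simeq\mathbb{P}^2$ the intersection $R|_E$ is a smooth conic (of class $2h$) while each $D'_i|_E$ is a line (of class $h$), so for a generic choice of the two planes the point $q=D'_1|_E\cap D'_2|_E$ avoids $R\cap E$ and the three smooth surfaces $D'_1,D'_2,E$ meet transversely at $q$. The remaining pairwise and triple transverse intersections---in particular the transversality of each $D'_i$, of $E$, and of their mutual intersections with $R$---will follow from routine Bertini-type arguments in the two-parameter linear system $|H-E|$; this verification is the main (though routine) obstacle.

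Once snc is established, the existence of the $0$-dimensional stratum $\{q\}$ forces $\dim\mathcal{D}(V_7,\,D'+\tfrac{1}{2}R)=2$, that is, $\coreg(V_7,\,D'+\tfrac{1}{2}R)=0$. Applying Lemma~\ref{lem-double-cover} to the double cover $f$ then shows that $(X,\,f^*D')$ is an snc lc log Calabi--Yau pair with $\coreg(X,\,f^*D')=0$, and therefore $\coreg(X)=0$.
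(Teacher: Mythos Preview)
Your proposal is correct and follows essentially the same approach as the paper's proof: both take $D'=D'_1+D'_2+E$ with $D'_1,D'_2\in|H-E|$ strict transforms of general planes through the blown-up point, verify that $(V_7,\,D'+\tfrac12 R)$ is snc with a zero-dimensional stratum, and then apply Lemma~\ref{lem-double-cover}. Your write-up is in fact more careful than the paper's, which asserts the snc property ``by construction'' without spelling out the check that $q\notin R$.
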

\begin{proof}
The variety $X$ is realized as a double cover $f\colon X\to V_7$
ramified in a smooth divisor $R$ on $V_7$ where $R\sim -K_{V_7}$. Denote by $g\colon V_7\to\mathbb{P}^3$ the blow up in a point $P\in \mathbb{P}^3$, and denote the $g$-exceptional divisor by~$E$. The Hurwitz formula yields
\[
K_X = f^*(K_{V_7}+R/2)
\sim f^*(-2H+E)
\]
where $H$ is the strict transform of a general plane via the map
$g$. 
Consider a complement $D' = D'_1+D'_2+D'_3$ on $V_7$ where $D'_1$ and
$D'_2$ are strict transform of general planes passing through $P$, so $D'_1\sim D'_2\sim H-E$, and $D'_3=E$. By construction, the pair $(Y, D'+R/2)$ has simple normal crossings and $D'$ has a zero-dimensional stratum. Applying Lemma \ref{lem-double-cover}, we conclude that $\coreg(X)=0$.
\end{proof}

\begin{lemma}
\label{lemma:2-6}
Let $X$ be a Fano threefold in the family \textnumero\,$2.6$.
Then $\coreg(X)=0$.
\end{lemma}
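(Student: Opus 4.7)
The variety $X$ in family \textnumero\,$2.6$ is realised as a smooth divisor of bidegree $(2,2)$ in $Y=\mathbb{P}^{2}\times\mathbb{P}^{2}$; writing $H_{i}=\pi_{i}^{*}\oo_{\mathbb{P}^{2}}(1)$, adjunction gives $-K_{X}\sim(H_{1}+H_{2})|_{X}$. Unlike in the families \textnumero\,$2.24$ and \textnumero\,$2.32$, this class does not decompose further into three or more effective prime summands, so the strategy used in the previous lemmas will not directly produce a zero-dimensional log canonical stratum. The plan is instead to find a boundary $D=D_{1}+D_{2}\in|-K_{X}|$ with $D_{i}$ of class $H_{i}|_{X}$, such that the scheme-theoretic intersection $C=D_{1}\cap D_{2}$ is a rational curve of arithmetic genus one with exactly one node, and then to invoke Lemma~\ref{lem-nodal-curve} to conclude that $\dim\mathcal{D}(X,D_{1}+D_{2})=2$.

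For lines $l_{i}\subset\mathbb{P}^{2}$, the divisor $D_{i}=X\cap\pi_{i}^{-1}(l_{i})$ is a $(2,2)$-hypersurface in $l_{i}\times\mathbb{P}^{2}\simeq\mathbb{P}^{1}\times\mathbb{P}^{2}$, hence a del Pezzo surface of degree $2$, smooth for a generic choice of $l_{i}$; meanwhile $C=X\cap(l_{1}\times l_{2})$ is a bidegree-$(2,2)$ curve on $\mathbb{P}^{1}\times\mathbb{P}^{1}$, of arithmetic genus one, anticanonical on each $D_{i}$. To force a node on $C$ I would fix a general point $p\in X$ at which both projections $\pi_{i}|_{X}$ are unramified, and choose $l_{i}$ through $\pi_{i}(p)$ so that the tangent direction $T_{\pi_{i}(p)}l_{i}$ coincides with the one-dimensional subspace $T_{p}X\cap T_{\pi_{i}(p)}\mathbb{P}^{2}$; such $l_{1}$ and $l_{2}$ are uniquely determined, and by construction $T_{p}(l_{1}\times l_{2})\subset T_{p}X$, so that $C$ acquires a double point at $p$.

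It remains to verify that for $p$ in a Zariski-open subset of $X$ this double point is an ordinary node, that $C$ has no further singularities, and that $D_{1}$ and $D_{2}$ are smooth everywhere. Smoothness of $D_{i}$ at $p$ follows from the same tangent-space calculation: the inclusion $T_{p}(l_{i}\times\mathbb{P}^{2})\subset T_{p}X$ would force $T_{p}X\supset T_{\pi_{3-i}(p)}\mathbb{P}^{2}$, contradicting the unramifiedness of $\pi_{3-i}|_{X}$ at $p$. Smoothness of $D_{i}$ elsewhere and the absence of further singularities of $C$ are Bertini-type genericity statements; ordinariness of the node and rationality of $C$ then follow because a bidegree-$(2,2)$ curve on $\mathbb{P}^{1}\times\mathbb{P}^{1}$ with a single double point has geometric genus zero. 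The main obstacle will be extending this argument to every smooth $X$ in the family, not just to a general one: for arbitrary smooth $X$ one has to rule out the possibility that no choice of $(l_{1},l_{2})$ yields a $C$ of the required form. I expect this to follow from the non-triviality of the discriminant of the four-dimensional family of curves $\{C_{(l_{1},l_{2})}\}$ in $(\mathbb{P}^{2})^{\vee}\times(\mathbb{P}^{2})^{\vee}$, but a clean treatment may require a short case analysis. Once these points are settled, Lemma~\ref{lem-nodal-curve} applied to $(X,D_{1}+D_{2})$ directly gives $\coreg(X)=0$.
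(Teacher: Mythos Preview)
Your proposal addresses only one of the two descriptions of family \textnumero\,$2.6$. As the paper's table records, a smooth Fano threefold in this family is \emph{either} a $(2,2)$-divisor in $\mathbb{P}^{2}\times\mathbb{P}^{2}$ \emph{or} a double cover of the flag variety $W$ (a smooth $(1,1)$-divisor in $\mathbb{P}^{2}\times\mathbb{P}^{2}$) branched along a member of $|-K_{W}|$. These are genuinely different varieties---the double covers carry an involution that a general $(2,2)$-divisor does not---and the paper treats them separately. Your argument says nothing about the second case; there the paper constructs a reducible hyperplane pair $D'_{1}+D'_{2}$ on $W$ whose intersection is a line plus a residual rational curve, checks the normal-bundle hypothesis of Lemma~\ref{lem-line-conic}, and then invokes Lemma~\ref{lem-double-cover} to lift coregularity~$0$ through the cover.

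For the $(2,2)$-divisor case your plan is in the same spirit as the paper's, but the paper's execution is both cleaner and complete for \emph{every} smooth $X$, avoiding the genericity caveat you flag. Rather than engineering a node at a prescribed point $p$ via tangent directions, the paper fixes a general $D_{1}$ of bidegree $(1,0)$---automatically a smooth del Pezzo surface of degree~$2$---and uses the double cover $\pi_{2}|_{D_{1}}\colon D_{1}\to\mathbb{P}^{2}$ branched over a smooth quartic $C$. A line $L\subset\mathbb{P}^{2}$ tangent to $C$ pulls back to a nodal anticanonical curve on $D_{1}$; taking $D_{2}=\pi_{2}^{-1}(L)$ and choosing $L$ transversal to the discriminant sextic $\Delta$ of the conic bundle $\pi_{2}\colon X\to\mathbb{P}^{2}$ forces $D_{2}$ smooth. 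Existence of such $L$ reduces to the observation that the dual curves $C^{\vee}$ and $\Delta^{\vee}$ are distinct, which holds for every smooth $X$. This sidesteps the discriminant analysis you anticipate needing.
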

\begin{proof}
First we consider the case when $X$ is a smooth divisor of bidegree $(2,2)$
in $\mathbb{P}^2\times\mathbb{P}^2$. Let $\pi_i\colon X\to \mathbb{P}^2$ be two natural projections. We construct a complement
of $K_X$ of the form $D=D_1+D_2$ where $D_1$, and $D_2$ are
smooth surfaces on $X$ of bidegree $(1,0)$ and $(0,1)$, respectively. Note that each $D_i$ is a del Pezzo surface of degree
$2$. Let us start with a general $D_1$. 
Note that $\pi_2|_{D_1}\colon D_1\to \mathbb{P}^2$ is a double cover ramified in a smooth quartic curve $C\subset \mathbb{P}^2$. 
Consider the projection $\pi_2\colon X \to \mathbb{P}^2$ which is a conic bundle with a discriminant curve $\Delta\subset \mathbb{P}^2$ of degree $6$. We can choose a line $L\subset \mathbb{P}^2$ such that $L$ is tangent to $C$, and $L$ intersects $\Delta$ transversally. Indeed, this follows from the fact that the dual curves $C^\vee$ and $\Delta^\vee$ in the dual plane $(\mathbb{P}^2)^\vee$ are distinct. Then $D_2=\pi_2^{-1}(L)$ is a divisor of bidegree $(0,1)$, and $D_1\cap D_2$ is a nodal curve of arithmetic genus $1$. Also note that $D_2$ is smooth. Applying Lemma \ref{lem-nodal-curve}, we see that $\coreg(X)=0$.

Now, we consider the case when $X$ is a double cover $f\colon
X\to Y$ where $Y$ is a Fano threefold $2.32$, that is, $Y $ is a smooth
divisor of bidegree $(1,1)$ in $\mathbb{P}^2\times\mathbb{P}^2$, and
$f$ is ramified in a smooth divisor $R\sim -K_Y$. The Hurwitz formula yields
\[
K_X = f^*(K_Y + R/2). 
\]
We construct a complement $D'=D'_1+D'_2$ of $K_Y + R/2\sim (-1,-1)$ where $D'_i$ are smooth, 
$D'_1$ has bidegree $(1,0)$ and $D'_2$ has bidegree $(0,1)$. Note that both $D'_i$ are
isomorphic to a Hirzebruch surface $\mathbb{F}_1$. We choose them in
such a way that their intersection is a union of two smooth
rational curves $L_1$ and $L_2$ intersecting transversally. 

To do this, start with a general $D'_1$ of bidegree $(1,0)$. Then $\pi_2|_{D'_1}\colon D'_1\to \mathbb{P}^2$ is the blow up of a point $P\in \mathbb{P}^2$. Consider a line $L\subset \mathbb{P}^2$ such that $P\in L$. Then the preimage $\pi_2|_{D'_1}^{-1}(L)$ is a reducible curve $C$ on $D'_1$ such that $C=L_1+L_2$ where $L_1$ is a $(-1)$-curve on $D'_1$, and $L_2$ is a fiber of the projection $D'_1=\mathbb{F}_1\to \mathbb{P}^1$. Put $D'_2=\pi_2^{-1}(L)$. Then it is clear that $D'_2$ has bidegree $(0, 1)$. Note that 
\[
N_{L_1/D'_1}=\oo(-1), \quad \quad L_1 \cdot D'_1 = 1 \quad \quad \text{and} \quad \quad N_{L_1/X}=\oo\oplus\oo
\] 
where the latter holds because $L_1$ is a fiber of a $\mathbb{P}^1$-bundle $\pi_1$. Applying Lemma \ref{lem-line-conic}, we conclude that $\dim \mathcal{D}(Y, D')=2$. Since $D'_1$ was chosen to be general, $R\cap D'_1$ is a smooth irreducible curve which is a positive section of the projection $D'_1=\mathbb{F}_1\to \mathbb{P}^1$. 
Applying Lemma \ref{lem-double-cover}, we conclude that $\coreg(X)=0$.
\end{proof}

\begin{lemma}
\label{lemma:2-2}
Let $X$ be a Fano threefold in the family \textnumero\,$2.2$. Then
$\coreg(X)=0$.
\end{lemma}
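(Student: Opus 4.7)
The plan is to realize $X$ as a double cover $f\colon X\to Y=\mathbb{P}^1\times\mathbb{P}^2$ ramified in a smooth divisor $R$ of bidegree $(2,4)$ (the standard description of family $2.2$), and to construct an anti-canonical complement on $X$ by pulling back an appropriate boundary from $Y$. The Hurwitz formula gives $K_X\sim f^\ast(K_Y+\tfrac12 R)$, where $-K_Y-\tfrac12 R$ has bidegree $(1,1)$, so a complement of $K_Y+\tfrac12 R$ of class $(1,1)$ pulls back to an anti-canonical boundary on $X$.

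On $Y$ I would take $D'=D'_1+D'_2$ with $D'_1\sim(1,0)$ a general fiber of the projection $\pi_1$ (so $D'_1\cong\mathbb{P}^2$) and $D'_2=\pi_2^{-1}(\ell)\sim(0,1)$ for a line $\ell\subset\mathbb{P}^2$ (so $D'_2\cong\mathbb{P}^1\times\mathbb{P}^1$). The curve $C_{12}=D'_1\cap D'_2$ is the line $\ell$ viewed inside $D'_1\cong\mathbb{P}^2$. For general $D'_1$, Bertini gives that $R|_{D'_1}$ is a smooth plane quartic, and the key choice is to pick $\ell$ tangent to this quartic at one point $q$ and transverse to it at two other points; equivalently, $R\cdot C_{12}=2q+q_1+q_2$ as a divisor on $C_{12}\cong\mathbb{P}^1$.

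Setting $D_i=f^{-1}(D'_i)$, the next step is to check that $D_1$ is a smooth del Pezzo surface of degree $2$ (a double cover of $\mathbb{P}^2$ ramified in the smooth quartic $R|_{D'_1}$) and that $D_2$, a double cover of $\mathbb{P}^1\times\mathbb{P}^1$ ramified in the smooth curve $R|_{D'_2}$ of bidegree $(2,4)$, is also smooth for generic $R$. Then $D_1\cap D_2\to C_{12}$ is a double cover of $\mathbb{P}^1$ branched at $2q+q_1+q_2$: the two simple branch points contribute smooth ramification, while the multiplicity-two branching at $q$ produces a node. Hence $D_1\cap D_2$ is a rational curve of arithmetic genus $1$ with exactly one node, lying above $q$. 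A short local computation in analytic coordinates $(t,u,w,z)$ on $X$, in which $z^2=f(t,u,w)$ and $D_1=\{t=0\}$, $D_2=\{u=0\}$, $C_{12}=\{t=u=0\}$, confirms both that $D_1,D_2$ are smooth along their intersection and that the node sits in the smooth locus of each.

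Finally I would apply Lemma~\ref{lem-nodal-curve} to the pair $(X,D_1+D_2)$, which by construction satisfies $K_X+D_1+D_2\sim f^\ast(K_Y+D'+\tfrac12 R)\sim 0$, to conclude $\dim\mathcal D(X,D_1+D_2)=2$ and therefore $\mathrm{coreg}(X)=0$. The main obstacle is keeping all genericity conditions compatible: the tangency of $\ell$ with $R|_{D'_1}$ is a codimension-one constraint on $(R,D'_1,D'_2)$, and one needs to verify that within this locus one can still arrange $R$ (and hence $R|_{D'_2}$, $D_2$) to be smooth, the tangency to be simple, and the remaining two points $q_1,q_2$ to be transverse; this is a routine incidence dimension count analogous to the ones used in Lemmas~\ref{lemma:Fano del Pezzo}--\ref{lemma:Fano del Pezzo1}.
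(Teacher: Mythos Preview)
Your proposal is correct and follows essentially the same approach as the paper: realize $X$ as a double cover of $\mathbb{P}^1\times\mathbb{P}^2$, take $D'_1$ of bidegree $(1,0)$ and $D'_2=\pi_2^{-1}(\ell)$ with $\ell$ a simple tangent line to the smooth quartic $R|_{D'_1}$, so that $D_1\cap D_2$ is an irreducible nodal curve in $|-K_{D_1}|$, and then apply Lemma~\ref{lem-nodal-curve}. You are in fact more explicit than the paper about why $D_1\cap D_2$ acquires exactly one node (via the branching $2q+q_1+q_2$ on $C_{12}$) and about the compatibility of the genericity constraints needed for $D_2$ to be smooth.
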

\begin{proof}
The variety $X$ is realized as a double cover $f\colon X\to Y=\mathbb{P}^1\times\mathbb{P}^2$ ramified in a smooth divisor $R$ of bidegree $(2, 4)$. The Hurwitz formula yields
\[
K_X = f^* ( K_Y + R/2 ) \sim f^* ( (-1,-1) ).
\]
Choose a complement of $K_Y + R/2$ of the form $D'=D'_1+D'_2$ where
$D'_1$ and $D'_2$ are divisors on $Y$ of bidegree $(1,0)$ and $(0,1)$, respectively. We show that we can choose $D'_1$ and $D'_2$ such that the intersection of their preimages $D_1=f^*(D'_1)$ and $D_2=f^*(D'_2)$ is the union of two smooth rational curves $L_1$ and $L_2$. First choose $D'_1$ to be general and observe that $D_1=f^*(D'_1)$ is a smooth del Pezzo surface
of degree $2$, and for $C=D_2|_{D_1}$ we have $C\sim -K_{D_1}$. Note that $C'=f(C)$ is a line on $D'_1=\mathbb{P}^2$. We can choose $D'_2$ in such a way that $C'$ is a general tangent line to a smooth quartic $R\cap D'_1$, and $D'_2$ is smooth. Then $C$ is a nodal irreducible curve of arithmetic genus $1$ on $D'_1$. Consequently, we have a complement of $K_X$ of the form $D=D_1+D_2$, so
\[
K_X + D = f^* ( K_Y + R/2 + D') \sim 0.
\]
Applying Lemma \ref{lem-nodal-curve} and Lemma \ref{lem-double-cover} we see that $\coreg(X)=0$.
\end{proof}

We treat Fano threefolds listed in \eqref{Fano-to-treat} with Picard rank $3$.

\begin{lemma}
\label{lemma:3-2}
Let $X$ be a Fano threefold in the family \textnumero\,$3.2$.
Then $\coreg(X)=0$.
\end{lemma}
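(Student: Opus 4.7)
The Fano threefold $X$ in family \textnumero\,$3.2$ admits a natural description via its Mori--Mukai realization, typically as a smooth divisor of small multidegree in a product of projective spaces or in a toric projective bundle, or else as a double cover of a simpler threefold. The plan is to exploit this explicit description to construct an snc anti-canonical boundary $D = D_1 + D_2 + D_3$ on $X$ whose triple intersection is non-empty; by Definition~\ref{defin-regularity} this gives $\dim \mathcal{D}(X, D) = 2$, and hence $\coreg(X) = 0$.

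First, using the natural ambient variety $Y$ containing $X$, I would decompose $-K_X$ as a sum of three restriction classes, each of the form $H_i|_X$ where $H_i$ runs through a base-point-free linear system on $Y$ (for instance, the three generators of $\Pic(Y)$ when $Y$ is a product of projective spaces). By Bertini's theorem, or by Lemma~\ref{lem-smooth-section} in the case when base loci appear, general elements $D_i$ are smooth, and a dimension count on $Y$ shows that $D_1 \cap D_2 \cap D_3$ is a non-empty zero-dimensional subscheme contained in the smooth locus of each $D_i$. The pairwise intersections $D_i \cap D_j$ are curves; in the standard case these are smooth and meet the third component transversally, so the snc condition at the zero-stratum is automatic. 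Should some $D_i \cap D_j$ degenerate into a reducible configuration (a line plus a conic, or similar) or a nodal curve, I would invoke Lemma~\ref{lem-line-conic} or Lemma~\ref{lem-nodal-curve}, exactly as in the treatment of Lemmas~\ref{lemma:2-6} and~\ref{lemma:2-2}.

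If instead the most convenient description presents $X$ as a double cover $f\colon X \to Y$ ramified along a smooth divisor $R$, the plan is to find an snc boundary $D' = D'_1 + D'_2 + D'_3$ of $K_Y + R/2$ on $Y$ whose dual complex has dimension $2$ (mirroring the arguments in Lemmas~\ref{lemma:2-8} and~\ref{lemma:2-18}), and then pull back via Lemma~\ref{lem-double-cover}. The main obstacle I anticipate is compatibility at the triple intersection: one must ensure that the chosen components avoid unexpected coincidences with the ramification locus in the double-cover setting, or remain in the smooth locus of $X$ in the divisor setting, and that the dimension count for the zero-stratum really produces a non-empty intersection. This is a direct but delicate verification using the explicit multigrading of the ambient variety, and it is where the case-analysis in analogous earlier lemmas was concentrated.
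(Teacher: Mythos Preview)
Your proposal is a reasonable template for the sort of argument used throughout Sections~\ref{sec-Fanos-that-are-not-blow-ups}--\ref{sec-Fano-with-big-rho2}, and the paper's proof does follow this overall shape. However, what you have written is not yet a proof of this particular lemma, because you never engage with the specific geometry of family \textnumero\,$3.2$. Concretely: this variety is \emph{not} a double cover, so your second paragraph is a dead end here; and in the first paragraph you speak of ``three restriction classes $H_i|_X$ where $H_i$ runs through \dots\ the three generators of $\Pic(Y)$'', but for this family $-K_X = (L + 2F_1 + F_2)|_X$ in the ambient $\mathbb{P}^2$-bundle $\mathbb{P}_{\mathbb{P}^1\times\mathbb{P}^1}(\mathcal{O}\oplus\mathcal{O}(-1,-1)^{\oplus 2})$, so the naive decomposition into generators does not give a reduced boundary. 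The paper therefore takes \emph{four} components $D_1 = L_X$, $D_2, D_3 \in |F_1|_X|$, $D_4 \in |F_2|_X|$.

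The substantive step that your sketch entirely omits is the verification that this boundary is snc with a zero-stratum. In the paper this is done by identifying $L_X \simeq \mathbb{P}^1 \times \mathbb{P}^1$ (note that $L$ here is the tautological class on a non-trivial projective bundle, so this is not automatic) and then computing the restrictions $F_1|_{L_X}\sim l_1$, $F_2|_{L_X}\sim 2l_2$ via adjunction, which makes the non-emptiness of a triple intersection on $L_X$ immediate. None of your general remarks (Bertini, Lemma~\ref{lem-smooth-section}, Lemmas~\ref{lem-line-conic}--\ref{lem-nodal-curve}) substitute for this calculation; those lemmas handle situations where the intersection curve is forced to be singular or reducible, whereas here the point is simply to compute classes on a specific surface. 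Until you name the ambient bundle, write down $-K_X$ explicitly, and carry out this restriction computation, the argument is incomplete.
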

\begin{proof}
We have that $X$ is a smooth divisor in a $\mathbb{P}^2$-bundle over
$\mathbb{P}^1\times\mathbb{P}^1$ of the form
\[
\mathbb{P}=\mathbb{P}_{\mathbb{P}^1\times\mathbb{P}^1}(\oo\oplus\oo(-1,-1)^{\oplus2})
\]
such that $X\sim 2L + 2F_1 + 3F_3$ where $F_1$ is the pullback of the divisor $l_1$ of bidegree $(1,0)$ on $\mathbb{P}^1\times\mathbb{P}^1$, $F_2$ is the pullback of the divisor $l_2$ of bidegree $(0,1)$ on $\mathbb{P}^1\times\mathbb{P}^1$, and $L$ is the tautological divisor. We have
\[
K_{\mathbb{P}} = -3L - 4F_1 - 4F_2, \quad \quad \quad \quad K_X = (-L - 2F_1 - F_2)|_X.
\]
Put $L_X=L|_X$. Note that $L_X$ is a bisection of the natural projection $\pi\colon X\to
\mathbb{P}^1\times\mathbb{P}^1$, that is, it intersects its general fiber
in two points. We have $L_X\simeq \mathbb{P}^1\times\mathbb{P}^1$. Consider the boundary $D$ on $X$ such that $D=D_1+D_2+D_3+D_4$ where $D_1=L_X$, $D_2\sim D_3\sim F_1|_X$ are general, and $D_4\sim F_2|_X$ is general as well. Then $K_X+D\sim 0$, the pair $(X, D)$ has simple normal crossings, and
\[
(K_X + L_X)|_{L_X} = K_{L_X} \sim -2l_1 - 2l_2.
\]
Observe that $F_1|_{L_X} \sim l_1$, and $F_2|_{L_X} \sim 2l_2$. Hence $D_1\cap D_2\cap D_3\neq \emptyset$, and thus $\mathrm{coreg}(X)=0$.
\end{proof}

\begin{lemma}
\label{lemma:3-1}
Let $X$ be a Fano threefold in the family \textnumero\,$3.1$.
Then $\coreg(X)=0$.
\end{lemma}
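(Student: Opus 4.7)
The plan is to realize $X$ as a double cover and then apply Lemma~\ref{lem-double-cover}, exactly in the spirit of the treatment of families $2.2$, $2.8$, $2.18$. Recall that a member of family $3.1$ can be presented as a double cover $f\colon X\to Y=\mathbb{P}^1\times\mathbb{P}^1\times\mathbb{P}^1$ branched along a smooth divisor $R$ of tridegree $(2,2,2)$. By Hurwitz,
\[
K_X=f^*\bigl(K_Y+\tfrac12 R\bigr)\sim f^*\bigl((-1,-1,-1)\bigr).
\]

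The next step is to exhibit a complement of $K_Y+\tfrac12 R$ on $Y$ of the simplest possible combinatorial shape. Namely, choose general fibers of the three projections $\pi_i\colon Y\to\mathbb{P}^1$, i.e.\ general divisors $D'_1,D'_2,D'_3$ of tridegrees $(1,0,0)$, $(0,1,0)$, $(0,0,1)$ respectively, and set $D'=D'_1+D'_2+D'_3$. Then $K_Y+\tfrac12 R+D'\sim 0$, and I would check (which is the only nontrivial point, though very mild) that the four divisors $R, D'_1, D'_2, D'_3$ can be chosen to be simultaneously in general position: this follows because each of the three linear systems $|D'_i|$ is base-point free and $R$ is smooth, so by Bertini general members meet $R$ and one another transversally; moreover the triple intersection $D'_1\cap D'_2\cap D'_3$ is a single point $(p_1,p_2,p_3)\in Y$ that can be taken outside $R$. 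Consequently $(Y, D'+\tfrac12 R)$ is snc, it is lc log Calabi--Yau, and the stratum $D'_1\cap D'_2\cap D'_3$ certifies $\dim\mathcal{D}(Y, D'+\tfrac12 R)=2$, so $\coreg(Y, D'+\tfrac12 R)=0$.

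Finally, since $R$ is smooth and not contained in any component of $D'$, Lemma~\ref{lem-double-cover} applies to the double cover $f\colon X\to Y$ and yields an snc lc log Calabi--Yau pair $(X, D)$ with
\[
K_X+D=f^*\bigl(K_Y+\tfrac12 R+D'\bigr)\sim 0
\]
of coregularity zero; therefore $\coreg(X)=0$. The only obstacle is the transversality verification in the previous paragraph, but this is routine Bertini because the three pencils $|D'_i|$ are precisely the fibers of the three rulings of $Y$, hence free, and $R$ is smooth; no further obstruction arises.
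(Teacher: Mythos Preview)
Your proof is correct and follows essentially the same approach as the paper: realize $X$ as a double cover of $Y=\mathbb{P}^1\times\mathbb{P}^1\times\mathbb{P}^1$ branched in a smooth $(2,2,2)$-divisor $R$, take $D'=D'_1+D'_2+D'_3$ to be general fibers of the three projections so that $(Y,D'+\tfrac12 R)$ is snc with a zero-dimensional stratum, and apply Lemma~\ref{lem-double-cover}. The only difference is that you spell out the Bertini/transversality justification a bit more explicitly than the paper does.
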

\begin{proof}
The variety $X$ is realized as a double cover $f\colon X\to
Y=\mathbb{P}^1\times\mathbb{P}^1\times\mathbb{P}^1$ ramified in a
smooth divisor $R$ of type $(2,2,2)$. The Hurwitz formula yields
\[
K_X = f^*(K_Y + R/2).
\]
Pick a complement of $K_Y+R/2\sim (-1,-1,-1)$ of the form $D' = D'_1+D'_2+D'_3$ where $D'_i$ are general divisors on $Y$
of the type $(1,0,0)$, $(0,1,0)$ and
$(0,0,1)$, respectively. Note that $D'_1\cap D'_2\cap D'_3\neq \emptyset$ and the pair $(Y, D')$ has simple normal crossings. Applying Lemma \ref{lem-double-cover}, we conclude that $\coreg(X)=0$.
\end{proof}

\section{Fano blow-ups with $\rho=2$}
\label{sec-Fano-with-big-rho}

In what follows, we will use the following well-known description of the normal bundle to a line on smooth Fano threefold, see e.g. \cite[Proposition 2.2.8.]{KPS18}. Namely, for any line $L\subset \mathbb{P}^3$ we have $N_{L/\mathbb{P}^3}=\oo(1)\oplus \oo(1)$. For any line $L$ on a smooth quadric $Q\subset \mathbb{P}^4$ we have $N_{L/Y}=\oo\oplus \oo(1)$. For a smooth del Pezzo threefold $Y$, we have $N_{L/Y}=\oo\oplus \oo$ for a general line $L\subset Y$, and $N_{L/Y}=\oo(-1)\oplus \oo(1)$ for special lines.

\begin{remark}
\label{rem-no-planes}
Observe that a del Pezzo threefold $Y$ with $\rho(Y)=1$ does not contain planes in the embedding given by the linear system $|-K_Y/2|$. Indeed, assume that $H\subset Y$ is a plane, so $H\simeq \mathbb{P}^2$. Then for two general hyperplane sections $H_1$ and $H_2$, we have $H\cdot H_1\cdot H_2=1$. 
On the other hand, let $H'$ be a generator of $\mathrm{Pic}(Y)$. Then $H\sim kH'$, and we have $1=H\cdot H_1\cdot H_2=kH'^3 = k d$, so $d=1$ and $k=1$. But this contradicts to the adjunction formula $-3H|_H=K_H=(K_X+H)|_H=-H|_H$. Arguing similarly, we see that $Y$ does not contain smooth quadric surfaces and quadric cones.
\end{remark}

We treat Fano threefolds that can be realized as a blow up of some other smooth variety. First we deal with the case of Picard number $2$.

\begin{lemma}
\label{corollary:blow-ups}
Let $X$ be a Fano threefold with Picard number $2$ that can be realized as a blow up of some other smooth variety. 
In the case when $X$ belongs to the family $2.1$, we assume that $X$ is general. 
Then~$\coreg(X)=~0$.
\end{lemma}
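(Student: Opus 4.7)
The plan is to proceed case by case through the Mori--Mukai list of smooth Fano threefolds with $\rho(X)=2$ that admit a blow-up structure. For every such $X$, there is an extremal birational contraction $\phi\colon X\to Y$ onto a smooth Fano threefold $Y$ with $\rho(Y)=1$, whose exceptional divisor contracts either onto a smooth point $P\in Y$ or onto a smooth irreducible curve $C\subset Y$. In every case, $Y$ is one of $\mathbb{P}^3$, a smooth quadric $Q\subset\mathbb{P}^4$, a del Pezzo threefold $V_d$ with $1\leq d\leq 5$, or a Fano threefold of the main series with $g\geq 6$. In all these cases, $\coreg(Y)=0$ has already been established in Sections~\ref{sec-Fanos-large-index} and~\ref{sec-prime-Fano-threefolds}, realized by an explicit anti-canonical boundary $D'=\sum D'_i$ with a zero-dimensional stratum.

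For each family, my plan is to choose such a boundary $D'$ on $Y$ that is additionally adapted to the center of $\phi$ in the sense required by Lemma~\ref{lemma:blow-up}: if the center is a smooth curve $C$, then $C$ must be contained in the smooth locus of some component $D'_j$ with $\coeff_{D'_j}D'=1$; if it is a point $P$, then $P$ must lie in the smooth locus of an intersection $D'_j\cap D'_k$ for some pair of distinct components. Typical arrangements are as follows. When $Y=\mathbb{P}^3$ and $C$ is contained in a low-degree surface $S$ (as happens, for instance, when $C$ is a complete intersection), I would take $D'=S+H_1+\ldots+H_{4-\deg S}$ with $H_i$ generic hyperplanes; since $C\subset S$ and $C$ avoids $S\cap H_i$ generically, this satisfies the lemma. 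When $Y=Q$ or a del Pezzo threefold of degree $\geq 3$ and $C$ is a line, I would use the two-hyperplane-section boundary constructed in Lemmas~\ref{lemma:quadrics} and~\ref{lemma:Fano del Pezzo}, arranging via Lemma~\ref{lem-smooth-section} for one of the two sections to be smooth and to contain $C$. For the remaining cases the same kind of adjustment of one of the components of the previously constructed boundary suffices. Once $D'$ is in hand, Lemma~\ref{lemma:blow-up} yields $\coreg(X)=\coreg(Y)=0$.

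Each individual verification reduces to a straightforward dimension count on the linear system defining the relevant component, combined with Bertini-type genericity and the explicit normal-bundle description of lines on $\mathbb{P}^3$, $Q$ and $V_d$ recalled at the start of this section. The main obstacle is family 2.1, where $Y=V_1$ is a del Pezzo threefold of degree one: here $\coreg(V_1)=0$ is known only for general $V_1$ via Lemma~\ref{lemma:Fano del Pezzo1}, and the construction in that lemma depends on the existence of a specific nodal element of $|{-K}_{H_1}|$ on a smooth $H_1\in|H|$. The hard part will be to check that, for a general $X$ in family 2.1, the pair $(V_1,C)$ can be chosen so that the blow-up center $C$ lies in the smooth locus of a component of such a nodal boundary; this is an open condition in the moduli of pairs (threefold, center) and is the reason for the generality hypothesis in the statement.
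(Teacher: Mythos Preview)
Your plan matches the paper's proof: both proceed family-by-family, construct on the base $Y$ an explicit anti-canonical boundary $D'$ of coregularity $0$ adapted so that the blow-up center lies in the smooth locus of a component with coefficient $1$, and then invoke Lemma~\ref{lemma:blow-up}. One small correction: for the $\rho=2$ families treated here the base $Y$ is always $\mathbb{P}^3$, $Q$, or a del Pezzo threefold $V_d$ with $1\le d\le 5$ (never a main-series Fano), and the center is always a curve, so the point-center clause of Lemma~\ref{lemma:blow-up} is not needed.
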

\begin{proof} 
We apply Lemma \ref{lemma:blow-up} and use its notation. In what follows, we construct the pairs $(Y, D')$ of coregularity $0$, and describe the subset $Z\subset Y$ such that $X=\mathrm{Bl}_Z Y$. Then the log pullback $(X, D)$ of $(Y, D')$ will also have coregularity $0$. 
\subsection{
Case
$Y=\mathbb{P}^3$}
\subsubsection{
    \label{2:30}
    $X$ is a Fano variety \textnumero\,2.30, }
    $Z$ is a plane conic,   
$D'=D'_1+D'_2+D'_3+D'_4$ where $D'_i$ are general planes such that
$Z\subset D'_1$.

\subsubsection{
    \label{2:28}
    $X$ is a Fano variety \textnumero\,2.28,}    
    $Z$ is a plane cubic curve,    
    $D'$ is as in the previous case, and $Z\subset D'_1$.
        
\subsubsection{
    \label{2:27}
    $X$ is a Fano variety \textnumero\,2.27,}     
    $Z$ is a twisted cubic curve,
    $D'=D'_1+D'_2+D'_3$ where $D'_1$ is a smooth quadric, $D'_2$ and $D'_3$ are general planes, and $Z\subset D'_1$.
    
\subsubsection{
    \label{2:25}
    $X$ is a Fano variety \textnumero\,2.25,}     
    $Z$ is a smooth elliptic curve of degree $4$ which is an intersection
of two quadrics, 
$D'$ as in the previous case, and $Z\subset D'_1$.

    \subsubsection{
    \label{2:15}
    $X$ is a Fano variety \textnumero\,2.15, }
    $Z$ is a smooth curve of degree $6$ which is an intersection of a
quadric and a smooth cubic surface, 
$D'=D'_1+D'_2+D'_3$ where $D'_1$ is either a smooth quadric or a quadric cone, $D'_2$ and $D'_3$
are general planes, and $Z\subset D'_1$.

\subsubsection{
    \label{2:12}
    $X$ is a Fano variety \textnumero\,2.12,}
    $Z$ is a smooth curve of degree $6$ and genus $3$ which is an intersection of $4$ cubic surfaces,    
    $D' = D'_1+D'_2$, where $D'_1$ is a
 cubic surface such that $Z\subset D'_1$, $D'_2$ is a hyperplane such
that $D'_1\cap D'_2$ is a union of a line $L$ and a conic~$C$. By Lemma \ref{lem-smooth-section}, we can choose $D'_1$ to be smooth. 
Since $N_{L/Y}=\oo(1)\oplus \oo(1)$, by Lemma \ref{reducible_cubic_surface} we see that $\dim\mathcal{D}(Y, D')=2$. Thus, 
$\mathrm{coreg}(Y, D')=0$. 

\subsubsection{
    \label{2:9}
    $X$ is a Fano variety \textnumero\,2.9,}     
    $Z$ is a smooth curve of degree $7$ and genus $5$ which is an intersection of cubic surfaces, 
    $D'$ is as in the
previous case, and $Z\subset D'_1$. By Lemma \ref{lem-smooth-section}, we may assume that $D'_1$ is smooth. By Lemma \ref{reducible_cubic_surface}, we see that $\dim\mathcal{D}(Y, D')=2$. Thus, 
$\mathrm{coreg}(Y, D')=0$. 

\subsubsection{
    \label{2:4}
    $X$ is a Fano variety \textnumero\,2.4,}  
    $Z$ is a smooth curve which is an intersection of two cubic surfaces,     
    $D'$ as in the previous case, and $Z\subset D'_1$. By Lemma \ref{lem-smooth-section}, we may assume that $D'_1$ is smooth. By Lemma \ref{reducible_cubic_surface}, we see that $\dim\mathcal{D}(Y, D')=2$. Thus, 
    $\mathrm{coreg}(Y, D')=0$. 

\subsection{
Case when $Y$ is a smooth quadric hypersurface
$\mathbb{P}^4$} 
\subsubsection{
    \label{2:31}
    $X$ is a Fano variety \textnumero\,2.31,}  
    $Z$ is a line,     
    $D'=D'_1+D'_2+D'_3$ where $D'_i$ are general hyperplane sections, and
$Z\subset D'_1$. By Lemma \ref{lem-smooth-section}, one can choose $D'_1$ such that it is smooth.

\subsubsection{
    \label{2:29}
    $X$ is a Fano variety \textnumero\,2.29,}     
    $Z$ is a conic,     
    $D'$ is as in the previous case, and $Z\subset D'_1$. By Lemma \ref{lem-smooth-section}, we can choose
$D'_1$ to be smooth.

\subsubsection{
    \label{2:23}
    $X$ is a Fano variety \textnumero\,2.23,}     
    $Z$ is a smooth curve which is an intersection of a hyperplane section and an element from $\OOO_Y(2)$,     
    $D'=D'_1+D'_2+D'_3$, where $D'_1$ is either a smooth
hyperplane section, or a hyperplane section which is a quadric cone, 
$D'_2$ and $D'_3$ are general hyperplane sections, and $Z\subset D'_1$.

\subsubsection{
    \label{2:21}
    $X$ is a Fano variety \textnumero\,2.21,} 
     $Z$ is a smooth rational quartic curve, 
$D'=D'_1+D'_2$ where $D'_1$ is a
smooth element from $\OOO_Y(2)$ such that $Z\subset D'_1$, $D'_2$ is a smooth hyperplane section
such that $D'_1$ intersects $D'_2$ in a union of a line $L$ and a smooth
rational cubic curve $C$. We can choose such $D'_1$ and $D'_2$ by Lemma \ref{lem-smooth-section}. Since $N_{L/Y}=\oo\oplus \oo(1)$, by Lemma \ref{lem-line-conic} we see that $\dim\mathcal{D}(Y, D')=2$. Thus, 
$\mathrm{coreg}(Y, D')=0$. 

\subsubsection{
    \label{2:13}
    $X$ is a Fano variety \textnumero\,2.13,}     
    $Z$ is a smooth curve of degree
$6$ and genus $2$ which is an intersection of quadrics, 
    $D'$ as in the previous case, $D'_1\cap D'_2$ is a union of a line $L$ and a smooth rational cubic curve $C$, and
$Z\subset D'_1$. We can choose $D'_1$ to be smooth by Lemma \ref{lem-smooth-section}. By Lemma \ref{lem-line-conic}, we see that $\dim\mathcal{D}(Y, D')=2$. Thus, 
$\mathrm{coreg}(Y, D')=0$.

\subsubsection{
    \label{2:7}
    $X$ is a Fano variety \textnumero\,2.7,}     
    $Z$ is a smooth curve which is an intersection two elements from $\OOO_Y(2)$,     
    $D'$ as in the previous case. We can choose $D'_1$ to be smooth by Lemma \ref{lem-smooth-section}. By Lemma \ref{lem-line-conic}, we see that $\dim\mathcal{D}(Y, D')=2$. Thus, 
    $\mathrm{coreg}(Y, D')=0$. 
    
\subsection{Case $Y=V_5$} That is, $Y$ is an intersection
of the Grassmannian $\mathrm{Gr}(2,5)\subset \mathbb{P}^9$ with a subspace of
codimension $3$.
\subsubsection{
    \label{2:26}
    $X$ is a Fano variety \textnumero\,2.26,}   
    $Z$ is a line,     
    $D'=D'_1+D'_2$ where $D'_1$ and $D'_2$ are smooth hyperplane sections intersecting in a union of a line $Z$ and a smooth rational quartic curve $C$ as constructed in Lemma \ref{lemma:Fano del
Pezzo}. 
By Lemma \ref{lem-line-conic}, we see that $\dim\mathcal{D}(Y, D')=2$. Thus, 
$\mathrm{coreg}(Y, D')=~0$. 

\subsubsection{
    \label{2:22}
    $X$ is a Fano variety \textnumero\,2.22,}     
    $Z$ is a conic,     
    $D'=D'_1+D'_2$ where $D'_1$ and $D'_2$ are smooth hyperplane sections intersecting in a union of a line $L$ and a smooth rational quartic curve $C$ as constructed in Lemma \ref{lemma:Fano del
Pezzo}, and $Z\subset D'_1$. We claim that a hyperplane section $D'_1$ can be chosen smooth. Let $\Pi=\mathbb{P}^2$ be a plane that contains $Z$. Since the Grassmannian is the intersection of quadrics and $Y$ does not contain planes by Remark \ref{rem-no-planes}, we have $Y\cap \Pi=Z$. Hence by Lemma \ref{lem-smooth-section} we can choose a smooth hyperplane section $D'_1$ such that $Z\subset D'_1$. Then we choose a smooth hyperplane section $D'_2$ which intersects $D'_1$ in a union of a line $L$ and a smooth rational curve $C$ as in Lemma \ref{lemma:Fano del Pezzo}. By Lemma \ref{lem-line-conic}, we see that $\dim\mathcal{D}(Y, D')=2$. Thus, 
$\mathrm{coreg}(Y, D')=0$. 
    
\subsubsection{
    \label{2:20}
    $X$ is a Fano variety \textnumero\,2.20,}     
    $Z$ is a twisted cubic,     
    $D'$ is as in the previous case, and $Z\subset D'_1$. We explain why we can choose such a smooth hyperplane section $D'_1$. Let $\Pi=\mathbb{P}^3$ be a projective subspace that contains $Z$. Since the Grassmannian is the intersection of quadrics, and $Y$ does not contain quadrics or quadric cones by Remark \ref{rem-no-planes}, we have that the intersection $Y\cap \Pi$ is either $Z$, or $Z\cup Z'$ where $Z'$ is a line. In both cases, using Lemma \ref{lem-smooth-section}, one can choose $D'_1$ such that it is smooth. Then we choose a smooth hyperplane section $D'_2$ which intersects $D'_1$ in a union of a line $L$ and a smooth rational curve $C$ as in Lemma \ref{lemma:Fano del Pezzo}. By Lemma \ref{lem-line-conic}, we see that $\dim\mathcal{D}(Y, D')=2$. Thus, 
    $\mathrm{coreg}(Y, D')=0$. 

\subsubsection{
    \label{2:14}
    $X$ is a Fano variety \textnumero\,2.14,}     
    $Z$ is an elliptic curve which is an intersection of two hyperplane sections,     
    $D'$ as in the previous case, and $Z\subset D'_1$. By Lemma \ref{lem-smooth-section}, we may assume that $D'_1$ is smooth. By Lemma \ref{lem-line-conic}, we see that $\dim\mathcal{D}(Y, D')=2$. Thus, $\dim\mathcal{D}(Y, D')=2$, and $\mathrm{coreg}(Y, D')=0$.

\subsection{Case $Y=V_4$} That is, $Y$ is an intersection of 
two quadrics in $\mathbb{P}^5$. 
\subsubsection{
    \label{2:19}
    $X$ is a Fano variety \textnumero\,2.19,}     
    $Z$ is a line,     
    $D'=D'_1+D'_2$ where $D'_1$ and $D'_2$ are smooth hyperplane sections intersecting in a union of a line $Z$ and a smooth rational cubic curve $C$ as constructed in Lemma \ref{lemma:Fano del
Pezzo}, so $Z\subset D'_1$. By Lemma \ref{lem-line-conic}, we see that $\dim\mathcal{D}(Y, D')=2$. Thus, 
$\mathrm{coreg}(Y, D')=0$.

\subsubsection{
    \label{2:16}
    $X$ is a Fano variety \textnumero\,2.16,}    
    $Z$ is a conic, 
    $D'=D'_1+D'_2$ where $D'_1$ and $D'_2$ are smooth hyperplane sections intersecting in a union of a line $L$ and a smooth rational cubic curve $C$ as constructed in Lemma \ref{lemma:Fano del
Pezzo}, and $Z\subset D'_1$. We explain why we can choose such a smooth hyperplane section $D'_1$. Let $\Pi=\mathbb{P}^2$ be a plane in $\mathbb{P}^5$ that contains $Z$. By Remark \ref{rem-no-planes}, we have that $\Pi\not\subset Y$. Note that $Y$ is defined by a pencil of quadrics whose restriction to $\Pi$ is a pencil of conics that contain $Z$ (in particular, this means that exactly one of the quadrics in the pencil contains $\Pi$). This shows that $\Pi\cap Y=Z$. By Lemma \ref{lem-smooth-section}, we can choose a hyperplane section $D'_1$ such that it is smooth and $Z\subset D'_1$. By Lemma \ref{lem-line-conic}, we see that $\dim\mathcal{D}(Y, D')=2$. Thus, 
    $\mathrm{coreg}(Y, D')=0$.

\subsubsection{
    \label{2:10}
    $X$ is a Fano variety \textnumero\,2.10,}     
    $Z$ is a smooth elliptic curve which is an intersection of two hyperplane
sections, 
    $D'$ as in the previous case, and $Z\subset D'_1$. By Lemma \ref{lem-line-conic}, we see that $\dim\mathcal{D}(Y, D')=2$. Thus, 
    $\mathrm{coreg}(Y, D')=0$.

\subsection{
Case $Y=V_3$} That is, $Y$ is a smooth cubic
hypersurface in $\mathbb{P}^4$. 
\subsubsection{
    \label{2:11}
    $X$ is a Fano variety \textnumero\,2.11,}     
    $Z$ is a line,     
    $D'=D'_1+D'_2$ where $D'_i$ are smooth hyperplane section intersecting in a union of a line $Z$ and a conic $C$ as constructed in Lemma \ref{lemma:Fano del
Pezzo}, and $Z\subset D'_1$. By Lemma \ref{lem-line-conic}, we see that $\dim\mathcal{D}(Y, D')=2$. Thus, $\mathrm{coreg}(Y, D')=0$.

\subsubsection{
    \label{2:5}
    $X$ is a Fano variety \textnumero\,2.5,}     
    $Z$ is a smooth plane cubic curve which is an intersection of two 
hyperplane sections,     
   $D'=D'_1+D'_2$ where $D'_i$ are smooth hyperplane section intersecting in a union of a line $L$ and a conic $C$ as constructed in Lemma \ref{lemma:Fano del
Pezzo}, and $Z\subset D'_1$. 
By Lemma \ref{lem-line-conic}, we see that $\dim\mathcal{D}(Y, D')=2$. Thus, $\dim\mathcal{D}(Y, D')=2$, and $\mathrm{coreg}(Y, D')=0$.

\subsection{Case $Y=V_2$} That is, $Y$ is a double cover of $\mathbb{P}^3$ ramified in a smooth quartic surface. 
        \label{2:3}
Then            $X$ is a Fano variety \textnumero\,2.3,     
    $Z$ is a smooth plane cubic curve which is an intersection of two hyperplane sections, 
            $D'=D'_1+D'_2$ where $D'_i$ are smooth hyperplane sections intersecting in a union of two smooth rational curves as constructed in Lemma \ref{lemma:Fano del
Pezzo}, and $Z\subset D'_1$. By Lemma \ref{lem-line-conic}, we see that $\dim\mathcal{D}(Y, D')=2$. Thus, 
$\mathrm{coreg}(Y, D')=0$.

\subsection{
Case $Y=V_1$} That is, $Y$ is a hypersurface of degree $6$ in $\mathbb{P}(1,1,1,2,3)$. 
    \label{cor-gen-2-1}
Then $X$ is a Fano variety \textnumero\,2.1, 
 $Z$ is an elliptic curve which is the intersection of two elements $H_1$ and $H_2$ such that $H_i\sim H$ where $-K_Y\sim 2H$,  
  $D'=D'_1+D'_2$ where $D'_i\sim H$ are smooth surfaces intersecting in a nodal curve $C$ as constructed in Lemma \ref{lemma:Fano del Pezzo1}. Here we use the assumption that $Y$ is general. More precisely, in Lemma \ref{lemma:Fano del Pezzo1} we have shown that for a general element in $|H|$, its anti-canonical linear system contains a nodal curve. Hence, starting from $D'_1=H_1$ and assuming it to be general we may find an element $D'_2\in |H|$ such that the intersection $D'_1\cap D'_2$ is a nodal curve. Applying Lemma \ref{lem-nodal-curve} we see that $\dim\mathcal{D}(Y, D')=2$. Thus, 
$\mathrm{coreg}(Y, D')=0$.

\

We have considered all smooth Fano threefolds with Picard number $2$ that can be realized as a blow up of some other smooth variety, and showed that they admit a boundary of coregularity $0$. This completes the proof.
\end{proof}

\section{Fano blow-ups with $\rho\geq 3$}
\label{sec-Fano-with-big-rho2}
It remains to consider Fano threefolds of Picard number at least~$3$ that can be realized as a blow up of some other threefold.

\begin{lemma}
\label{corollary:blow-ups3}
Let $X$ be a Fano threefold with Picard number at least $3$ and such that it can be realized as a blow up of some other smooth variety. Then~$\coreg(X)=0$.
\end{lemma}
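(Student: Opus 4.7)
The plan is to proceed in exact analogy with Lemma~\ref{corollary:blow-ups}, going through the classification of smooth Fano threefolds of Picard number $\rho\geq 3$ that arise as a blow up $\pi\colon X\to Y$ of a smooth threefold $Y$ (see e.g.~\cite{IP99}, or the table in Section~\ref{sec-the-table}). For each such family we will exhibit a boundary $D'$ on a suitable parent $Y$ so that $(Y,D')$ is an lc log CY pair with $\coreg(Y,D')=0$ and so that the blown-up locus $Z\subset Y$ satisfies the hypotheses of Lemma~\ref{lemma:blow-up}; that lemma then yields the log pullback $(X,D)$ of $(Y,D')$ with $\coreg(X,D)=0$, proving the statement.

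The parent variety $Y$ will usually be another Fano threefold of Picard number $\rho(Y)=\rho(X)-1$ that was already treated in Sections~\ref{sec-Fanos-that-are-not-blow-ups} and~\ref{sec-Fano-with-big-rho}, or in some cases a $\mathbb{P}^1$-bundle or a product already handled. The centers $Z\subset Y$ to be blown up are disjoint unions of smooth curves and points; for each family the list of admissible centers is contained in the classification. The strategy for each entry is uniform: starting from the concrete construction of $(Y,D')$ given in the previous two sections (for instance, on $Y=\mathbb{P}^3$, $Y=Q\subset\mathbb{P}^4$, $Y=V_5$, $Y=V_4$, $Y=\mathbb{P}^1\times\mathbb{P}^2$, $\mathbb{P}^1\times\mathbb{P}^1\times\mathbb{P}^1$, etc.), we will check that the center $Z$ meets the boundary only in codimension-$1$ or codimension-$2$ strata and away from the bad loci, possibly after replacing a general component of $D'$ by a different general element of the same linear system or by a special snc representative. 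For the cases of products and toric parents (for instance $Y=\mathbb{P}^1\times\mathbb{P}^1\times\mathbb{P}^1$ or a del Pezzo surface $\times\mathbb{P}^1$) one takes the torus-invariant or product-type boundary, which automatically has a zero-dimensional stratum, and a general choice of $Z$ avoids the problematic cells.

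The key technical step, already isolated in Lemma~\ref{lemma:blow-up}, is that each irreducible component of the center must lie in the smooth locus of a single component of $D'^{=1}$ if it is a curve, or in the smooth locus of the intersection of two components if it is a point; under this condition the dual complex is preserved, so $\dim\mathcal{D}(X,D)=\dim\mathcal{D}(Y,D')=2$. In all families with $\rho\geq 3$ one can guarantee this by choosing the components of $D'$ generically inside suitable linear systems that are forced to contain $Z$ (or parts of it). Here one uses Lemma~\ref{lem-smooth-section} to ensure smoothness of components that are required to contain a given curve, and Lemmas~\ref{lem-line-conic} and~\ref{lem-nodal-curve} to arrange the intersections of pairs of components to form the expected one-dimensional strata (line plus rational curve, or nodal curve of arithmetic genus $1$).

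The main obstacle is therefore not conceptual but combinatorial: one must verify case-by-case through the $\rho\geq 3$ part of the list in~\cite{IP99} (families $3.3$--$3.24$, $4.1$--$4.8$, and $5.1$) that the center $Z$ of the blow up can be arranged to meet a chosen coregularity-$0$ boundary on the parent in the required transverse way. We will compile this verification in the table in Section~\ref{sec-the-table}, treating families that are $\mathbb{P}^1$-bundles or products of lower-dimensional varieties directly via Corollaries~\ref{corollary:P1 x del Pezzo} and~\ref{lemma:coreg P1xS1}, and reducing every remaining family to a single blow up of a Fano threefold already settled in Lemma~\ref{corollary:blow-ups}, Lemma~\ref{lemma:2-32}, Lemma~\ref{lemma:2-24}, Lemma~\ref{lemma:2-18}, Lemma~\ref{lemma:2-8}, Lemma~\ref{lemma:2-6}, Lemma~\ref{lemma:2-2}, Lemma~\ref{lemma:3-2}, or Lemma~\ref{lemma:3-1}. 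Applying Lemma~\ref{lemma:blow-up} finitely many times then gives $\coreg(X)=0$.
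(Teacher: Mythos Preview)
Your approach is exactly the one the paper takes: exhibit, for every family with $\rho\geq 3$ arising as a blow up $X\to Y$, an explicit lc log CY boundary $D'$ on the parent $Y$ with $\dim\mathcal{D}(Y,D')=2$ such that the center $Z$ satisfies the hypotheses of Lemma~\ref{lemma:blow-up}, and then pull back. So strategically there is nothing to correct.

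That said, what you have written is a plan, not a proof: the entire content of the paper's argument is the explicit case-by-case construction of $(Y,D')$, and you have deferred all of it. A few places where your plan is slightly off and would need attention when you carry it out:
\begin{itemize}
\item Your enumeration ``$3.3$--$3.24$, $4.1$--$4.8$, and $5.1$'' misses family $4.13$ (blow up of $\mathbb{P}^1\times\mathbb{P}^1\times\mathbb{P}^1$ along a curve of type $(1,1,3)$), which the paper does treat.
\item Families $3.9$ and $4.2$ do not reduce to a parent ``already settled'' in the earlier sections: the parents are the non-trivial $\mathbb{P}^1$-bundles $\mathbb{P}_{\mathbb{P}^2}(\mathcal{O}\oplus\mathcal{O}(2))$ and $\mathbb{P}_{\mathbb{P}^1\times\mathbb{P}^1}(\mathcal{O}\oplus\mathcal{O}(1,1))$, and the paper constructs the boundary $D'$ on these from scratch (tautological divisor plus pullbacks of rulings).
\item Several cases (e.g.\ $3.11$, $3.16$, $3.23$ with parent $V_7$; $4.4$; $5.1$) genuinely require applying Lemma~\ref{lemma:blow-up} twice, as you anticipate; the paper makes the intermediate boundary explicit each time.
\end{itemize}
Once these are accounted for, your outline matches the paper's proof precisely.
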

\begin{proof}
We apply Lemma \ref{lemma:blow-up} and use its notation. In what follows, we construct the pairs $(Y, D')$ of coregularity $0$, and describe the subset $Z\subset Y$ such that $X=\mathrm{Bl}_Z Y$. Then the log pullback $(X, D)$ of $(Y, D')$ will also have coregularity $0$. 
\subsection{
Case
$Y=\mathbb{P}^3$.}

\subsubsection{
    \label{3:18}
    $X$ is a Fano variety \textnumero\,3.18,}     
    $Z=Z_1+Z_2$, $Z_1$ is a line and $Z_2$ is a plane conic,     
    $D'=D'_1+D'_2+D'_3+D'_4$ where $D'_i$ are general planes such that $Z_1\subset D'_1$ and $Z_2\subset D'_2$.
    
\subsubsection{
    \label{3:14}
    $X$ is a Fano variety \textnumero\,3.14,}     
    $Z$ is a smooth cubic curve contained in a plane $\Pi$, and $P$ is point such that $P\not\in \Pi$,     
    $D'$ is as in the previous case such that $Z\subset D'_1$ and $P\in D'_2\cap D'_3$.
    
\subsubsection{
    \label{3:12}
    $X$ is a Fano variety \textnumero\,3.12,}     
    $Z=Z_1+Z_2$, $Z_1$ is a line and $Z_2$ is a twisted cubic curve,     
    $D'=D'_1+D'_2+D'_3$ where $D'_1$ is a smooth quadric, $D'_2$ and $D'_3$ are general planes such that $Z_1\subset D'_2$ and  with $Z_2\subset D'_1$.

\subsubsection{
    \label{3:6}
    $X$ is a Fano variety \textnumero\,3.6,}      
    $Z=Z_1+Z_2$ where $Z_1$ is an elliptic curve of degree $4$ which is an intersection of two quadrics, $Z_2$ is a line,     
    $D'$ is as in the previous case such that $Z_1\subset D'_1$ and $Z_2\subset D'_2$.
    
\subsubsection{
    \label{4:6}
     $X$ is a Fano variety \textnumero\,4.6,}      
     $Z=Z_1+Z_2+Z_3$ where $Z_i$ are lines, 
$D'=D'_1+D'_2+D'_3+D'_4$ where $D'_i$ are general planes such that $Z_1\subset D'_1$, $Z_2\subset D'_2$ and
$Z_3\subset D'_3$.

\subsection{
Case when $Y$ is a smooth quadric hypersurface in
$\mathbb{P}^4$} 
\subsubsection{
\label{3:20}
$X$ is a Fano variety \textnumero\,3.20,} 
$Z=Z_1+Z_2$ where $Z_i$ are lines, 
$D'=D'_1+D'_2+D'_3$ where $D'_i$
are general hyperplane sections such that
$Z_1\subset D'_1$ and $Z_2\subset D'_2$. Note that we can choose smooth $D'_i$ with this property. 

\subsubsection{
\label{3:19}
$X$ is a Fano variety \textnumero\,3.19,} 
$P=P_1+P_2$ where $P_i$ are points that do not lie on a line that belongs to $Y$, 
$D'=D'_1+D'_2+D'_3$ where $D'_i$
are general hyperplane sections such that $P_1\in D'_1\cap
D'_2$ and $P_2\in D'_2\cap D'_3$.

\subsubsection{
\label{3:15}
$X$ is a Fano variety \textnumero\,3.15,}
$Z=Z_1+Z_2$ where $Z_1$ is a line, $Z_2$ is a plane conic, 
$D'=D'_1+D'_2+D'_3$ where $D'_i$
are general hyperplane sections such that $Z_1\subset D'_1$, $Z_2\subset D'_2$. By Lemma \ref{lem-smooth-section}, we can choose smooth $D'_i$ with these conditions.

\subsubsection{
\label{3:10}
$X$ is a Fano variety \textnumero\,3.10,}
$Z=Z_1+Z_2$ where $Z_i$ are plane conics, 
$D'=D'_1+D'_2+D'_3$ where $D'_i$
are general hyperplane sections such that $Z_1\subset D'_1$ and $Z_2\subset D'_2$. By Lemma \ref{lem-smooth-section}, we can choose smooth $D'_i$ with these conditions.

\subsubsection{
\label{4:4}
$X$ is a Fano variety \textnumero\,4.4.} In this case, we apply Lemma \ref{lemma:quadrics} twice. 
Let $P_1$ and $P_2$ be the points that do not lie on a line in $Y$, $Z$ be a conic that passes through $P_1$ and $P_2$. 
Put $D'=D'_1+D'_2+D'_3$ where $D'_i$
are hyperplane sections. To obtain $X$, we first
we blow up the points $P_1$ and $P_2$, and then blow up the strict transform
of the conic $Z$. We can choose general smooth $D'_i$ with the conditions
that $P_1\subset D'_1\cap D'_3$, $P_2\subset D'_1\cap D'_3$ and $Z_3\subset D'_3$. By Lemma \ref{lem-smooth-section}, we can choose smooth $D'_3$ with these conditions.

\subsection{
Case when $Y$ is a smooth divisor of bidegree $(1,1)$
in $\mathbb{P}^2\times\mathbb{P}^2$} 
We denote by $\pi_1$ and $\pi_2$ the natural projections from $Y$ to the first and the second copy of $\mathbb{P}^2$, respectively. Note that $\pi_i$ are $\mathbb{P}^1$-bundles.  
\subsubsection{
\label{3:24}
$X$ is a Fano variety \textnumero\,3.24,} 
$Z$ is a curve
which is a fiber of the projection $\pi_1$, 
$D'=D'_1+D'_2+D'_3$ where $D'_i$ are general such that
$D'_1$ has bidegree $(1, 1)$, $D'_2$ has bidegree $(1,0)$, $D'_3$ has bidegree $(0, 1)$, and
$Z\subset D'_2$. By Lemma \ref{lem-smooth-section}, we can choose $D'_2$ to be smooth.

\subsubsection{
\label{3:13}
$X$ is a Fano variety \textnumero\,3.13,} 
$Z$ is a curve of
bidegree $(2, 2)$ such that the composition of the embedding $Z\subset Y$ with the projections $\pi_i$ 
is also an embedding, so $\pi_i(C)$ are irreducible conics, 
$D'=D'_1+D'_2+D'_3$ where $D'_i$ are general such that
$D'_1$ has bidegree $(0, 2)$, $D'_2$ has bidegree $(1,0)$, $D'_3$ has bidegree $(1, 0)$, and $Z\subset D'_1$. 

\subsubsection{
\label{3:7}
$X$ is a Fano variety \textnumero\,3.7,} 
$Z$ is a smooth elliptic curve that is an intersection of two divisors $H_1$ and $H_2$ such that each $H_i$ has bidegree $(1,1)$, 
$D'=D'_1+D'_2+D'_3$ where $D'_i$ are general such that $D'_1$ has bidegree $(1,1)$, $D'_2$ has bidegree $(1,0)$, $D'_3$ has bidegree $(0,1)$ such that $Z=D'_1\cap H_2$. By Lemma \ref{lem-smooth-section}, we can choose $D'_1$ to be smooth.

\subsubsection{
\label{4:7}
$X$ is a Fano variety \textnumero\,4.7,} 
$Z=Z_1+Z_2$ where $Z_1$ is a curve which is a fiber of the projection $\pi_1$, $Z_2$ is a curve which is a fiber of the projection $\pi_2$, 
$D'=D'_1+D'_2+D'_3$ where $D'_i$ are general such that
$D'_1$ has bidegree $(1, 1)$, $D'_2$ has bidegree $(1,0)$, $D'_3$ has bidegree $(0, 1)$, and $Z_1\subset D'_2$ and $Z_2\subset D'_3$. By Lemma \ref{lem-smooth-section}, we can choose $D'_i$ to be smooth.

\subsection{
Case $Y=\mathbb{P}^1\times \mathbb{P}^2$}
We denote by $\pi_1$ and $\pi_2$ the natural projections from $Y$ to $\mathbb{P}^1$ and $\mathbb{P}^2$, respectively. 
\subsubsection{
\label{3:22}
        $X$ is a Fano variety \textnumero\,3.22,}         
        $Z$ is a conic in a fiber of $\pi_2$,         
$D'=D'_1+D'_2+D'_3+D'_4+D'_5$ where $D'_1$ and $D'_2$ have bidegree $(1,0)$,
$D'_3$, $D'_4$ and $D'_5$ have bidegree $(0,1)$, and $Z\subset D'_1$.
                
\subsubsection{
\label{3:21}
        $X$ is a Fano variety \textnumero\,3.21,} 
$Z$ is a curve of bidegree $(2, 1)$, 
$D'=D'_1+D'_2+D'_3+D'_4+D'_5$ where $D'_1$ and $D'_2$ have bidegree $(1,0)$,
$D'_3$, $D'_4$ and $D'_5$ have bidegree $(0,1)$, and $Z\subset D'_3$.

\subsubsection{
\label{3:17}
        $X$ is a Fano variety \textnumero\,3.17,}         
        $Z$ is a smooth curve of genus $0$ which is a complete intersection of two divisors of bidegree $(1, 1)$,         
        $D'=D'_1+D'_2+D'_3+D'_4$ where $D'_1$ has bidegree $(1, 1)$, $D'_2$ has bidegree $(1, 0)$, $D'_3$ and $D'_4$ have bidegree $(0, 1)$, and $Z\subset D'_1$. By Lemma \ref{lem-smooth-section}, we can choose $D'_1$ to be smooth.
        
\subsubsection{
\label{3:8}
        $X$ is a Fano variety \textnumero\,3.8,}         
        $Z$ is a smooth rational curve which is an intersection of divisors of bidegree $(0, 2)$ and $(1, 2)$,         
        $D'=D'_1+D'_2+D'_3$ where $D'_1$ has bidegree $(1, 0)$, $D'_2$ has bidegree $(0, 1)$, $D'_3$ has bidegree $(0, 2)$ such that $Z\subset D'_3$.    
        
\subsubsection{
\label{3:5}
        $X$ is a Fano variety \textnumero\,3.5,} 
                $Z$ is a curve of
bidegree $(5, 2)$ such that the composition of the embedding $Z\subset Y$ with the projection $\pi_2$ is an embedding,
        $D'=D'_1+D'_2+D'_3+D'_4$ where $D'_1$ and $D'_2$ have bidegree $(1, 0)$, $D'_3$
has bidegree $(0,1)$, $D'_4$ has bidegree $(0,2)$, so we pick $D'_4$ such that $Z\subset D'_4$. 

\subsubsection{
\label{3:3}
        $X$ is a Fano variety \textnumero\,3.3,}         
        $Z$ is a smooth curve of genus $3$ which is a complete intersection of two divisors of bidegree $(1, 2)$,         
        $D'=D'_1+D'_2+D'_3$ where $D'_1$ has bidegree $(1, 2)$, $D'_2$ has bidegree $(1, 0)$, $D'_3$ has bidegree $(0, 1)$ such that $Z\subset D'_1$. 
   
\subsubsection{
        \label{4:5}
        $X$ is a Fano variety \textnumero\,4.5,}          
        $Z=Z_1+Z_2$ where $Z_1$ is a curve of bidegree $(2, 1)$, $Z_2$ is a curve of bidegree $(1, 0)$,         
        $D'=D'_1+D'_2+D'_3+D'_4+D'_5$ where $D'_1$ and $D'_2$ have bidegree $(1,0)$,
$D'_3$, $D'_4$ and $D'_5$ have bidegree $(0,1)$ such that $Z_1\subset D'_3$ and $Z_2\subset D'_4$.

\subsection{
Case $Y=\mathbb{P}^1\times\mathbb{P}^1\times\mathbb{P}^1$} 
\subsubsection{
\label{4:13}
        $X$ is a Fano variety \textnumero\,4.13,}         
        $Z$ is a curve of
type $(1, 1, 3)$, 
$D'=D'_1+D'_2+D'_3+D'_4$ where $D'_1$ and $D'_2$ have type $(1, 1, 0)$,
$D'_3$ and $D'_4$ have type $(0, 0, 1)$ such that $Z\subset D'_1$.

\subsubsection{
\label{4:8}
        $X$ is a Fano variety \textnumero\,4.8,}        
        $Z$ is a curve of
type $(0, 1, 1)$, $D'=D'_1+D'_2+D'_3+D'_4$ where $D'_1$ and $D'_2$ have type $(0, 1, 1)$,
$D'_3$ and $D'_4$ have type $(1, 0, 0)$ such that $Z\subset D'_1$.

\subsubsection{
\label{4:3}
        $X$ is a Fano variety \textnumero\,4.3,}         
         $Z$ is a curve of
type $(1, 1, 2)$,
$D'=D'_1+D'_2+D'_3+D'_4$ where $D'_1$ and $D'_2$ have type $(1, 1, 0)$,
$D'_3$ and $D'_4$ have type $(0, 0, 1)$ such that $Z\subset D'_1$.

\subsubsection{
\label{4:1}
        $X$ is a Fano variety \textnumero\,4.1,}       
        $Z$ is a curve which is an intersection of two divisors of type $(1,1,1)$,        
$D'=D'_1+D'_2+D'_3+D'_4$ where $D'_2$ has type $(1,0,0)$, $D'_3$ has type $(0,1,0)$, $D'_4$ has type $(0, 0, 1)$, and $D'_1$ has type $(1, 1, 1)$ with the condition that $Z\subset D'_1$. By Lemma \ref{lem-smooth-section}, we can choose $D'_1$ to be smooth.

\subsection{
Case $Y=V_7$} That is, $Y$ is the blow up of a point $P_1$ in
$\mathbb{P}^3$.
In this case, we apply Lemma \ref{lemma:blow-up}
twice to conclude that $\coreg(X)=0$. First, we construct a pair $(\mathbb{P}^3, D'')$ of coregularity zero, then we observe that its log pullbacks $(Y, D')$ and $(X, D)$ also have coregularity zero.
\subsubsection{
\label{3:23}
        $X$ is a Fano variety \textnumero\,3.23,}         
        $Z'$ is a conic on $\mathbb{P}^3$ passing through $P_1$,        
        $D''=D''_1+D''_2+D''_3+D''_4$ where
$D''_i$ are general planes with the property $P_1\in D''_1\cap D''_2$ and
$Z'\subset D''_1$.

\subsubsection{
\label{3:16}
        $X$ is a Fano variety \textnumero\,3.16,}         
        $Z'$ is a twisted cubic on $\mathbb{P}^3$ passing
through $P_1$, 
$D''=D''_1+D''_2+D''_3$ where
$D''_1$ is a smooth quadric, $D''_2$ and $D''_3$ are general planes with the
property $P_1\in D''_1\cap D''_2$ such that $Z'\subset D''_1$.

\subsubsection{
\label{3:11}
        $X$ is a Fano variety \textnumero\,3.11,}         
        $Z'$ is a smooth elliptic curve on $\mathbb{P}^3$ which is an intersection of two quadrics 
passing through $P_1$, 
$D''=D''_1+D''_2+D''_3$ where
$D''_1$ is a smooth quadric, $D''_2$ and $D''_3$ are general planes with the
property $P_1\in D''_1\cap D''_2$, and such that $Z'\subset D''_1$.

\subsection{
\label{lemma:3-4}
Case when $Y$ is a Fano threefold in the family \textnumero\,$2.2$} That is, $Y$ is a double cover $f\colon Y\to \mathbb{P}^1\times\mathbb{P}^2$ ramified in a smooth divisor of bidegree $(2,4)$.
Then $X$ is a Fano threefold in the family \textnumero\,$3.4$, $Z$ is a smooth fiber of the composition $Y \to \mathbb{P}^1 \times \mathbb{P}^2 \to
\mathbb{P}^2$ of the double cover $f$ with the projection onto the second factor.
According to the proof of Lemma \ref{lemma:2-2}, there is a boundary $D'=D'_1+D'_2$ on $Y$
of coregularity~$0$, and we can pick $D'_2$ such that $Z\subset
D'_2$. 

\subsection{
\label{lemma:3-9}
Case 
$Y=\mathbb{P}_{\mathbb{P}^2}(\oo\oplus\oo(2))$} 
Then $X$ is a Fano threefold in the family \textnumero\,$3.9$, 
$Z$ is a smooth quartic
curve that is contained in a projective plane which is equivalent
to a tautological divisor $H$ on $Y$. We have
$
K_Y \sim -2H - F
$
where $F$ is the preimage of a line via the projection $Y=\mathbb{P}_{\mathbb{P}^2}(\oo\oplus\oo(2))\to \mathbb{P}^2$. 
Put $D'=D'_1+D'_2+D'_3$ where $D'_1\sim H$ such that $Z\subset D'_1$, $D'_2\sim H$ is general and  
$D'_3\sim F$. 

\subsection{
\label{lemma:4-2}
Case $Y=\mathbb{P}_{\mathbb{P}^1\times\mathbb{P}^1}(\OOO\oplus\OOO(1, 1))$} 
Then $X$ is a Fano threefold in the family \textnumero\,$4.2$, 
$Z$ is an elliptic curve such that is contained in a surface equivalent to a tautological
divisor $H$ on $Y$. We have
$
K_Y \sim -2H - F_1 - F_2
$
where $F_1$ and $F_2$ are the preimages of two distinct rulings via the projection $Y=\mathbb{P}_{\mathbb{P}^1\times\mathbb{P}^1}(\OOO\oplus\OOO(1, 1))\to\mathbb{P}^1\times\mathbb{P}^1$. Put $D'=D'_1+D'_2+D'_3+D'_4$ where $D'_1\sim H$ such that $Z\subset D'_1$, $D'_2\sim H$ is general,
$D'_3\sim F_1$ and $D'_4\sim F_2$ are general as well. 

\subsection{
\label{5:1}
Case when $Y$ is a Fano threefold in the family
\textnumero\,$5.1$} 
That is, $Y$ is the blow up of a conic $C$ on a smooth
quadric hypersurface $Q\subset \mathbb{P}^4$. In this case, we apply
Lemma \ref{lemma:blow-up} twice. Consider a pair $(Q,
D'')$ of coregularity $0$ where $D''=D''_1+D''_2+D''_3$, $D'_i$ are general
hyperplane sections with the property $C\subset D'_1\cap D'_2$. Put $D'=D'_1+D'_2+D'_3+D'_4$ where $D'_1$, $D''_2$ and $D'_3$ are strict preimages of $D''_1$, $D''_2$ and $D''_3$, respectively, and 
$D'_4=E$ is the exceptional divisor of the blow up $Y\to Q$. Then the
log pullback $(Y, D')$ also has coregularity $0$. Now, let
$Z=Z_1+Z_2+Z_3$ be the union of three fibers of the blow up $Y\to Q$, so $Z\subset
D'_4$. 

\

We have considered all smooth Fano threefolds with Picard number at least $3$ that can be realized as a blow up of some other smooth variety, and showed that they admit a boundary of coregularity $0$. This completes the proof.
\end{proof}

\newpage
\section{The table}
\Small{
We comment on the notation that we use in the table. In the column ``Coreg.'', we write down the value of the coregularity of the corresponding Fano variety. When we write ``gen.'' we mean the general element in the corresponding deformation family. In particular, in the families 1.2 and 1.5 we work only with the first description of the corresponding Fano variety given in the column ``Brief description''. For the definition of $\mathrm{coreg}_1$, that is, the first coregularity, we refer to Definition \ref{defin-regularity}. 
}
\label{sec-the-table}
\small{
\begin{longtable}{|c|c|p{9cm}|c|c|}
\caption{Coregularity of smooth Fano threefolds}\label{table:Fanos}\\
\hline Family & $-K_X^3$ &  Brief description & Coreg. & Proof \\
\hline $1.1$ & $2$ & a hypersurface in $\mathbb{P}(1,1,1,1,3)$ of
degree $6$ & $\mathrm{gen.}\geq 1$ & Proposition \ref{cor-gen-sextic-double-solid} \\
\hline $1.2$ & $4$ & a hypersurface in $\mathbb{P}^4$ of degree
$4$ or\hfill\break a double cover of smooth quadric in
$\mathbb{P}^{4}$
branched over a surface of degree $8$ & $\mathrm{gen.}\geq 1$ & Proposition \ref{quartic-coreg-0} \\
\hline $1.3$ & $6$ & a complete intersection of a quadric and a
cubic in
$\mathbb{P}^{5}$ & $\mathrm{gen.coreg}_1=2$ & Proposition \ref{prop-23-reg1} \\
\hline $1.4$ & $8$ & a complete intersection of three quadrics
$\mathbb{P}^{6}$ & $\mathrm{gen.coreg}_1=2$ & Proposition \ref{prop-222-reg1} \\
\hline $1.5$ & $10$ & a section of
$\mathrm{Gr}(2,5)\subset\mathbb{P}^9$ by
quadric and linear subspace of dimension~$7$ or \hfill\break
a double cover of 1-15 with branch locus an anti-canonical divisor
& $\mathrm{gen.}\leq 1$ &  Lemma~\ref{lemma:1-5} \\
\hline $1.6$ & $12$ & a section of the Hermitian symmetric space
$M=G/P\subset\mathbb{P}^{15}$\hfill\break of type DIII  by a
linear
subspace of dimension~$8$ & $\mathrm{gen.}\,0$ & Lemma~\ref{lemma:1-6} \\
\hline $1.7$ & $14$ & a section of
$\mathrm{Gr}(2,6)\subset\mathbb{P}^{14}$
by a linear subspace of codimension~$5$ & $\mathrm{gen.}\,0$ & Lemma~\ref{lemma:1-7} \\
\hline $1.8$ & $16$ & a section of the Hermitian symmetric space
$M=G/P\subset \mathbb{P}^{19}$\hfill\break of type CI  by a linear
subspace
of dimension~$10$ & gen. $0$ & Lemma~\ref{lemma:1-8} \\
\hline $1.9$ & $18$ & a section of the $5$-dimensional rational
homogeneous contact\hfill\break manifold
$G_2/P\subset\mathbb{P}^{13}$  by
a linear subspace of dimension~$11$ & gen. $0$ & Lemma~\ref{lemma:1-9} \\
\hline $1.10$ & $22$ & a zero locus of three sections of the rank
$3$ vector bundle $\bigwedge^2\mathcal{Q}$,\hfill\break where
$\mathcal{Q}$ is
the universal quotient bundle on $\mathrm{Gr}(7,3)$ & $\mathrm{gen.}\,0$ &
Lemma~\ref{lemma:1-10} \\
\hline $1.11$ & $8$ & $V_{1}$ that is a hypersurface in
$\mathbb{P}(1,1,1,2,3)$ of degree $6$ & $\mathrm{gen.}\,0$ &
Lemma~\ref{lemma:Fano del Pezzo1} \\
\hline $1.12$ & $16$ & $V_{2}$ that is a hypersurface in
$\mathbb{P}(1,1,1,1,2)$ of degree $4$ & $0$ & Lemma~\ref{lemma:Fano
del Pezzo} \\
\hline $1.13$ & $24$ & $V_{3}$ that is a hypersurface in
$\mathbb{P}^{4}$ of degree $3$ & $0$ & Lemma~\ref{lemma:Fano del
Pezzo} \\
\hline $1.14$ & $32$ & $V_{4}$ that is a complete intersection of two
quadrics in $\mathbb{P}^{5}$ & $0$ & Lemma~\ref{lemma:Fano del Pezzo}
\\
\hline $1.15$ & $40$ & $V_{5}$ that is a section of
$\mathrm{Gr}(2,5)\subset\mathbb{P}^9$ by linear subspace of
codimension $3$ & $0$ & Lemma~\ref{lemma:Fano del Pezzo} \\
\hline $1.16$ & $54$ & $Q$ that is a hypersurface in $\mathbb{P}^{4}$
of degree $2$ & $0$ & Lemma~\ref{lemma:quadrics} \\
\hline $1.17$ & $64$ & $\mathbb{P}^{3}$ & $0$ & Lemma~\ref{lemma:Pn} \\
\hline $2.1$ & $4$ & a blow up of the Fano threefold $V_1$ along an
elliptic curve\hfill\break that is an intersection of two divisors
from $|-\frac{1}{2}K_{V_1}|$ & gen. $0$ & Lemma 
\ref{cor-gen-2-1} \\%
\hline $2.2$ & $6$ & a double cover of
$\mathbb{P}^1\times\mathbb{P}^2$
whose branch locus is a divisor of bidegree $(2, 4)$ & $0$ & Lemma
\ref{lemma:2-2} \\%
\hline $2.3$ & $8$ & the blow up of the Fano threefold $V_2$ along an
elliptic curve\hfill\break that is an intersection of two divisors
from $|-\frac{1}{2}K_{V_2}|$ & $0$ & Lemma
\ref{2:3} \\
\hline $2.4$ & $10$ & the blow up of $\mathbb{P}^3$ along an
intersection of two cubics & $0$ & Lemma \ref{2:4}
\\
\hline $2.5$ & $12$ & the blow up of the threefold
$V_3\subset\mathbb{P}^4$ along a plane cubic & $0$ & Lemma
\ref{2:5} \\
\hline $2.6$ & $12$ & a divisor on
$\mathbb{P}^2\times\mathbb{P}^2$ of bidegree $(2, 2)$
or\hfill\break a double cover of $W$ whose branch locus
is a surface in $|-K_W|$  & $0$ & Lemma \ref{lemma:2-6} \\
\hline $2.7$ & $14$ & the blow up of $Q$ along the intersection of
two divisors from $|\mathcal{O}_Q (2)|$ & $0$ & Lemma
\ref{2:7} \\
\hline $2.8$ & $14$ & a double cover of $V_7$ whose branch locus
is a surface
in $|-K_{V_7}|$ & $0$ & Lemma \ref{lemma:2-8} \\
\hline $2.9$ & $16$ & the blow up of $\mathbb{P}^3$ along a curve
of degree $7$ and genus~$5$\hfill\break which is an intersection
of cubics & $0$ & Lemma \ref{2:9} \\
\hline $2.10$ & $16$ & the blow up of $V_4\subset\mathbb{P}^5$
along an elliptic curve\hfill\break which is an intersection of
two hyperplane
sections & $0$ & Lemma \ref{2:10} \\
\hline $2.11$ & $18$ & the blow up of $V_3$ along a line & $0$ &
Lemma \ref{2:11} \\
\hline $2.12$ & $20$ & the blow up of $\mathbb{P}^3$ along a curve
of degree $6$ and genus~$3$\hfill\break which is an intersection
of cubics & $0$ & Lemma \ref{2:12} \\
\hline $2.13$ & $20$ & the blow up of $Q\subset\mathbb{P}^4$ along
a curve of degree $6$ and genus $2$ & $0$ & Lemma \ref{2:13} \\
\hline $2.14$ & $20$ & the blow up of $V_5\subset\mathbb{P}^6$ along
an elliptic curve\hfill\break which is an intersection of two
hyperplane sections & $0$ & Lemma \ref{2:14} \\
\hline $2.15$ & $22$ & the blow up of $\mathbb{P}^3$ along the
intersection of a quadric and a cubic surfaces & $0$ & Lemma
\ref{2:15} \\
\hline $2.16$ & $22$ & the blow up of $V_4\subset\mathbb{P}^5$
along a conic & $0$ & Lemma \ref{2:16} \\
\hline $2.17$ & $24$ & the blow up of $Q\subset\mathbb{P}^4$ along
an elliptic curve of degree~$5$ & $0$ & Lemma \ref{corollary:blow-ups} \\
\hline $2.18$ & $24$ & a double cover of
$\mathbb{P}^1\times\mathbb{P}^2$ whose branch locus is a divisor of
bidegree $(2, 2)$ & $0$ & Lemma \ref{lemma:2-18} \\
\hline $2.19$ & $26$ & the blow up of $V_4\subset\mathbb{P}^5$ along a
line & $0$ & Lemma \ref{2:19} \\
\hline $2.20$ & $26$ & the blow up of $V_5\subset\mathbb{P}^6$
along a twisted cubic & $0$ & Lemma \ref{2:20} \\
\hline $2.21$ & $28$ & the blow up of $Q\subset\mathbb{P}^4$ along
a twisted quartic & $0$ & Lemma \ref{2:21} \\
\hline $2.22$ & $30$ & the blow up of $V_5\subset\mathbb{P}^6$
along a conic & $0$ & Lemma \ref{2:22} \\
\hline $2.23$ & $30$ & the blow up of $Q\subset\mathbb{P}^4$ along a
curve of degree $4$ that is an intersection of a surface in
$|\mathcal{O}_{\mathbb{P}^{4}}(1)\vert_{Q}|$ and a surface in
$|\mathcal{O}_{\mathbb{P}^{4}}(2)\vert_{Q}|$ & $0$ & Lemma
\ref{2:23} \\
\hline $2.24$ & $30$ & a divisor on $\mathbb{P}^2\times\mathbb{P}^2$
of bidegree $(1, 2)$ & $0$ & Lemma \ref{lemma:2-24} \\
\hline $2.25$ & $32$ & the blow up of $\mathbb{P}^3$ along an elliptic
curve which is an intersection of two quadrics & $0$ & Lemma
\ref{2:25} \\
\hline $2.26$ & $34$ & the blow up of the threefold
$V_5\subset\mathbb{P}^6$ along a line & $0$ & Lemma
\ref{2:26} \\
\hline $2.27$ & $38$ & the blow up of $\mathbb{P}^3$ along a twisted
cubic  & $0$ & Lemma \ref{2:27} \\
\hline $2.28$ & $40$ & the blow up of $\mathbb{P}^3$ along a plane
cubic & $0$ & Lemma \ref{2:28} \\
\hline $2.29$ & $40$ & the blow up of $Q\subset\mathbb{P}^4$ along a
conic & $0$ & Lemma \ref{2:29} \\
\hline $2.30$ & $46$ & the blow up of $\mathbb{P}^3$ along a conic   &
$0$ & Lemma \ref{2:30} \\
\hline $2.31$ & $46$ & the blow up of $Q\subset\mathbb{P}^4$ along a
line & $0$ & Lemma \ref{2:31} \\
\hline $2.32$ & $48$ & $W$ that is a divisor on
$\mathbb{P}^2\times\mathbb{P}^2$ of bidegree $(1, 1)$  & $0$ &
Lemma~\ref{lemma:2-32} \\
\hline $2.33$ & $54$ & the blow up of $\mathbb{P}^3$ along a line  &
$0$ & Remark~\ref{remark:toric} \\
\hline $2.34$ & $54$ & $\mathbb{P}^1\times\mathbb{P}^2$  & $0$ &
Remark~\ref{remark:toric} \\
\hline $2.35$ & $56$ &
$V_7\cong\mathbb{P}(\mathcal{O}_{\mathbb{P}^2}\oplus\mathcal{O}_{\mathbb{P}^2}(1))$
 & $0$ & Remark~\ref{remark:toric} \\
\hline $2.36$ & $62$ &
$\mathbb{P}(\mathcal{O}_{\mathbb{P}^2}\oplus\mathcal{O}_{\mathbb{P}^2}(2))$
 & $0$ & Remark~\ref{remark:toric} \\
\hline $3.1$ & $12$ & a double cover of
$\mathbb{P}^1\times\mathbb{P}^1\times\mathbb{P}^1$ branched in a
divisor of type $(2, 2, 2)$ & $0$ & Lemma \ref{lemma:3-1} \\
\hline $3.2$ & $14$ & a divisor on a $\mathbb{P}^{2}$-bundle
$\mathbb{P}(\mathcal{O}_{\mathbb{P}^1\times\mathbb{P}^1}\oplus\mathcal{O}_{\mathbb{P}^1\times\mathbb{P}^1}(-1,-1)\oplus\mathcal{O}_{\mathbb{P}^1\times\mathbb{P}^1}(-1,-1))$
such that $X\in|L^{{}\otimes
2}\otimes\mathcal{O}_{\mathbb{P}^{1}\times\mathbb{P}^{1}}(2,3)|$,
where $L$ is the tautological line bundle & $0$ & Lemma
\ref{lemma:3-2} \\
\hline $3.3$ & $18$ & a divisor on
$\mathbb{P}^1\times\mathbb{P}^1\times\mathbb{P}^2$ of type $(1,
1, 2)$ & $0$ & Lemma \ref{3:3} \\
\hline $3.4$ & $18$ & the blow up of the Fano threefold $Y$ from the
family $\textnumero\ 2.18$ along a smooth fiber of the
composition $Y\to\mathbb{P}^1\times\mathbb{P}^2\to\mathbb{P}^2$ of the
double cover with the projection & $0$ & Lemma \ref{lemma:3-4}
\\
\hline $3.5$ & $20$ & the blow up of $\mathbb{P}^1\times\mathbb{P}^2$
along a curve $C$ of bidegree $(5, 2)$\hfill\break such that the
composition  $C\hookrightarrow\mathbb{P}^1\times\mathbb{P}^2\to\mathbb{P}^2$
is an embedding & $0$ & Lemma \ref{3:5} \\
\hline $3.6$ & $22$ & the blow up of $\mathbb{P}^3$ along a disjoint
union of a line and an elliptic curve of degree~$4$ & $0$ & Lemma
\ref{3:6} \\
\hline $3.7$ & $24$ & the blow up of the threefold $W$ along an
elliptic curve\hfill\break that is an intersection of two  divisors
from $|-\frac{1}{2}K_W|$  & $0$ & Lemma \ref{3:7}
\\
\hline $3.8$ & $24$ & a divisor in
$|(\alpha\circ\pi_1)^*(\mathcal{O}_{\mathbb{P}^2}(1))\otimes\pi_2^*(\mathcal{O}_{\mathbb{P}^2}(2))|$,
where $\pi_{1}\colon\mathbb{F}_1\times\mathbb{P}^2\to\mathbb{F}_1$
and $\pi_{2}\colon\mathbb{F}_1\times\mathbb{P}^2\to\mathbb{P}^2$ are
projections, and $\alpha\colon\mathbb{F}_1\to\mathbb{P}^2$ is a blow
up of a point & $0$ & Lemma
\ref{3:8} \\
\hline $3.9$ & $26$ & the blow up of a cone $W_4\subset\mathbb{P}^6$
over the Veronese surface  $R_4\subset\mathbb{P}^5$\hfill\break with
center in a disjoint union of the vertex and a quartic on
$R_4\cong\mathbb{P}^2$ & $0$ & Lemma
\ref{lemma:3-9} \\
\hline $3.10$ & $26$ & the blow up of $Q\subset\mathbb{P}^4$ along a
disjoint union of two conics & $0$ & Lemma
\ref{3:10} \\
\hline $3.11$ & $28$ & the blow up of the threefold $V_7$ along an
elliptic curve\hfill\break that is an intersection of  two divisors
from $|-\frac{1}{2}K_{V_7}|$ & $0$ & Lemma
\ref{3:11} \\
\hline $3.12$ & $28$ & the blow up of $\mathbb{P}^3$ along a disjoint
union of a line and a twisted cubic & $0$ & Lemma
\ref{3:12} \\
\hline $3.13$ & $30$ & the blow up of
$W\subset\mathbb{P}^2\times\mathbb{P}^2$ along a curve $C$ of
bidegree $(2, 2)$\hfill\break such that
$\pi_{1}(C)\subset\mathbb{P}^2$ and
$\pi_{2}(C)\subset\mathbb{P}^{2}$ are irreducible
conics,\hfill\break where $\pi_{1}\colon W\to\mathbb{P}^2$ and
$\pi_{2}\colon W\to\mathbb{P}^2$ are
natural projections & $0$ & Lemma \ref{3:13} \\
\hline $3.14$ & $32$ & the blow up of $\mathbb{P}^3$ along a
disjoint union of a plane cubic curve that is
contained in a plane
$\Pi\subset\mathbb{P}^{3}$ and a point that is not contained in $\Pi$
& $0$ & Lemma \ref{3:14} \\
\hline $3.15$ & $32$ & the blow up of $Q\subset\mathbb{P}^4$ along a
disjoint union of a line and a conic & $0$ & Lemma
\ref{3:15} \\
\hline $3.16$ & $34$ & the blow up of $V_7$ along a proper
transform via the blow up $\alpha\colon
V_7\to\mathbb{P}^3$ of a twisted cubic
passing through the center of the blow up $\alpha$ & $0$ & Lemma
\ref{3:16} \\
\hline $3.17$ & $36$ & a divisor on
$\mathbb{P}^1\times\mathbb{P}^1\times\mathbb{P}^2$ of type $(1,
1, 1)$ & $0$ & Lemma \ref{3:3} \\
\hline $3.18$ & $36$ & the blow up of $\mathbb{P}^3$ along a disjoint
union of a line and a conic & $0$ & Lemma
\ref{3:18} \\
\hline $3.19$ & $38$ & the blow up of $Q\subset\mathbb{P}^4$ at two
non-collinear points & $0$ & Lemma \ref{3:19} \\
\hline $3.20$ & $38$ & the blow up of $Q\subset\mathbb{P}^4$ along a
disjoint union of two lines & $0$ & Lemma
\ref{3:20} \\
\hline $3.21$ & $38$ & the blow up of $\mathbb{P}^1\times\mathbb{P}^2$
along a curve of bidegree  $(2, 1)$ & $0$ & Lemma
\ref{3:21} \\
\hline $3.22$ & $40$ & the blow up of $\mathbb{P}^1\times\mathbb{P}^2$
along a conic in a fiber of the projection
$\mathbb{P}^{1}\times\mathbb{P}^2\to\mathbb{P}^1$ & $0$ & Lemma
\ref{3:22} \\
\hline $3.23$ & $42$ & the blow up of $V_7$ along a proper transform
via the blow up $\alpha\colon V_7\to\mathbb{P}^3$ of an
irreducible conic passing through the center of the blow up $\alpha$ &
$0$ & Lemma \ref{3:23} \\
\hline $3.24$ & $42$ & $W\times_{\mathbb{P}^2}\mathbb{F}_1$, where
$W\to\mathbb{P}^2$ is a $\mathbb{P}^1$-bundle and
$\mathbb{F}_1\to\mathbb{P}^2$ is the
blow up & $0$ & Lemma \ref{3:24} \\
\hline $3.25$ & $44$ & the blow up of $\mathbb{P}^3$ along a disjoint
union of two lines & $0$ & Remark~\ref{remark:toric} \\
\hline $3.26$ & $46$ & the blow up of $\mathbb{P}^3$ with center in a
disjoint union of a point and a line  & $0$ &
Remark~\ref{remark:toric} \\
\hline $3.27$ & $48$ &
$\mathbb{P}^1\times\mathbb{P}^1\times\mathbb{P}^1$  & $0$ &
Remark~\ref{remark:toric} \\
\hline $3.28$ & $48$ & $\mathbb{P}^1\times\mathbb{F}_1$  & $0$ &
Remark~\ref{remark:toric} \\
\hline $3.29$ & $50$ & the blow up of the Fano threefold $V_7$
along a line in $E\cong\mathbb{P}^2$,\hfill\break where $E$ is the
exceptional divisor of the
blow up $V_7\to\mathbb{P}^3$  & $0$ & Remark~\ref{remark:toric} \\
\hline $3.30$ & $50$ & the blow up of $V_7$ along a proper transform
via the blow up $\alpha\colon V_7\to\mathbb{P}^3$ of a
line that passes through the center of the blow up $\alpha$   & $0$ &
Remark~\ref{remark:toric} \\
\hline $3.31$ & $52$ & the blow up of a cone over a smooth quadric in
$\mathbb{P}^3$ at the vertex & $0$ & Remark~\ref{remark:toric} \\
\hline $4.1$ & $24$ & divisor on
$\mathbb{P}^1\times\mathbb{P}^1\times\mathbb{P}^1\times\mathbb{P}^1$
of multidegree $(1, 1, 1, 1)$ & $0$ & Lemma \ref{4:1} \\
\hline $4.2$ & $28$ & the blow up of the cone over a smooth quadric
$S\subset\nolinebreak\mathbb{P}^3$\hfill\break along a disjoint union
of the vertex and an elliptic curve on $S$  & $0$ & Lemma
\ref{lemma:4-2} \\
\hline $4.3$ & $30$ & the blow up of
$\mathbb{P}^1\times\mathbb{P}^1\times\mathbb{P}^1$ along a curve of
type $(1, 1, 2)$  & $0$ & Lemma \ref{4:3} \\
\hline $4.4$ & $32$ & the blow up of the smooth Fano threefold $Y$
from the family $\textnumero\ 3.19$ along the proper
transform of a conic on the quadric $Q\subset\mathbb{P}^4$\hfill\break
that passes through the both centers of the blow up $Y\to Q$ & $0$ &
Lemma \ref{4:4} \\
\hline $4.5$ & $32$ & the blow up of $\mathbb{P}^1\times\mathbb{P}^2$
along a disjoint union of\hfill\break two irreducible curves of
bidegree $(2, 1)$ and $(1, 0)$ & $0$ & Lemma
\ref{4:5} \\
\hline $4.6$ & $34$ & the blow up of $\mathbb{P}^3$ along a disjoint
union of three lines & $0$ & Lemma \ref{4:6} \\
\hline $4.7$ & $36$ & the blow up of
$W\subset\mathbb{P}^2\times\mathbb{P}^2$ along a disjoint union
of\hfill\break two curves of bidegree $(0, 1)$ and $(1, 0)$ & &
Lemma \ref{4:7} \\
\hline $4.8$ & $38$ & the blow up of
$\mathbb{P}^1\times\mathbb{P}^1\times\mathbb{P}^1$ along a curve of
type $(0, 1, 1)$  & $0$ & Lemma \ref{4:8} \\
\hline $4.9$ & $40$ & the blow up of the smooth Fano threefold $Y$
from the family
$\textnumero\ 3.25$ along a curve that is contracted
by the blow
up $Y \to\mathbb{P}^3$  &  $0$ & Remark~\ref{remark:toric} \\
\hline $4.10$ & $42$ & $\mathbb{P}^1\times S_7$  & $0$ &
Remark~\ref{remark:toric} \\
\hline $4.11$ & $44$ & the blow up of
$\mathbb{P}^1\times\mathbb{F}_1$ along a curve $C\cong\mathbb{P}^1$ such
that $C$ is contained in a fiber $F\cong\mathbb{F}_{1}$ of
the projection
$\mathbb{P}^1\times\mathbb{F}_{1}\to\mathbb{P}^1$ and $C\cdot C=-1$ on
$F$ & $0$ & Remark~\ref{remark:toric} \\
\hline $4.12$ & $46$ & the blow up of the smooth Fano threefold
$Y$ from the family $\textnumero\ 2.33$ along two curves that are
contracted by the blow up
$Y\to\mathbb{P}^3$ & $0$ & Remark~\ref{remark:toric} \\
\hline $4.13$ & $26$ & the blow up of
$\mathbb{P}^1\times\mathbb{P}^1\times\mathbb{P}^1$ along a curve of
type $(1, 1, 3)$ & $0$ & Lemma \ref{4:13} \\
\hline $5.1$ & $28$ & the blow up of the smooth Fano threefold $Y$
from the family $\textnumero\ 2.29$ along three curves that are
contracted by the
blow up $Y\to Q$ & $0$ & Lemma \ref{5:1} \\
\hline $5.2$ & $36$ & the blow up of the smooth Fano threefold $Y$
in the family $3.25$ along two curves $C_{1}\ne
C_{2}$ that are contracted by the blow up $\phi\colon
Y\to\nolinebreak\mathbb{P}^3$ and that are contained in the
same exceptional divisor of the blow up $\phi$  &
$0$ & Remark~\ref{remark:toric} \\
\hline $5.3$ & $36$ & $\mathbb{P}^1\times S_6$  & $0$ &
Remark~\ref{remark:toric} \\
\hline $6.1$ & $30$ & $\mathbb{P}^1\times S_5$  & $0$ &
Corollary~\ref{corollary:P1 x del Pezzo} \\
\hline $7.1$ & $24$ & $\mathbb{P}^1\times S_4$  & $0$ &
Corollary~\ref{corollary:P1 x del Pezzo} \\
\hline $8.1$ & $18$ & $\mathbb{P}^1\times S_3$  & $0$ &
Corollary~\ref{corollary:P1 x del Pezzo} \\
\hline $9.1$ & $12$ & $\mathbb{P}^1\times S_2$  & $0$ &
Corollary~\ref{corollary:P1 x del Pezzo} \\
\hline $10.1$ & $6$ & $\mathbb{P}^1\times S_1$  & gen. $0$ &
Corollary~\ref{lemma:coreg P1xS1} \\
\hline
\end{longtable}
}

\end{document}